\def\ignore#1{\relax}
\newcommand\inlinegraphic[2][{scale=1.0}]{\begin{array}{c} \includegraphics[#1]{./EPS/#2}\end{array}}
\numberwithin{equation}{section}
\numberwithin{figure}{section}
\def\la{\lambda}
\def\La{\Lambda}
\def\Ga{\Gamma}
\def\vareps{\varepsilon}
\def\End{{\rm End}}
\def\B{\mathfrak{B}}
\def\S{\mathfrak S}
\def\Ind{{\rm Ind}}
\def\Res{{\rm Res}}
\def\inv{^{-1}}
\def\rank{{\rm rank}}
\def\dim{{\rm dim}}
\def\cl{{\rm cl}}
\def\uA{\breve A^\la}
\def\1{\bm 1}
\def\C{\mathcal{C}}
\def\a{\alpha}
\def\spn{{\rm span}}
\def\br{B}
\def\mathbold{\bm}
\def\p #1{ \bm {#1}}
\def\pbar #1{\overline{\p {#1}}}
\def\inv{^{-1}}
\def\Z{{\mathbb Z}}
\def\Q{{\mathbb Q}}
\def\C{{\mathbb C}}
\def\la{\lambda}
\def\eps{\varepsilon}
\def\ubold{{\bm u}}
\def\lambdabold{{\bm \la}} 
\def\labold{\lambdabold}
\def\mubold{{\bm \mu}}
\def\qbold{{\bm q}}
\def\rhobold{{\bm \rho}}
\def\powerpm{^{\pm 1}} 
\def\deltabold{{\mathbold \delta}}
\def\power #1{^{(#1)}}
\def\End{{\rm End}}
\def\ahec #1{\widehat{H}_{#1}}  
\def\hec #1{H_{#1}}
\def\bmw #1{W_{#1}}
\def\kt #1{{\rm KT}_{#1}}
\def\tl{T}
\def\TL{Temperley--Lieb }
\def\cont #1 #2 #3{C_{#1}^{#2}(#3)}
\theoremstyle{plain}
\newtheorem{theorem}{Theorem}[section]
\theoremstyle{plain}
\newtheorem{proposition}[theorem]{Proposition}
\theoremstyle{plain}
\newtheorem{corollary}[theorem]{Corollary}
\theoremstyle{plain}
\newtheorem{lemma}[theorem]{Lemma}
\theoremstyle{definition}
\newtheorem{definition}[theorem]{Definition}
\theoremstyle{definition}
\newtheorem{example}[theorem]{Example}
\theoremstyle{definition}
\newtheorem{remark}[theorem]{Remark}
\theoremstyle{definition}
\theoremstyle{remark}
\theoremstyle{claim}
\title{Cellularity and the Jones basic construction}
\author{Frederick M. Goodman}
\address{Department of Mathematics\\ University of Iowa\\ Iowa
City, Iowa}
\email{ goodman@math.uiowa.edu} 
\author{John Graber}
\address{Department of Mathematics\\ University of Iowa\\ Iowa
City, Iowa}
\email{ jgraber@math.uiowa.edu}
\subjclass[2000]{20C08, 16G99, 81R50}
\dedicatory{Dedicated to Dennis Stanton on the occasion of his $60$th birthday.}
\begin{document}
 \begin{abstract}    We establish a framework for cellularity of algebras related to the Jones basic construction.
Our framework allows a uniform proof of cellularity of Brauer algebras, ordinary and cyclotomic  BMW algebras, walled Brauer algebras, partition algebras, and others.  Our cellular bases are labeled by paths on certain branching diagrams rather than by  tangles.  Moreover, for the class of algebras that we study, we show that the cellular structures are compatible with restriction and induction of modules.

\end{abstract}
 \maketitle
\medskip

To appear in {\em Advances in Applied Mathematics}

\setcounter{tocdepth}{1}
\tableofcontents

\section{Introduction}

Cellularity is a concept due to Graham and Lehrer ~\cite{Graham-Lehrer-cellular}  that is useful for studying non--semisimple specializations of certain algebras such as Hecke algebras, $q$--Schur algebras, etc.     A number of important examples of cellular algebras, including  the Hecke algebras of type $A$ and the Birman--Wenzl--Murakami (BMW) algebras,  actually occur in towers $A_0 \subseteq A_1 \subseteq A_2 \subseteq \dots$ with coherent cellular structures.   Coherence means that the cellular structures are well--behaved with respect to induction and restriction.

This paper establishes a framework for proving cellularity of towers of algebras $(A_n)_{n \ge 0}$ that are obtained by repeated Jones basic constructions from a coherent tower of cellular algebras $(Q_n)_{n \ge 0}$.    

Examples that fit in our framework include:  Temperley-Lieb algebras,  Brauer algebras,  walled Brauer algebras,  Birman--Wenzl--Murakami (BMW) algebras,   cyclotomic BMW algebras,  partition algebras,  and contour algebras.
We give a uniform proof of cellularity for  all of these algebras. 

We should alert the reader that we use a definition of cellular algebras that is slightly weaker than the original definition of  Graham and Lehrer.  The two definitions are equivalent in case $2$ is invertible in the ground ring, and we know of no consequence of cellularity that would not also hold with the weaker definition; in particular, all results of Graham and Lehrer   ~\cite{Graham-Lehrer-cellular}   go through with the modified definition.   See Section \ref{subsection: cellularity} for details.    Our contention is that the relaxed definition is in fact superior, as it allows one to deal more naturally with extensions of cellular algebras.  
For this reason, we have retained the terminology ``cellularity" for our weaker definition, rather than inventing some new terminology such as ``weak cellularity."  

 Once we have proved our abstract result (Theorem \ref
{main theorem}),  it is generally very easy to check that each example fits our framework, and thus that the tower $(A_n)_{n\ge 0}$ in the example is a coherent tower of cellular algebras.  What we need is, for the most part, already in the literature, or completely elementary.  The application of our method to the cyclotomic BMW algebras depends on a very recent result of Mathas regarding induced modules of cyclotomic Hecke algebras ~\cite{mathas-2009}.

For most of our examples,  cellularity has been established previously (but coherence of the cellular structures is  a new result).   Many of the existing proofs of cellularity for these algebras   follow the pattern made explicit by Xi in his paper on cellularity of the partition algebras ~\cite{Xi-Partition}.  The cellular bases obtained are pieced together from  cellular bases of the (quotient) algebras $Q_k$ and bases of certain $R$--modules $V_k$ of tangles or diagrams, where $R$ is the ground ring for $A_n$;  a formal method for piecing the parts together is K\"onig and Xi's method of ``inflation" 
~\cite{KX-Morita}.     It is not evident that the resulting  ``tangle bases''  yield coherent cellular structures.
By contrast, the cellular bases that we produce are indexed by paths on the branching diagram (Bratteli diagram) for the generic semisimple representation theory of the tower $(A_n)_{n \ge 0}$ over a field, and coherence is built into the construction. 

For example,  for the Brauer algebras, the BMW algebras,  and the cyclotomic BMW algebras, 
our  cellular basis of the $n$--th algebra is indexed by up--down tableaux of length $n$, and may be regarded as
an analogue of Murphy's cellular basis ~\cite{murphy-hecke95} for the Hecke algebra,   or the basis of Dipper, James and Mathas ~\cite{dipper-james-mathas} for the cyclotomic Hecke algebras.
 A Murphy type basis for the BMW and Brauer algebras has been constructed  by Enyang ~\cite{Enyang2}, but such a basis for the cyclotomic BMW algebras has not been obtained previously.  It would be fairly involved to extend Enyang's method to the cyclotomic case, but our method applies to  this case without difficulty.

Let us remark on the role played by the generic ground ring for our examples.  For each of our examples $(A_n)_{n \ge 0}$, there is a generic ground ring $R$  such that any specialization  $A_n^S$ to a ground ring $S$ is obtained as $A_n^S = A_n^R \otimes_R S$.  Moreover,  $R$ is an integral domain, and if $F$ denotes the field of fractions of $R$, then the algebras $(A_n^F)_{n \ge 0}$ are split semisimple  with a known representation theory and branching diagram.  It suffices for us to prove that the sequence of  algebras defined over the generic ground ring $R$ is a coherent cellular tower,  and we find that we can use the structure of the algebras defined over $F$ as a tool to accomplish this.

Our approach is influenced by  the work of K\"onig and Xi    ~\cite{KX-Morita} as well as by the work of Cox et.~al. on ``towers of recollement"  ~\cite{cox-towers}.  In fact,  the idea behind our approach is roughly  the following:  Each algebra $A_n$ (over the generic ground ring $R$)  contains an essential idempotent $e_{n-1}$  with the properties that $e_{n-1} A_n e_{n-1} \cong A_{n-2}$  and \break $A_n/(A_n e_{n-1} A_n)  \cong Q_n$,  where $Q_n$ is a cellular algebra.    Assuming that $A_{n-2}$ and $A_{n-1}$ are  cellular, we show that the
(generally non--unital) ideal  $I_n = A_n e_{n-1} A_n$ is a ``cellular ideal"  in $A_n$  by relating ideals of 
$A_{n-2}$ to ideals of $A_n$ contained in $I_n$.   This proof involves a new basis--free characterization of cellularity and also involves showing that $I_n \cong  A_{n-1} \otimes_{A_{n-2}}  A_{n-1}$  as
$A_{n-1}$ bimodules; thus $I_n$ is a sort of Jones basic construction for the  pair $A_{n-2} \subseteq A_{n-1}$.    Since our version of cellularity behaves well under extensions, we can conclude that
$A_n$ is cellular.    Our method is related to ideas introduced by  K\"onig and Xi  in their treatment of 
cellularity and Morita equivalence ~\cite{KX-Morita}.

 Following Cox et.~al.~\cite{cox-towers},  our approach employs the interaction between induction and restriction functors relating $A_{n-1}$--mod and $A_{n}$--mod, on the one hand, and localization and globalization functions relating $A_n$--mod and $A_{n-2}$--mod, on the other hand.  (Write $e = e_{n-1} \in A_n$.  The localization functor $F: A_n\text{--mod} \rightarrow e A_n e\text{--mod} \cong A_{n-2}\text{--mod}$ is $F: M \mapsto e M$.  The globalization function $G: A_{n-2}\text{--mod} \cong e A_n e\text{--mod} \rightarrow A_n\text{--mod}$ is $G: N \mapsto A_n e \otimes_{e A_n e} N$.)  
 
 Our framework and that of  Cox et.~al.  dovetail nicely;  in fact, our main result (Theorem ~\ref{main theorem}) says that if 
 $(A_n)$,  $(Q_n)$ are two sequences of algebras satisfying our framework axioms,  then $(A_n)$  satisfies a cellular version of the axioms for towers of recollement;  see ~\cite{cox-walled-brauer} for a discussion of   cellularity and towers of recollement.

Although our techniques do not seem to be adaptable to proving ``strict" cellularity  in the sense of 
~\cite{Graham-Lehrer-cellular},  by combining our results with previous proofs of ``strict" cellularity for our examples,  we can show the existence of ``strictly" cellular Murphy type bases, i.e. bases indexed by
paths on the generic branching diagram for the sequence of algebras $(A_n)_{n \ge 0}$.     We will indicate how this can be done for the cyclotomic BMW algebras;  other examples are similar.  

Several other  general frameworks have been proposed for cellularity which also successfully encompass many of our examples;  see   ~\cite{KX-Morita, green-martin-tabular, wilcox-cellular}.

In a companion paper ~\cite{GG2}, we refine the framework of this paper to take into account the role played by Jucys--Murphy elements. At the same time, we modify Andrew Mathas's theory  \cite{mathas-seminormal}  of  cellular algebras with Jucys--Murphy elements to take into account coherent sequences of such algebras.

\medskip
\noindent{\bf Acknowledgement.}  Part of this work was done while both authors were visiting MSRI in 2008.  We are grateful to the organizers of the program in Combinatorial Representation Theory and to the staff at MSRI for a  pleasant and stimulating visit.  We thank the referees for helpful suggestions which resulted in several improvements.   

\hfill \newpage

\section{Preliminaries}

\subsection{Algebras with involution}
Let $R$ be a commutative ring with identity.  In the following, assume $A$ is an $R$--algebra with an involution $i$  (that is, an $R$--linear algebra  anti--automorphism of $A$ with \def\id{{\rm id}} $i^2 = \id$). 

If $M$ is a left $A$--module, we define a right $A$--module $i(M)$ as follows.   As a set, $i(M)$ is a copy of $M$, with elements marked with the symbol $i$,  $i(M) = \{i(m) : m \in M\}$. The $R$--module structure of $i(M)$ is given by $i(m_1) + i(m_2) = i(m_1 + m_2)$, and $r i(m) = i(r m)$.  Finally,  the right $A$--module structure is defined by
$i(m) a = i( (i(a) m)$.  If $\alpha : M \to N$ is a homomorphism of left $A$--modules, define $i(\alpha) : i(M) \to i(N)$ by $i(\alpha)(i(m)) = i(\alpha(m))$.  Then $i :  A\text{--mod} \to \text{mod--}A$ is a functor. 
For any fixed $M$, 
$i : M \to i(M)$ given by $m \mapsto i(m)$ is, by definition, an isomorphism of $R$--modules.

If $\Delta$ is a left ideal in $A$,  we have two possible meanings for $i : \Delta \to i(\Delta)$, namely the restriction to $\Delta$ of the involution $i$, whose image is a right ideal in $A$,  or the application of the functor $i$.  However, there is no problem with this, as the right $A$--module obtained by applying the functor $i$ can be identified with the
right ideal $i(\Delta)$.

 The same construction gives a map from right $A$--modules to left $A$--modules.  Moreover, if 
$A$ and $B$ are $R$--algebras with involutions $i_A$ and $i_B$, and     $M$ is an $A$--$B$--bimodule,   then
$i(M)$, defined as above as an $R$--module has the structure of a $B$--$A$--bimodule with
$b\, i(m) a =i( i_A(a) m \, i_B(b))$.   
Note that $i\circ i(M)$ is naturally isomorphic to $M$, so $i$ is an equivalence between the categories of 
$A$--$B$--bimodules and the category of   $B$--$A$--bimodules.

\begin{lemma} \label{lemma; involutions and tensor products of bimodules}
 Suppose $A$, $B$, and $C$  are $R$--algebras with involutions  $i_A$,  $i_B$,  and $i_C$.  Let
$_B P_A$ and $_A Q_C$ be bimodules.   Then
$$
i(P \otimes_A Q) \cong i(Q) \otimes_A i(P),
$$
as $C$--$B$--bimodules.
\end{lemma}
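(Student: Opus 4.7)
The plan is to define the isomorphism directly on elementary tensors by the formula $\phi(i(p \otimes q)) = i(q) \otimes i(p)$, and then verify well-definedness, bimodule linearity, and the existence of a symmetric inverse. Before doing this, it is convenient to extract two ``transfer'' formulas from the definition of the functor $i$: if $P$ is a right $A$-module, then $i(p a) = i_A(a)\cdot i(p)$ in the left $A$-module $i(P)$; dually, if $Q$ is a left $A$-module, then $i(a q) = i(q)\cdot i_A(a)$ in the right $A$-module $i(Q)$. Both follow from the defining formula $i(m) a = i(i_A(a) m)$ by substituting $i_A(a)$ for $a$ and using $i_A^2 = \mathrm{id}$.

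Next I would define an $R$-bilinear map $\psi \colon P \times Q \to i(Q) \otimes_A i(P)$ by $\psi(p,q) = i(q) \otimes i(p)$ and check it is $A$-balanced: using the transfer formulas,
\[
\psi(pa,q) = i(q) \otimes i(pa) = i(q) \otimes i_A(a)\, i(p), \qquad \psi(p, aq) = i(aq) \otimes i(p) = i(q)\, i_A(a) \otimes i(p),
\]
and these agree in $i(Q) \otimes_A i(P)$. Composing the resulting $R$-linear map $P \otimes_A Q \to i(Q) \otimes_A i(P)$ with the $R$-module isomorphism $i^{-1} \colon i(P\otimes_A Q) \to P \otimes_A Q$ gives the desired map $\phi$.

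Third, I would verify that $\phi$ is $C$-$B$-bilinear. For the left $C$-action, using the bimodule structure $c \cdot i(x) \cdot b = i(i_B(b)\, x\, i_C(c))$ on $i(P\otimes_A Q)$,
\[
\phi\bigl(c \cdot i(p \otimes q)\bigr) = \phi\bigl(i(p \otimes q\cdot i_C(c))\bigr) = i(q\cdot i_C(c)) \otimes i(p) = c\cdot i(q) \otimes i(p) = c \cdot \phi(i(p\otimes q)),
\]
and the right $B$-action calculation is entirely analogous, with $i_B(b)$ moving to the $i(P)$-factor. Finally, the symmetric construction $i(q) \otimes i(p) \mapsto i(p \otimes q)$ yields a two-sided inverse to $\phi$ on generators, and hence on the whole module.

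No step is a serious obstacle; this is essentially a bookkeeping lemma. The only place to be careful is in tracking the twisted actions coming from $i_A$, $i_B$, $i_C$, in particular confirming via the transfer formulas that $\psi$ is $A$-balanced. Once this is in hand, $C$-$B$-bilinearity and invertibility are automatic.
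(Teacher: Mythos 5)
Your proof is correct and follows the same route as the paper: define the map on simple tensors by $p \otimes q \mapsto i(q) \otimes i(p)$, compose with $i^{-1}$, and check the twisted bimodule actions; the paper simply leaves these verifications as "straightforward to check," while you have written them out (correctly) via the transfer formulas.
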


\begin{proof}  It is straightforward to check that there is a well defined $R$--linear isomorphism
$f_0 : P \otimes_A Q \to  i(Q) \otimes_A i(P)$ such that $f_0(p \otimes q) = i(q) \otimes i(p)$.   Then
$$f = f_0 \circ i\inv : i(P\otimes_A Q) \to   i(Q) \otimes_A i(P)$$ is an $R$--linear isomorphism.  Finally, one can check that $f$ is a $C$--$B$--bimodule map.
\end{proof}

\begin{remark}  \label{remark:  i applied to tensor product}
Note that if we identify $i(P \otimes_A Q)$ with $i(Q) \otimes_A i(P)$ via $f$, then we have the 
formula $i(p \otimes q) = i(q) \otimes i(p)$.  In particular,  let $M$ be a $B$--$A$--bimodule,  and identify
$i\circ i(M)$ with $M$, and $i(M\otimes_A i(M))$  with $i\circ i(M) \otimes_A i(M) = M\otimes_A i(M)$.
Then we have the formula $i(x \otimes i(y)) = y \otimes i(x)$.  We will use these identifications throughout the paper.
\end{remark}

\subsection{Cellularity} \label{subsection: cellularity}

We recall the definition of {\em cellularity}  from ~\cite{Graham-Lehrer-cellular}; see also
~\cite{Mathas-book}.   The version of the definition given here is slightly weaker than the original definition in ~\cite{Graham-Lehrer-cellular}; we justify this below.

\begin{definition}  \label{gl cell}  Let $R$ be an integral domain and $A$ a unital $R$--algebra.  A {\em cell datum} for $A$ consists of  an algebra involution $i$ of $A$; a partially ordered set $(\Lambda, \ge)$ and 
for each $\la \in \Lambda$  a set $\mathcal T(\lambda)$;  and   a subset $
\mathcal C = \{ c_{s, t}^\la :  \la \in \Lambda \text{ and }  s, t \in \mathcal T(\la)\} \subseteq A$; 
with the following properties:
\begin{enumerate}
\item  $\mathcal C$ is an $R$--basis of $A$.
\item   \label{mult rule} For each $\la \in \Lambda$,  let $\breve A^\la$  be the span of the  $c_{s, t}^\mu$  with
$\mu > \la$.   Given $\la \in \Lambda$,  $s \in \mathcal T(\la)$, and $a \in A$,   there exist coefficients 
$r_v^s( a) \in R$ such that for all $t \in \mathcal T(\la)$:
$$
a c_{s, t}^\la  \equiv \sum_v r_v^s(a)  c_{v, t}^\la  \mod  \breve A^\la.
$$
\item  $i(c_{s, t}^\la) \equiv c_{t, s}^\la   \mod  \breve A^\la$ for all $\la\in \Lambda$ and, $s, t \in \mathcal T(\lambda)$.

\end{enumerate}
$A$ is said to be a {\em cellular algebra} if it has a  cell datum.  
\end{definition}

For brevity,  we will write that  $(\mathcal C, \La)$ is a cellular basis of $A$.

\begin{remark} \mbox{} \label{remark:  on definition of cellularity}
\begin{enumerate}
\item  The original definition in  ~\cite{Graham-Lehrer-cellular} requires that $i(c_{s, t}^\la) = c_{t, s}^\la $ for all $\la, s, t$.  However, one can check that  the results of \cite{Graham-Lehrer-cellular} remain valid with our weaker axiom.
In fact, we are not aware of any consequence of cellularity that would not also hold with our weaker definition. 
\item  In case $2 \in R$ is invertible, our definition is equivalent to the original.  Here is the proof:  Suppose that $2$ is invertible in the ground ring and that
$\{c_{s, t}^\la\}$ is a cellular basis in the sense of Definition \ref{gl cell}.  We want to produce a new cellular basis $\{a_{s, t}^\la\}$ satisfying the strict equality $i(a_{s, t}^\la) = a_{t, s}^\la $ for all $\la, s, t$.  By hypothesis, for each $\la, s, t$ there is a unique $f(\la, s, t) \in \breve A^\la$  such that  $i(c_{s, t}^\la) = c_{t, s}^\la + f(\la, s, t)$.   One easily checks that $i(f(\la, s, t)) = -f(\la, t, s)$.    Declare
$a_{s, t}^\la = c_{s, t}^\la + (1/2) f(\la, t, s)$ for all $\la, s, t$.  
Then $\{a_{s, t}^\la\}$ has the desired properties.  
\end{enumerate}
\end{remark}

We recall some basic structures related to cellularity, see ~\cite{Graham-Lehrer-cellular}.
Given $\la\in\La$.  Let $A^\la$ denote the span of the $c_{s,t}^{\mu}$ with $\mu \geq \la$.  It follows that both $A^\la$ and $\breve A^\la$ (defined above) are $i$--invariant two sided ideals of $A$.
If $t \in \mathcal T(\la)$, define $C_t^\la$ to be the $R$-submodule of $A^\la/\uA$ with basis $\{ c_{s,t}^\la + \uA : s \in \mathcal T(\la) \}$.  Then $C_t^\la$ is a left $A$-module by Definition \ref{gl cell} (\ref{mult rule}).  Furthermore, the action of $A$ on $C_t^\la$ is  independent of $t$, i.e $C_u^{\la}\cong C_t^{\la}$  for any $u,t \in \mathcal T(\la)$.    The {\em left  cell module} $\Delta^\la$ 
is defined as follows: as   an $R$--module, $\Delta^\la$  is free with basis  $\{c_s^\la$ : $s \in \mathcal T(\la)\}$;   for each $a \in A$, the action of $a$ on $\Delta^\la$ is defined by  $ ac_s^\la=\sum_v r_v^s(a)  c_v^\la$ where $r_v^s(a)$ is as  in Definition \ref{gl cell} (\ref{mult rule}).  Then $\Delta^\la \cong C_t^\la$, for any $t \in \mathcal T(\la)$.
 For all $s,t \in \mathcal T(\la)$, we have a canonical $A-A$--bimodule isomorphism $\alpha : A^\la/\uA \rightarrow \Delta^\la \otimes_R i(\Delta^\la)$ defined by $\alpha(c_{s,t}^{\la}+\uA)=c_s^\la \otimes_R  i(c_t^\la)$.  Moreover, we have
 $i \circ \alpha = \alpha \circ i$,  using Remark \ref{remark:  i applied to tensor product} and point (3) of Definition \ref{gl cell}.

\begin{definition}  Suppose $A$ is a unital $R$--algebra with involution $i$, and $J$ is an $i$--invariant ideal; then we have an induced algebra involution $i$ on $A/J$. 
Let us say that $J$ is a {\em cellular ideal} in $A$ if it satisfies the axioms for a  cellular algebra (except for being unital) with cellular basis 
$$ \{ c_{s, t}^\la :  \la \in \Lambda_J \text{ and }  s, t \in \mathcal T(\la)\} \subseteq J$$
and we have, as in point (2) of the definition of cellularity, 
$$
a c_{s, t}^\la  \equiv \sum_v r_v^s(a)  c_{v, t}^\la  \mod  \breve J^\la
$$
not only for $a \in J$ but also for $a \in A$.
\end{definition}

\begin{remark}  \label{remark on extensions of cellular algebras} (On extensions of cellular algebras.) 
If $J$ is a cellular ideal in $A$, and 
$H = A/J$ is  cellular  (with respect to the involution induced from the involution on $A$), then $A$ is cellular.   In fact, let $(\La_J, \ge)$ be the partially ordered set in the cell datum for $J$ and $\mathcal C_J$ the cellular basis.
Let $(\La_H, \ge)$  be the partially ordered set in the cell datum for $H$ and $\{\bar h_{u, v}^\mu\}$ the cellular basis.  Then $A$ has a cell datum with partially ordered set $\La = \La_J \cup \La_H$,  with partial order agreeing with the original partial orders on $\La_J$ and on $\La_H$ and with $\la > \mu$ if $\la \in \La_J$ and $\mu \in \La_H$.
A cellular basis of $A$ is $\mathcal C_J \cup \{h_{s, t}^\mu\}$, where $h_{s, t}^\mu$ is any lift of $\bar h_{s, t}^\mu$.

With the original definition of ~\cite{Graham-Lehrer-cellular}, the assertions of this remark  would be valid only  if the ideal
$J$ has an $i$--invariant $R$--module complement in $A$.
The ease of handling extensions is  our motivation for using the  weaker definition of cellularity.
\end{remark}

\subsection{Basis--free formulations of cellularity}
K\"onig and Xi have given a basis--free definition of cellularity ~\cite{KX-Morita}.  We describe a slight weakening of their definition, which corresponds exactly to our weaker form of Graham--Lehrer cellularity

\begin{definition}[K\"onig and Xi] \label{defKX}  Let $R$ be an integral domain and $A$ a unital $R$-algebra
with  involution $i$.    An $i$--invariant two sided ideal $J$ in $A$ is called a {\em split ideal} if, and only if,
there exists a left ideal $\Delta$ of $A$ contained in  $J$, with  $\Delta$  finitely generated and free over $R$, and 
there is an isomorphism of $A$--$A$--bimodules $\alpha : J \rightarrow \Delta \otimes_R i(\Delta)$  making the following diagram commute:
\begin{diagram}
J	&\rTo^{\alpha}	&&\Delta \otimes_R i(\Delta)\\
\dTo_{i} &&& \dTo_{i}\\
J	&\rTo^{\alpha}	&&\Delta \otimes_R i(\Delta)\\
\end{diagram}

\ignore{
$$\begin{CD}
J	@>\alpha>>	\Delta \otimes_R i(\Delta)\\
@VViV			@VViV\\
J	@>\alpha>>		\Delta \otimes_R i(\Delta).
\end{CD}$$
}
A finite chain of $i$--invariant  two sided ideals
$$0=J_0 \subset J_1 \subset J_2 \subset \cdots \subset J_n = A$$
is called a {\em cell chain} if for each $j$  ($1 \leq j \leq n$),   the quotient $J_j/J_{j-1}$ is a split ideal  of $A/J_{j-1}$ (with respect to the involution induced by $i$ on $A/J$). 
\end{definition} 

\begin{remark}  \label{remark on Konig Xi definition}
\mbox{}
\begin{enumerate}
\item  K\"onig and Xi call a split ideal a ``cell ideal."   We changed the terminology to avoid confusion with other concepts.  
\item  The definition of a cell chain differs from the one given by Konig and Xi in that we dropped the  requirement that $J_{j-1}$ have an $i$-invariant $R$-module complement in $J_j$.
\end{enumerate}
\end{remark}

\begin{lemma} Let $R$ be an integral domain and let $A$ be  a unital $R$--algebra with involution $i$.
An ideal  $J$ of $A$ is split if, and only if, there exists a left $A$--module $M$ that is finitely generated and free as an $R$--module, and there exists an
isomorphism of $A$--$A$--bimodules $\gamma : J \rightarrow M \otimes_R i(M)$  making the following diagram commute:
\begin{diagram}
J &\rTo^{\gamma} &&M \otimes_R i(M)\\
\dTo_{i} &&& \dTo_{i}\\
J &\rTo^{\gamma} &&M \otimes_R i(M)\\
\end{diagram}
\end{lemma}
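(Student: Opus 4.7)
The forward implication is immediate: if $J$ is split with witness left ideal $\Delta$, then taking $M := \Delta$ and $\gamma := \alpha$ satisfies the conditions of the lemma, so I would focus on the converse.

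The plan for the converse is to cut $M$ down to a left $A$-submodule of $J$, i.e.\ a left ideal of $A$ contained in $J$, which is still isomorphic to $M$ as a left $A$-module. Given $M$ and $\gamma$, I would first fix a basis element $m_0$ of the free $R$-module $M$ and consider the map
\[
\varphi : M \to M \otimes_R i(M), \qquad \varphi(m) = m \otimes i(m_0).
\]
This is an injective left $A$-module map, because $i(m_0)$ extends to an $R$-basis of $i(M)$. I would then set $\Delta := \gamma^{-1}(\varphi(M))$. Since $\varphi(M)$ is a left $A$-submodule of $M \otimes_R i(M)$ and $\gamma$ is an $A$-$A$-bimodule isomorphism, $\Delta$ is a left $A$-submodule of $J$, equivalently a left ideal of $A$ contained in $J$; moreover $\beta := \varphi^{-1} \circ \gamma|_{\Delta} : \Delta \to M$ is an isomorphism of left $A$-modules, so $\Delta$ is finitely generated and free over $R$.

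It remains to produce an $A$-$A$-bimodule isomorphism $\alpha : J \to \Delta \otimes_R i(\Delta)$ commuting with $i$. I would take $\alpha := (\beta \otimes i(\beta))^{-1} \circ \gamma$, which is an $A$-$A$-bimodule isomorphism since $\beta$ is a left $A$-module isomorphism and hence $i(\beta)$ is a right $A$-module isomorphism. Using the hypothesis $\gamma \circ i = i \circ \gamma$, the compatibility $\alpha \circ i = i \circ \alpha$ reduces to checking $(\beta \otimes i(\beta)) \circ i = i \circ (\beta \otimes i(\beta))$ on $\Delta \otimes_R i(\Delta)$. This is a direct computation: both sides send $d \otimes i(d')$ to $\beta(d') \otimes i(\beta(d))$, using the formula $i(x \otimes i(y)) = y \otimes i(x)$ from Remark~\ref{remark:  i applied to tensor product}. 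The only substantive point is the existence of the basis element $m_0$, which is guaranteed by freeness of $M$; the rest is formal bookkeeping, so I would expect no real obstacle.
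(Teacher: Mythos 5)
Your proof is correct and follows essentially the same route as the paper: the paper fixes a basis element $b_0$ and defines $\beta : M \to A$, $m \mapsto \gamma^{-1}(m \otimes b_0)$, whose image is the left ideal $\Delta$ — this is precisely the inverse of your map $\beta = \varphi^{-1}\circ\gamma|_{\Delta}$ — and then likewise transports $\gamma$ through $\beta \otimes i(\beta)$ and verifies $i$-compatibility via $i(x \otimes i(y)) = y \otimes i(x)$. Your write-up just makes the injectivity of $m \mapsto m \otimes i(m_0)$ and the $i$-equivariance computation more explicit.
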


\begin{proof}  If $J$ is split, it clearly satisfies the condition of the lemma.  Conversely,  suppose the condition of the lemma is satisfied.    Fix some element $b_0$ of the basis of $M$ over $R$ and define a left $A$--module map
$\beta : M \to A$ by $\beta(m) = \gamma\inv(m \otimes b_0)$.  Then $\beta$ is an isomorphism of $M$ onto a left
ideal $\Delta$ of $A$ contained in $J$.

\ignore{
Regard $M$ as an $A$--$R$--bimodule.
As in Remark \ref{remark:  i applied to tensor product},   identify $i\circ i(M)$ with $M$ and
$i(M \otimes_R i(M))$ with $M \otimes_R i(M)$.   Then the map $x \otimes i(y) \mapsto y \otimes i(x)$ is just
$i : M \otimes_R i(M) \to M \otimes_R i(M)$.  The same considerations apply to $\Delta$ as well.
}

Now we have $\beta \otimes i(\beta) :  M \otimes_R i(M) \to \Delta \otimes_R i(\Delta)$
is an isomorphism satisfying $(\beta \otimes i(\beta)) \circ i = i \circ (\beta \otimes i(\beta))$.  It follows that
$\alpha = (\beta \otimes i(\beta)) \circ \gamma : J \to \Delta \otimes_R i(\Delta)$ is an isomorphism of 
$A$--$A$--bimodules satisfying the requirement for a split ideal, namely, $ \alpha \circ i = i \circ \alpha$.
\end{proof}

\begin{lemma}[K\"onig and Xi] \label{lemma:  equivalence of GL and KX} Let $A$ be an $R$--algebra with involution.
$A$ is {cellular} if, and only if,  $A$ has a finite cell chain.
\end{lemma}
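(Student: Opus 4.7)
My plan is to prove the two implications of the equivalence separately.

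For the forward direction, suppose $A$ is cellular with cell datum $(i, \Lambda, \mathcal T, \mathcal C)$ (assuming, as is standard, that $\Lambda$ is finite). I choose a linear extension $\lambda_1, \ldots, \lambda_N$ of $(\Lambda, \ge)$ in which $\ge$-maximal elements come first, so that $\lambda_i > \lambda_j$ in $\Lambda$ forces $i < j$, and set
\[
J_k = \spn_R\{c_{s,t}^{\lambda_i} : 1 \le i \le k,\ s, t \in \mathcal T(\lambda_i)\},
\]
so that $J_0 = 0$ and $J_N = A$. I check in order that each $J_k$ is a left ideal, that it is $i$-invariant, and finally that it is a right ideal. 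The crucial observation is that for $i \le k$ the ideal $\breve A^{\lambda_i}$ lies in $J_{i-1} \subseteq J_{k-1}$, because any $\mu > \lambda_i$ must be some $\lambda_j$ with $j < i$ by the linearization; hence the ``error terms'' in axioms (2) and (3) of Definition \ref{gl cell} are already in $J_{k-1}$. The right-ideal property then drops out by applying $i$ and reducing to the left-ideal case. To see that $J_k/J_{k-1}$ is a split ideal of $A/J_{k-1}$, I use the standard cell module $\Delta^{\lambda_k}$ and the map $\alpha_k : J_k/J_{k-1} \to \Delta^{\lambda_k} \otimes_R i(\Delta^{\lambda_k})$ sending $c_{s,t}^{\lambda_k} + J_{k-1}$ to $c_s^{\lambda_k} \otimes i(c_t^{\lambda_k})$; the usual cellular calculation shows that $\alpha_k$ is an $A$-bimodule isomorphism, and the compatibility $i \circ \alpha_k = \alpha_k \circ i$ comes from axiom (3) together with the formula $i(c_s \otimes i(c_t)) = c_t \otimes i(c_s)$ recorded in Remark \ref{remark:  i applied to tensor product}.

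For the reverse direction, I induct on the length $n$ of the cell chain. In the base case $n=1$, fix an $R$-basis $\{d_s\}_{s\in T}$ of the left ideal $\Delta \subseteq J = A$ appearing in the split structure and set $c_{s,t} = \alpha^{-1}(d_s \otimes i(d_t))$. Since $\alpha$ is an $A$-bimodule map, $a c_{s,t} = \alpha^{-1}((a d_s) \otimes i(d_t))$, and expanding $a d_s = \sum_v r_v^s(a)\, d_v$ in $\Delta$ yields axiom (2) of Definition \ref{gl cell} with $\Lambda$ a singleton and $\breve A = 0$; the involution identity $i(c_{s,t}) = c_{t,s}$ follows from $i \circ \alpha = \alpha \circ i$ together with $i(d_s \otimes i(d_t)) = d_t \otimes i(d_s)$. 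For the inductive step, apply this base-case construction to the split ideal $J_1$ of $A$: the resulting basis of $J_1$ exhibits it as a cellular ideal of $A$ in the sense of the paper, because the multiplication rule $a c_{s,t} = \sum r_v^s(a) c_{v,t}$ holds for every $a \in A$, not just $a \in J_1$, again because $\alpha$ is a bimodule map. The induced chain $0 \subset J_2/J_1 \subset \cdots \subset J_n/J_1 = A/J_1$ is a cell chain of length $n-1$ for $A/J_1$, so $A/J_1$ is cellular by induction, and Remark \ref{remark on extensions of cellular algebras} glues the two pieces into a cellular structure on $A$.

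The more delicate direction is the forward one: the bookkeeping with the linearization of $\Lambda$ has to be handled carefully, and the two-sided/$i$-invariance properties of each $J_k$ must be verified in the right order, since the right-ideal property is what lets the split-ideal isomorphism $\alpha_k$ be a bimodule map. The linearization is chosen precisely so that the axiom-(2) and axiom-(3) congruences modulo $\breve A^{\lambda_k}$ become honest equalities modulo $J_{k-1}$, which is what the split-ideal structure on $J_k/J_{k-1}$ requires. The reverse direction is essentially a direct transcription of Definition \ref{defKX} into basis language, combined with the extension principle of Remark \ref{remark on extensions of cellular algebras}; this is the kind of situation that motivated the paper's relaxation of the original Graham--Lehrer involution axiom, so one should expect no obstruction there.
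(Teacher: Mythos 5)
Your proposal is correct and follows essentially the same route as the paper: the forward direction is exactly the paper's linearization of $\Lambda$ (maximal elements first), with $J_k$ the span of the $c_{s,t}^{\lambda_i}$ for $i\le k$ and the standard isomorphism $J_k/J_{k-1}\cong\Delta^{\lambda_k}\otimes_R i(\Delta^{\lambda_k})$ compatible with $i$. Your reverse direction, organized as an induction on the chain length together with Remark \ref{remark on extensions of cellular algebras}, is just a repackaging of the paper's one-step argument, which lifts $\alpha_j^{-1}(b_s^j\otimes i(b_t^j))$ to $J_j$ for every $j$ and takes the total order $\lambda_1>\cdots>\lambda_n$; the underlying lifting argument is identical.
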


\begin{proof}  We sketch the proof from  ~\cite{KX-structure}, p.\ 372.

Suppose $A$ has a cell datum with partially ordered set $(\La, \ge)$ and cell basis $\{ c_{s, t}^\la\}$.  
Write $\La$ as a sequence $(\la_1, \la_2, \dots, \la_n)$, where $\la_1$ is maximal in $\La$,  and, for $1 \le j < n$, 
$\la_{j+1}$ is maximal in $\La \setminus \{\la_1, \dots, \la_j\}$. Then for each $j\ge 1$,  $\Ga_j = \{\la_1, \dots, \la_j\}$ is an order ideal in $\La$.  Set $\Ga_0 = \emptyset$.
 Define $A(\Ga_j)$ to be the $R$-submodule of $A$ 
spanned by the basis elements $c_{u, v}^\la$,  with $\la \in \Ga_j$.
Then $A(\Ga_j)$ is an $i$--invariant  two sided ideal in $A$, and
$$0=A(\Ga_0)\subset A(\Ga_1)\subset\cdots\subset A(\Ga_{n})=A.$$
Moreover (see ~\cite{Graham-Lehrer-cellular}, p. 6), 
$$A(\Ga_j)/A(\Ga_{j-1})\cong
A^{\la_j}/\breve A^{\la_j}     \cong
\Delta^{\la_j}\otimes_Ri(\Delta^{\la_j}),$$
and the isomorphism $\alpha : A(\Ga_j)/A(\Ga_{j-1}) \to \Delta^{\la_j}\otimes_Ri(\Delta^{\la_j})$ satisfies
$\alpha \circ i =  i \circ \alpha$.  Thus $(A(\Ga_j))_{1 \le j \le n}$ is a cell chain.

Conversely,  suppose $(J_j)_{0 \le j \le n}$ is a cell chain in $A$.   Then for each $j \ge 1$, we have an 
$A$--module $\Delta_j$  that is finitely generated and  free as an $R$--module, and an isomorphism of $A$--$A$--bimodules
$\alpha_j : J_j/J_{j-1} \to \Delta_j \otimes_R i(\Delta_j)$ satisfying $i \circ \alpha_j = \alpha_j \circ i$.
 Let $\{b^j_s : s\in \mathcal T(j)\}$ be an $R$--basis of $\Delta_j$ and let
$c_{s, t}^{\la_j}$ be any lift in $J_j$ of $\alpha_j\inv(b^j_s \otimes i(b^j_t))$.  Now take $\La'$ to be $\La$ with the order $\la_1 > \la_2 > \cdots > \la_n$.  Let  $\mathcal C = \{c_{s, t}^{\la_j} : 1\le j \le n; \ s, t \in \mathcal T(j)\}$.   Then 
$(\mathcal C, \La')$ is a cellular basis of $A$.
\end{proof}

\begin{remark}  In the Lemma, $A$ has a cellular basis $\{c_{s, t}^\la\}$  with
$i(c_{s, t}^\la) = c_{t, s}^\la$ if, and only if,  $A$ has a finite cell chain $(J_j)$  such that
for each $j\ge 1$,  $J_{j-1}$  has an $i$--invariant $R$--module complement in $J_j$.  
\end{remark}

Note that if we follow the procedure of the proof, starting with a cell datum on $A$ with partially ordered set
$(\La, \ge)$,  then the only information that we retain about $\La$ is that $\la_{j+1}$ is maximal in 
$\La \setminus \Ga_j$;  we cannot recover the partial order on $\La$ from this.   Moreover, if we continue to
produce a cellular basis $\{c_{s, t}^j\}$ from the cell chain $(A(\Ga_j))_{0 \le j \le n}$, the result will not necessarily have the properties of a cellular basis with respect to the original partially ordered set $(\La, \ge)$.

In order to prove our main results, we will need a different basis--free formulation of cellularity that allows us to pass back and forth between
the formulation of Definition \ref{gl cell} and the basis--free formulation without losing information about the partially ordered set.

\begin{definition} \label{definition: cell net}
Let $A$ be an $R$--algebra with involution $i$.  Let $(\La, \ge)$ be a finite partially ordered set. For $\la \in \La$, let
 $\Ga_{\ge \la}$ denote the order ideal $\{\mu : \mu \ge \la\}$  and  $\Ga_{> \la}$  the order ideal $\{\mu : \mu > \la\}$.
 
A  {\em $\La$--cell net}  is a map from the set of order ideals of $\La$ to the set of $i$--invariant two sided ideals of $A$, 
$\Ga \mapsto A_\Ga$, with the following properties:
\begin{enumerate}
\item $A_\emptyset = \{0\}$.  If $\Ga_1 \subseteq \Ga_2$, then $A_{\Ga_1} \subseteq A_{\Ga_2} $.
\item  For $\la \in \La$, write $A_{\ge \la} = A_{\Ga_{\ge \la}}$ and  $A_{> \la} = A_{\Ga_{> \la}}$.  Then
$$A =  \spn\{A_{\ge \mu} :  \mu \in \La \},$$    and for all  $\la \in \La$,      $$A_{> \la} =\spn \{ A_{\ge \mu} :  \mu > \la\}.$$
\ignore{
\item  If $\mu$ is not comparable with $\la_i$  ($1 \le i \le s$), then
$$A_{\ge \mu}  \cap  \spn\{A_{\ge \la_i}  :  1 \le i \le s\} \subseteq A_{> \mu}.$$
}
\item For each $\la \in \La$, there is an $A$--module $M^\la$, finitely generated and free as an $R$--module, such that whenever $\Ga \subseteq \Ga'$ are order ideals  of $\La$, with $\Ga' \setminus \Ga = \{\la\}$,  then there exists an
isomorphism of $A$--$A$--bimodules $$\alpha : A_{\Ga'}/A_{\Ga} \to M^\la \otimes_R i(M^\la),$$  satisfying
$i \circ \alpha = \alpha \circ i$.
\end{enumerate}
\end{definition}

\begin{proposition} \label{lemma: cell net characterization of cellularity}
 Let $A$ be an $R$--algebra with involution, and let $(\La, \ge)$ be a finite partially ordered set.
  Then $A$ has a cell datum with partially ordered set $\La$ if, and only if, $A$ has a  $\La$--cell net.
\end{proposition}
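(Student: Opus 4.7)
The plan is to prove both directions by explicit construction, tying cellular basis elements to lifts across the ideal filtration of the cell net.

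For the forward direction, given a cell datum $(\mathcal{C},\La)$ I set $A_\Ga := \spn_R\{c_{s,t}^\mu : \mu \in \Ga\}$ for each order ideal $\Ga$. Monotonicity and $i$-invariance are immediate, and part~(2) of the cell net axioms follows because the basis of $\breve A^\la$ consists of the $c_{s,t}^\mu$ with $\mu > \la$. For part~(3), when $\Ga' = \Ga \cup \{\la\}$ with both sets order ideals, upward-closedness of $\Ga$ forces $\Ga_{>\la} \subseteq \Ga$, so the inclusion $A^\la \hookrightarrow A_{\Ga'}$ has kernel $A^\la \cap A_\Ga = \breve A^\la$ and descends to an isomorphism $A^\la/\breve A^\la \to A_{\Ga'}/A_\Ga$; composing with the canonical bimodule isomorphism $A^\la/\breve A^\la \cong \Delta^\la \otimes_R i(\Delta^\la)$ recalled in Section~\ref{subsection: cellularity} (which already intertwines $i$) delivers the required $\alpha$, with $M^\la := \Delta^\la$.

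For the reverse direction, given a $\La$-cell net I apply part~(3) to the pair $(\Ga_{>\la}, \Ga_{\ge \la})$ for each $\la$ to obtain an $A$-$A$-bimodule isomorphism $\alpha_\la \colon A_{\ge \la}/A_{>\la} \to M^\la \otimes_R i(M^\la)$ satisfying $i \circ \alpha_\la = \alpha_\la \circ i$. Fix an $R$-basis $\{b_s^\la\}_{s \in \mathcal{T}(\la)}$ of $M^\la$, and let $c_{s,t}^\la \in A_{\ge \la}$ be any lift of $\alpha_\la^{-1}(b_s^\la \otimes i(b_t^\la))$. The multiplication rule $a\,c_{s,t}^\la \equiv \sum_v r_v^s(a)\,c_{v,t}^\la \pmod{A_{>\la}}$, with $r_v^s(a)$ read off the $A$-action on $M^\la$, and the involution rule $i(c_{s,t}^\la) \equiv c_{t,s}^\la \pmod{A_{>\la}}$ are then immediate from the bimodule property of $\alpha_\la$, its $i$-compatibility, and the identity $i(x \otimes i(y)) = y \otimes i(x)$.

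To upgrade these congruences from modulo $A_{>\la}$ to modulo $\breve A^\la$, it suffices to prove that $\mathcal{C} = \{c_{s,t}^\la\}$ is an $R$-basis of $A$, for then the axiom~(2) identification $A_{>\la} = \sum_{\mu > \la} A_{\ge \mu}$ combined with $A_{\ge \la} = \spn\{c_{s,t}^\mu : \mu \ge \la\}$ gives $A_{>\la} = \breve A^\la$. Spanning is handled by strong downward induction on $\La$, starting from maximal elements: one shows $A_{\ge \la} = \spn\{c_{s,t}^\mu : \mu \ge \la\}$ using the splitting $A_{\ge \la} = A_{>\la} + \spn\{c_{s,t}^\la\}$ (which holds because $\alpha_\la$ sends $\{c_{s,t}^\la + A_{>\la}\}$ to the basis $\{b_s^\la \otimes i(b_t^\la)\}$), together with the inductive hypothesis and axiom~(2); then $A = \sum_\la A_{\ge \la} = \spn \mathcal{C}$.

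The main obstacle is linear independence of $\mathcal{C}$, which I plan to handle by passing to the fraction field $F$ of the integral domain $R$. Choose a linear extension $\la_1, \ldots, \la_n$ of $\La$ with each $\la_j$ maximal in $\La \setminus \{\la_1, \ldots, \la_{j-1}\}$, so that each $\Ga_j := \{\la_1, \ldots, \la_j\}$ is an order ideal. Axiom~(3) along the chain $\emptyset = \Ga_0 \subset \cdots \subset \Ga_n = \La$ identifies each subquotient $A_{\Ga_j}/A_{\Ga_{j-1}}$ with the free $R$-module $M^{\la_j} \otimes_R i(M^{\la_j})$ of rank $|\mathcal{T}(\la_j)|^2$. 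Since $F$ is flat over $R$, the induced filtration on $A \otimes_R F$ has corresponding free subquotients, so $\dim_F(A \otimes_R F) = \sum_\la |\mathcal{T}(\la)|^2 = |\mathcal{C}|$. Because $\mathcal{C}$ spans $A$ over $R$, its image spans $A \otimes_R F$ over $F$, and a spanning set of cardinality equal to the $F$-dimension is automatically a basis; hence $\mathcal{C}$ is $F$-linearly independent, and a fortiori $R$-linearly independent, completing the verification that $\mathcal{C}$ is a cellular basis with partially ordered set $\La$.
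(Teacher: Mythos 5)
Your proposal is correct and follows essentially the same route as the paper's proof: the same assignment $\Ga \mapsto \spn\{c_{s,t}^\mu : \mu \in \Ga\}$ in the forward direction, the same lifts $c_{s,t}^\la$ of $\alpha_\la\inv(b_s^\la \otimes i(b_t^\la))$ and the same downward induction establishing $A_{\ge \la} = A^\la$ and $A_{>\la} = \breve A^\la$ in the reverse direction, and the same maximal chain of order ideals $(\Ga_j)$ for the counting step. The only divergence is in how linear independence of $\mathcal C$ is concluded: you tensor the filtration $(A_{\Ga_j})$ with the fraction field $F$ and count dimensions, whereas the paper feeds the same chain into the proof of Lemma \ref{lemma:  equivalence of GL and KX} to produce an auxiliary cellular basis of the same cardinality and compares over $R$ --- a cosmetic difference resting on the identical filtration.
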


\begin{proof}  Suppose that $A$ has a cell datum with partially ordered set $\La$ and cell basis
$\{c_{s, t}^\la\}$.   For each order ideal $\Ga$ of $\La$,  let $A(\Ga)$ denote the span of those $c_{s, t}^\la$ with
$\la \in \Ga$.  Then $\Ga \mapsto A(\Ga)$ is a $\La$--cell net.

Conversely, suppose that $A$ has a $\La$--cell net, $\Ga \mapsto A_\Ga$.  For each $\la \in \La$, we have
an isomorphism of  $A$--$A$--bimodules $\alpha_\la : A_{\ge \la}/A_{ > \la}  \to M^\la \otimes_R i(M^\la)$.
Let $\{b_s^\la : s \in \mathcal T(\la)\}$ be an $R$--basis of $M^\la$ and let $c_{s, t}^\la$ be any lift of
$\alpha_\la\inv(b_s^\la \otimes i(b_t^\la))$ to $A_{\ge \la}$.  We claim that $$\mathcal C = \{c_{s, t}^\la : \la \in \La; s, t \in \mathcal T(\la)\}$$ is an $R$--basis of $A$.  

Let $A^\la$ be the span of those $c_{s, t}^\mu$ with $\mu \ge \la$ and $\breve A^\la$ the span of  those $c_{s, t}^\mu$ with $\mu > \la$.    If $\mu \ge \la$,  then for all $s, t \in \mathcal T(\mu)$,  $c_{s, t}^\mu \in A_{\ge \mu} \subseteq A_{\ge \la}$, using point (1) of Definition \ref{definition: cell net}.
Hence $A^\la \subseteq A_{\ge \la}$.  Similarly, $\breve A^\la \subseteq A_{> \la} $.

We claim that 
\begin{equation} \label{equation:  equality of ideals related to order ideals}
\text{for all} \ \la \in \La,  \quad A_{\ge \la} = A^\la.
\end{equation}
This is clear if $\la$ is a maximal element of $\La$.   (Note that $A_{> \la} = A_\emptyset = \{0\}$.)
 Now suppose that $\la$ is not maximal and that for all
 $\mu > \la$,   $A_{\ge \mu} = A^\mu$.  Then $$A_{> \la} = \spn\{A_{\ge \mu} : \mu > \la\}
 = \spn\{A^\mu : \mu > \la\} = \breve A^\la,$$
 where the first equality comes from (2) of Definition \ref{definition: cell net} and the second from the induction hypothesis.  By definition of $\{c_{s, t}^\la\}$, we have $$A_{\ge \la} = \spn\{c_{s, t}^\la\} + A_{> \la} =  \spn\{c_{s, t}^\la\} + \breve A^\la = A^\la.$$
 Assertion (\ref{equation:  equality of ideals related to order ideals}) now follows by induction. Point (2) of Definition \ref{definition: cell net} and  (\ref{equation:  equality of ideals related to order ideals}) imply that
  $A_{> \la} = \breve A^\la$ for all $\la \in \La$, and that $A = \spn(\mathcal C)$.
  
We now proceed to establish linear independence of $\mathcal C$.  
Write $\La$ as a sequence $(\la_1, \la_2, \dots, \la_K)$  with 
$\la_1$ maximal and $\la_{j+1}$ maximal in $\La \setminus \{\la_1, \dots, \la_j\}$ for  $1 \le j < K$.  Put 
$\Ga_j = \{\la_1, \dots, \la_j\}$ for $j \ge 1$ and $\Ga_0 = \emptyset$.   Then $(\Ga_j)_{0 \le j \le K}$ is a maximal chain of order ideals.  Since $\Ga_j \setminus \Ga_{j-1} = \{\la_j\}$, we have an isomorphism $\gamma_j : A_{\Ga_j}/A_{\Ga_{j-1}} \to M^{\la_j} \otimes_R 
i(M^{\la_j})$ with $i \circ \gamma_j = \gamma_j \circ i$.  Thus $(A_{\Ga_j})_{0 \le j \le K}$ is a cell chain
in $A$.  So by the proof of Lemma \ref{lemma:  equivalence of GL and KX},   $A$ has a cellular basis
$$\mathcal B = \{ b_{s, t}^\la : \la \in \La;\  s, t, \in \mathcal T(\la)\},$$  but with respect to the ``wrong" partial order on $\La$.   Since $\mathcal C$ is a spanning set of the same cardinality as the basis
$\mathcal B$,  it follows that $\mathcal C$ is linearly independent over $R$, and thus an $R$--basis of
$A$.  

\ignore{
We now proceed to establish linear independence of $\mathcal C$, making use of condition (3) of Definition   \ref{definition: cell net}.
  Suppose we have a non--trivial linear relation $\sum_{\la, s, t}  r(\la, s, t) c_{s, t}^\la = 0$ with coefficients in $R$.  Let $\mu$ be minimal among those $\la$ such that some $r(\la, s, t)$ is non--zero.  Rewrite the linear relation as
$\Sigma' + \Sigma'' + \Sigma''' = 0$, where $\Sigma'$ is the sum of those terms with $\la$ not comparable to $\mu$,
\ $\Sigma''$ is the sum of those terms with $\la > \mu$, and $\Sigma'''$ is the sum of those terms with $\la = \mu$.
Then $\Sigma'' + \Sigma'''  \in A^{\mu} = A_{\ge \mu}$.  Hence also $\Sigma' \in A_{\ge \mu}$.  But
 $\Sigma' $ is also in the span of those $A_{\ge \la}$ with $\la$ not comparable to $\mu$.  By point (3) of 
 Definition \ref{definition: cell net},  we have $\Sigma' \in A_{> \mu}$.  But then $\Sigma' + \Sigma'' \in  A_{> \mu}$. 
 Taking the quotient by $A_{> \mu}$, we get $\sum_{s, t} r(\mu, s, t) ( c_{s, t}^\mu + A_{> \mu}) = 0$.
 Since the set of $( c_{s, t}^\mu + A_{> \mu})$ is a basis of $A_{\ge \mu}/A_{ > \mu} $, it follows that all the 
 coefficients $r(\mu, s, t)$ are zero,  a contradiction.  
 }
 
 Because $A_{> \la} = \breve A^\la$ for all $\la \in \La$, it is now easy to see that properties (2) and (3) of Definition \ref{gl cell} are satisfied by $\mathcal C$.
\end{proof}

\begin{remark} \label{remark: conditions for cell net to give strict cellular basis}
 In the Proposition, the following are equivalent:
\begin{enumerate}
\item  $A$  has a cellular basis $\{c_{s, t}^\la\}$  with
$i(c_{s, t}^\la) = c_{t, s}^\la$.
\item $A$ has a $\Lambda$  cell net $\Gamma \to A_\Gamma$ such that for each pair $\Gamma \subseteq \Gamma'$,  $A_\Gamma$ has an $i$--invariant $R$--module complement in $A_{\Gamma'}$.  
\item  $A$ has a $\Lambda$  cell net $\Gamma \to A_\Gamma$ such that for each $\la \in \Lambda$, 
$A_{> \la}$  has an $i$--invariant $R$--module complement in $A_{\ge \la}$.  
\end{enumerate}
The implications (1) $\implies$  (2)  $\implies$  (3) are evident.   For (3) $\implies$ (1),  let $B_\la$ denote the $i$--invariant $R$--module complement of  $A_{> \la}$ in $A_{\ge  \la}$, and,  
in the 2nd
paragraph of the proof of the Proposition,    let  $c_{s, t}^\la$ be the unique lift of
$\alpha_\la\inv(b_s^\la \otimes i(b_t^\la))$ in $B_\la$.  
\end{remark}

\subsection{Coherent towers of cellular algebras}
\begin{definition}
Let $H_0 \subseteq H_1 \subseteq H_2 \subseteq \cdots$ be an increasing sequence of cellular algebras, with a common multiplicative identity element,  over an integral domain $R$.   Let $\Lambda_n$ denote the partially ordered set in the cell datum for $H_n$.   We say that $(H_n)_{n \ge 0}$ is a {\em  coherent tower of cellular algebras} if the following conditions are satisfied:
\begin{enumerate}
\item  The involutions are consistent; that is,  the involution on $H_{n+1}$,  restricted to $H_n$, agrees with the involution on $H_n$.
\item  For each $n\ge 0$ and for each $\la \in \Lambda_n$, the induced module $\Ind_{H_n}^{H_{n+1}} (\Delta^\la)$
has a filtration by cell modules of $H_{n+1}$. That is, there is a filtration
$$
\Ind_{H_n}^{H_{n+1}} (\Delta^\la) = M_t \supseteq M_{t-1} \supseteq \cdots \supseteq M_0 = (0)
$$
such that for each $j\ge1$,  there is a $\mu_j \in \Lambda_{n+1}$  with $M_j/M_{j-1} \cong \Delta^{\mu_j}$.
\item  For each $n\ge 0$ and for each $\mu \in \Lambda_{n+1}$, the restriction  $\Res_{H_n}^{H_{n+1}} (\Delta^\mu)$
has a filtration by cell modules of $H_{n}$. That is, there is a filtration
$$
\Res_{H_n}^{H_{n+1}} (\Delta^\mu) = N_s \supseteq N_{s-1} \supseteq \cdots \supseteq N_0 = (0)
$$
such that for each $i\ge1$, there is a $\la_i \in \Lambda_{n}$  with $N_j/N_{j-1} \cong \Delta^{\la_i}$.

\end{enumerate}
\end{definition}

The modification of the definition for a {\em finite} tower of cellular algebras is obvious.

We call a filtration as in (2) and (3) a {\em cell filtration}.
In the examples that we study, we will also have {\em uniqueness of the multiplicities} of the cell modules appearing as subquotients of the cell filtrations, and {\em Frobenius reciprocity} connecting the multiplicities in the two types of filtrations.  We did not include uniqueness of multiplicities and Frobenius reciprocity as requirements in the definition, as they will follow from additional assumptions that we will impose later; see Lemma \ref{lemma: multiplicities in cell filtrations}.\footnote{Hemmer and Nakano ~\cite{Hemmer-Nakano}  have obtained remarkable general results about uniqueness of multiplicities in Specht filtrations of modules over Hecke algebras of type A.   Hartmann and Paget \cite{Hartmann-Paget}  obtained analogous results for modules over Brauer algebras.  The assertions that we require here are much more special, applying only to induced modules of cell modules and restrictions of cell modules. }

\begin{example} \label{example: Hn coherent tower} {\em  The tower of Hecke algebras of type $A$  is a coherent tower of cellular algebras.}
Let $R$ be an integral domain  and $q$ an invertible element of $R$.  Let $H_n(R, q)$ denote the Hecke algebra of type $A$  generated by elements $T_1, \dots, T_{n-1}$ satisfying the braid relations and the quadratic relations   $(T_j-q)(T_j + 1) = 0$ for $1 \le j \le n-1$.  When $q = 1$,  $H_n(R, q)$ is the group algebra $R \S_n$  of the symmetric group $\S_n$.  
As is well known, $H_n(R, q)$ has  a basis $T_w$  ($w \in \S_n$) given by 
$T_w = T_{j_1} \dots T_{j_\ell}$ for any reduced expression $w = s_{j_1} \dots s_{j_\ell}$.
The map defined by $i(T_w) = T_{w\inv}$ is an algebra involution.   The map  defined by $(T_w)^\# =  (-q)^{\ell(w)}(T_{w\inv})\inv$ is an algebra  automorphism.   The assignment $T_w \mapsto T_w$ is an embedding of $H_n(R, q)$ into
$H_{n+1}(R, q)$.  The algebra involutions are consistent on $(H_n)_{n \ge 0}$.

Dipper and James ~\cite{dipper-james1, dipper-james2}  studied the representation theory of the Hecke algebras,  defining Specht modules  $S^\la$  which generalize Specht modules for symmetric groups. 
They showed that induced modules of Specht modules have a filtration by Specht modules
~\cite{dipper-james1}.   Jost  ~\cite{jost}  showed that restrictions of Specht modules have Specht filtrations.
 
 Murphy ~\cite{murphy-hecke95}  showed that the Hecke algebras are cellular  (before the formalization of the notion of cellularity in ~\cite{Graham-Lehrer-cellular}).   Murphy shows that his cell modules $\Delta^\la$ satisfy
 $\Delta^\la \cong  ({S^{\la'}})^\#$,  where $\la'$  is the transpose of $\la$  and the superscript $\#$  means that the module is twisted by the automorphism $\#$.    Thus it follows from the results of Dipper, James, and Jost cited above that restricted modules and induced modules of Murphy's cell modules have cell filtrations.
\end{example}

\subsection{Inclusions of split semisimple algebras and branching diagrams}
 A general \break source for the material in this section is ~\cite{GHJ}.    
 
 A finite dimensional split semisimple algebra over a field $F$ is one which is isomorphic to a finite direct sum of full matrix algebras over $F$.
 
Suppose
$A \subseteq B$ are finite dimensional split semisimple algebras over  $F$ (with the same identity element).  Let   $A(i)$,  $i \in I$, be the minimal ideals of $A$  and  $B(j)$,  $j \in J$,   the minimal ideals of $B$.  
We associate a  $J \times I$   {\em inclusion matrix}
$\Omega$ to the inclusion $A \subseteq B$, as follows.  Let $W_j$ be a simple $B(j)$--module.
Then $W_j$ becomes an $A$--module via the inclusion,  and $\Omega(j, i)$ is the multiplicity of  a simple $A_i$--module 
 in the decomposition of $W_j$ as an $A$--module.   
 An equivalent characterization of the inclusion matrix is the following.   Let $q_i$ be a minimal idempotent in $A(i)$  and let $z_j$  be the identity of 
$B(j)$  (a minimal central idempotent in $B$).  Then $q_i z_j$ is the sum of
 $\Omega(j, i)$ minimal idempotents in $B(j)$.

It is convenient to encode an inclusion matrix   by a bipartite graph, called the {\em branching diagram};  the branching diagram has vertices labeled by $I$  arranged on one horizontal line,  vertices labeled by  $J$  arranged along a second (higher) horizontal line,    and $\Omega(j, i)$ edges connecting
$j \in J$ to $i \in I$.

If $A_1 \subseteq A_2 \subseteq A_3  \cdots$ is a (finite or infinite) sequence of inclusions of finite dimensional split semisimple algebras over $F$,  then the branching diagram for the sequence is obtained by stacking the branching diagrams for each inclusion,  with the 
upper vertices of the diagram for $A_i \subseteq A_{i+1}$  being identified with the lower vertices of the diagram for $A_{i+1} \subseteq A_{i+2}$.

For our purposes, it suffices to restrict our attention to the case that $A_0 \cong F$.  In most of our examples,
 the entries in each inclusion matrix are all $0$ or $1$;  thus in the branching diagram there are no multiple edges between vertices.

\begin{definition} \label{def of branching}  An (infinite) abstract branching diagram $\B$  is an infinite graph  with vertex set
$V =   \coprod_{i \ge 0} V_i$, with the following properties
\begin{enumerate}
\item $V_0$ is a singleton and $V_i$ is finite for all $i$.
\item Two vertices $v \in V_i$ and $w \in V_j$ are adjacent only if $|i - j|=1$.  Multiple edges are allowed between adjacent vertices.  
\item  If $i \ge 1$ and $v \in V_i$,  then $v$ is adjacent to at least one vertex in $V_{i-1}$ and to at least one vertex in $V_{i+1}$.
\end{enumerate}
\end{definition}

The definition can be modified in the obvious way for a {\em finite} abstract branching diagram.  
When we treat the walled Brauer algebra in Section \ref{subsection: walled Brauer algebras}, we will loosen the definition by dropping the requirement that $V_0$ is a singleton.

The branching diagram for a sequence  of finite dimensional split semisimple algebras (with the restrictions mentioned above) is an abstract branching diagram, and conversely, given an abstract branching diagram $\B$, one can construct  a sequence  of finite dimensional split semisimple algebras (over any given field)  whose branching diagram is (isomorphic to) $\B$.

Let $\B$ be an abstract branching diagram with vertex set $V =   \coprod_{i \ge 0} V_i$.  We usually denote the unique element of $V_0$ by $\emptyset$.  We picture $\B$ with the elements of $V_i$ arranged on the horizontal line $y = i$ in the plane, and we call $V_i$ the $i$--th {\em row} of vertices in $\B$.  If $v \in V_i$ and $w \in V_{i+1}$ are adjacent, we write  $v \nearrow w$.  The subgraph of $\B$ consisting of $V_i$ and $V_{i+1}$ and the edges connecting them is called the $i$--th  {\em level} of $\B$.

Now suppose we are given an abstract branching diagram $\B_0$  with vertex set
\def\spp #1{^{(#1)}}
$V\spp  0 = \coprod_{i \ge 0} V_i\spp 0$.   We construct a new abstract branching diagram $\B$ as follows:
The vertex set of $\B$ is $V = \coprod_{k \ge 0} V_k$,  where
$$
V_k =  \coprod_{\substack{i\leq k\\k-i\text{ even}}} V_i\spp 0 \times \{k\}.
$$
Thus the $k$--th row of vertices of $\B$ consists of copies of rows $k$, $k-2$, $k-4$, \dots of vertices of
$\B_0$.  Now if $(\lambda, k) \in V_k$  and $(\mu, k+1) \in V_{k+1}$,  there exist $i \le k$ with $k -i$ even such that
$\lambda \in V_i \spp 0$,  and  $j \le k+1$ with $k+ 1 - j$ even such that $\mu \in V_j \spp 0$.
We declare $(\lambda, k) \nearrow (\mu, k+1)$ if, and only if, $|i - j| = 1$ and $\lambda$ and $\mu$ are adjacent
in $\B_0$.   The number of edges connecting  $(\lambda, k) $  and $(\mu, k+1)$ is the same as the number of edges connecting $\lambda$  and $\mu$  in $\B_0$.   

The first few levels of $\B$ is picture schematically  in Figure \ref{figure: branching diagram},  where each diagonal line represents
all the edges connecting vertices in $V_i^{(0)}$  with vertices in $V_{i \pm 1}^{(0)}$.  
\begin{figure}
$$ \inlinegraphic[scale=.5]{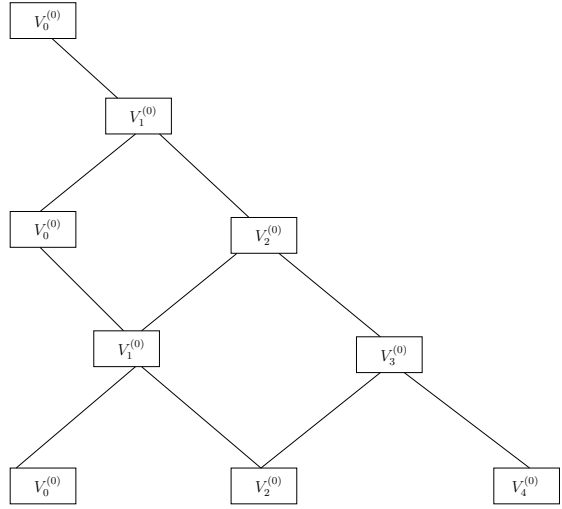}$$
\caption{Branching diagram obtained by reflections}
\label{figure: branching diagram}
\end{figure}
Note that  the $k$--th level of $\B$ is a folded copy of  the first $k$ levels of $\B_0$.
We call $\B$ the {\em  branching diagram obtained by reflections from $\B_0$}.

\begin{example}  Take $\B_0$ to be Young's lattice.  Thus $V_k\spp 0$ consists of Young diagrams of size
$k$, and $\lambda \nearrow \mu$ in $\B_0$ if $\mu$ is obtained from $\lambda$ by adding one box.
Then the  $k$--th row of vertices in the abstract branching diagram $\B$ obtained from $\B_0$ by reflections consists of all pairs $(\lambda, k)$,  where $\lambda$ is a  Young diagram of size $i \le k$, with $k - i$ even.
 Moreover,  $(\lambda, k) \nearrow (\mu, k+1)$ in $\B$ if, and only if, 
$\mu$ is obtained from $\lambda$ either by adding one box or by removing one box.
\end{example}

\subsection{The Jones basic construction} This paper could be written without ever mentioning the Jones basic construction.
  Nevertheless,  in our view, the   basic construction plays an essential role behind the scenes.

The Jones basic construction was introduced ~\cite{Jones-index} in the theory of von Neumann algebras and is crucial  in the analysis of von Neumann  subfactors.  Translated to the context of finite dimensional split semisimple algebras over a field, the basic construction was a fundamental ingredient  in  Wenzl's analysis of the generic structure of the Brauer algebras and the BMW algebras ~\cite{Wenzl-Brauer, Birman-Wenzl, Wenzl-BCD} .

The basic construction for finite dimensional split semisimple algebras can be described as follows (see ~\cite{GHJ}):  let $ A \subseteq B $ be finite dimensional split semisimple algebras over field $ F$, with the same multiplicative identity element.
The basic construction for the pair   $ A \subseteq B $ is the algebra $\End(B_A)$.
This algebra is also split semisimple and the inclusion matrix for the pair $B \subseteq \End(B_A)$ is a transpose of that for the pair $ A \subseteq B $.  Suppose now that $ B$  has a faithful $F$-- valued trace $ \varepsilon$ with faithful restriction to $ A$.   Here faithful means that the bilinear form $(x, y) \mapsto \varepsilon(x y)$ is non--degenerate.  In this case there is a unique trace preserving conditional expectation
$ \eps_A : B \to A$, i.e. a unital $A$--$A$--bimodule map satisfying $\eps\circ \eps_A = \eps$. 
Identify $ B$ with its image in $\End_F(B)$ under the left regular representation.
The basic construction $ \End(B_A)$  is equal to $ B \eps_A B =
\{ \sum_{i = 1}^n  b_i' \eps_A b_i'' :  n \ge 1,  b_i',  b_i'' \in B\}$.  Moreover,  $ B \eps_A B \cong
B \otimes_A B$,  where the latter is given the algebra structure determined by
$(b_1 \otimes b_2)(b_3 \otimes b_4) = b_1 \otimes \eps_A(b_2 b_3) b_4$.  Note that  we have three realizations for the basic construction,
$$
\End(B_A) \cong B \eps_A B \cong
B \otimes_A B,
$$
any of which could serve as a potential definition of the basic construction in a more general setting.

Suppose in addition that we are given an algebra $C$  with $B  \subseteq C$  and that $ C$ contains an idempotent  $ e$  such that  $ exe = \eps_{A}(x) e$ for $ x \in B$, and $x \mapsto x e$ is injective from $B$ to $Be \subseteq C$.     Note that
 $ BeB$  is a possibly non--unital  subalgebra of $C$.
By ~\cite{Wenzl-Brauer}, Theorem 1.3, $ BeB \cong B\eps_{A}B  \cong \End(B_{A})$, and,  in particular,  $ BeB$  is unital and semisimple.

Let's now describe how Wenzl used these ideas to show the generic semisimplicity  of the 
Brauer algebras.    We refer  the reader to Section \ref{subsection: Brauer algebras} for the definition of the Brauer algebras.
Consider the Brauer algebras $B_{n} = \br_n(F, \delta)$  over $F= \C$ or $F = \Q(\delta)$, in the first case with parameter
$\delta $ a non-integer complex number, and in the second case with parameter $\delta $
an indeterminant over $\Q$. The Brauer  algebras  have a canonical $F$--valued trace $\varepsilon$  and conditional expectations $\varepsilon_{n}: B_{n} \to B_{n-1}$  preserving the trace. 
 Each Brauer  algebra $B_{n}$ contains an essential idempotent $e_{n-1}$ with
 $e_{n-1}^{2} = \delta e_{n-1}$ and $e_{n-1} xe_{n-1} = \delta \eps_{n-1}(x) e_{n-1}$ for
 $x \in B_{n-1}$.  Moreover, $x \mapsto x e_{n-1}$ is injective from $B_{n-1}$ to $B_{n}$ and one has $B_{n}/ B_{n} e_{n-1} B_{n} \cong F \mathfrak S_{n}$,
 which is semisimple, since $F$ has characteristic 0.
  Let   $f_{n-1} =  \delta\inv e_{n-1}$;  then $f_{n-1}$  is an idempotent with$f_{n-1} xf_{n-1} =  \eps_{n-1}(x) f_{n-1}$ for
 $x \in B_{n-1}$.   We have $B_{0} \cong B_{1} \cong F$.   
 
 Suppose it is known for some $n$ that $B_{k}$ is split semisimple and that
 the trace $\eps$ is faithful on $B_{k}$  for $k \le n$.   By Wenzl's  observation applied to
 $B_{n-1}  \subseteq B_{n}  \subseteq B_{n+1}$  and the idempotent  $f_{n} \in  B_{n+1}$, we have
 $B_{n} e_{n} B_{n}  = B_{n} f_{n} B_{n} \cong B_{n} \eps_{n} B_{n} \cong \End((B_{n})_{B_{n-1}})$.  But it is elementary to check that   $B_{n} e_{n} B_{n}  = 
 B_{n+1} e_{n} B_{n+1}$.  Thus we have that the ideal  $B_{n+1} e_{n} B_{n+1} \subseteq 
 B_{n+1}$ is split semisimple, and the quotient of $B_{n+1}$ by this ideal ($\cong F \mathfrak S_{n+1}$) is also split semisimple, so $B_{n+1}$ is split  semisimple.  To continue the inductive argument, it is necessary to verify that the trace $\eps$ 
is faithful on $B_{n+1}$. Wenzl uses a Lie theory argument for this.

 In this paper, we develop a cellular analog of this argument. Let's continue to use the example of the Brauer algebras to illustrate this. Cellularity  is a property that is preserved under specializations, so it suffices to consider the Brauer algebras over the generic ring
 $R = \Z[ \delta ] $.  Let $F$ denote the field of fractions of $R$, $F = \Q(\delta)$.	
 Write $\br_{n}$ for $\br_{n}(R, \delta)$  and $\br_{n}^{F}$ for $\br_{n}(F, \delta)$.  By Wenzl's theorem, $\br_{n}^{F}$ is split semisimple. 
  We have $B_{0} \cong B_{1} \cong R$.   
 
 Suppose it is known for some $n$ that $B_{k}$ is cellular for $k \le n$.  We want to show that
$B_{n+1} e_{n} B_{n+1} = B_{n} e_{n} B_{n}$ is a cellular ideal in $B_{n+1}$.  It will then follow  that $B_{n+1}$  is cellular, because the quotient $B_{n+1}/B_{n+1} e_{n} B_{n+1} \cong R \mathfrak S_{n+1}$  is cellular.   Let $\Lambda_{n-1}$ denote the partially ordered set in the 
cell datum for $B_{n-1}$.  For each order ideal $\Ga$ of $\La_{n-1}$,   write $J(\Ga)$ for  the span in $B_{n-1}$ of all $c_{s, t}^\la$ with $\la \in \Ga$.   The crucial point is to show that
$\Ga \mapsto B_n e_n J(\Ga)  B_n = B_{n+1} e_n J(\Ga)B_{n+1}$  is a 
$\La_{n-1}$--cell net in $B_{n+1} e_n B_{n+1}$. Along the way to doing this, we  show that
 \begin{equation}\label{equation:  jones bc 1}
J'(\Ga) := B_{n} \otimes_{B_{n-1}} J(\Ga) \otimes_{B_{n-1}} B_{n} \cong B_{n}e_{n}J(\Ga)B_{n} 
 \end{equation} 
via $b' \otimes x \otimes b'' \mapsto b' e_{n} x b''$; consequently, if $\Ga_{1} \subseteq \Ga_{2}$,  then $J'(\Ga_{1})$ imbeds in $J'(\Ga_{2}) $.  In particular,
 \begin{equation} \label{equation:  jones bc 2}
 B_{n}  \otimes_{B_{n-1}} B_{n} \cong B_{n}e_{n}B_{n} =  B_{n+1}e_{n}B_{n+1}, 
 \end{equation}
and $J'(\Ga)$ imbeds as an ideal in the (non--unital) algebra $
 B_{n}  \otimes_{B_{n-1}} B_{n}$.  Essentially,  what we show is that 
 $B_{n+1}e_{n}B_{n+1} = B_{n}e_{n} B_{n}$ is isomorphic to the basic construction 
 $
 B_{n}  \otimes_{B_{n-1}} B_{n}$, and that
 $\Ga \mapsto J'(\Ga)$ is a $\La_{n-1}$--cell net in  $
 B_{n}  \otimes_{B_{n-1}} B_{n}$. 
 
  We note that   $B_{n}$ is {\em not}  a projective $B_{n-1}$--module, but the isomorphisms
 (\ref{equation:  jones bc 1})  and the embeddings  $J'(\Ga_{1})  \hookrightarrow J'(\Ga_{2}) $
 reflect the projectivity of $B_{n}^{F}$ over $B_{n-1}^{F}$.

\subsection{Coherent cellular towers and extension of the ground ring}

Let $R$ be an integral domain and let $F$ denote the field of fractions of $R$.  We will be interested in 
coherent towers $(H_n)_{n \ge 0}$  of cellular algebras over $R$ such that for all $n$, the $F$--algebra $H_n^F := H_n \otimes_R F$ is (split) semisimple.  We will see that in this situation we have uniqueness of multiplicities in the
filtrations of induced and restricted modules by cell modules, and Frobenius reciprocity connecting these multiplicities.

For any algebra $A$ over $R$, write $A^F$ for the $F$--algebra $A\otimes_R F$.  Moreover, for a left (or right)
$A$--module $M$,  write $M^F$ for the left (or right) $A^F$  module $M\otimes_R F$.

\begin{lemma} \label{first tensor iso}
Let $R$ be an integral domain and  $F$ its field of fractions.  Let   $A$ and $B$ be   $R$-algebras.   For modules $M_{A}$ and 
$_A N$, we have
\begin{equation} \label{Fisomorphism}
M\otimes_A N\otimes_R F\cong M^F\otimes_{A^F}N^F
\end{equation}
\noindent
as $F$-vector spaces.  The isomorphism
$$M\otimes_A N\otimes_RF\rightarrow M^F\otimes_{A^F}N^F$$
is determined by $(x\otimes_Ay\otimes_R f)\mapsto(x\otimes_R\1_F)\otimes_{A^F}(y\otimes_R f)$.  If ${}_{A}{N}_B$ is a bimodule, then the isomorphism in (\ref{Fisomorphism})  is an isomorphism of  right $B^F$--modules, and similarly, if 
$_B M_A$ is a bimodule, then the isomorphism is an isomorphism of left  $B^F$--modules. 
\end{lemma}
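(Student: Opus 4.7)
The plan is to exhibit the isomorphism by constructing maps in both directions via universal properties of tensor products and then checking they are mutually inverse on simple tensors. The content is the classical fact that tensor product commutes with base change; the work is just to track the bimodule structures.

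First I would build the forward map. Consider the $R$-trilinear map
$$\phi : M \times N \times F \longrightarrow M^F \otimes_{A^F} N^F, \qquad (x, y, f) \longmapsto (x \otimes_R \mathbf 1_F) \otimes_{A^F} (y \otimes_R f).$$
One checks $\phi$ is $A$-middle-linear in the first two slots (using $(xa \otimes \mathbf 1_F)\otimes_{A^F}(y\otimes f) = (x\otimes \mathbf 1_F)(a\otimes \mathbf 1_F) \otimes_{A^F}(y\otimes f) = (x\otimes \mathbf 1_F)\otimes_{A^F}(ay\otimes f)$) and $R$-middle-linear in the last two, so $\phi$ descends to an $F$-linear map
$$\Phi : M \otimes_A N \otimes_R F \longrightarrow M^F \otimes_{A^F} N^F$$
with the claimed formula.

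For the inverse, I would use the universal property of $M^F \otimes_{A^F} N^F$. The map
$$\psi : M^F \times N^F \longrightarrow M \otimes_A N \otimes_R F, \qquad (x \otimes_R f,\, y \otimes_R g) \longmapsto x \otimes_A y \otimes_R fg$$
is well-defined (it factors through the two tensor products $M\otimes_R F$ and $N\otimes_R F$) and is $F$-bilinear. I would check it is $A^F$-balanced, i.e.\ $\psi((x\otimes f)(a\otimes h), y \otimes g) = \psi(x\otimes f, (a\otimes h)(y\otimes g))$: both sides come out to $xa \otimes_A y \otimes_R fhg = x \otimes_A ay \otimes_R fhg$. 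Hence $\psi$ induces $\Psi : M^F \otimes_{A^F} N^F \to M \otimes_A N \otimes_R F$, and a quick check on simple tensors shows $\Phi \circ \Psi$ and $\Psi \circ \Phi$ are identities.

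For the bimodule assertions, if $_BM_A$ is given, then $M^F$ is a $(B^F,A^F)$-bimodule and both $M\otimes_A N \otimes_R F$ and $M^F \otimes_{A^F} N^F$ inherit a left $B^F$-action on the left-hand factor; the maps $\Phi$ and $\Psi$ visibly commute with $b \otimes h$ acting on the left. The right $B^F$-action case is symmetric. The only mildly delicate step is the well-definedness of $\psi$ on $M^F \times N^F$ (factoring out the right balancing $x\otimes_R rf$ vs.\ $xr \otimes_R f$ and similarly for $N$), but since $R$ is central in $A$ this is automatic. I don't anticipate any real obstacle; the lemma is essentially a bookkeeping exercise in universal properties.
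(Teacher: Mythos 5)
Your proof is correct, and the balancing checks you outline (the $A$-balance for $\Phi$, the $A^F$-balance for $\psi$, and the inverse check on simple tensors) all go through; the only point needing the care you already flag is that $\psi$ must be built via the universal properties of $M\otimes_R F$ and $N\otimes_R F$ (or, equivalently, from a four-fold $R$-multilinear map on $M\times F\times N\times F$) rather than declared on simple tensors, and centrality of $R$ makes that routine. Your route differs in style from the paper's: the paper does not construct an explicit inverse but instead strings together canonical isomorphisms,
$$M\otimes_A(N\otimes_R F)\cong M\otimes_A A^F\otimes_{A^F}(N\otimes_R F)=(M\otimes_A A\otimes_R F)\otimes_{A^F}(N\otimes_R F)\cong(M\otimes_R F)\otimes_{A^F}(N\otimes_R F),$$
using $N^F\cong A^F\otimes_{A^F}N^F$ and $M\otimes_A A^F\cong M^F$, and then tracks a simple tensor through the chain to obtain the stated formula; the bimodule statements follow since each link is a bimodule map. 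The paper's argument is shorter because it delegates all well-definedness to standard associativity and base-change isomorphisms, whereas yours is more self-contained and has the advantage that the two-sided inverse makes bijectivity and the explicit formula immediate rather than something to be read off from a chain of identifications. Either proof is acceptable for the role this lemma plays later (Lemmas \ref{second tensor iso}, \ref{lemma: multiplicities in cell filtrations}, \ref{lemma:  sort of flatness}).
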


\begin{proof}
Note that
\begin{align}
M&\otimes_A(N\otimes_R F)\cong M\otimes_A A^F\otimes_{A^F}(N\otimes_R F) \notag \\
	&=(M\otimes_A A\otimes_R F)\otimes_{A^F}(N\otimes_R F) \notag \\
	&\cong (M\otimes_R F)\otimes_{A^F}(N\otimes_R F) \notag \\
	&=M^F\otimes_{A^F}N^F. \notag
\end{align}

\noindent
If we track a simple tensor through these equalities and isomorphisms, we see that
\begin{align}
&x\otimes_A y\otimes_R f\mapsto x\otimes_A\1_{A^F}\otimes_{A^F}(y\otimes_R f) \notag \\
	&\quad=x\otimes_A\1_A\otimes_R\1_F\otimes_{A^F}(y\otimes_R f)\mapsto(x\otimes_R\1_F)\otimes_{A^F}(y\otimes_R f). \notag
\end{align}
The final statement follows from this.
\end{proof}

\begin{lemma} \label{lemma injectivity of x to x tensor 1}  Let $R$ be an integral domain and $F$ its field of fractions.  If $M$ is a free $R$--module, then the map $M \to M \otimes_R F$ determined by 
$x \mapsto x \otimes 1_F$ is injective.
\end{lemma}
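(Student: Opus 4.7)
The plan is to reduce everything to a computation in coordinates against an $R$-basis. Fix an $R$-basis $\{e_\alpha\}_{\alpha \in I}$ of $M$, so that every element $x \in M$ has a unique expression $x = \sum_\alpha r_\alpha e_\alpha$ with $r_\alpha \in R$ and only finitely many $r_\alpha$ nonzero.

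Next I would use the fact that tensor product commutes with direct sums to conclude that $M \otimes_R F \cong \bigoplus_\alpha (R e_\alpha \otimes_R F) \cong \bigoplus_\alpha F$, so that $\{e_\alpha \otimes 1_F\}_{\alpha \in I}$ is an $F$-basis of $M \otimes_R F$. Then under $x \mapsto x \otimes 1_F$, an element $x = \sum_\alpha r_\alpha e_\alpha$ maps to
$$
\sum_\alpha r_\alpha e_\alpha \otimes 1_F = \sum_\alpha (e_\alpha \otimes r_\alpha\cdot 1_F) = \sum_\alpha r_\alpha (e_\alpha \otimes 1_F),
$$
where in the last step we have used the $F$-module structure on $M \otimes_R F$ and identified $r_\alpha \in R$ with its image in $F$.

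If $x \otimes 1_F = 0$, the $F$-linear independence of the $e_\alpha \otimes 1_F$ forces $r_\alpha = 0$ in $F$ for each $\alpha$. The point where the hypothesis on $R$ enters is here: because $R$ is an integral domain with field of fractions $F$, the natural map $R \hookrightarrow F$ is injective, so $r_\alpha = 0$ in $F$ implies $r_\alpha = 0$ in $R$. Hence $x = 0$, proving injectivity.

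There is no real obstacle in this argument; the only subtle point is the appeal to $R \hookrightarrow F$, which is precisely why the integral domain hypothesis on $R$ is needed. (One could alternatively frame the proof by observing that the kernel of $x \mapsto x \otimes 1_F$ consists of the $R$-torsion in $M$, and free modules over an integral domain are torsion-free, but the coordinate computation above is the most transparent version.)
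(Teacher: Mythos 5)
Your proof is correct and follows essentially the same route as the paper: the paper cites Jacobson for the fact that an $R$--basis of $M$ is carried to an $F$--basis of $M \otimes_R F$ and deduces injectivity, while you simply prove that basis fact directly (tensor commutes with direct sums) and make the coordinate argument, including the needed injection $R \hookrightarrow F$, explicit.
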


\begin{proof}  It follows from ~\cite{Jacobson}, Propositions 3.2 and 3.3 that the map $x \mapsto x \otimes 1$ takes an $R$--basis of $M$ to an $F$--basis of $M \otimes_R F$.  In particular, the map is injective.
\end{proof}

\begin{lemma}  \label{injectivity of iota tensor id(F) with free R modules}
Let $R$ be an integral domain and $F$ its field of fractions.  Let $N_1 \subseteq N_2$ be $R$--modules with $N_2$ free.  Let $\iota : N_1 \to N_2$ denote the injection.  Then
$\iota \otimes \id_F :  N_1 \otimes_R F \to N_2 \otimes_R F$ is injective.
\end{lemma}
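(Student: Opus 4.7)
My plan is to exploit the fact that $F$ is the localization of $R$ at $S = R \setminus \{0\}$, so that for any $R$--module $N$ the tensor product $N \otimes_R F$ is identified with the localization $S^{-1} N$, with a typical element representable as $n/s$ for some $n \in N$ and $s \in S$. Under this identification, $\iota \otimes \id_F$ becomes the map $S^{-1} N_1 \to S^{-1} N_2$ given by $n/s \mapsto \iota(n)/s$, so the task reduces to showing that if $\iota(n)/s = 0$ in $S^{-1} N_2$ then $n/s = 0$ in $S^{-1} N_1$.

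The first step is to unpack the kernel condition via the defining equivalence of localization: $\iota(n)/s = 0$ means there exists $t \in S$ with $t \iota(n) = 0$ in $N_2$. Since $\iota$ is $R$--linear, this is $\iota(t n) = 0$, and since $\iota$ is the inclusion, this gives $t n = 0$ in $N_2$.

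The second step is to invoke the freeness of $N_2$. A free module over an integral domain is torsion--free (the map $x \mapsto x \otimes 1_F$ from $N_2$ to $N_2 \otimes_R F$ is injective by Lemma \ref{lemma injectivity of x to x tensor 1}, and $t x = 0$ with $t \ne 0$ forces $x \otimes 1_F = 0$, hence $x = 0$). Therefore $t n = 0$ with $t \in S$ implies $n = 0$ in $N_2$, hence also in $N_1$, and so $n/s = 0$ in $S^{-1} N_1 \cong N_1 \otimes_R F$.

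There is essentially no obstacle here; the only subtlety is making sure the identification $N \otimes_R F \cong S^{-1} N$ is applied consistently on both sides so that the map $\iota \otimes \id_F$ really does correspond to the obvious map on localizations. Alternatively, one could phrase the whole argument as: $F$ is a flat $R$--module (being a localization), so tensoring with $F$ preserves injections; but since the previous lemma in the paper was stated in the concrete form involving freeness, I would present the torsion--freeness version above to keep the argument in the same register as Lemma \ref{lemma injectivity of x to x tensor 1}.
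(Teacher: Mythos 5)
Your proof is correct. It rests on the same two ingredients as the paper's argument — the representation of elements of $\,\cdot \otimes_R F$ as fractions and the injectivity of $x \mapsto x \otimes 1_F$ on the free module $N_2$ (Lemma \ref{lemma injectivity of x to x tensor 1}) — but you route it through the identification $N \otimes_R F \cong S^{-1}N$ with $S = R \setminus \{0\}$ and the localization kernel criterion ($n/s = 0$ iff $tn = 0$ for some $t \in S$), whereas the paper stays inside the tensor product: it writes an arbitrary element as $(1/q)(x \otimes 1_F)$, applies $\iota \otimes \id_F$, clears the scalar $1/q$ in the $F$--vector space $N_2 \otimes_R F$, and concludes $x = 0$ directly from injectivity of $z \mapsto z \otimes 1_F$. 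One thing your route makes visible that the paper's does not: once you have the kernel criterion, the freeness of $N_2$ is superfluous — from $t\iota(n) = 0$ with $n \in N_1$ you get $tn = 0$ already in $N_1$, hence $n/s = 0$ in $S^{-1}N_1$; equivalently, flatness of the localization $F$ gives the statement for arbitrary $N_2$, as you note. The paper's hypothesis and proof are deliberately more elementary, using only the preceding concrete lemma rather than any localization or flatness machinery, which is all that is needed for its applications (where $N_2$ is always free). Your torsion--freeness deduction from Lemma \ref{lemma injectivity of x to x tensor 1} is also fine, so there is no gap; the only caution is the one you already flag, namely checking that $\iota \otimes \id_F$ corresponds to the natural map of localizations under the identification, which holds by naturality of $N \otimes_R F \cong S^{-1}N$.
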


\begin{proof}
 Any element of $N_1 \otimes_R F $ can be written as $y = (1/q) (x \otimes 1_F)$,
with $q \in R^\times$ and $x \in N_1$.  Then $\iota \otimes \id_F(y) =  (1/q)(\iota(x) \otimes 1_F) =
(1/q)\, \gamma \circ \iota (x)$,  where $\gamma : N_2 \to N_2 \otimes_R F$ is determined
by $z \mapsto z \otimes 1_F$.  Because $N_2$ is a free $R$--module, $\gamma$ is injective, by Lemma \ref{lemma injectivity of x to x tensor 1}, and it follows that $\iota \otimes \id_F$ is injective.
\end{proof}

\begin{lemma} \label{lemma: multiplicities in cell filtrations}
Let $R$ be an integral domain with field of fractions $F$.
 Suppose that $(H_n)_{n \ge 0}$ is a coherent tower of cellular algebras over  $R$ and that
 $H_n^F$ is split semisimple for all $n$.  Let $\Lambda_n$ denote the partially ordered set in the cell datum for
 $H_n$. 
  Then
 \begin{enumerate}
 \item 
 $\{(\Delta^\la)^F : \la \in \Lambda_n\}$ is a complete family of simple $H_n^F$--modules.
 \item Let $[\omega(\mu, \la)]_{\mu \in \Lambda_{n+1}, \,  \la \in \Lambda_n}$ denote the inclusion matrix for
 $H_n^F \subseteq H_{n+1}^F$.   Then for any $\la \in \Lambda_n$ and $\mu \in \Lambda_{n+1}$, 
  and any cell filtration of $\Res_{H_n}^{H_{n+1}}(\Delta^\mu)$,  the number of subquotients of the filtration isomorphic to  $\Delta^\la$ is $\omega(\mu, \la)$.
  
  \item  Likewise, for any $\la \in \Lambda_n$ and $\mu \in \Lambda_{n+1}$, 
 and any cell filtration of $\Ind_{H_n}^{H_{n+1}}(\Delta^\la)$,  the number of subquotients of the filtration isomorphic to  $\Delta^\mu$ is $\omega(\mu, \la)$.
 \end{enumerate}
\end{lemma}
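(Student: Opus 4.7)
The plan is to tensor everything up to the field of fractions $F$ and exploit Jordan--Hölder together with Frobenius reciprocity in the split semisimple setting; the statements over $R$ then follow by transferring composition series back through base change.

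For (1), observe that cellularity is preserved by base change: the cell basis $\{c_{s,t}^\la\}$ of $H_n$ becomes an $F$-basis of $H_n^F$ satisfying the cellularity axioms over $F$, and $(\Delta^\la)^F$ is the cell module of $H_n^F$ indexed by $\la$. Graham--Lehrer's structure theorem, valid over a field for the weaker axioms by Remark \ref{remark:  on definition of cellularity}, classifies the simple $H_n^F$-modules as the non-zero quotients $(\Delta^\la)^F/\mathrm{rad}(\phi^\la_F)$, where $\phi^\la_F$ is the cellular bilinear form. Split semisimplicity of $H_n^F$ forces every radical to vanish (else there would be a non-zero nilpotent ideal) and every $\la \in \Lambda_n$ to contribute a distinct simple module (else $\sum_\la (\dim (\Delta^\la)^F)^2$ would fall short of $\dim H_n^F$). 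Hence $\{(\Delta^\la)^F : \la \in \Lambda_n\}$ is a complete irredundant family of simple $H_n^F$-modules.

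The central technical step for (2) is that base change preserves cell filtrations. Given a filtration $\Res_{H_n}^{H_{n+1}}(\Delta^\mu) = N_s \supseteq \cdots \supseteq N_0 = (0)$ with $N_j/N_{j-1} \cong \Delta^{\la_j}$, I argue by induction on $j$ that each $N_j$ is free as an $R$-module: each quotient $\Delta^{\la_j}$ is free over $R$ by definition of cellularity, so the short exact sequence $0 \to N_{j-1} \to N_j \to \Delta^{\la_j} \to 0$ splits over $R$. Lemma \ref{injectivity of iota tensor id(F) with free R modules} then guarantees that the inclusions $N_{j-1} \hookrightarrow N_j$ remain injective after applying $- \otimes_R F$; since restriction commutes trivially with base change, the result is a filtration of $\Res_{H_n^F}^{H_{n+1}^F}((\Delta^\mu)^F)$ whose subquotients are the simple $H_n^F$-modules $(\Delta^{\la_j})^F$. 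Jordan--Hölder in the semisimple category $H_n^F$-mod forces the multiplicity of each $(\Delta^\la)^F$ to be independent of the filtration and equal to $\omega(\mu, \la)$ by the very definition of the inclusion matrix, which proves (2).

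For (3), the same freeness argument applied to a cell filtration of $\Ind_{H_n}^{H_{n+1}}(\Delta^\la)$ shows that its base change to $F$ is still a filtration with simple subquotients $(\Delta^{\mu_j})^F$. Lemma \ref{first tensor iso} identifies
$$(\Ind_{H_n}^{H_{n+1}}(\Delta^\la))^F = (H_{n+1} \otimes_{H_n} \Delta^\la) \otimes_R F \cong H_{n+1}^F \otimes_{H_n^F} (\Delta^\la)^F = \Ind_{H_n^F}^{H_{n+1}^F}((\Delta^\la)^F)$$
as $H_{n+1}^F$-modules. Frobenius reciprocity for the pair $H_n^F \subseteq H_{n+1}^F$ of split semisimple algebras equates the multiplicity of $(\Delta^\mu)^F$ in $\Ind_{H_n^F}^{H_{n+1}^F}((\Delta^\la)^F)$ with the multiplicity of $(\Delta^\la)^F$ in $\Res_{H_n^F}^{H_{n+1}^F}((\Delta^\mu)^F)$, which is $\omega(\mu, \la)$ by (2). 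The main obstacle is the base-change step: $- \otimes_R F$ preserves injectivity only when the ambient modules are suitably torsion-free, and confirming this for each $N_j$ and $M_j$ in a cell filtration is precisely the content of the inductive freeness argument above; everything else is the standard semisimple calculus.
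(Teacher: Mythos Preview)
Your proof is correct and follows essentially the same route as the paper: tensor up to $F$, use that cell modules of a split semisimple cellular algebra are the simple modules, check that cell filtrations survive base change via the freeness of each $N_j$ and Lemma~\ref{injectivity of iota tensor id(F) with free R modules}, and then read off multiplicities by Jordan--H\"older (for restriction) together with Lemma~\ref{first tensor iso} and Frobenius reciprocity (for induction). The only cosmetic difference is that you spell out the inductive $R$-splitting argument for freeness of the $N_j$, while the paper simply asserts it.
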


\begin{proof}  For point (1),  $(\Delta^\la)^F$ is a cell module for $H_n^F$, and, for a semisimple cellular algebra, the cell modules are precisely the simple modules.

We have 
\begin{equation} \label{first direct sum decomp of Res Delta}
(\Res_{H_n}^{H_{n+1}}(\Delta^\mu))^F = \Res_{H_n^F}^{H_{n+1}^F}((\Delta^\mu)^F) \cong
 \bigoplus_{\la \in \Lambda_n}  \omega(\mu ,\la) (\Delta^\la)^F,
\end{equation}
by definition of the inclusion matrix.  On the other hand, if 
$$
\Res_{H_n}^{H_{n+1}} (\Delta^\mu) = N_s \supseteq N_{s-1} \supseteq \cdots \supseteq N_0 = (0)
$$
is a cell filtration, with $N_j/N_{j-1} \cong \Delta^{\la_j}$, then
$$
(\Res_{H_n}^{H_{n+1}}(\Delta^\mu))^F = N_s^F \supseteq N_{s-1}^F \supseteq \cdots \supseteq N_0^F = (0),
$$
by Lemma \ref{injectivity of iota tensor id(F) with free R modules}, because all the modules
$N_j$  are free as $R$--modules.  Moreover, 
$N_j^F/N_{j-1}^F \cong (N_j/N_{j-1})^F \cong (\Delta^{\la_j})^F$ by right exactness of tensor products.    Since $H_n^F$  modules are semisimple,
\begin{equation}\label{second direct sum decomp of Res Delta}
(\Res_{H_n}^{H_{n+1}}(\Delta^\mu))^F \cong  \bigoplus_{j = 1}^s (\Delta^{\la_j})^F.
\end{equation}
Comparing (\ref{first direct sum decomp of Res Delta})  and (\ref{second direct sum decomp of Res Delta}) and taking into account that $\Delta^\la \mapsto (\Delta^\la)^F$ is injective, we obtain conclusion (2).

Likewise,
$$
(\Ind_{H_n}^{H_{n+1}}(\Delta^\la))^F =  H_{n+1} \otimes_{H_n} \Delta^\la \otimes_R F \cong 
 H_{n+1}^F \otimes_{H_n^F} (\Delta^\la)^F,
$$
by Lemma \ref{first tensor iso}.  
But
$$
H_{n+1}^F \otimes_{H_n^F} (\Delta^\la)^F =  \Ind_{H_n^F}^{H_{n+1}^F}((\Delta^\la)^F)  \cong \bigoplus_{\mu \in \Lambda_{n+1}}  \omega(\mu ,\la) (\Delta^\mu)^F,
$$
using (\ref{first direct sum decomp of Res Delta}) and Frobenius reciprocity. The rest of the argument for point (3) is  similar to  that for point (2).
\end{proof}

\begin{lemma}  \label{lemma: cell basis indexed by paths}
Adopt the assumptions and notation of Lemma \ref{lemma: multiplicities in cell filtrations}.
Assume in addition that the branching diagram $\B$ for $(H_n^F)_{n\ge 0}$ has no multiple edges and that
$H_0^F = F$.   It follows that each $H_n$ has a cell datum (perhaps different from the one initially given)
with the same partially ordered set $\Lambda_n$ but with $\mathcal T(\la)$ equal to the set of paths
on $\B$ from $\emptyset$ to $\la$.  
\end{lemma}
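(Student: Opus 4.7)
The plan is to build, for each $\la \in \Lambda_n$, an $R$-basis of the cell module $\Delta^\la$ indexed by paths from $\emptyset$ to $\la$ in $\B$, and then use the basis-free characterization of cellularity (Proposition \ref{lemma: cell net characterization of cellularity}) to convert this into a new cellular basis of $H_n$ with index set $\mathcal{T}(\la)$ equal to the set of such paths.

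First I would observe that the hypotheses force very rigid numerology.  Since $\B$ has no multiple edges, the inclusion matrix satisfies $\omega(\mu,\la) \in \{0,1\}$, with $\omega(\mu,\la) = 1$ precisely when $\la \nearrow \mu$.  By Lemma \ref{lemma: multiplicities in cell filtrations}(2), any cell filtration of $\Res^{H_n}_{H_{n-1}}(\Delta^\mu)$ has, for each $\la \in \Lambda_{n-1}$ with $\la \nearrow \mu$, exactly one subquotient isomorphic to $\Delta^\la$, and no other subquotients.  Because $H_0^F = F$, Lemma \ref{lemma: multiplicities in cell filtrations}(1) gives $|\Lambda_0| = 1$, say $\Lambda_0 = \{\emptyset\}$, with $\Delta^\emptyset$ free of rank $1$ over $R$.

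Next I would inductively construct, for each $n$ and each $\la \in \Lambda_n$, an $R$-basis $\{b_s^\la : s \text{ a path from } \emptyset \text{ to } \la\}$ of $\Delta^\la$.  The base case $n=0$ is the choice of any generator of $\Delta^\emptyset \cong R$, indexed by the empty path.  For the inductive step, fix $\mu \in \Lambda_n$ and choose a cell filtration
$$
\Res^{H_n}_{H_{n-1}}\Delta^\mu = N_s \supseteq N_{s-1} \supseteq \cdots \supseteq N_0 = 0,
$$
with $N_j/N_{j-1} \cong \Delta^{\la_j}$, where the $\la_j$ are the distinct elements of $\Lambda_{n-1}$ adjacent to $\mu$ in $\B$.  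Each subquotient $\Delta^{\la_j}$ is free over $R$ (being a cell module), so each extension $0 \to N_{j-1} \to N_j \to N_j/N_{j-1} \to 0$ splits as $R$-modules, and lifting the inductively chosen basis of each $\Delta^{\la_j}$ into $N_j$ and combining yields an $R$-basis of $\Delta^\mu$.  Each such basis element is canonically labelled by a path from $\emptyset$ to some $\la_j$ followed by the edge $\la_j \nearrow \mu$, i.e.\ by a path from $\emptyset$ to $\mu$.  (A cardinality check against $\dim_F(\Delta^\mu)^F = \sum_{\la \nearrow \mu} \dim_F(\Delta^\la)^F$ confirms that this list exhausts the paths of the required form.)

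Finally, I would upgrade these module bases to a cellular basis of $H_n$.  Since $H_n$ is already cellular with partially ordered set $\Lambda_n$, Proposition \ref{lemma: cell net characterization of cellularity} gives a $\Lambda_n$-cell net $\Gamma \mapsto (H_n)_\Gamma$ with
$$
\alpha_\la : (H_n)_{\ge \la}/(H_n)_{> \la} \;\xrightarrow{\ \cong\ }\; \Delta^\la \otimes_R i(\Delta^\la),
$$
intertwining the involutions.  Using the path-indexed basis $\{b_s^\la\}$ of each $\Delta^\la$ as the basis $\{b_s^\la\}$ of $M^\la$ in the second paragraph of that proof, I would set $c_{s,t}^\la$ to be any lift of $\alpha_\la^{-1}(b_s^\la \otimes i(b_t^\la))$ into $(H_n)_{\ge \la}$.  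The proof of Proposition \ref{lemma: cell net characterization of cellularity} then gives that $\{c_{s,t}^\la : \la \in \Lambda_n,\ s,t \in \mathcal{T}(\la)\}$, with $\mathcal{T}(\la)$ the set of paths from $\emptyset$ to $\la$, is a cell datum for $H_n$ with the same partially ordered set $\Lambda_n$.

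The main obstacle is the second paragraph: one must be sure that splicing together lifts of inductively constructed bases through a cell filtration of $\Res\Delta^\mu$ really does produce a free basis of $\Delta^\mu$, independent of the many choices made (filtration, lifts, orderings of the $\la_j$).  Freeness is secured by the fact that cell modules are $R$-free, so every extension step splits $R$-linearly, and well-definedness is irrelevant for the statement — we only need the existence of \emph{some} basis indexed by paths.  Everything else is essentially bookkeeping supported by Lemma \ref{lemma: multiplicities in cell filtrations} and the no-multiple-edges hypothesis, which ensures the natural bijection between basis vectors of $\Delta^\mu$ and paths ending at $\mu$.
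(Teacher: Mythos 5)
Your proposal is correct, and its overall skeleton matches the paper's: both reduce the lemma, via the proof of Proposition \ref{lemma: cell net characterization of cellularity}, to producing for each $\la \in \La_n$ an $R$--basis of $\Delta^\la$ indexed by the paths $\mathcal P(\la)$ from $\emptyset$ to $\la$, and then re-run the cell-net argument with those bases to get the new cell datum. Where you diverge is in how that path-indexed basis is obtained. The paper disposes of it with a pure cardinality count: $\Delta^\la$ is $R$--free, so $\rank_R(\Delta^\la) = \dim_F(\Delta^\la \otimes_R F) = |\mathcal P(\la)|$, and any $R$--basis may simply be re-labelled by paths; no filtration argument is needed in the proof itself. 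You instead construct the basis recursively, splicing together lifts of inductively chosen bases through a cell filtration of $\Res^{H_n}_{H_{n-1}}(\Delta^\mu)$, using Lemma \ref{lemma: multiplicities in cell filtrations}(2) and the no-multiple-edges hypothesis to see that the subquotients are exactly the $\Delta^\la$ with $\la \nearrow \mu$, each once, and $R$--freeness of the subquotients to split each extension. This is precisely the constructive procedure the paper records in the remark immediately following the lemma rather than in the proof. Your route is heavier but buys a basis that is adapted to the restriction filtrations (a Murphy-type feature), whereas the paper's count is shorter because the statement only asserts existence of \emph{some} path-indexed cell datum; your own closing observation that well-definedness of the construction is irrelevant concedes exactly the point that makes the paper's one-line argument sufficient.
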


\begin{proof}
Referring to the proof of Proposition \ref{lemma: cell net characterization of cellularity}, it suffices to show that, for each $n$ and for each $\la \in \La_n$, the cell module $\Delta^\la$  has
 an $R$--basis  indexed by the set  $\mathcal P(\la)$ of paths in $\B$ from $\emptyset$ to $\la$.   
But this says only that the rank of $\Delta^\la$ over $R$ is $|\mathcal P(\la)|$, and this is true because $\rank_R(\Delta^\la) = \dim_F(\Delta^\la \otimes_R F) = |\mathcal P(\la)|$.  See also the following remark.
\end{proof}

\begin{remark}  In principle,  in the situation of Lemma \ref{lemma: cell basis indexed by paths}, we can recursively build bases of cell modules, using the cell filtrations of restrictions.  Suppose we have bases of $\Delta^\la$ for all $\la \in \La_n$ for some $n$. 
Let $\mu \in \La_{n+1}$.  Then $\Delta^\mu$,  regarded as an $H_n$--module, has a filtration by cell modules of $H_n$,   
$$
\Delta^\mu = N_s \supseteq N_{s-1} \supseteq \cdots \supseteq N_0 = (0),
$$
 with $N_j/N_{j-1} \cong \Delta^{\la_j}$;  and $\la \in \La_n$  appears (exactly once) in the list 
 of $\la_j$,  if, and only if, $\la \nearrow \mu$.  Now we inductively build bases of the $N_j$ to obtain a basis of $N_s = \Delta^\mu$.  The isomorphism $N_1 \cong \Delta^{\la_1}$ provides a basis of $N_1$.  For $j \ge 2$, if we have a basis of $N_{j-1}$,  then  that basis together with any lift of a basis of  $N_j/N_{j-1} \cong \Delta^{\la_j}$ gives a basis of $N_j$.

\end{remark}

\section{A framework for cellularity}
In this section we describe our framework for cellularity of algebras related to the Jones basic construction.

\subsection{Framework Axioms} \label{subsection: framework axioms}
Let $R$ be an integral domain with field of fractions $F$.  We consider two sequences of $R$--algebras
$$
A_0 \subseteq A_1 \subseteq A_2 \subseteq \cdots, \quad\text{and} \quad Q_0 \subseteq Q_1 \subseteq Q_2 \subseteq \cdots,
$$
each with a common multiplicative identity element.  
We assume the following axioms:
\begin{enumerate}
\item \label{axiom Hn coherent}  $(Q_n)_{n \ge 0}$ is a coherent tower of cellular algebras.
\item \label{axiom: involution on An}  There is an algebra involution $i$  on $\cup_n A_n$ such that $i(A_n) = A_n$.
\item  \label{axiom: A0 and A1}  $A_0 = Q_0 = R$, and $A_1 = Q_1$  (as algebras with involution).
\item  \label{axiom: semisimplicity}
For all $n$,  $A_n^F : = A_n \otimes_R F$   is split semisimple.  
\item \label{axiom:  idempotent and Hn as quotient of An}
 For $n \ge 2$,  $A_n$ contains an  essential idempotent $e_{n-1}$ such that $i(e_{n-1}) = e_{n-1}$ and
$A_n/(A_n e_{n-1} A_n) \cong Q_n$,  as algebras with involution.

\item \label{axiom: en An en} For $n \ge 1$,   $e_{n}$ commutes with $A_{n-1}$ and $e_{n} A_{n} e_{n} \subseteq  A_{n-1} e_{n}$.
\item  \label{axiom:  An en}
For $n \ge 1$,  $A_{n+1} 	e_{n} = A_{n} e_{n}$,  and the map $x \mapsto x e_{n}$ is injective from
$A_{n}$ to $A_{n} e_{n}$.
\item \label{axiom: e(n-1) in An en An} For $n \ge 2$,   $e_{n-1} \in A_{n+1} e_n A_{n+1}$.
\end{enumerate}

\begin{remark} \mbox{}
\begin{enumerate}
\item
Let $\Lambda_n \spp 0$ denote the partially ordered set in the cell datum for $Q_n$.  It follows from axioms (\ref{axiom Hn coherent}) and (\ref{axiom: semisimplicity}) and Lemma \ref{lemma: multiplicities in cell filtrations} that $\Lambda_n \spp 0$ can be identified with the $n$--th row of vertices of the branching diagram for $(Q_n^F)_{n \ge 0}$.

\item Applying the involution in axiom (\ref{axiom:  An en}),  we also have $e_{n} A_{n+1} 	 = e_{n} A_n $, and the map $x \mapsto e_{n} x $ is injective from
$A_{n}$ to $e_{n} A_{n}$. 
\item  Since $e_n$ is an essential idempotent, there is a non--zero $\delta_n \in R$ with $e_n^2 = \delta_n e_n$.   Thus we have $e_n A_n e_n \supseteq e_n A_{n-1} e_n = A_{n-1} e_n^2 =
\delta_n A_{n-1}  e_n$.   Combining this with axiom  (\ref{axiom: en An en}), we have
$\delta_n A_{n-1}  e_n \subseteq e_n A_n e_n \subseteq A_{n-1} e_n$.  Hence
$e_n A_n^F e_n =  A_{n-1}^F e_n$.

\item From axiom (\ref{axiom: en An en}), we have for every
$x \in A_{n}$, there is a $y \in A_{n-1}$ such that $e_{n} x e_{n} = y e_{n}$; but by axiom
(\ref{axiom:  An en}),  $y$ is uniquely determined, so we have a map $\cl_{n} : A_{n} \rightarrow A_{n-1}$ 
with  $e_{n} x e_{n} = \cl_{n}(x) e_{n}$.    It is easy to check that $\cl_{n}$ is an $A_{n-1}$--$A_{n-1}$--bimodule map, but it is not unital in general;  if $e_{n-1}^2 = \delta_n e_{n-1}$,  then  
$\cl_{n}(\1) = \delta_n \1$.  If $\delta_n$ is invertible in $R$,  then $\vareps_{n} = (1/\delta_n) \cl_{n}$ is a conditional expectation, i.e., a unital $A_{n-1}$--$A_{n-1}$--bimodule map.
\item From axioms (\ref{axiom: semisimplicity}) and (\ref{axiom:  idempotent and Hn as quotient of An}),  we have $Q_n^F : = Q_n \otimes_R F$ is split semisimple. 
\item  In our examples, there is a single non--zero $\delta$ with $e_n^2 = \delta e_n$  for all $n$.  
\end{enumerate}
\end{remark}

\subsection{The main theorem}

\begin{theorem} \label{main theorem}
 Let $R$ be an integral domain with field of fractions $F$.  Let $(Q_k)_{k\ge 0}$ and
$(A_k)_{k\ge 0}$  be two towers of $R$--algebras satisfying the framework axioms of Section \ref{subsection: framework axioms}.  Then
\begin{enumerate}
\item $(A_k)_{k\ge 0}$ is a coherent tower of cellular algebras.
\item  For all $k$, the  partially ordered set in the cell datum for $A_k$ can be realized as
$$
\Lambda_k =  \coprod_{\substack{i\leq k\\k-i\text{ even}}} \Lambda_i\spp 0 \times \{k\},
$$
with the following partial order:  Let $\lambda \in \Lambda_i\spp 0$ and $\mu \in \Lambda_j\spp 0$,  with
$i$, $j$, and $k$ all of the same parity.  Then
 $(\lambda, k) > (\mu, k)$ if, and only if,   $i < j$,  or $i = j$ and $\lambda > \mu$ in $\Lambda_i\spp 0$.
 \item  Suppose $k \ge 2$ and $(\la, k)  \in \La_i\spp 0 \times \{k\} \subseteq \La_k$.  Let 
 $\Delta^{(\la, k)}$ be the corresponding cell module.  
  If  $i < k$,  
 then
 $(A_k e_{k-1} A_k \ \Delta^{(\la, k)})\otimes_R F = \Delta^{(\la, k)}\otimes_R F $, while if $i = k$  then    \break $A_k e_{k-1} A_k  \ \Delta^{(\la, k)}  = 0$.
\item The branching diagram $\B$ for $(A_k^F)_{k \ge 0}$ is that obtained by reflections from the branching diagram
$\B_0$ for $(Q_k^F)_{n \ge 0}$.

\end{enumerate}
\end{theorem}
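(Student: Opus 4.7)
The plan is to proceed by induction on $k$, proving parts (1)--(4) simultaneously. The base cases $k = 0, 1$ are immediate from axiom (3) and the assumed cellularity of $(Q_n)_{n \ge 0}$. For the inductive step, assume all four conclusions hold for $A_0, \dots, A_{k-1}$. The crux is to identify the (generally non-unital) ideal $J_k := A_k e_{k-1} A_k$ as a cellular ideal in $A_k$ whose cell datum is indexed by a reindexed copy of $\La_{k-2}$. Once this is done, cellularity of $A_k$ (part (1)) follows from Remark \ref{remark on extensions of cellular algebras} combined with cellularity of $A_k/J_k \cong Q_k$ (axiom (5) together with the assumed cellularity of $Q_k$), and parts (2)--(4) fall out from the bookkeeping.

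To exhibit the cell structure on $J_k$, first establish the basic-construction isomorphism
\begin{equation*}
J_k \;\cong\; A_{k-1} \otimes_{A_{k-2}} A_{k-1}
\end{equation*}
of $A_{k-1}$-bimodules via $a \otimes b \mapsto a e_{k-1} b$, and transfer the $A_k$-bimodule structure on $J_k$ to the right hand side. This isomorphism rests on axioms (6), (7) and on the identifications $A_k e_{k-1} = A_{k-1} e_{k-1} \cong A_{k-1}$ and $e_{k-1} A_k \cong A_{k-1}$. For each order ideal $\Ga \subseteq \La_{k-2}$, let $A_{k-2}(\Ga)$ denote the $i$-invariant ideal given by the inductive cell net on $A_{k-2}$, and define $J_k(\Ga)$ to be the image of $A_{k-1} \otimes_{A_{k-2}} A_{k-2}(\Ga) \otimes_{A_{k-2}} A_{k-1}$ under the basic-construction isomorphism. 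By Lemma \ref{lemma; involutions and tensor products of bimodules} the subquotient $J_k(\Ga')/J_k(\Ga)$, for $\Ga' \setminus \Ga = \{\mu\}$, is identified with $M^\mu \otimes_R i(M^\mu)$, where $M^\mu := A_{k-1} \otimes_{A_{k-2}} \Delta^{(\mu, k-2)}$ inherits its $A_k$-module structure from the identification $M^\mu \cong A_k e_{k-1} \otimes_{A_{k-2}} \Delta^{(\mu, k-2)}$. Thus $\Ga \mapsto J_k(\Ga)$ is a $\La_{k-2}$-cell net on the ideal $J_k$ in the sense of Definition \ref{definition: cell net}, adapted to a two-sided ideal.

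Combining the cell datum on $J_k$ with that on $Q_k \cong A_k/J_k$, and declaring the reindexed labels from $\La_{k-2}$ (now living in $\coprod_{i \le k-2,\, k-i \text{ even}} \La_i\spp 0 \times \{k\}$) to be larger than those from $\La_k\spp 0 \times \{k\}$, produces the cell datum for $A_k$ with the partial order of part (2). Part (3) then follows directly: for $i = k$ the cell module $\Delta^{(\la,k)}$ is pulled back from $Q_k$, so $J_k \cdot \Delta^{(\la,k)} = 0$; for $i < k$ the module $\Delta^{(\la,k)} = M^\la$ has $(M^\la)^F$ simple by axiom (4), and since $e_{k-1}$ acts on $e_{k-1} \otimes x$ by multiplication by the nonzero scalar $\delta_{k-1}$, the ideal $J_k^F$ acts nontrivially, forcing $J_k^F \cdot (M^\la)^F = (M^\la)^F$. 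For coherence and part (4), Lemma \ref{first tensor iso} identifies $(M^\la)^F$ with $\mathrm{Ind}_{A_{k-2}^F}^{A_{k-1}^F}(\Delta^{(\la, k-2)})^F$; combined with the inductive form of the branching diagram and Frobenius reciprocity, this shows that cell-filtration subquotients of $\mathrm{Ind}$ and $\mathrm{Res}$ of cell modules of $A_k^F$ run exactly along the edges of the reflected graph $\B$.

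The main obstacle will be proving injectivity of the multiplication map
\begin{equation*}
A_{k-1} \otimes_{A_{k-2}} A_{k-2}(\Ga) \otimes_{A_{k-2}} A_{k-1} \;\longrightarrow\; J_k,
\end{equation*}
for every order ideal $\Ga$, so that $\{J_k(\Ga)\}$ is a genuine ascending chain rather than a collapsed version of one, and so that the subquotient identification of the previous paragraph actually produces the required bimodule $M^\mu \otimes_R i(M^\mu)$. Because $A_{k-1}$ is not in general projective (or even flat) over $A_{k-2}$ when working over $R$, this injectivity cannot be read off from homological algebra alone; instead, via Lemma \ref{injectivity of iota tensor id(F) with free R modules}, it reduces to the analogous statement after tensoring with $F$, where the split semisimplicity of axiom (4) supplies automatic projectivity of $A_{k-1}^F$ over $A_{k-2}^F$ and the inductive form of part (4) lets one match $F$-dimensions on the two sides. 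This is the step where all the framework axioms are genuinely used together.
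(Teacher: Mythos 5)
Your skeleton is the same as the paper's: induction on the tower with (1)--(4) proved together, realization of $A_k e_{k-1} A_k = A_{k-1}e_{k-1}A_{k-1}$ as a basic construction $A_{k-1}\otimes_{A_{k-2}}A_{k-1}$, a cell net on this ideal indexed by order ideals of $\Lambda_{k-2}$ with subquotients $M^\mu\otimes_R i(M^\mu)$ where $M^\mu=\Ind_{A_{k-2}}^{A_{k-1}}(\Delta^{(\mu,k-2)})$, and then the extension argument against $Q_k\cong A_k/J_k$; this is exactly the route of Propositions \ref{proposition: Phi isomorphism} and \ref{lemma:  cellularity induction step}. Your treatment of part (3) is also equivalent to the paper's.

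There is, however, a genuine gap at the step you yourself flag as the main obstacle. Reducing the injectivity of $A_{k-1}\otimes_{A_{k-2}}A_{k-2}(\Ga)\otimes_{A_{k-2}}A_{k-1}\to J_k$ to the field $F$ is only legitimate if this tensor product is already known to be free (or at least torsion--free) over $R$: otherwise $x\mapsto x\otimes 1_F$ kills torsion and injectivity over $F$ gives nothing over $R$. That freeness is not supplied by any axiom; in the paper it is produced by an intertwined induction along a maximal chain of order ideals (Lemmas \ref{lemma 1 for induction on j} and \ref{lemma 2 for induction on j}): given freeness and the isomorphism $\Phi_\Ga$ at one stage, the next quotient is identified with $\Ind_{A_{k-2}}^{A_{k-1}}(\Delta^{\la})\otimes_R i(\Ind_{A_{k-2}}^{A_{k-1}}(\Delta^{\la}))$, and this is $R$--free precisely because the inductive \emph{coherence} hypothesis gives the induced module a cell filtration. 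So coherence of the lower part of the tower is an essential input to the cellularity step, not a by-product; your plan carries it in the induction hypothesis but never feeds it in where it is needed. (Also, over $F$ the paper does not finish by matching dimensions---the dimension of $A_{k-1}^F e_{k-1}J(\Ga)^F A_{k-1}^F$ is not known in advance---but by Lemma \ref{beta inj} together with a Morita-context argument identifying $A_{k-1}^F e_{k-1}\otimes_{A_{k-2}^F} e_{k-1}A_{k-1}^F$ with $A_{k-1}^F e_{k-1}A_{k-1}^F$.)

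Relatedly, coherence of $(A_k)$ is a statement about cell filtrations over $R$, and your final paragraph argues only over $F$: identifying $(M^\la)^F$ with an induced module and invoking the branching diagram and Frobenius reciprocity controls multiplicities of simple $A_k^F$--modules, but over the semisimple algebra $A_k^F$ every module has a filtration by simples, so this cannot produce integral cell filtrations of restrictions and inductions. What is actually needed (Propositions \ref{lemma: cell filtration of restrictions} and \ref{proposition: cell filtration of induced modules}) is: for ideal-type cell modules, $\Res_{A_{k-1}}(\Delta^{(\la,k)})\cong \Ind_{A_{k-2}}^{A_{k-1}}(\Delta^{(\la,k-2)})$ via $A_{k-1}e_{k-1}\cong A_{k-1}$, so the inductive hypothesis applies; for quotient-type ones, axiom (\ref{axiom: e(n-1) in An en An}) forces the restriction to be a $Q_{k-1}$--module, so coherence of $(Q_n)$ applies; and for induced modules, the exact sequence $0\to A_{k-1}e_{k-1}A_{k-1}\otimes_{A_{k-1}}\Delta\to \Ind_{A_{k-1}}^{A_k}(\Delta)\to Q_k\otimes_{A_{k-1}}\Delta\to 0$, whose first term is handled by Lemmas \ref{induction to An en An  from An} and \ref{lemma:  globalization preserves cell filtrations}, whose injectivity again rests on an $R$--freeness argument (Lemma \ref{lemma:  sort of flatness}), and whose last term is dispatched using axiom (\ref{axiom: e(n-1) in An en An}) and coherence of $(Q_n)$. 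The $F$--level branching computation then yields statement (4) only after these integral filtrations exist.
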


 \begin{remark} \label{remark:  point 5 comes from other statements}
 In most of our examples,  the branching diagrams have no multiple edges.  In this case,
 for all $k$ and for all $(\lambda, k) \in \Lambda_k$,   the index set $\mathcal T((\lambda, k))$ in the cell datum for $A_k$ can be taken to be the set of paths on $\B$  from $\emptyset$ to $(\lambda, k)$.
This follows from (1) and (4),  using Lemma \ref{lemma: cell basis indexed by paths}.
 \end{remark}

\section{Proof of the main theorem} \label{section:  basic construction preserves cellularity}

 We will prove Theorem \ref{main theorem} in this section.  Our strategy is to prove 
 the following statement by induction on $n$:

\smallskip
\noindent {\bf Claim:}\quad
{\em  For all $n \ge 0$,  the statements  (1) --(4)  of Theorem \ref{main theorem}  hold for the finite tower $(A_k)_{0 \le k \le n}$.}
\smallskip

Of course, by statement (4) for the finite tower, we mean that the branching diagram for the
finite tower  $(A_k^F)_{0 \le k \le n}$ is that obtained by reflections from the branching diagram
of the finite tower $(Q_k^F)_{0 \le k \le n}$.  

The claim holds trivially for $n = 0$ and $n = 1$.  We assume that the claim holds 
for some $n \ge 1$ and prove that it also holds for $n + 1$.  

\subsection{$A_{n+1}$ is cellular}
We will show that $A_{n+1}$ is a cellular algebra.

Since $A_{n+1}/A_{n+1} e_n A_{n+1} \cong Q_{n+1}$ is cellular,  to prove that
 $A_{n+1}$ is cellular,  it suffices to show that  $A_{n+1} e_n A_{n+1}$ is a cellular ideal in $A_{n+1}$; see Remark
\ref{remark on extensions of cellular algebras}.

Recall that $\Lambda_k$ denotes the partially ordered set in the cell datum for $A_k$ for each $k$, $0 \le k \le n$.
Denote the elements of the cellular basis of $A_k$ by $c_{u, v}^\la$  for $\la \in \La_k$ and
$u, v \in \mathcal T(\la)$.

For each order ideal $\Ga$ of $\La_{n-1}$,  recall that  $A_{n-1}(\Ga)$ is the span in $A_{n-1}$ of all $c_{s, t}^\la$ with $\la \in \Ga$.
$A_{n-1}(\Ga)$ is an $i$--invariant two sided ideal of $A_{n-1}$.  
\ignore{
If $\Ga \subseteq \Ga'$ are two order ideals with
$\Ga' \setminus \Ga = \{\la\}$, then
$$A_{n-1}(\Ga')/A_{n-1}(\Ga)\cong
A_{n-1}^{\la}/\breve A_{n-1}^{\la}     \cong
\Delta^{\la}\otimes_Ri(\Delta^{\la}),$$
and the isomorphism $\alpha : A_{n-1}(\Ga')/A_{n-1}(\Ga) \to \Delta^{\la}\otimes_Ri(\Delta^{\la})$ satisfies
$\alpha \circ i =  i \circ \alpha$.
}
In the following, we will write $J(\Ga) = A_{n-1}(\Ga)$
and     $$\hat J(\Ga)=  A_n e_n J(\Ga)  A_n = A_{n+1} e_n J(\Ga)A_{n+1}, $$  which is  a two sided ideal in $A_{n+1}$.  
Our goal is to show that $\Ga \mapsto \hat J(\Ga)$ is a $\La_{n-1}$--cell net in $A_{n+1} e_n A_{n+1}$.

\begin{lemma} \label{second tensor iso}
Let $R$ be an integral domain and $F$ its field of fractions.  
Suppose that $A$ and $B$ are   $R$-algebras.  Let
$P_{A}$,   $_A M_A$  and $_A Q$
 be modules.  Then
$$P\otimes_AM\otimes_AQ\otimes_R F\cong P^F\otimes_{A^F}M^F\otimes_{A^F}Q^F$$
as $F$-vector spaces.  The isomorphism 
$$P\otimes_A M\otimes_A Q\otimes_R F\rightarrow P^F\otimes_{A^F} M^F\otimes_{A^F}Q^F$$
is determined by 
$$x\otimes_Ay\otimes_Az\otimes_R f \mapsto (x\otimes_R\1_F)\otimes_{A^F}(y\otimes_R\1_F)\otimes_{A^F}(z\otimes_R f).$$
If $_B P_A$ and $_A Q_B$ are bimodules,  then the isomorphism is an isomorphism of $B^F$--$B^F$--bimodules.
\end{lemma}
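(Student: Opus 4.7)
The plan is to reduce the three-fold statement to two applications of Lemma \ref{first tensor iso}. First I view $P \otimes_A M$ as a right $A$-module via the right $A$-action on $M$, and apply Lemma \ref{first tensor iso} to the pair $(P \otimes_A M,\, Q)$. This produces an $F$-linear isomorphism
$$(P \otimes_A M) \otimes_A Q \otimes_R F \;\cong\; (P \otimes_A M)^F \otimes_{A^F} Q^F,$$
which, on a simple tensor, sends $(x \otimes_A y) \otimes_A z \otimes_R f$ to $\bigl((x \otimes_A y) \otimes_R \1_F\bigr) \otimes_{A^F} (z \otimes_R f)$.

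Next I apply Lemma \ref{first tensor iso} a second time to the pair $(P,\, M)$, with $M$ treated as an $A$-$A$-bimodule. This gives a right $A^F$-module isomorphism
$$(P \otimes_A M)^F \;=\; P \otimes_A M \otimes_R F \;\cong\; P^F \otimes_{A^F} M^F,$$
carrying $x \otimes_A y \otimes_R g$ to $(x \otimes_R \1_F) \otimes_{A^F} (y \otimes_R g)$. Tensoring this isomorphism with $Q^F$ over $A^F$ and composing with the first isomorphism produces the map in the statement; tracking a simple tensor $x \otimes_A y \otimes_A z \otimes_R f$ through both steps confirms the explicit formula
$$x \otimes_A y \otimes_A z \otimes_R f \;\mapsto\; (x \otimes_R \1_F) \otimes_{A^F} (y \otimes_R \1_F) \otimes_{A^F} (z \otimes_R f).$$

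For the bimodule assertion, when $_B P_A$ and $_A Q_B$ are bimodules, the tensor product $P \otimes_A M$ carries a natural $B$-$A$-bimodule structure, so the second application of Lemma \ref{first tensor iso} yields a $B^F$-$A^F$-bimodule isomorphism; likewise the first application is a $B^F$-$B^F$-bimodule isomorphism since $(P \otimes_A M)^F$ retains the left $B^F$-action and $Q^F$ the right $B^F$-action. Composition preserves the bimodule structure on both sides. The main obstacle here is pure bookkeeping: one must keep straight which module structures persist through each invocation of Lemma \ref{first tensor iso}, and verify that the right and left $B^F$-actions commute through the intermediate tensor products. Beyond this, no new idea is required — the three-fold statement is formally a consequence of the two-fold one.
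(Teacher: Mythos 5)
Your proposal is correct and matches the paper's own argument: two applications of Lemma \ref{first tensor iso}, first to $\bigl((P\otimes_A M)_A,\, {}_A Q\bigr)$ and then to $(P_A,\, {}_A M)$, followed by tensoring with $Q^F$ over $A^F$, tracking a simple tensor, and the same bookkeeping of $B^F$--$B^F$ and $B^F$--$A^F$ bimodule structures for the final assertion. No gaps.
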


\begin{proof}
By Lemma \ref{first tensor iso},
\begin{equation} \label{equation: 2nd tensor iso 1}
(P\otimes_AM)\otimes_AQ\otimes_R F\cong(P\otimes_AM)^F\otimes_{A^F}Q^F.\end{equation}
Applying Lemma \ref{first tensor iso} again, we have that 
\begin{equation} \label{equation: 2nd tensor iso 2}
(P\otimes_AM)^F\cong P^F\otimes_{A^F}M^F
\end{equation}
as right $A^F$--modules.  Combining the two isomorphisms we have
\begin{equation} \label{equation: 2nd tensor iso 3}
P\otimes_AM\otimes_AQ\otimes_R F\cong P^F\otimes_{A^F}M^F\otimes_{A^F}Q^F.
\end{equation}
If we track a simple tensor  through these isomorphisms, we see that
\begin{align}
x\otimes_A&y\otimes_Az\otimes_R f\mapsto(x\otimes_Ay\otimes_R\1_F)\otimes_{A^F}(z\otimes_R f) \notag \\
	&\mapsto(x\otimes_R\1_F)\otimes_{A^F}(y\otimes_R\1_F)\otimes_{A^F}(z\otimes_R f).  \notag
\end{align}
If $_B P_A$ and $_A Q_B$ are bimodules,  then the isomorphism in (\ref{equation: 2nd tensor iso 1}) is an isomorphism of $B^F$--$B^F$--bimodules, and the isomorphism in  (\ref{equation: 2nd tensor iso 2}) is an isomorphism of 
$B^F$--$A^F$--  bimodules.  Hence the final isomorphism (\ref{equation: 2nd tensor iso 3}) is an isomorphism of
$B^F$--$B^F$--bimodules.
\end{proof}

\begin{lemma} \label{beta inj}
Let $K$ be a field and $A$ a  semisimple $K$-algebra.  Suppose that $I \subseteq A$ is a two-sided ideal and $M_A$, $_A N$ are modules.  Then the homomorphism $M\otimes_{A} I\otimes_{A} N\rightarrow M\otimes_{A} N$ defined by $x\otimes y\otimes z\mapsto x\otimes yz$ is injective.
\end{lemma}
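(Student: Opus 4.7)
The plan is to exploit the fact that in a semisimple $K$-algebra, every two-sided ideal is a direct summand as an $A$-$A$-bimodule. Since $A$ is semisimple, I would decompose $A = A_1 \oplus \cdots \oplus A_r$ as a direct sum of its minimal two-sided ideals (the simple components). Any two-sided ideal $I \subseteq A$ is then the sum of some subcollection of the $A_i$, so there is a complementary two-sided ideal $J$ with $A = I \oplus J$ as $A$-$A$-bimodules. This step is the only semisimplicity input and does not rely on $A \otimes_K A^{\mathrm{op}}$ being semisimple.

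Next I would tensor this decomposition over $A$ on both sides: since tensor products distribute over direct sums,
\[
M \otimes_A A \otimes_A N \;\cong\; (M \otimes_A I \otimes_A N) \oplus (M \otimes_A J \otimes_A N).
\]
In particular, the bimodule inclusion $\iota : I \hookrightarrow A$ induces an injective map
\[
\iota_* : M \otimes_A I \otimes_A N \longrightarrow M \otimes_A A \otimes_A N,
\]
because the bimodule projection $\pi : A \to I$ along the decomposition gives $(\mathrm{id}_M \otimes \pi \otimes \mathrm{id}_N)\circ \iota_* = \mathrm{id}$.

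Finally, compose $\iota_*$ with the standard isomorphism $M \otimes_A A \otimes_A N \to M \otimes_A N$ determined by $x \otimes a \otimes z \mapsto x \otimes az$. On simple tensors the composite sends $x \otimes y \otimes z$ to $x \otimes yz$, which is precisely the map in the statement of the lemma. Since both $\iota_*$ and the isomorphism are injective, so is the composite.

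The main (in fact only) obstacle is verifying the bimodule direct summand property of $I$, and that is immediate from the Wedderburn decomposition of the semisimple algebra $A$ into a finite direct sum of simple two-sided ideals; no separability or perfectness hypothesis on $K$ is required, since we only need the decomposition of $A$ itself as an $A$-$A$-bimodule.
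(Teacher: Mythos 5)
Your proof is correct, but it takes a different route from the paper's. The paper's argument puts the semisimplicity hypothesis on the outer modules: since every module over a semisimple algebra is projective, hence flat, the functors $M\otimes_A-$ and $-\otimes_A N$ are exact, so tensoring the inclusion $I\hookrightarrow A$ on both sides preserves injectivity, and one concludes via $M\otimes_A A\otimes_A N\cong M\otimes_A N$. You instead put the semisimplicity on the ideal itself: by the Wedderburn decomposition, $I$ is a direct summand of $A$ as an $A$--$A$--bimodule, so $\id_M\otimes\iota\otimes\id_N$ is a split injection (with splitting $\id_M\otimes\pi\otimes\id_N$, which is well defined precisely because $\pi$ is a bimodule map), and composing with the same multiplication isomorphism gives the claim. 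Both arguments are complete; the trade-off is in what they generalize to. Your version needs nothing about $M$ and $N$ but requires $I$ to be a bimodule summand (equivalently, generated by a central idempotent), so it would also work over a non-semisimple ring for such ideals; the paper's version needs nothing special about $I$ but requires $M$ and $N$ to be flat. In the paper's later application (to $A_{n-1}^F$, which is split semisimple) either hypothesis is available, so your proof would serve equally well there.
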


\begin{proof}
The semisimplicity of $A$ implies that all $A$-modules are projective.  Thus $N\otimes_{A}-$ and $-\otimes_{A} M$ are exact, and 
$$N\otimes_AI\otimes_AM\rightarrow N\otimes_AA\otimes_AM\cong N\otimes_AM$$
is injective.
\end{proof}

\ignore{
\begin{remark} \label{special case of beta inj}
If $J$ is a two-sided ideal such that $I\trianglelefteq J\trianglelefteq A$, then the proof of Lemma \ref{beta inj} implies that the map $N\otimes_AI\otimes_AM\rightarrow N\otimes_AJ\otimes_AM$ is injective.
\end{remark}
}

\vbox{
\begin{proposition} \label{proposition: Phi isomorphism}  For all order ideals $\Ga$ of $\La_{n-1}$:
\begin{enumerate}
\item   
 The  map
$$\Phi_\Ga : \ A_n  e_n \otimes_{A_{n-1}}J(\Ga)\otimes_{A_{n-1}} e_n A_n\rightarrow A_n e_n J(\Ga) A_n$$
determined by $$\Phi_\Ga(a_1 e_n \otimes x\otimes  e_n a_2)=a_1 e_n xa_2$$ is an isomorphism of 
$A_{n+1}$--$A_{n+1}$--bimodules.
\item  $ A_n  e_n \otimes_{A_{n-1}}J(\Ga) \otimes_{A_{n-1}} e_n A_n$ is a free $R$--module.
\item Let $ \Ga'$ be another order  ideal containing $\Ga$,   such that $\Ga' \setminus \Ga$ is a singleton.
 Let $\iota$ denote the injection $J(\Ga) \ \to J(\Ga')$.  Then
$$
\begin{aligned}
\beta_{\Ga,\Ga'} := \id \otimes \iota \otimes \id : \ &A_n  e_n \otimes_{A_{n-1}}J(\Ga) \otimes_{A_{n-1}} e_n A_n \to \\
&A_n  e_n \otimes_{A_{n-1}}J(\Ga') \otimes_{A_{n-1}} e_n A_n
\end{aligned}
$$
is injective.
\end{enumerate}
\end{proposition}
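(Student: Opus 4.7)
My plan is to prove parts (1)--(3) simultaneously by induction on $|\Ga|$, after first recasting $\Phi_\Ga$ in a more tractable form. By Axiom (\ref{axiom:  An en}) the multiplication map $A_n \to A_n e_n$, $a \mapsto a e_n$, is an $R$--linear bijection, and since $e_n$ commutes with $A_{n-1}$ by Axiom (\ref{axiom: en An en}) it is in fact an isomorphism of right $A_{n-1}$--modules; the symmetric statement gives a left $A_{n-1}$--module isomorphism $A_n \cong e_n A_n$. These identifications transport $\Phi_\Ga$ to the map
$$
\Psi_\Ga \,:\, M_\Ga := A_n \otimes_{A_{n-1}} J(\Ga) \otimes_{A_{n-1}} A_n \;\longrightarrow\; A_n e_n J(\Ga) A_n, \qquad a \otimes x \otimes b \mapsto a e_n x b,
$$
so it suffices to prove parts (1)--(3) for $\Psi_\Ga$ and $M_\Ga$.

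The first step will be to show $\Psi_\Ga^F$ is an isomorphism. Since $\delta_n$ is invertible in $F$, the element $f_n := \delta_n^{-1} e_n$ is an idempotent in $A_n^F$ satisfying $f_n y f_n = \vareps_n(y) f_n$ for $y \in A_n^F$, where $\vareps_n = \delta_n^{-1} \cl_n$; combined with the injectivity of $y \mapsto y f_n$ and split semisimplicity of $A_n^F$, this is exactly the setting of the Jones basic construction and supplies the isomorphism $A_n^F \otimes_{A_{n-1}^F} A_n^F \cong A_n^F e_n A_n^F$ sending $a \otimes b$ to $a e_n b$. Lemma \ref{beta inj}, applied to the ideal $J(\Ga)^F \subseteq A_{n-1}^F$, gives an injection
$$
A_n^F \otimes_{A_{n-1}^F} J(\Ga)^F \otimes_{A_{n-1}^F} A_n^F \;\hookrightarrow\; A_n^F \otimes_{A_{n-1}^F} A_n^F,
$$
and composing identifies the source with the subbimodule $A_n^F e_n J(\Ga)^F A_n^F$; that identification is precisely $\Psi_\Ga^F$. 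Lemma \ref{second tensor iso} furnishes $M_\Ga \otimes_R F \cong A_n^F \otimes_{A_{n-1}^F} J(\Ga)^F \otimes_{A_{n-1}^F} A_n^F$.

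I then induct on $|\Ga|$, with joint hypothesis that $M_\Ga$ is free over $R$ (part (2)) and that $\Psi_\Ga$ is an isomorphism (part (1)); the case $\Ga = \emptyset$ is trivial. For the inductive step, fix $\mu$ maximal in $\Ga$, set $\Ga_0 = \Ga \setminus \{\mu\}$, and take the short exact sequence of $A_{n-1}$--bimodules
$$
0 \to J(\Ga_0) \to J(\Ga) \to \Delta^\mu \otimes_R i(\Delta^\mu) \to 0
$$
from the cell datum of $A_{n-1}$. Applying $A_n \otimes_{A_{n-1}} (-) \otimes_{A_{n-1}} A_n$ produces a right exact sequence whose cokernel is
$$
M' \;\cong\; \bigl(A_n \otimes_{A_{n-1}} \Delta^\mu\bigr) \otimes_R \bigl(i(\Delta^\mu) \otimes_{A_{n-1}} A_n\bigr).
$$
By the Claim's inductive hypothesis on $n$, the finite tower $(A_k)_{0 \le k \le n}$ is coherent cellular, so $A_n \otimes_{A_{n-1}} \Delta^\mu = \Ind_{A_{n-1}}^{A_n} \Delta^\mu$ admits a cell filtration with free quotients and is therefore free over $R$; Lemma \ref{lemma; involutions and tensor products of bimodules} transfers this conclusion to the other factor, so $M'$ is free. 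The hypothesis on $|\Ga_0|$ makes $M_{\Ga_0}$ free, hence $M_{\Ga_0} \hookrightarrow M_{\Ga_0} \otimes_R F$ by Lemma \ref{lemma injectivity of x to x tensor 1}; since tensoring over the semisimple algebra $A_{n-1}^F$ is exact, $\beta_{\Ga_0, \Ga}^F$ is injective, and freeness of $M_{\Ga_0}$ then forces $\beta_{\Ga_0, \Ga}$ itself to be injective, establishing part (3). The short exact sequence $0 \to M_{\Ga_0} \to M_\Ga \to M' \to 0$ splits because $M'$ is projective, so $M_\Ga$ is free; freeness of $M_\Ga$ finally promotes the isomorphism $\Psi_\Ga^F$ to an isomorphism $\Psi_\Ga$, closing the induction.

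The main obstacle will be the freeness assertion in part (2): the threefold tensor product $M_\Ga$ has no manifest $R$--basis, and the only route I see is the inductive one above, which requires propagating freeness through the cell filtration of $J(\Ga)$ while relying on both the coherent cellular structure of $(A_k)_{0 \le k \le n}$ supplied by the outer induction on $n$ and the splitting of short exact sequences with free quotient.
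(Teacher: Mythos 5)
Your induction has the same skeleton as the paper's proof: peel one element $\mu$ off the order ideal, identify the cokernel of $\beta_{\Ga_0,\Ga}$ with $\Ind_{A_{n-1}}^{A_n}(\Delta^{\mu})\otimes_R i\bigl(\Ind_{A_{n-1}}^{A_n}(\Delta^{\mu})\bigr)$, invoke coherence of $(A_k)_{0\le k\le n}$ for freeness, and pass to $F$ using Lemma \ref{beta inj}; your route to part (3) (freeness of $M_{\Ga_0}$ plus flatness over the semisimple algebra $A_{n-1}^F$) is a harmless variant of the paper's commutative-diagram argument. The genuine gap is at the crux of part (1): the claim that the isomorphism $A_n^F\otimes_{A_{n-1}^F}A_n^F\cong A_n^F e_n A_n^F$, $a\otimes b\mapsto a e_n b$, is ``supplied by the Jones basic construction'' because $f_n=\delta_n\inv e_n$ is an idempotent with $f_n y f_n=\vareps_n(y)f_n$, $y\mapsto yf_n$ is injective, and $A_n^F$ is split semisimple. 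These hypotheses do not imply injectivity of that map. For a counterexample, take $A_{n-1}^F=F$, $A_n^F=F\oplus F$ acting on itself by multiplication operators, $\vareps(x_1,x_2)=x_1$ (a unital conditional expectation), and $e\in\End_F(F\oplus F)$ the idempotent $(x_1,x_2)\mapsto(x_1,x_1)$: then $ebe=\vareps(b)e$ for all $b$, $b\mapsto be$ is injective, both algebras are split semisimple, yet $(1,0)\,e\,(0,1)=0$, so $a\otimes b\mapsto aeb$ kills the nonzero tensor $(1,0)\otimes(0,1)$. Wenzl's version of the basic construction requires a faithful trace (equivalently, a nondegenerate conditional expectation), which the abstract framework does not supply. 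The paper instead proves injectivity of $\phi^F:A_n^Fe_n\otimes_{A_{n-1}^F}e_nA_n^F\to A_n^Fe_nA_n^F$ by a Morita-context argument inside $A_{n+1}^F$, using the split semisimplicity of $A_{n+1}^F$ together with $A_{n+1}e_n=A_ne_n$, $e_nA_{n+1}=e_nA_n$, and injectivity of both $x\mapsto xe_n$ and $x\mapsto e_nx$ (the last two from the involution). Your proof never invokes $A_{n+1}^F$ at all — a symptom is the slip that $f_n$ ``is an idempotent in $A_n^F$,'' whereas $e_n\in A_{n+1}$ — so the key injectivity over $F$ is asserted rather than proved. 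Inserting the Morita argument of Lemma \ref{lemma 2 for induction on j} (or any other proof that the pairing $(a,b)\mapsto\cl_n(ab)$ is nondegenerate over $F$) closes the gap, and the rest of your induction then goes through.

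A smaller point: with the paper's convention that order ideals are the upward-closed subsets of $\La_{n-1}$, you must remove a \emph{minimal} element $\mu$ of $\Ga$, not a maximal one, for $\Ga_0=\Ga\setminus\{\mu\}$ to remain an order ideal and for $J(\Ga)/J(\Ga_0)\cong\Delta^{\mu}\otimes_R i(\Delta^{\mu})$ to hold as $A_{n-1}$--bimodules; as written, $J(\Ga_0)$ need not even be an ideal, so the ``short exact sequence from the cell datum'' does not exist. This is a one-word fix, but it matters for the bimodule identification of the cokernel.
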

}

We provide two lemmas on the way to proving Proposition \ref{proposition: Phi isomorphism}.

\begin{lemma} \label{lemma 1 for induction on j}
 Let \ $\Ga \subseteq \Ga'$ be two order ideals in $\La_{n-1}$ such that $\Ga' \setminus \Ga$ is a singleton.
Suppose that \  $\Phi_{\Ga}$ is an isomorphism and that   $ A_n  e_n \otimes_{A_{n-1}}J(\Ga) \otimes_{A_{n-1}} e_n A_n$ is a free $R$--module.  Then  $\beta_{\Ga, \Ga'}$ is injective and 
$ A_n  e_n \otimes_{A_{n-1}}J(\Ga') \otimes_{A_{n-1}} e_n A_n$ is a free $R$--module.
\end{lemma}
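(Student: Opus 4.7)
The plan is to establish the two assertions in sequence; write $M := A_n e_n \otimes_{A_{n-1}} J(\Ga) \otimes_{A_{n-1}} e_n A_n$ and $M' := A_n e_n \otimes_{A_{n-1}} J(\Ga') \otimes_{A_{n-1}} e_n A_n$, so that $\beta_{\Ga,\Ga'} : M \to M'$. For injectivity, I would pass to $F$: by Lemma \ref{second tensor iso}, $M \otimes_R F \cong (A_n e_n)^F \otimes_{A_{n-1}^F} J(\Ga)^F \otimes_{A_{n-1}^F} (e_n A_n)^F$ (and analogously for $M'$), under which $\beta_{\Ga,\Ga'} \otimes \id_F$ becomes $\id \otimes \iota^F \otimes \id$ for $\iota^F : J(\Ga)^F \to J(\Ga')^F$ induced by the inclusion.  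Since $J(\Ga')$ is free over $R$ (with the cellular basis $\{c_{s,t}^\mu : \mu \in \Ga',\ s,t \in \mathcal T(\mu)\}$), Lemma \ref{injectivity of iota tensor id(F) with free R modules} gives that $\iota^F$ is injective.  Axiom (4) makes $A_{n-1}^F$ semisimple, hence every $A_{n-1}^F$--module is flat, so the functor $(A_n e_n)^F \otimes_{A_{n-1}^F} - \otimes_{A_{n-1}^F} (e_n A_n)^F$ is exact, and therefore $\beta_{\Ga,\Ga'} \otimes \id_F$ is injective.  The freeness of $M$ over $R$ (our hypothesis) together with Lemma \ref{lemma injectivity of x to x tensor 1} makes $M \to M \otimes_R F$ injective, and chasing the obvious commutative square forces $\beta_{\Ga,\Ga'}$ to be injective as well.

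For freeness of $M'$, I would apply right exactness of the tensor functor to $0 \to J(\Ga) \to J(\Ga') \to J(\Ga')/J(\Ga) \to 0$ and combine with the injectivity just proved to obtain a short exact sequence
$$0 \to M \to M' \to Q \to 0,$$
where $Q := A_n e_n \otimes_{A_{n-1}} (J(\Ga')/J(\Ga)) \otimes_{A_{n-1}} e_n A_n$.  Writing $\{\la\} = \Ga' \setminus \Ga$, the cellularity of $A_{n-1}$ supplies a bimodule isomorphism $J(\Ga')/J(\Ga) \cong \Delta^\la \otimes_R i(\Delta^\la)$ in which $A_{n-1}$ acts on the two tensor factors separately, so $Q \cong (A_n e_n \otimes_{A_{n-1}} \Delta^\la) \otimes_R (i(\Delta^\la) \otimes_{A_{n-1}} e_n A_n)$.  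Axioms (6) and (7) give a right $A_{n-1}$--module isomorphism $A_n \to A_n e_n$ via $x \mapsto x e_n$, so $A_n e_n \otimes_{A_{n-1}} \Delta^\la \cong \Ind_{A_{n-1}}^{A_n} \Delta^\la$.  By the inductive hypothesis --- the Claim for $n$, which asserts that $(A_k)_{0 \le k \le n}$ is a coherent tower of cellular algebras --- this induced module carries a filtration whose subquotients are cell modules of $A_n$, each free over $R$; iterating the splitting of short exact sequences whose right-hand term is free (hence projective) over $R$, the induced module is itself free.  The same conclusion for $i(\Delta^\la) \otimes_{A_{n-1}} e_n A_n$ follows by symmetry using Lemma \ref{lemma; involutions and tensor products of bimodules}.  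Hence $Q$ is a tensor product over $R$ of free $R$--modules, hence free; the displayed sequence splits over $R$, and $M' \cong M \oplus Q$ is free.

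The technical obstacle I anticipate is the careful handling of bimodule structures: checking that the identifications provided by Lemmas \ref{first tensor iso} and \ref{second tensor iso} really do align $\beta_{\Ga,\Ga'} \otimes \id_F$ with $\id \otimes \iota^F \otimes \id$, that the rearrangement of $Q$ as $(A_n e_n \otimes_{A_{n-1}} \Delta^\la) \otimes_R (i(\Delta^\la) \otimes_{A_{n-1}} e_n A_n)$ is canonical (this rests on $A_{n-1}$ acting on $\Delta^\la \otimes_R i(\Delta^\la)$ through the first factor on the left and through the second on the right), and that invoking the coherent-tower hypothesis on $\Ind_{A_{n-1}}^{A_n} \Delta^\la$ is legitimate at this stage of the outer induction on $n$.
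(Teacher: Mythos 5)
Your proof is correct, and its freeness half is essentially the paper's own argument; the injectivity half, however, takes a genuinely different route. The paper gets injectivity of $\beta_{\Ga,\Ga'}$ formally from the hypothesis that $\Phi_\Ga$ is an isomorphism: $\Phi_{\Ga'}\circ\beta_{\Ga,\Ga'}$ equals the composite of $\Phi_\Ga$ with the inclusion $A_n e_n J(\Ga)A_n \subseteq A_n e_n J(\Ga')A_n$, and since that composite is injective, so is $\beta_{\Ga,\Ga'}$ --- two lines, no scalar extension. You never use the hypothesis on $\Phi_\Ga$ at all: you extend scalars to $F$, identify $\beta_{\Ga,\Ga'}\otimes\id_F$ with $\id\otimes\iota^F\otimes\id$ via Lemma \ref{second tensor iso}, get injectivity of $\iota^F$ from freeness of $J(\Ga')$ over $R$ and Lemma \ref{injectivity of iota tensor id(F) with free R modules}, use split semisimplicity of $A_{n-1}^F$ (axiom (\ref{axiom: semisimplicity})) to make the two-sided tensor functor exact, and descend using freeness of $A_ne_n\otimes_{A_{n-1}}J(\Ga)\otimes_{A_{n-1}}e_nA_n$. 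This is precisely the mechanism the paper packages later as Lemma \ref{lemma:  sort of flatness} and deploys in Lemma \ref{lemma 2 for induction on j}; your version proves a marginally stronger statement (injectivity of $\beta_{\Ga,\Ga'}$ from the freeness hypothesis alone) at the cost of the bookkeeping you flag, whereas the paper keeps this lemma trivial and concentrates the passage to $F$ in the companion lemma. Your treatment of freeness --- the short exact sequence from right exactness plus the injectivity just proved, identification of the quotient with $\Ind_{A_{n-1}}^{A_n}(\Delta^\la)\otimes_R i(\Ind_{A_{n-1}}^{A_n}(\Delta^\la))$ via $A_ne_n\cong A_n$ as $A_n$--$A_{n-1}$--bimodules and Lemma \ref{lemma; involutions and tensor products of bimodules}, freeness of the induced module from the coherence part of the induction hypothesis on $n$, and splitting off the free quotient --- matches the paper's computation step for step, and your invocation of the coherent-tower hypothesis for $\Ind_{A_{n-1}}^{A_n}(\Delta^\la)$ is legitimate at this stage of the outer induction.
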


\begin{proof} Let $\{\la\} = \Ga' \setminus \Ga$.
Since $\Phi_{\Ga}$ is assumed injective,  it follows from considering the commutative diagram below that
$\beta_{\Ga,\Ga'}$ is also injective:
\begin{diagram}
A_n e_n \otimes_{A_{n-1}}J(\Ga)\otimes_{A_{n-1}} e_n A_n &&\rTo^{\Phi_\Ga}   &&A_n e_n J(\Ga) A_n  \\
\dTo_{ \beta_{\Ga, \Ga'} } & &&& \dTo\\
 A_n e_n \otimes_{A_{n-1}} J(\Ga')\otimes_{A_{n-1}} e_n A_n &&\rTo^{\Phi_{\Ga'}} &&A_n e_n J(\Ga')  A_n
\end{diagram}
\ignore{
$$\begin{CD}
A_n e_n \otimes_{A_{n-1}}J(\Ga)\otimes_{A_{n-1}} e_n A_n @>\Phi_\Ga>>  A_n e_n J(\Ga) A_n  \\
@VV  \beta_{\Ga, \Ga'} V                                @VV V  \\
A_n e_n \otimes_{A_{n-1}} J(\Ga')\otimes_{A_n} e_n A_n   @> \Phi_{\Ga'}  >> A_n e_n J(\Ga')  A_n
\end{CD}
$$
}

By the right exactness of tensor products, we have
\begin{equation} \label{equation 1 for lemma 1 of induction on j}
\begin{aligned}
&(A_n e_n\otimes_{A_{n-1}} J(\Ga')\otimes_{A_{n-1}} e_{n }A_n) / \beta_{\Ga,\Ga'}(A_n e_{n}\otimes_{A_{n-1}}J(\Ga)\otimes_{A_{n-1}} e_n A_n ) \\
&\cong A_n e_n\otimes_{A_{n-1}} (J(\Ga')/J(\Ga))    \otimes_{A_{n-1}} e_n A_n \\
&\cong   A_n e_n\otimes_{A_{n-1}}  \Delta^{\la} \otimes_R i( \Delta^{\la})  \otimes_{A_{n-1}} e_n A_n
\end{aligned}
\end{equation}
Consider $A_n e_n  =  A_{n+1} e_n$  (because of framework axiom (\ref{axiom:  An en}))   as an $A_{n+1}$--$A_{n-1}$--bimodule.  One can easily check that
$i(A_n e_n) \cong  e_n A_n $ as $A_{n- 1}$--$A_{n+1}$--bimodules.  Therefore,
\begin{equation}  \label{equation 2 for lemma 1 of induction on j}
i( \Delta^{\la})  \otimes_{A_{n-1}} e_n A_n \cong i( \Delta^{\la})  \otimes_{A_{n-1}} i( A_n e_n)
\cong i(A_n e_n\otimes_{A_{n-1}}  \Delta^{\la}),
\end{equation}
using Lemma \ref{lemma; involutions and tensor products of bimodules}.   By framework axioms 
(\ref {axiom: en An en}) and (\ref{axiom:  An en}),  $A_n e_n \cong A_n$ as $A_n$--$A_{n-1}$--bimodules.  Hence,
\begin{equation}  \label{equation 3 for lemma 1 of induction on j}
A_n e_n\otimes_{A_{n-1}}  \Delta^{\la} \cong A_n \otimes_{A_{n-1}}  \Delta^{\la} = \Ind_{A_{n-1}}^{A_n}( \Delta^{\la}),
\end{equation}
as $A_n$ modules.   Combining (\ref{equation 1 for lemma 1 of induction on j}), 
(\ref{equation 2 for lemma 1 of induction on j}), and (\ref{equation 3 for lemma 1 of induction on j}), we have
\begin{equation} \label{equation 4 for lemma 1 of induction on j}
\begin{aligned}
&(A_n e_n\otimes_{A_{n-1}} J(\Ga')\otimes_{A_{n-1}} e_{n}A_n) / \beta_{\Ga,\Ga'}(A_n e_{n}\otimes_{A_{n-1}}J(\Ga)\otimes_{A_{n-1}} e_n A_n ) \\
&\cong  \Ind_{A_{n-1}}^{A_n}( \Delta^{\la}) \otimes_R i( \Ind_{A_{n-1}}^{A_n}( \Delta^{\la})),
\end{aligned}
\end{equation}
as $A_n$--$A_n$--bimodules.

By the induction assumption on $n$,    $\Ind_{A_{n-1}}^{A_n}( \Delta^{\la})$ has a filtration
with subquotients isomorphic to cell modules for $A_n$, and in particular $\Ind_{A_{n-1}}^{A_n}( \Delta^{\la})$ is a free $R$--module.  By (\ref{equation 4 for lemma 1 of induction on j}),  
$$
(A_n e_n\otimes_{A_{n-1}} J(\Ga')\otimes_{A_n} e_{n}A_n) / \beta_{\Ga,\Ga'}(A_n e_{n}\otimes_{A_{n-1}}J(\Ga)\otimes_{A_{n-1}} e_n A_n )
$$
is a free $R$--module. Since  $A_n e_{n }\otimes_{A_{n-1}}J(\Ga)\otimes_{A_{n-1}} e_n A_n$ is free by hypothesis, and $\beta_{\Ga,\Ga'}$ is injective, 
 $$\beta_{\Ga,\Ga'}(A_n e_{n}\otimes_{A_{n-1}}J(\Ga)\otimes_{A_{n-1}} e_n A_n )$$ is a free $R$--module.  Hence 
$$
A_n e_n\otimes_{A_{n-1}} J(\Ga')\otimes_{A_{n-1}} e_{n}A_n
$$
is also a free $R$--module.
\end{proof}

\begin{lemma} \label{lemma 2 for induction on j} Let $\Ga$ be an order ideal in $\La_{n-1}$.
  If $A_n e_n\otimes_{A_{n-1}} J(\Ga) \otimes_{A_{n-1}} e_{n}A_n$ is a free $R$--module, then $\Phi_\Ga$ is an isomorphism.
\end{lemma}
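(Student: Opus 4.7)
The map $\Phi_\Ga$ is surjective by construction, so the task is to prove injectivity. The plan is to extend scalars to the fraction field $F$, use the semisimplicity of $A_n^F$ together with the Jones basic construction to prove $\Phi_\Ga^F$ is injective, and then descend to $R$ using the freeness hypothesis on the domain.

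By Lemma \ref{second tensor iso} the $F$--extension of $\Phi_\Ga$ agrees with the analogous map $\Phi_\Ga^F$ defined over $F$. Using the $A_n$--$A_{n-1}$--bimodule isomorphism $A_n e_n \cong A_n$ (via $a e_n \leftrightarrow a$, well--defined by axiom (\ref{axiom:  An en})) and its mirror $e_n A_n \cong A_n$, the map $\Phi_\Ga^F$ is identified with the composite
$$
A_n^F \otimes_{A_{n-1}^F} J(\Ga)^F \otimes_{A_{n-1}^F} A_n^F \overset{\alpha}{\to} A_n^F \otimes_{A_{n-1}^F} A_n^F \overset{\Psi}{\to} A_n^F e_n A_n^F,
$$
where $\alpha$ is induced by $J(\Ga)^F \hookrightarrow A_{n-1}^F$ and $\Psi(a\otimes b) = a e_n b$; its image lies in the subspace $A_n^F e_n J(\Ga)^F A_n^F$. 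The semisimplicity of $A_{n-1}^F$ (axiom (\ref{axiom: semisimplicity})) and two applications of Lemma \ref{beta inj} make $\alpha$ injective. The map $\Psi$ is an isomorphism by the Jones basic construction over $F$: setting $f_n = \delta_n\inv e_n$ yields an idempotent in $A_{n+1}^F$ with $f_n x f_n = \eps(x) f_n$ for $x \in A_n^F$ (using axiom (\ref{axiom: en An en}) and the conditional expectation $\eps = \delta_n\inv \cl_n$), and $x\mapsto xf_n$ is injective on $A_n^F$ by axiom (\ref{axiom:  An en}) combined with the freeness of $A_n$ over $R$ from the inductive cellularity hypothesis; Wenzl's theorem~\cite{Wenzl-Brauer} then yields $A_n^F f_n A_n^F \cong A_n^F \otimes_{A_{n-1}^F} A_n^F$, and rescaling gives the claim for $\Psi$. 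Since $\alpha$ is injective and $\Psi$ is an isomorphism, the composite $\Psi\circ\alpha$ is injective; as this composite equals $\Phi_\Ga^F$ post--composed with the inclusion $A_n^F e_n J(\Ga)^F A_n^F \hookrightarrow A_n^F e_n A_n^F$, it follows that $\Phi_\Ga^F$ itself is injective.

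Finally, since the domain $M := A_n e_n \otimes_{A_{n-1}} J(\Ga) \otimes_{A_{n-1}} e_n A_n$ is free over $R$ by hypothesis, Lemma \ref{lemma injectivity of x to x tensor 1} ensures that $M \to M\otimes_R F$ is injective; the commutative base--change square now forces injectivity of $\Phi_\Ga$. The main obstacle is the Jones basic construction step: although Wenzl's theorem is classical, one needs to verify its hypotheses carefully over the fraction field $F$, particularly the injectivity of $x\mapsto xf_n$ on $A_n^F$, which we obtain by lifting the integral statement in axiom (\ref{axiom:  An en}) via the freeness of $A_n$.
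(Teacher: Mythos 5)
Your overall scaffolding matches the paper's: surjectivity is immediate, the freeness hypothesis together with Lemma \ref{lemma injectivity of x to x tensor 1} and Lemma \ref{second tensor iso} reduces everything to injectivity of $\Phi_\Ga^F$ over the fraction field, and Lemma \ref{beta inj} (semisimplicity of $A_{n-1}^F$) handles the factor coming from $J(\Ga)^F\hookrightarrow A_{n-1}^F$. The gap is at the remaining, crucial step: the injectivity of your map $\Psi\colon A_n^F\otimes_{A_{n-1}^F}A_n^F\to A_n^Fe_nA_n^F$, $a\otimes b\mapsto ae_nb$. You justify it by invoking Wenzl's Theorem 1.3, but that theorem (as stated, and as recalled in the preliminaries of this paper) assumes that $B=A_n^F$ carries a \emph{faithful} trace whose restriction to $A=A_{n-1}^F$ is also faithful, and that the idempotent implements the associated trace--preserving conditional expectation. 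None of this is available from the framework axioms: the axioms only give the bimodule map $\cl_n$ (hence $\eps_n=\delta_n\inv\cl_n$ over $F$), with no trace and no nondegeneracy assumption on the pairing $(x,y)\mapsto\eps_n(xy)$. A conditional expectation between split semisimple algebras can be degenerate, and a degenerate one is never trace--preserving for a faithful trace; so you cannot simply manufacture the trace that Wenzl's theorem needs. In effect, the statement you import from Wenzl is (essentially) the content of the lemma you are trying to prove, so the citation begs the question in this abstract setting.

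The paper closes exactly this gap by a different device: since $A_{n+1}^F$ is split semisimple (axiom (\ref{axiom: semisimplicity})), the ideal $A_n^Fe_nA_n^F=A_{n+1}^Fe_nA_{n+1}^F$ is a unital algebra, and
$$(e_nA_n^Fe_n,\ A_n^Fe_nA_n^F,\ A_n^Fe_n,\ e_nA_n^F,\ \psi^F,\ \phi^F)$$
is a Morita context with \emph{both} structure maps surjective; Morita Theorem I then forces $\phi^F$ (your $\Psi$, after the bimodule identifications you made) to be an isomorphism. This uses only the axioms, with no trace. A secondary point: your claim that $x\mapsto xf_n$ is injective on $A_n^F$ ``by lifting axiom (\ref{axiom:  An en}) via freeness of $A_n$'' is also shaky, since passing an injection to $\otimes_RF$ via Lemma \ref{injectivity of iota tensor id(F) with free R modules} needs the \emph{target} ($A_{n+1}$, or $A_ne_n$ inside it) to be known free over $R$, which is not yet available at this stage of the induction. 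Replacing the appeal to Wenzl by the Morita--context argument repairs the proof.
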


\begin{proof}   $\Phi_\Ga$ is surjective, so we only have to prove  $\Phi_\Ga$ is injective.
Define
$$\a_1~:~ A_{n} e_n \otimes_{A_{n-1}} J(\Ga)\otimes_{A_{n-1}} e_n A_{n}\rightarrow A_{n}e_n\otimes_{A_{n-1}} J(\Ga)\otimes_{A_{n-1}} e_n A_{n}\otimes_R F$$
and
$$\a_2~:~ A_{n} e_n J(\Ga) A_{n}\rightarrow  A_{n} e_n J(\Ga) A_{n}\otimes_R F$$
by $x \mapsto x \otimes \1_F$.
 Since $A_{n}e_n \otimes_{A_{n-1}} J(\Ga)\otimes_{A_{n-1}} e_n A_{n}$ is a free $R$--module,  by assumption, 
  $\a_1$ is injective, according to Lemma \ref{lemma injectivity of x to x tensor 1}.   Let 
  $$
  \tau:  A_{n} e_n \otimes_{A_{n-1}} J(\Ga)\otimes_{A_{n-1}} e_n A_{n}\otimes_R F \rightarrow
  A_n^F e_n \otimes_{ A_{n-1}^F}J(\Ga)^F \otimes_{ A_{n-1}^F} e_n A_n^F
  $$
  be the isomorphism from Lemma \ref{second tensor iso}.  (We are writing $e_n$ for $e_n \otimes \1_F$.)
  Let
  $$
  \Phi_\Ga^F: A_n^F e_n \otimes_{ A_{n-1}^F}J(\Ga)^F \otimes_{ A_{n-1}^F} e_n A_n^F	\rightarrow	 A_n^F e_nJ(\Ga)^F  A_n^F
  $$
  be defined by $xe_n \otimes a \otimes e_n y \mapsto x e_n a y$.  
  
  Consider the following diagram
  \newarrow{Equals} =====
  \begin{diagram}
   A_n^F e_n \otimes_{ A_{n-1}^F}J(\Ga)^F \otimes_{ A_{n-1}^F} e_n A_n^F && \rTo^{\Phi_\Ga^F}  &&  A_n^F e_nJ(\Ga)^F  A_n^F \\
   \uTo^{\tau}&&&& \uEquals\\
   A_{n}e_n \otimes_{A_{n-1}} J(\Ga)\otimes_{A_{n-1}} e_n A_{n}\otimes_R F &&\rTo^{\Phi_\Ga\otimes id_F}&&
   A_{n} e_n J(\Ga) A_{n}\otimes_R F\\
   \uTo^{\alpha_{1}}&&&&\uTo_{\alpha_{2}}\\
   A_{n}e_n \otimes_{A_{n-1}} J(\Ga)\otimes_{A_{n-1}} e_n A_{n}&&\rTo^{\Phi_\Ga}&&A_{n} e_n J(\Ga) A_{n}.\\
  \end{diagram}
  \ignore{
$$\begin{CD}
 A_n^F e_n \otimes_{ A_{n-1}^F}J(\Ga)^F \otimes_{ A_{n-1}^F} e_n A_n^F	@>\Phi_\Ga^F>>	 A_n^F e_nJ(\Ga)^F  A_n^F \\
@A\tau AA	@|\\
 A_{n}e_n \otimes_{A_{n-1}} J(\Ga)\otimes_{A_{n-1}} e_n A_{n}\otimes_R F @>\Phi_\Ga\otimes id_F>>  A_{n} e_n J(\Ga) A_{n}\otimes_R F\\
@A\a_1AA	@AA\a_2A\\
 A_{n}e_n \otimes_{A_{n-1}} J(\Ga)\otimes_{A_{n-1}} e_n A_{n} @>\Phi_\Ga>>  A_{n} e_n J(\Ga) A_{n}.\\
\end{CD}$$
}
It is straightforward to check that $\Phi_\Ga^F \circ \tau\circ \a_1 = \a_2 \circ \Phi_\Ga$.    Thus, to prove that $\Phi_\Ga$ is injective, it suffices to show that $\Phi_\Ga^F$ is injective.

 Define 
 $$\beta~:~ A_n^F e_n \otimes_{A_{n-1}^F}J(\Ga)^F\otimes_{A_{n-1}^F}  e_n A_n^F\rightarrow A_n^F e_n\otimes_{A_{n-1}^F} e_n A_n^F$$
   by $\beta(x\otimes y\otimes z)=x\otimes yz$.  Observe that $\beta$ is  injective by
 Lemma  \ref{beta inj}.   Define  $$\phi^F : A_n^F e_n\otimes_{A_{n-1}^F} e_n A_n^F \to A_n^F e_n A_n^F$$  by
 $\phi^F( x e_n  \otimes e_n y) = x e_n y$.  Observe that $\phi^F \circ \beta = \Phi_\Ga^F$,  so to prove that $\Phi_\Ga^F$ is injective, it suffices to show that $\phi^F$ is injective.

Since $A_{n+1}^F$ is split semisimple (by framework axiom (\ref{axiom: semisimplicity})), the ideal $A_{n+1}^F e_n A_{n+1}^F$
(which equals $ A_n^F e_n A_n^F$ by framework axiom (\ref{axiom:  An en}))   is a unital algebra in its own right, and  Morita equivalent to $e_n A_{n+1}^F e_n = e_n A_n^F e_n \cong A_{n-1}^F$.
In fact,  let 
$$\psi^F :  e_n A_n \otimes_{A_n^F e_n A_n^F} A_n^F e_n \to e_n A_n^F e_n$$ be given by 
$e_n x \otimes y e_n \mapsto (1/\delta_n) e_n xy e_n$, where $e_n^2 = \delta_n e_n$.  Then
$$(e_n A_{n}^F e_n,  A_n^F e_n A_n^F,  A_n^F e_n,  e_n A_n^F,  \psi^F, \phi^F)$$
  is a Morita context, in the sense of ~\cite{Jacobson},  Section 3.12, with surjective
bimodule maps $\psi^F$ and $\phi^F$.  It follows from Morita theory, for example ~\cite{Jacobson},   Morita Theorem I, page
167,  that $\psi^F$ and $\phi^F$ are isomorphisms.
\end{proof}

\noindent
{\em Proof of Proposition \ref{proposition: Phi isomorphism}}:  \quad 
Let $\Ga$ be an order ideal of $\La_{n-1}$.  There exists a chain of order ideals
$$
\emptyset = \Ga_0 \subseteq \Ga_1 \subseteq \cdots \subseteq \Ga_s = \Ga,
$$
such that the difference between any two successive order ideals is a singleton.  Write $\beta_j$ for
$\beta_{\Ga_j, \Ga_{j+1}}$, for $0 \le j < s$.

We prove by induction  that for $0 \le j \le s$, 
$\Phi_{\Ga_j}$ is an isomorphism  and $A_n  e_n \otimes_{A_{n-1}}J(\Ga_j) \otimes_{A_{n-1}} e_n A_n$ is a free $R$--module;   and that for $0 \le j < s$,  $\beta_j$ is injective.
For $j = 0$, these statements are trivial since $J(\emptyset) = 0$.

Fix $j$ ($0 \le j < s$)  and suppose that 
$A_n  e_n \otimes_{A_{n-1}}J(\Ga_j)\otimes_{A_{n-1}} e_n A_n$ is a free $R$--module, that
$\Phi_{\Ga_j}$ is an isomorphism.  Then it follows from Lemma \ref{lemma 1 for induction on j}  that 
$ A_n  e_n \otimes_{A_{n-1}}J(\Ga_{j+1})\otimes_{A_{n-1}} e_n A_n$ is a free $R$--module.  Next, it follows from Lemma \ref{lemma 2 for induction on j} that $\Phi_{\Ga_{j+1}}$ is an isomorphism. 

We conclude that $A_n  e_n \otimes_{A_{n-1}}J(\Ga)\otimes_{A_{n-1}} e_n A_n$ is a free $R$--module and that
 $\Phi_\Ga$ is an isomorphism.   Applying Lemma 
\ref{lemma 1 for induction on j} again gives statement (3) of the Proposition.
  \qed

\medskip

We continue to work with the following assumptions:  
 $R$ is an integral domain with field of fractions $F$.   $(Q_k)_{k\ge 0}$ and
$(A_k)_{k\ge 0}$  are two towers of $R$--algebras satisfying the framework axioms of Section \ref{subsection: framework axioms}.  
The following induction assumption is in force:
For some fixed $n \ge 1$,  the conclusions (1) --(4)  of Theorem \ref{main theorem}  hold for the finite tower $(A_k)_{0 \le k \le n}$.
We use the notation of the discussion preceding Lemma \ref{second tensor iso}.

The following is a corollary of  Proposition \ref{proposition: Phi isomorphism}.

\begin{corollary}  \label{corollary:   a basic construction isomorphism}
$A_n e_n \otimes_{A_{n-1}} e_n A_n \cong A_n e_n A_n$,  as $A_{n+1}$--$A_{n+1}$ bimodules, with the isomorphism determined by $x e_n \otimes e_n y \mapsto x e_n y$.  
\end{corollary}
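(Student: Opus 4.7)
The plan is to derive this directly from Proposition \ref{proposition: Phi isomorphism} by taking $\Gamma$ to be the maximal order ideal $\Lambda_{n-1}$ itself. Since $\{c_{s,t}^\lambda : \lambda \in \Lambda_{n-1}, \ s, t \in \mathcal T(\lambda)\}$ is an $R$-basis of $A_{n-1}$, we have $J(\Lambda_{n-1}) = A_{n-1}$.

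With this choice, Proposition \ref{proposition: Phi isomorphism}(1) asserts that
$$
\Phi_{\Lambda_{n-1}} : A_n e_n \otimes_{A_{n-1}} A_{n-1} \otimes_{A_{n-1}} e_n A_n \longrightarrow A_n e_n A_{n-1} A_n
$$
is an isomorphism of $A_{n+1}$-$A_{n+1}$-bimodules. On the right-hand side, $A_n e_n A_{n-1} A_n = A_n e_n A_n$, because $A_{n-1}$ is unital and contained in $A_n$. On the left-hand side, the canonical map
$$
A_n e_n \otimes_{A_{n-1}} e_n A_n \longrightarrow A_n e_n \otimes_{A_{n-1}} A_{n-1} \otimes_{A_{n-1}} e_n A_n,
\qquad x e_n \otimes e_n y \mapsto x e_n \otimes 1 \otimes e_n y,
$$
is an isomorphism of $A_{n+1}$-$A_{n+1}$-bimodules, with inverse given by collapsing the middle factor via the bimodule action. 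Composing these two isomorphisms yields the required bimodule isomorphism $A_n e_n \otimes_{A_{n-1}} e_n A_n \cong A_n e_n A_n$; and tracking a simple tensor through the composition shows it sends $xe_n \otimes e_n y$ to $\Phi_{\Lambda_{n-1}}(xe_n \otimes 1 \otimes e_n y) = xe_n \cdot 1 \cdot y = xe_n y$, as required.

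There is no real obstacle here; the corollary is just the specialization of the already-established proposition to the case where the ideal is all of $A_{n-1}$, together with the trivial identification of $M \otimes_{A_{n-1}} A_{n-1} \otimes_{A_{n-1}} N$ with $M \otimes_{A_{n-1}} N$. I would keep the proof to a single short paragraph pointing to Proposition \ref{proposition: Phi isomorphism} and this canonical identification.
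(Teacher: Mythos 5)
Your proof is correct and is essentially the paper's own argument: the paper likewise obtains the corollary by taking $\Gamma = \Lambda_{n-1}$ in Proposition \ref{proposition: Phi isomorphism}, so that $J(\Gamma) = A_{n-1}$, and then identifying $A_n e_n \otimes_{A_{n-1}} A_{n-1} \otimes_{A_{n-1}} e_n A_n$ with $A_n e_n \otimes_{A_{n-1}} e_n A_n$. The only difference is that you spell out this canonical identification explicitly, which the paper leaves implicit.
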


\begin{proof} In Proposition \ref{proposition: Phi isomorphism}, take $\Gamma = \Lambda_{n-1}$,  so 
$J(\Gamma) = A_{n-1}$.  
\end{proof}

\begin{proposition}  \mbox{}  \label{lemma:  cellularity induction step}
\begin{enumerate}
\item  $\Ga \mapsto \hat J(\Ga)$ is a $\La_{n-1}$--cell net in $A_n e_n A_n$.
\item
 $A_n e_n A_n$ is a cellular ideal in $A_{n+1}$.
\item $A_{n+1}$ is a cellular algebra.  The partially ordered set in the cell datum for $A_{n+1}$ can be realized as
$\La_{n+1} = \La_{n-1} \cup \La_{n+1}\spp 0$,  where $ \La_{n+1}\spp 0$ is the partially ordered set in the cell datum for
$Q_{n+1}$;  moreover the partial order on $\La_{n+1}$ agrees with the original partial orders on $\La_{n-1} $ and 
$\La_{n+1}\spp 0$, and satisfies $\la > \mu$ if $\la \in \La_{n-1}$ and $\mu \in \La_{n+1}\spp 0$.
\item  Let $\la \in \La_{n-1}$, and let $ \Delta^\la$ denote the corresponding cell module of $A_{n-1}$.   The cell module of $A_{n+1}$  corresponding to $\la$ is isomorphic to  
$A_n e_n \otimes_{A_{n-1}} \Delta^\la$.
\end{enumerate}
\end{proposition}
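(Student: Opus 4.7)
The plan is to establish the four parts sequentially, with (1) carrying the technical weight and (2)--(4) following rather directly. The main input throughout is Proposition \ref{proposition: Phi isomorphism}, together with the induction hypothesis that $(A_k)_{0 \le k \le n}$ is a coherent tower of cellular algebras.

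For (1), I will verify the three axioms of a $\Lambda_{n-1}$-cell net for $\Gamma \mapsto \hat J(\Gamma)$. Monotonicity and the spanning conditions reduce immediately to the analogous identities for $\Gamma \mapsto J(\Gamma)$ in $A_{n-1}$, since $J(\Gamma_1 \cup \Gamma_2) = J(\Gamma_1) + J(\Gamma_2)$ and the operation $X \mapsto A_n e_n X A_n$ preserves sums. Moreover framework axiom (\ref{axiom:  An en}) (giving $A_{n+1} e_n = A_n e_n$, with its involutive counterpart) ensures that $\hat J(\Gamma)$ is actually a two-sided ideal of $A_{n+1}$, not merely of $A_n$. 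The substantive content is axiom (3) of the cell net: for $\Gamma' \setminus \Gamma = \{\lambda\}$, I must identify $\hat J(\Gamma')/\hat J(\Gamma)$ with $M^\lambda \otimes_R i(M^\lambda)$ as $A_{n+1}$-$A_{n+1}$-bimodules, $i$-compatibly, for some $R$-free module $M^\lambda$.

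By Proposition \ref{proposition: Phi isomorphism}, the maps $\Phi_\Gamma,\ \Phi_{\Gamma'}$ are bimodule isomorphisms and the inclusion $\hat J(\Gamma) \hookrightarrow \hat J(\Gamma')$ corresponds to the injection $\beta_{\Gamma,\Gamma'}$. Right exactness of tensor product then yields
$$\hat J(\Gamma')/\hat J(\Gamma) \;\cong\; A_n e_n \otimes_{A_{n-1}} \bigl(J(\Gamma')/J(\Gamma)\bigr) \otimes_{A_{n-1}} e_n A_n.$$
The cellular structure of $A_{n-1}$ supplies an $i$-compatible bimodule isomorphism $J(\Gamma')/J(\Gamma) \cong \Delta^\lambda \otimes_R i(\Delta^\lambda)$. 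Setting $M^\lambda := A_n e_n \otimes_{A_{n-1}} \Delta^\lambda$, I use the canonical identification $i(A_n e_n) \cong e_n A_n$ of $A_{n-1}$-$A_{n+1}$-bimodules, together with Lemma \ref{lemma; involutions and tensor products of bimodules} and Remark \ref{remark:  i applied to tensor product}, to rewrite $i(\Delta^\lambda) \otimes_{A_{n-1}} e_n A_n \cong i(M^\lambda)$. Composing gives the required $i$-compatible $A_{n+1}$-bimodule isomorphism $\hat J(\Gamma')/\hat J(\Gamma) \cong M^\lambda \otimes_R i(M^\lambda)$. Freeness of $M^\lambda$ over $R$ follows from the identification $M^\lambda \cong \Ind_{A_{n-1}}^{A_n}(\Delta^\lambda)$ (using $A_n e_n \cong A_n$ as $A_n$-$A_{n-1}$-bimodules, via axioms (\ref{axiom: en An en}) and (\ref{axiom:  An en})) together with the induction hypothesis that this induced cell module admits a cell filtration.

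For (2), applying Proposition \ref{lemma: cell net characterization of cellularity} to the cell net of (1) produces a cellular basis of $A_n e_n A_n$; since each $\hat J(\Gamma)$ is an $A_{n+1}$-ideal, the successive quotients carry compatible $A_{n+1}$-bimodule structures, so the multiplication rule for a cellular ideal holds for all $a \in A_{n+1}$. For (3), Remark \ref{remark on extensions of cellular algebras} applies: $A_n e_n A_n$ is a cellular ideal and $A_{n+1}/A_n e_n A_n \cong Q_{n+1}$ is cellular with partially ordered set $\La_{n+1}\spp 0$ by framework axiom (\ref{axiom:  idempotent and Hn as quotient of An}), so $A_{n+1}$ is cellular with $\Lambda_{n+1} = \Lambda_{n-1} \sqcup \La_{n+1}\spp 0$ equipped with the stated order. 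For (4), inspecting the cellular basis construction in the proof of Proposition \ref{lemma: cell net characterization of cellularity} shows that the cell module for $\lambda \in \Lambda_{n-1}$ is precisely the module $M^\lambda$ appearing in axiom (3) of our cell net, namely $A_n e_n \otimes_{A_{n-1}} \Delta^\lambda$. The main obstacle is the $i$-compatibility bookkeeping in (1), where one must carefully track involutions across tensor products and confirm that the identification $i(A_n e_n) \cong e_n A_n$ interacts correctly with Lemma \ref{lemma; involutions and tensor products of bimodules}; the remaining arguments are routine given the earlier results of the paper.
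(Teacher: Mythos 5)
Your proposal is correct and follows essentially the same route as the paper: verify the cell-net axioms for $\Ga \mapsto \hat J(\Ga)$ using Proposition \ref{proposition: Phi isomorphism} and right exactness to identify $\hat J(\Ga')/\hat J(\Ga)$ with $M^\la \otimes_R i(M^\la)$ for $M^\la = A_n e_n \otimes_{A_{n-1}} \Delta^\la$ (free over $R$ by the inductive cell-filtration hypothesis on $\Ind_{A_{n-1}}^{A_n}(\Delta^\la)$), then conclude via Proposition \ref{lemma: cell net characterization of cellularity}, the $A_{n+1}$-bimodule structure of the quotients, and Remark \ref{remark on extensions of cellular algebras}. The only difference is that the paper spells out the $i$-compatibility of the composite isomorphism $\chi = \bar\alpha \circ \tilde\pi \circ \tilde\Phi_\Ga\inv$ factor by factor, which is exactly the bookkeeping you flag and which goes through as you indicate.
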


\begin{proof}  It is evident that  $\hat J(\emptyset) = \{0\}$, and that $\Ga_1 \subseteq \Ga_2$ implies $\hat J(\Ga_1) \subseteq \hat J(\Ga_2)$.
Note that  $J(\Ga_{\ge \la}) = A_{n-1}^\la$, so $\hat J(\Ga_{\ge \la})  = A_n e_n A_{n-1}^\la A_n$.
Similarly, $\hat J(\Ga_{> \la})  = A_n e_n \breve A_{n-1}^\la A_n$.  It follows that
$A_n e_n A_n = \spn\{\hat J(\Ga_{\ge \la}) : \la \in \La_{n-1}\}$ and that
for all $\la \in \La_{n-1}$,  $\hat J(\Ga_{> \la}) = \spn\{ \hat J(\Ga_{\ge \mu}) : \mu > \la\}$.  We have shown that
$\Ga \mapsto \hat J(\Ga)$  satisfies conditions (1) and (2) of Definition \ref{definition: cell net}.
 
 Next we show that $\Ga \mapsto \hat J(\Ga)$  satisfies condition (3) of Definition \ref{definition: cell net}.
Let  $\Ga \subseteq \Ga'$ be two order ideals  of $\La_{n-1}$, with $\Ga' \setminus \Ga = \{\la\}$.
From the proof of Proposition \ref{proposition: Phi isomorphism}, we already have $\hat J(\Ga')/\hat J(\Ga) \cong M^\la \otimes_R i(M^\la)$, with
$M^\la =  A_n e_n\otimes_{A_{n-1}}  \Delta^{\la} $. Let  $\chi : \hat J(\Ga')/\hat J(\Ga)  \to  M^\la \otimes_R i(M^\la)$ denote the isomorphism. We  have to check   that $\chi \circ i = i \circ \chi$.  The isomorphism
$\Phi_\Ga$ of Proposition \ref{proposition: Phi isomorphism} satisfies $i \circ \Phi_\Ga = \Phi_\Ga\circ i$.  Moreover,
$$\beta_{\Ga, \Ga'}( A_n e_n \otimes_{A_{n-1}} J(\Ga) \otimes_{A_{n-1}} e_n A_n) \subseteq A_n e_n \otimes_{A_{n-1}} J(\Ga') \otimes_{A_{n-1}} e_n A_n$$
and $\hat J(\Ga) \subseteq \hat J(\Ga')$  are $i$--invariant,  so the induced isomorphism
$$
\begin{aligned}
\tilde \Phi_\Ga :  A_n e_n \otimes_{A_{n-1}} J(\Ga')& \otimes_{A_{n-1}} e_n A_n/ \beta_{\Ga, \Ga'}( A_n e_n \otimes_{A_{n-1}} J(\Ga) \otimes_{A_{n-1}} e_n A_n) \\ &\to \hat J(\Ga')/\hat J(\Ga)
\end{aligned}
$$
satisfies $i \circ \tilde\Phi_\Ga = \tilde\Phi_\Ga\circ i$.  Next,  the map $$\pi : A_n e_n \otimes_{A_{n-1}} J(\Ga') \otimes_{A_{n-1}} e_n A_n \to
A_n e_n \otimes_{A_{n-1}} J(\Ga')/J(\Ga) \otimes_{A_{n-1}} e_n A_n$$
satisfies $i \circ \pi = \pi \circ i$, 
so the induced isomorphism
$$
\begin{aligned}
\tilde \pi:  A_n e_n \otimes_{A_{n-1}} J(\Ga') &\otimes_{A_{n-1}} e_n A_n/ \beta_{\Ga, \Ga'}( A_n e_n \otimes_{A_{n-1}} J(\Ga) \otimes_{A_{n-1}} e_n A_n) \\ &\to A_n e_n \otimes_{A_{n-1}} J(\Ga')/J(\Ga) \otimes_{A_{n-1}} e_n A_n
\end{aligned}
$$
satisfies $i \circ \tilde \pi = \tilde \pi \circ i$.  Finally, we have an isomorphism $\alpha: J(\Ga')/J(\Ga) \to \Delta^{\la} \otimes_R i(\Delta^{\la})$ satisfying $i \circ \alpha = \alpha \circ i$,  so the map
$$\begin{aligned}
\bar\alpha = \id \otimes \alpha \otimes \id : &A_n e_n \otimes_{A_{n-1}}   J(\Ga')/J(\Ga)  \otimes_{A_{n-1}} e_n A_n \to \\
&A_n e_n \otimes_{A_{n-1}} \Delta^{\la} \otimes_R i(\Delta^{\la}) \otimes_{A_{n-1}} e_n A_n
\end{aligned}$$
satisfies $i \circ \bar\alpha = \bar\alpha \circ i$.
The map $\chi$ is   $  \bar\alpha \circ \tilde \pi \circ \tilde \Phi_\Ga\inv$,  so we have $i \circ \chi = \chi \circ i$.

This completes the proof that $\Ga \mapsto \hat J(\Ga)$ is a $\La_{n-1}$--cell net in $A_n e_n A_n$.  By
Proposition  \ref{lemma: cell net characterization of cellularity},  $A_n e_n A_n$ has a cell datum with partially ordered set equal to $\La_{n-1}$.  Moreover, since
the isomorphisms  $\hat J(\Ga')/\hat J(\Ga) \cong M^\la \otimes_R i(M^\la)$ are actually 
isomorphisms of $A_{n+1}$--$A_{n+1}$--bimodules, the cellular basis $\tilde{\mathcal C}$ of $A_n e_n A_n$
satisfies the property (2) of Definition \ref{gl cell} not only for $a \in A_n e_n A_n$ but also for $a \in A_{n+1}$;
that is $A_n e_n A_n$ is a cellular ideal in $A_{n+1}$.

Statement (3) of the Lemma follows from applying  Remark \ref{remark on extensions of cellular algebras}.
Statement (4)  follows from  the isomorphism $\hat J(\Ga')/\hat J(\Ga) \cong M^\la \otimes_R i(M^\la)$.
\end{proof}

\begin{corollary} \label{corollary: p.o. set for cellular structure}

 The description of the partially ordered set given in Theorem \ref{main theorem}, point (2), is valid for $k = n+1$.
\end{corollary}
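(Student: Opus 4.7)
The plan is to deduce this immediately from the induction hypothesis applied one step down (at $k = n-1$) combined with the structural description of $\Lambda_{n+1}$ furnished by Proposition \ref{lemma:  cellularity induction step}(3). The key arithmetic observation that makes the two descriptions compatible is that $n-1$ and $n+1$ have the same parity, so the index set $\{i \le n-1 : n-1-i \text{ even}\}$ coincides with $\{i \le n-1 : n+1-i \text{ even}\}$.

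First, I would invoke the inductive hypothesis at level $n-1$ to identify
$$
\Lambda_{n-1} \ =\  \coprod_{\substack{i \le n-1 \\ n-1-i \text{ even}}} \Lambda_i^{(0)} \times \{n-1\},
$$
with the partial order described in Theorem \ref{main theorem}(2). Because of the parity observation above, relabeling the second coordinate from $n-1$ to $n+1$ gives a poset isomorphism
$$
\Lambda_{n-1} \ \cong\  \coprod_{\substack{i \le n-1 \\ n+1-i \text{ even}}} \Lambda_i^{(0)} \times \{n+1\}.
$$
Next, I would apply Proposition \ref{lemma:  cellularity induction step}(3), which realizes $\Lambda_{n+1}$ as the disjoint union $\Lambda_{n-1} \cup \Lambda_{n+1}^{(0)}$, with the original partial orders preserved on each piece and with every element of $\Lambda_{n-1}$ strictly larger than every element of $\Lambda_{n+1}^{(0)}$. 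Identifying $\Lambda_{n+1}^{(0)}$ with $\Lambda_{n+1}^{(0)} \times \{n+1\}$ then yields
$$
\Lambda_{n+1} \ =\  \coprod_{\substack{i \le n+1 \\ n+1-i \text{ even}}} \Lambda_i^{(0)} \times \{n+1\}
$$
as sets.

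Finally I would verify that the partial order matches the one prescribed by Theorem \ref{main theorem}(2) for $k=n+1$. There are three cases: (a) if both $(\lambda, n+1), (\mu, n+1)$ lie in the $\Lambda_{n-1}$ piece, the comparison is inherited from the induction hypothesis and already has exactly the required form; (b) if both lie in $\Lambda_{n+1}^{(0)} \times \{n+1\}$, the comparison is inherited from $\Lambda_{n+1}^{(0)}$, which corresponds to the case $i=j=n+1$ of the prescribed order; (c) if one lies in each piece, Proposition \ref{lemma:  cellularity induction step}(3) tells us the element of $\Lambda_{n-1}$ (with $i \le n-1$) is the larger one, which is precisely the case $i<j=n+1$ of the prescribed order.

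There is no substantive obstacle here — everything is bookkeeping, and the only step that even briefly needs care is the parity matching that aligns the two index sets. Since both the cellularity and the bimodule structure have already been established in Proposition \ref{lemma:  cellularity induction step}, the corollary is just a reformulation of its part (3) in the uniform notation of Theorem \ref{main theorem}(2).
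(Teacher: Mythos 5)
Your proposal is correct and follows essentially the same route as the paper: combine Proposition \ref{lemma:  cellularity induction step}(3) with the induction hypothesis describing $\La_{n-1}$ as a union of copies of $\La_{n-1}\spp 0, \La_{n-3}\spp 0,\dots$, and observe that the resulting order is the one prescribed in Theorem \ref{main theorem}(2). Your explicit parity check and three-case verification of the order are just a more detailed writing-out of the same bookkeeping the paper leaves implicit.
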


\begin{proof}
 Combining point (3) of Proposition \ref{lemma:  cellularity induction step} with the induction assumption  (specifically the description of  $\La_{n-1}$ as the union of copies of 
$\La_{n-1}\spp 0$,  $\La_{n-3}\spp 0$, etc.), we see that $\La_{n+1}$ is the union of copies of 
$\La_{n+1}\spp 0$, $\La_{n-1}\spp 0$,  $\La_{n-3}\spp 0$, etc., with the following partial order:
the partial order agrees with the original partial order on each  $\La_i \spp 0$, and 
$\la > \mu$ if $\la \in \La_i \spp 0$,  $\mu \in \La_j \spp 0$, and $i < j$.  
\end{proof}

For the remainder of Section \ref{section:  basic construction preserves cellularity},  we denote elements of $\La_k$  ($0 \le k \le n+1$)  by ordered pairs $(\la, k)$,  where it is understood that
$\la \in \La_i\spp 0$ for some $i \le k$ with $k -i$ even.

\begin{corollary} \label{corollary: cell modules and An en An}
 Point (3) of Theorem \ref{main theorem} holds for $k = n+1$.
\end{corollary}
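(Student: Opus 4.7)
The claim splits according to whether $i = n+1$ or $i < n+1$, and in each case Proposition \ref{lemma:  cellularity induction step} supplies the necessary description of the cell module.

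If $i = n+1$, the label $(\la, n+1)$ lies in $\Lambda_{n+1}\spp 0$, and by Proposition \ref{lemma:  cellularity induction step}(3) the cell module $\Delta^{(\la, n+1)}$ is a cell module of the quotient algebra $Q_{n+1} = A_{n+1}/A_{n+1} e_n A_{n+1}$; hence the $A_{n+1}$-action factors through the quotient and $A_{n+1} e_n A_{n+1}$ annihilates $\Delta^{(\la, n+1)}$, giving the second alternative of point (3).

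If $i < n+1$, then $\la$ is identified with a label in $\Lambda_{n-1}$, and Proposition \ref{lemma:  cellularity induction step}(4) supplies an $A_{n+1}$-module isomorphism $\Delta^{(\la, n+1)} \cong A_n e_n \otimes_{A_{n-1}} \Delta^{(\la, n-1)}$. Extending scalars to $F$ via Lemma \ref{first tensor iso}, one obtains $(\Delta^{(\la, n+1)})^F \cong A_n^F e_n \otimes_{A_{n-1}^F} (\Delta^{(\la, n-1)})^F$ as $A_{n+1}^F$-modules. Since $A_{n+1}$ is cellular (Proposition \ref{lemma:  cellularity induction step}(3)) and $A_{n+1}^F$ is split semisimple (framework axiom (\ref{axiom: semisimplicity})), Lemma \ref{lemma: multiplicities in cell filtrations}(1) implies that $(\Delta^{(\la, n+1)})^F$ is a simple $A_{n+1}^F$-module. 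It therefore suffices to exhibit a single element on which $A_{n+1}^F e_n A_{n+1}^F$ acts non-trivially, for then simplicity will force $A_{n+1}^F e_n A_{n+1}^F \cdot (\Delta^{(\la, n+1)})^F = (\Delta^{(\la, n+1)})^F$.

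The tensor product is generated as an $A_n^F$-module by the vectors $e_n \otimes x$ with $x \in (\Delta^{(\la, n-1)})^F$, so the non-vanishing of $(\Delta^{(\la, n+1)})^F$ (which holds because $\Delta^{(\la, n+1)}$ has positive $R$-rank, being a cell module in the cellular structure of $A_{n+1}$) forces some $x_0$ with $e_n \otimes x_0 \neq 0$. The essential-idempotent relation $e_n^2 = \delta_n e_n$ with $\delta_n \in R \setminus \{0\}$ then yields $e_n \cdot (e_n \otimes x_0) = e_n^2 \otimes x_0 = \delta_n (e_n \otimes x_0) \neq 0$ in $(\Delta^{(\la, n+1)})^F$, because $\delta_n$ is invertible in $F$. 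This is the key computation and the only mildly delicate point of the argument. Finally, to convert the $F$-level equality into the form stated in the corollary, I apply Lemma \ref{injectivity of iota tensor id(F) with free R modules} to the inclusion of $R$-modules $A_{n+1} e_n A_{n+1} \cdot \Delta^{(\la, n+1)} \hookrightarrow \Delta^{(\la, n+1)}$ (the target being free), which identifies $(A_{n+1} e_n A_{n+1} \cdot \Delta^{(\la, n+1)}) \otimes_R F$ with its image in $\Delta^{(\la, n+1)} \otimes_R F$; by the preceding paragraph this image is all of $(\Delta^{(\la, n+1)})^F$, yielding the first alternative of point (3).
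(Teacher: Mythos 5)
Your argument is correct, but the heart of it differs from the paper's. The case $i=n+1$ is handled identically. For $i<n+1$, the paper works directly with the presentation $\Delta^{(\la,n+1)} \cong A_n e_n \otimes_{A_{n-1}} \Delta^{(\la,n-1)}$ and computes the ideal action outright: $A_n e_n A_n\,\Delta^{(\la,n+1)} = A_n e_n A_n e_n \otimes_{A_{n-1}} \Delta^{(\la,n-1)}$, and $A_n e_n A_n e_n \otimes_R F = A_n^F e_n$ because $e_n A_n^F e_n = A_{n-1}^F e_n$ (framework axiom (\ref{axiom: en An en}) together with the invertibility of $\delta_n$ in $F$); Lemma \ref{first tensor iso} then gives the claim. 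You instead argue through the representation theory over $F$: $(\Delta^{(\la,n+1)})^F$ is simple, the ideal carries it to a submodule, and the computation $e_n\cdot(e_n\otimes x_0)=\delta_n\,(e_n\otimes x_0)\neq 0$ excludes the zero submodule. Both routes are sound; the paper's is a purely module-theoretic computation that never needs simplicity of the cell module, while yours is shorter but leans on axiom (\ref{axiom: semisimplicity}) through that simplicity statement. Two small points of hygiene: you quote Lemma \ref{lemma: multiplicities in cell filtrations}(1), whose stated hypothesis is a coherent tower, but coherence of $(A_k)_{0\le k\le n+1}$ is only established later (Corollary \ref{corollary:  Ak coherent tower}); what you actually use is only that cell modules become simple after the split semisimple base change, which needs nothing beyond cellularity of $A_{n+1}$ (already proved in Proposition \ref{lemma:  cellularity induction step}) and axiom (\ref{axiom: semisimplicity}), so cite the fact in that form. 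Also, the nonvanishing of $\Delta^{(\la,n+1)}$ deserves a one-line reason rather than an appeal to its being a cell module (the definitions do not by themselves exclude rank zero); for instance $(\Delta^{(\la,n+1)})^F \cong \Ind_{A_{n-1}^F}^{A_n^F}\bigl((\Delta^{(\la,n-1)})^F\bigr)\neq 0$, since $A_{n-1}^F$ is semisimple and hence a right $A_{n-1}^F$-module direct summand of $A_n^F$. Your final descent to the statement over $R$ via Lemma \ref{injectivity of iota tensor id(F) with free R modules} is fine, indeed slightly more careful than the paper's one-line appeal to Lemma \ref{first tensor iso}.
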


\begin{proof}
 The cell modules of $A_{n+1}$ are of two types:   There are the cell modules
$\Delta^{(\la, n+1)}$ with $\la \in \La_{n+1}\spp 0$,  which are actually cell modules of
$A_{n+1}/(A_n e_n A_n) \cong Q_{n+1}$.  These satisfy $$A_n e_n A_n \ \Delta^{(\la, n+1)} = 0.$$
On the other hand,  there are the cell modules of the cellular ideal $A_n e_n A_n$, namely
$\Delta^{(\la, n+1)} = A_n e_n \otimes_{A_{n-1}} \Delta^{(\la, n-1)}$,  with
$\la \in \La_i\spp 0$  for some $i < n+1$ with $n+1 -i$ even.  These satisfy
$$A_n e_n A_n \ \Delta^{(\la, n+1)} = 
A_n e_n A_n e_n \otimes_{A_{n-1}} \Delta^{(\la, n-1)}.
$$
But 
$$
A_n e_n A_n e_n \otimes_R F = A_n^F A_{n-1}^F e_n = A_n^F e_n, 
$$ using framework axiom (\ref{axiom: en An en}),  so we have $$A_n e_n A_n \ \Delta^{(\la, n+1)}  \otimes_R F  =  \Delta^{(\la, n+1)} \otimes_R F ,$$
by application of Lemma \ref{first tensor iso}..
\end{proof}

\subsection{Cell filtrations of  restrictions and induced modules}

Next we show that the restriction of a cell module from $A_{n+1}$ to $A_n$,  and the induction of a cell module from
$A_n$ to $A_{n+1}$,  have cell filtrations.

\begin{proposition} \label{lemma: cell filtration of restrictions}
Let $(\la, n+1) \in \La_{n+1}$, and let $\Delta = \Delta^{(\la, n+1)}$ be the corresponding
cell module of $A_{n+1}$.  Then the restriction of $\Delta$ to $A_n$ has a cell filtration.
\end{proposition}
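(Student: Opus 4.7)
The plan is to split the proof according to which layer of the reflection decomposition the label $(\la, n+1)$ lives in, using the description of $\La_{n+1}$ from Corollary \ref{corollary: p.o. set for cellular structure}. Either $\la \in \La_{n+1}\spp 0$ (the ``new'' stratum, where $\Delta^{(\la, n+1)}$ is pulled back from $Q_{n+1}$), or $\la \in \La_i\spp 0$ for some $i \le n-1$ with $n+1-i$ even (the ``reflected'' stratum, where $\Delta^{(\la, n+1)}$ lives in the cellular ideal $A_n e_n A_n$ and is described by Proposition \ref{lemma:  cellularity induction step}(4)). In each case I will identify $\Res_{A_n}^{A_{n+1}}\Delta^{(\la, n+1)}$ with a module to which a cell filtration result is already available.

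In the first case, $\Delta^{(\la, n+1)}$ is annihilated by $A_{n+1} e_n A_{n+1} = A_n e_n A_n$, so the $A_{n+1}$-action factors through $Q_{n+1}$. The key point is that the restricted $A_n$-action then factors through the quotient map $A_n \to Q_n$: by framework axiom \ref{axiom: e(n-1) in An en An} we have $e_{n-1} \in A_{n+1} e_n A_{n+1}$, and hence $A_n e_{n-1} A_n \subseteq A_n e_n A_n$ acts as zero on $\Delta^{(\la, n+1)}$; since $A_n e_{n-1} A_n$ is the kernel of $A_n \to Q_n$ (by framework axiom \ref{axiom:  idempotent and Hn as quotient of An} applied at level $n$), this gives a commutative square and identifies the $A_n$-restriction with the pullback of the $Q_n$-restriction. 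By coherence of $(Q_n)_{n \ge 0}$, $\Res_{Q_n}^{Q_{n+1}}\Delta^{(\la, n+1)}$ admits a cell filtration whose subquotients are $Q_n$-cell modules; pulling the filtration back to $A_n$ yields a filtration by $A_n$-modules, and by iterating Proposition \ref{lemma:  cellularity induction step} and Corollary \ref{corollary: cell modules and An en An} down one level, the pullbacks of these $Q_n$-cell modules are precisely the $A_n$-cell modules indexed by elements of $\La_n\spp 0 \subseteq \La_n$.

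In the second case, Proposition \ref{lemma:  cellularity induction step}(4) gives $\Delta^{(\la, n+1)} \cong A_n e_n \otimes_{A_{n-1}} \Delta^{(\la, n-1)}$ as $A_{n+1}$-modules, so in particular as $A_n$-modules. Framework axioms \ref{axiom: en An en} and \ref{axiom:  An en} together imply that the map $x \mapsto x e_n$ is an isomorphism $A_n \to A_n e_n$ of $A_n$--$A_{n-1}$--bimodules (the right $A_{n-1}$-action is intertwined because $e_n$ commutes with $A_{n-1}$), so
\[
\Res_{A_n}^{A_{n+1}} \Delta^{(\la, n+1)} \;\cong\; A_n \otimes_{A_{n-1}} \Delta^{(\la, n-1)} \;=\; \Ind_{A_{n-1}}^{A_n} \Delta^{(\la, n-1)}.
\]
The inductive hypothesis on $(A_k)_{0 \le k \le n}$ asserts coherence of this finite tower, so this induced module has a cell filtration by $A_n$-cell modules. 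The main obstacle, if any, is the bookkeeping in Case 1 — verifying that the subquotient modules, after being transported through $Q_{n+1} \leftarrow A_{n+1}$ and along $A_n \to Q_n$, are genuinely identified with $A_n$-cell modules labelled by the expected elements of $\La_n\spp 0$; this bookkeeping is exactly what the inductive application of Proposition \ref{lemma:  cellularity induction step} at level $n$ provides.
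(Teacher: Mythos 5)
Your proposal is correct and follows essentially the same route as the paper: the same case split (new stratum versus reflected stratum), the same use of axiom (8) to show $A_n e_{n-1} A_n$ annihilates the restriction so that coherence of $(Q_k)$ applies, and the same identification $A_n e_n \cong A_n$ as $A_n$--$A_{n-1}$--bimodules to convert $\Res(\Delta)$ into $\Ind_{A_{n-1}}^{A_n}(\Delta^{(\la,n-1)})$, handled by the induction hypothesis. The extra bookkeeping you note in Case 1 is indeed exactly what the construction of the cell datum for $A_n$ at the previous induction step supplies.
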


\begin{proof}  Write $\Res(\Delta)$ for the restriction to $A_n$.

If $A_{n+1} e_n A_{n+1} \ \Delta = 0$, then $\Delta$ is an $Q_{n+1}$--module; moreover,
by framework axiom (\ref{axiom: e(n-1) in An en An}) from Section \ref{subsection: framework axioms},  $A_n e_{n-1} A_n\   \Res(\Delta) = 0$ as well, so $\Res(\Delta)$ is a $Q_{n}$--module.
Then it follows from the assumption of coherence of $(Q_k)_{k \ge 0}$ that $\Res(\Delta)$ has a cell filtration as an $Q_n$--module, hence as an $A_n$--module.

If $A_{n+1} e_n A_{n+1}\  \Delta  \ne 0$,  then $\la \in \La_i\spp 0$  for some $i < n$, and 
$$\Delta \cong   A_n e_n \otimes_{A_{n-1}} \Delta^{(\la, n-1)}.$$ 
Since $A_n e_n \cong A_n$ as $A_n$--$A_{n-1}$ bimodules, 
 $\Res(\Delta) \cong 
\Ind_{A_{n-1}}^{A_{n}}( \Delta^{(\la, n-1)})$,  which has a cell filtration by the induction assumption.
\end{proof}

\begin{lemma} \label{lemma:  sort of flatness}
 Let $R$ be an integral domain with field of fractions $F$.   Let $A$ be a unital $R$--algebra,  $P$ a right $A$--module,  and  $N_1 \subseteq  N_2$ left  $A$--modules,  such that
\begin{enumerate}
\item  $A^F = A \otimes_R F$ is semisimple, and 
\item  $N_2$  and $P \otimes_A N_1$  are free $R$--modules.
\end{enumerate}
Let $\iota : N_1 \to N_2$ denote the injection.  Then $$\id_P \otimes \iota : P \otimes_A N_1 \to  P \otimes_A N_2$$ is injective.
\end{lemma}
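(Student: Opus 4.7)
The plan is to reduce the injectivity question over $R$ to one over $F$, where semisimplicity of $A^F$ will supply the necessary flatness. Concretely, I will embed $P \otimes_A N_1$ into its extension of scalars $(P \otimes_A N_1) \otimes_R F$, identify the $F$-spaces on both sides with $P^F \otimes_{A^F} N_j^F$ via Lemma \ref{first tensor iso}, and then verify injectivity in that semisimple setting.

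First I would observe that the natural map $y \mapsto y \otimes 1_F$ from $P \otimes_A N_1$ into $(P \otimes_A N_1) \otimes_R F$ is injective, since $P \otimes_A N_1$ is free over $R$ by assumption; this is exactly Lemma \ref{lemma injectivity of x to x tensor 1}. Consequently, to show that $\id_P \otimes \iota$ is injective, it is enough to show that the map $(\id_P \otimes \iota) \otimes \id_F$ is injective on $(P \otimes_A N_1) \otimes_R F \to (P \otimes_A N_2) \otimes_R F$, because $\id_P \otimes \iota$ sits inside a commutative square whose other vertical arrow is injective too.

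Next I would apply Lemma \ref{first tensor iso} to identify $(P \otimes_A N_j) \otimes_R F$ with $P^F \otimes_{A^F} N_j^F$ for $j = 1, 2$, and check (by tracking simple tensors through the formula given in that lemma) that under these identifications the map induced by $\id_P \otimes \iota$ becomes $\id_{P^F} \otimes \iota^F$, where $\iota^F = \iota \otimes \id_F : N_1^F \to N_2^F$. Since $N_2$ is free over $R$, Lemma \ref{injectivity of iota tensor id(F) with free R modules} says that $\iota^F$ is injective. Finally, because $A^F$ is semisimple, every $A^F$-module is projective and therefore flat, so $P^F \otimes_{A^F} -$ preserves the injection $\iota^F$; this is precisely the flatness argument already used in the proof of Lemma \ref{beta inj}. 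Chasing back through the square then yields the injectivity of $\id_P \otimes \iota$.

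The only real obstacle is bookkeeping: one must confirm that the isomorphisms from Lemma \ref{first tensor iso} intertwine $(\id_P \otimes \iota) \otimes \id_F$ with $\id_{P^F} \otimes \iota^F$. This is a routine simple-tensor check, and once it is in hand the three injectivity inputs (freeness of $P \otimes_A N_1$ over $R$, freeness of $N_2$ over $R$, and semisimplicity of $A^F$) combine to give the conclusion.
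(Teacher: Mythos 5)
Your proof is correct and takes essentially the same route as the paper's: the same commutative square, with injectivity of $P\otimes_A N_1 \to (P\otimes_A N_1)\otimes_R F$ coming from freeness of $P\otimes_A N_1$ (Lemma \ref{lemma injectivity of x to x tensor 1}), the identification of the $F$-level maps via Lemma \ref{first tensor iso}, injectivity of $\iota\otimes \id_F$ from freeness of $N_2$ (Lemma \ref{injectivity of iota tensor id(F) with free R modules}), and flatness of $P^F$ from semisimplicity of $A^F$. One small caveat: your parenthetical claim that the other vertical arrow $P\otimes_A N_2 \to (P\otimes_A N_2)\otimes_R F$ is "injective too" is neither justified (nothing guarantees $P\otimes_A N_2$ is $R$-free) nor needed, since the diagram chase uses only the injectivity of the left vertical map and of the bottom map.
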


\begin{proof}  First, $\iota \otimes \id_F : N_1 \otimes_R F \to N_2 \otimes_R F$ is injective by Lemma \ref{injectivity of iota tensor id(F) with free R modules}.
Write $\beta = \id_P \otimes \iota$, and let 
$$\beta^F  = \id_{P^F} \otimes (\iota \otimes \id_F) : P^F \otimes_{A^F} N_1^F \to 
P^F \otimes_{A^F} N_2^F. 
$$
Since $A^F$ is semisimple, $P^F$ is projective;  hence $\beta^F$ is injective.

Consider the following diagram: 
\begin{diagram}
 P^F \otimes_{A^F} N^F_{1} 	&&\rTo^{\beta^F} &&	 P^F \otimes_{A^F} N^F_{2} 	\\
 \uTo^{\tau_{1}} &&&& \uTo_{\tau_{2}}\\
 P \otimes_{A} N_1 \otimes_R F &&\rTo^{\beta\otimes id_F} &&  P \otimes_{A} N_2 \otimes_R F\\
\uTo^{\a_1}&&&&	\uTo_{\a_2}\\
 P \otimes_{A} N_1 &&\rTo^{\beta}&&  P \otimes_{A} N_2,\\
\end{diagram}
\ignore{
$$\begin{CD}
 P^F \otimes_{A^F} N^F_{1} 	@>\beta^F>>	 P^F \otimes_{A^F} N^F_{2} 	\\
@A\tau_1 AA	@A\tau_2 AA\\
P \otimes_{A} N_1 \otimes_R F @>\beta\otimes id_F>>  P \otimes_{A} N_2 \otimes_R F\\
@A\a_1AA	@AA\a_2A\\
 P \otimes_{A} N_1 @>\beta>>   P \otimes_{A} N_2,\\
\end{CD}$$
}
where $\alpha_i$ is determined by $x \mapsto x \otimes 1_F$   and $\tau_i$ is the isomorphism of Lemma \ref{first tensor iso} ($i = 1, 2$).  Note that 
$\alpha_1$ is injective
by Lemma \ref{lemma injectivity of x to x tensor 1}, since  $ P \otimes_{A} N_1$ is assumed to be free over $R$.  One can check that
$\beta^F \circ \tau_1 \circ \alpha_1 = \tau_2 \circ \alpha_2 \circ \beta$.  It follows that $\beta$ is injective.
\end{proof}

\begin{lemma} \label{lemma:  globalization preserves cell filtrations}
 Let $M$ be an $A_{n-1}$ module with a cell filtration:
$$
(0) = M_0 \subseteq M_1 \subseteq \cdots \subseteq M_t = M,
$$
with $M_j/M_{j-1} \cong \Delta^{(\la_j, n-1)}$ for $1 \le j \le t$.  Then for $1 \le j \le t$,
\begin{enumerate}
\item $A_n e_n \otimes_{A_{n-1}} M_{j}$ is a free $R$--module,
\item $A_n e_n \otimes_{A_{n-1}} M_{j-1}$  imbeds in $A_n e_n \otimes_{A_{n-1}} M_{j}$, and
\item  $(A_n e_n \otimes_{A_{n-1}} M_{j})/(A_n e_n \otimes_{A_{n-1}} M_{j-1}) \cong 
A_n e_n \otimes_{A_{n-1}} \Delta^{(\la_j, n-1)}$.
\end{enumerate}
Thus, the $A_{n+1}$--module $A_n e_n \otimes_{A_{n-1}} M$ has a cell filtration with subquotients
$\Delta^{(\la_j, n+1)} = A_n e_n \otimes_{A_{n-1}} \Delta^{(\la_j, n-1)}$ ($1 \le j \le t$).
\end{lemma}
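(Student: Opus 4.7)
The plan is to prove all three statements simultaneously by induction on $j$, the filtration index. The base case $j=0$ is trivial since $M_0 = (0)$, so $A_n e_n \otimes_{A_{n-1}} M_0 = (0)$.

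Before starting the induction, I would make the preliminary observation that each $M_j$ is itself a free $R$--module. This follows by a short induction on $j$: each subquotient $\Delta^{(\la_j, n-1)}$ is a free $R$--module (cell modules of cellular algebras are free over the ground ring), so since $M_{j-1}$ is free, the short exact sequence $0 \to M_{j-1} \to M_j \to \Delta^{(\la_j, n-1)} \to 0$ splits as a sequence of $R$--modules, making $M_j$ free.

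For the inductive step on $j$, I would work from statement (2) to statement (3) to statement (1). To establish (2), I would apply Lemma~\ref{lemma:  sort of flatness} with $A = A_{n-1}$, $P = A_n e_n$, $N_1 = M_{j-1}$, and $N_2 = M_j$: the hypothesis that $A_{n-1}^F$ is semisimple holds by framework axiom~(\ref{axiom: semisimplicity}), $N_2 = M_j$ is free over $R$ by the preliminary observation, and $P \otimes_A N_1 = A_n e_n \otimes_{A_{n-1}} M_{j-1}$ is free over $R$ by the inductive hypothesis on $j$. Thus $\beta_j := \id \otimes \iota_j : A_n e_n \otimes_{A_{n-1}} M_{j-1} \to A_n e_n \otimes_{A_{n-1}} M_j$ is injective. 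Statement (3) is then immediate from the right exactness of the tensor product: the cokernel of $\beta_j$ is $A_n e_n \otimes_{A_{n-1}} (M_j/M_{j-1}) \cong A_n e_n \otimes_{A_{n-1}} \Delta^{(\la_j, n-1)}$, which by Proposition~\ref{lemma:  cellularity induction step}(4) is the cell module $\Delta^{(\la_j, n+1)}$ of $A_{n+1}$ and in particular is free over $R$.

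Finally, for statement (1), I would observe that $A_n e_n \otimes_{A_{n-1}} M_j$ fits into a short exact sequence of $R$--modules
\begin{equation*}
0 \to A_n e_n \otimes_{A_{n-1}} M_{j-1} \xrightarrow{\beta_j} A_n e_n \otimes_{A_{n-1}} M_j \to \Delta^{(\la_j, n+1)} \to 0,
\end{equation*}
in which the outer terms are free $R$--modules. This sequence splits over $R$ (lift an $R$--basis of the quotient), so the middle term is free. The final assertion of the lemma then follows by collecting the cell filtration of $A_n e_n \otimes_{A_{n-1}} M$ from the injections $\beta_j$ and the quotient identifications in (3).

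The only real substance lies in the application of Lemma~\ref{lemma:  sort of flatness}, and the main subtlety I expect is simply verifying that all the freeness hypotheses propagate correctly through the induction. There is no conceptual obstacle; the key idea that $A_n$ is not projective over $A_{n-1}$ integrally but $A_n^F$ is projective over $A_{n-1}^F$ has already been packaged into Lemma~\ref{lemma:  sort of flatness}, and we merely need to feed it the right data.
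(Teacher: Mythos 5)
Your proposal is correct and follows essentially the same route as the paper: induction on the filtration index $j$, injectivity of $\id\otimes\iota$ via the ``sort of flatness'' lemma with $A=A_{n-1}$, $P=A_ne_n$, $N_1=M_{j-1}$, $N_2=M_j$, identification of the cokernel by right exactness with $A_ne_n\otimes_{A_{n-1}}\Delta^{(\la_j,n-1)}$, and propagation of $R$--freeness up the filtration. The only differences are cosmetic: you make explicit the freeness of each $M_j$ over $R$ and the splitting of the short exact sequence, which the paper leaves implicit, and you cite the identification of the subquotient as the cell module $\Delta^{(\la_j,n+1)}$ of $A_{n+1}$ rather than as the induced module $\Ind_{A_{n-1}}^{A_n}(\Delta^{(\la_j,n-1)})$, which amounts to the same thing at this stage of the induction.
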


\begin{proof}  We have $M_1 \cong \Delta^{(\la_1, n-1)}$,  so $A_n e_n \otimes_{A_{n-1}} M_1$ is a free $R$--module.
Fix $j \ge 2$ and suppose that $A_n e_n \otimes_{A_{n-1}} M_{j-1}$ is a free $R$--module.
Let $\iota: M_{j-1} \to M_j$ denote the injection and let $$\beta = \id_{ A_n e_n} \otimes \iota : A_n e_n \otimes_{A_{n-1}} M_{j-1} \to A_n e_n \otimes_{A_{n-1}} M_{j}.$$
Then $\beta$ is injective by an application of Lemma \ref{lemma:  sort of flatness},  with 
$A = A_{n-1}$, $P = A_n e_n$, $N_1 = M_{j-1}$,  and $N_2 = M_j$.
The quotient  $$(  A_{n}e_n \otimes_{A_{n-1}} M_{j},)/\beta(  A_{n}e_n \otimes_{A_{n-1}} M_{j-1})$$ is free over $R$,
because
$$
\begin{aligned}
(  A_{n}e_n \otimes_{A_{n-1}} &M_{j})/\beta(  A_{n}e_n \otimes_{A_{n-1}} M_{j-1}) \\
&\cong
A_{n}e_n \otimes_{A_{n-1}}  (M_j/ M_{j-1}) \\
&\cong A_{n}e_n \otimes_{A_{n-1}}  \Delta^{(\la_j, n-1)}.
\end{aligned}
$$
Consequently,  $ A_{n}e_n \otimes_{A_{n-1}} M_{j}$ is free over $R$.  All the assertions of the lemma now follow by  induction on $j$.
\end{proof}

\begin{lemma} \label{induction to An en An  from An}
 Let  $M$ be an  $A_n$--module, 
and let $\Res(M)$ denote the restriction of $M$ to $A_{n-1}$.  We have
$$A_n e_n A_n \otimes_{A_n} M \cong  A_n e_n \otimes_{A_{n-1}} \Res(M),
$$
as $A_{n+1}$ modules.
\end{lemma}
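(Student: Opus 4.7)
The plan is to chain together two bimodule isomorphisms, the first coming from the basic construction identity of Corollary~\ref{corollary:   a basic construction isomorphism}, and the second coming from the injectivity built into framework axiom (\ref{axiom:  An en}).

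First I would rewrite the left hand side using Corollary~\ref{corollary:   a basic construction isomorphism}, which provides an isomorphism of $A_{n+1}$--$A_{n+1}$--bimodules $A_n e_n \otimes_{A_{n-1}} e_n A_n \to A_n e_n A_n$ sending $x e_n \otimes e_n y \mapsto x e_n y$. Tensoring this iso on the right with $M$ over $A_n$ (and using associativity of tensor product) gives a left $A_{n+1}$--module isomorphism
$$
A_n e_n A_n \otimes_{A_n} M \;\cong\; A_n e_n \otimes_{A_{n-1}} \bigl( e_n A_n \otimes_{A_n} M \bigr).
$$
It therefore suffices to identify $e_n A_n \otimes_{A_n} M$ with $\Res(M)$ as left $A_{n-1}$--modules.

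To do this, I would show that $e_n A_n \cong A_n$ as $A_{n-1}$--$A_n$--bimodules. Applying the involution $i$ to framework axiom (\ref{axiom:  An en}), the map $x \mapsto e_n x$ is injective from $A_n$ to $e_n A_n$, and it is obviously surjective by the definition of $e_n A_n$; thus it is a bijection. It is evidently a right $A_n$--module map. For the left $A_{n-1}$--action, framework axiom (\ref{axiom: en An en}) gives $a e_n = e_n a$ for $a \in A_{n-1}$, so $a \cdot (e_n x) = e_n (a x)$, which shows the inverse $e_n x \mapsto x$ is also a left $A_{n-1}$--module map. Hence $e_n A_n \cong A_n$ as $A_{n-1}$--$A_n$--bimodules, and consequently
$$
e_n A_n \otimes_{A_n} M \;\cong\; A_n \otimes_{A_n} M \;\cong\; \Res(M)
$$
as left $A_{n-1}$--modules.

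Tensoring this last isomorphism on the left with the $A_{n+1}$--$A_{n-1}$--bimodule $A_n e_n$ yields the desired isomorphism of left $A_{n+1}$--modules and completes the argument. No step here is an obstacle, provided one keeps careful track of bimodule structures; the only subtle point is verifying that the identification $e_n A_n \cong A_n$ respects both the right $A_n$--action and the left $A_{n-1}$--action, for which the commutation of $e_n$ with $A_{n-1}$ in axiom (\ref{axiom: en An en}) is essential.
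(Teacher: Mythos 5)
Your proposal is correct and follows essentially the same route as the paper: the paper also invokes Corollary~\ref{corollary:   a basic construction isomorphism} to write $A_n e_n A_n \cong A_n e_n \otimes_{A_{n-1}} e_n A_n \cong A_n e_n \otimes_{A_{n-1}} A_n$ as $A_{n+1}$--$A_n$ bimodules and then tensors with $M$ over $A_n$. You merely make explicit the bimodule identification $e_n A_n \cong A_n$ (via axioms (\ref{axiom: en An en}) and (\ref{axiom:  An en})) that the paper uses implicitly.
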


\begin{proof}  
\ignore{The map from $A_n e_n A_n \times M$ to  
$A_n e_n \otimes_{A_{n-1}} \Res(M)$ determined by
$$
(\sum_i x'_i e_n x''_i, m) \mapsto \sum_i (x'_i e_n \otimes x''_i m)
$$
is $R$--bilinear and $A_n$--balanced, so yields a linear map $\psi: A_n e_n A_n \otimes_{A_n} M \to  A_n e_n \otimes_{A_{n-1}} \Res(M)$.    Likewise the map from $A_n e_n \times M$ to
$A_n e_n A_n \otimes_{A_n} M$ determined by $(x e_n, m) \mapsto x e_n \otimes m$ is
$R$--bilinear and $A_{n-1}$--balanced, so gives a linear map
$\phi : A_n e_n \otimes_{A_{n-1}} \Res(M) \to A_n e_n A_n \otimes_{A_n} M $.   It is straightforward to check that $\psi$ and $\phi$ are inverses.
}
By Corollary \ref{corollary:   a basic construction isomorphism},   we have $A_n e_n A_n \cong  A_n e_n \otimes_{A_{n-1}}  e_n A_n \cong
 A_n e_n \otimes_{A_{n-1}}   A_n$  as $A_{n+1}$--$A_n$  bimodules.   Thus
 $$A_n e_n A_n \otimes_{A_n} M  \cong A_n e_n \otimes_{A_{n-1}}   A_n \otimes_{A_n} M \cong
 A_n e_n \otimes_{A_{n-1}} \Res(M).$$
\end{proof}

\begin{proposition} \label{proposition: cell filtration of induced modules}
 Let ${(\mu, n)} \in \La_n$ and let $\Delta^{(\mu, n)}$ be the corresponding cell module of 
$A_n$.  
\begin{enumerate}
\item
 $A_n e_n A_n \otimes_{A_n} \Delta^{(\mu, n)}$ has  cell filtration (as an
$A_{n+1}$--module).  In particular,    $A_n e_n A_n \otimes_{A_n} \Delta^{(\mu, n)}$ is free as an $R$--module.
\item  $A_n e_n A_n \otimes_{A_n} \Delta^{(\mu, n)}$  imbeds in $\Ind_{A_n}^{A_{n+1}}(\Delta^{(\mu, n)})$,
and $$\Ind_{A_n}^{A_{n+1}}(\Delta^{(\mu, n)})/(A_n e_n A_n \otimes_{A_n} \Delta^{(\mu, n)}) \cong
Q_{n+1} \otimes_{A_n} \Delta^{(\mu, n)}.$$
\item   $Q_{n+1} \otimes_{A_n} \Delta^{(\mu, n)}$ has  cell filtration (as a  $Q_{n+1}$--module, hence as an 
$A_{n+1}$--module).  
\item  $\Ind_{A_n}^{A_{n+1}}(\Delta^{(\mu, n)})$ has a cell filtration.
\end{enumerate}
\end{proposition}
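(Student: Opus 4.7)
For (1), I would apply Lemma \ref{induction to An en An  from An} to rewrite $A_n e_n A_n \otimes_{A_n} \Delta^{(\mu,n)}$ as $A_n e_n \otimes_{A_{n-1}} \Res_{A_{n-1}}^{A_n}\Delta^{(\mu,n)}$; the restriction has a cell filtration over $A_{n-1}$ by Proposition \ref{lemma: cell filtration of restrictions}, and Lemma \ref{lemma:  globalization preserves cell filtrations} globalizes this filtration through $A_n e_n \otimes_{A_{n-1}}(-)$ to a cell filtration of the $A_{n+1}$--module, in particular showing it is free over $R$.  For (2), I would tensor the short exact sequence of right $A_n$--modules $0 \to A_n e_n A_n \to A_{n+1} \to Q_{n+1} \to 0$ on the right with $\Delta^{(\mu,n)}$: right exactness identifies the cokernel as $Q_{n+1} \otimes_{A_n} \Delta^{(\mu,n)}$, and injectivity of the first map follows from the right--module analogue of Lemma \ref{lemma:  sort of flatness} (whose proof is symmetric in the two tensor factors), using that $A_{n+1}$ and $A_n e_n A_n \otimes_{A_n} \Delta^{(\mu,n)}$ are free over $R$ and that $A_n^F$ is split semisimple.

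For (3), I would split on $\mu$'s parity stratum.  If $\mu \in \La_n\spp 0$ then, by statement (3) of Theorem \ref{main theorem} at level $n$ (in force by induction), $A_n e_{n-1} A_n \cdot \Delta^{(\mu,n)} = 0$, so $\Delta^{(\mu,n)}$ descends to a $Q_n$--module and
$$
Q_{n+1} \otimes_{A_n} \Delta^{(\mu,n)} \cong Q_{n+1} \otimes_{Q_n} \Delta^{(\mu,n)} \cong \Ind_{Q_n}^{Q_{n+1}}\Delta^{(\mu,n)},
$$
which has a cell filtration by coherence of $(Q_k)$.  If instead $\mu \in \La_i\spp 0$ with $i < n$, I expect $Q_{n+1} \otimes_{A_n} \Delta^{(\mu,n)} = 0$, so the empty filtration is a cell filtration.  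Using $\Delta^{(\mu,n)} \cong A_{n-1} e_{n-1} \otimes_{A_{n-2}} \Delta^{(\mu,n-2)}$, the manipulation $\delta\bigl(b \otimes (e_{n-1} \otimes w)\bigr) = (b e_{n-1}) \otimes (e_{n-1} \otimes w)$ inside $\Ind_{A_n}^{A_{n+1}}\Delta^{(\mu,n)}$, together with axioms (7)--(8) which force $be_{n-1} \in A_{n+1}\cdot A_n e_n A_n \subseteq A_n e_n A_n$, shows every simple tensor lies in $A_n e_n A_n \otimes_{A_n} \Delta^{(\mu,n)}$ after multiplication by $\delta$; hence the cokernel is $\delta$--torsion.

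The main obstacle is upgrading this $\delta$--torsion conclusion to outright vanishing in the case $\delta$ is not invertible in $R$.  The plan is to compare $F$--ranks using the reflection branching diagram inherited from point (4) of Theorem \ref{main theorem} at level $n$: the composition factors of $(A_n e_n A_n \otimes_{A_n} \Delta^{(\mu,n)})^F$ (namely the $\Delta^{(\la, n+1),F}$ with $\la$ running over labels in $\Res\Delta^{(\mu,n)}$, which under the reflection identification live in $\La_j\spp 0\times\{n+1\}$ for $j = i\pm 1$) are in bijection via Frobenius reciprocity with the composition factors of $\Ind_{A_n^F}^{A_{n+1}^F}(\Delta^{(\mu,n),F})$, so the injection of (2) becomes an isomorphism after base change to $F$; combining this $F$--rank equality with the $\delta$--annihilation produces vanishing over $R$.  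Finally, for (4), the cell filtration of the sub $A_n e_n A_n \otimes_{A_n} \Delta^{(\mu,n)}$ from (1), followed by preimages under the projection map of (2) of a cell filtration of the quotient $Q_{n+1} \otimes_{A_n} \Delta^{(\mu,n)}$ from (3), assembles into the desired cell filtration of $\Ind_{A_n}^{A_{n+1}}\Delta^{(\mu,n)}$.
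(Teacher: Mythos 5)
Your treatment of points (1), (2) and (4) follows the paper's proof: Lemma \ref{induction to An en An  from An} to rewrite the submodule, globalization of a cell filtration of the restriction, the left/right--swapped version of Lemma \ref{lemma:  sort of flatness} for the imbedding, and splicing sub and quotient filtrations at the end.  (One citation slip: the cell filtration of $\Res_{A_{n-1}}^{A_n}\Delta^{(\mu,n)}$ comes from the induction hypothesis, i.e.\ coherence of the finite tower $(A_k)_{0\le k\le n}$, not from Proposition \ref{lemma: cell filtration of restrictions}, which concerns restriction from $A_{n+1}$ to $A_n$.)  The $i=n$ case of (3) also matches the paper.  The genuine gap is your handling of (3) when $\mu\in\La_i\spp 0$ with $i<n$.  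What your computation actually establishes is that $\delta\cdot\big(Q_{n+1}\otimes_{A_n}\Delta^{(\mu,n)}\big)=0$, and the proposed upgrade --- ``$F$--rank equality via Frobenius reciprocity plus $\delta$--annihilation produces vanishing over $R$'' --- is a non sequitur: since $\delta$ is invertible in $F$, $\delta$--torsion already forces the module to vanish after $\otimes_R F$, so the rank comparison adds no new information, and a $\delta$--torsion $R$--module of $F$--rank zero need not be zero (think of $R/\delta R$).  Nothing in your argument excludes the possibility that the cokernel is a nonzero torsion module, which would admit no cell filtration and would also break your proof of (4).

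The paper's argument for this case involves no $\delta$ and no torsion analysis.  It reads the induction assumption as giving the identity $A_n e_{n-1}A_n\,\Delta^{(\mu,n)}=\Delta^{(\mu,n)}$ over $R$ when $i<n$ (note this is stronger than the displayed $F$--statement in point (3) of Theorem \ref{main theorem}; the paper asserts it on the nose), and then computes
$$
Q_{n+1}\otimes_{A_n}\Delta^{(\mu,n)} \;=\; Q_{n+1}\otimes_{A_n}A_n e_{n-1}A_n\,\Delta^{(\mu,n)} \;=\; Q_{n+1}A_n e_{n-1}A_n\otimes_{A_n}\Delta^{(\mu,n)} \;=\;0,
$$
the last equality because $e_{n-1}\in A_{n+1}e_nA_{n+1}$ by axiom (\ref{axiom: e(n-1) in An en An}), so the image of $e_{n-1}$ in $Q_{n+1}=A_{n+1}/A_{n+1}e_nA_{n+1}$ is zero and hence $Q_{n+1}A_ne_{n-1}A_n=0$.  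So the repair of your $i<n$ case is to establish (or invoke) the equality $A_ne_{n-1}A_n\,\Delta^{(\mu,n)}=\Delta^{(\mu,n)}$ over $R$ itself, rather than its image under $\otimes_R F$; once that identity is in hand the vanishing is immediate, and the Frobenius reciprocity count is not needed.
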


\begin{proof}  For point (1), 
let $\Res(\Delta^{(\mu, n)})$ denote the restriction to $A_{n-1}$.
By Lemma \ref{induction to An en An  from An}, we have
$A_n e_n A_n \otimes_{A_n} \Delta^{(\mu, n)} \cong A_n e_n \otimes_{A_{n-1}} \Res(\Delta^{(\mu, n)})$, as
$A_{n+1}$ modules.  By the induction assumption stated at the beginning of Section 
\ref{section:  basic construction preserves cellularity},  $ \Res(\Delta^{(\mu, n)})$ has cell filtration,
$$
(0) = M_0 \subseteq M_1 \subseteq \cdots \subseteq M_t =  \Res(\Delta^{(\mu, n)}),
$$
with $M_j/M_{j-1} \cong \Delta^{(\la_j, n-1)}$ for some  $(\la_j, n-1) \in \La_{n-1}$.  By Lemma \ref {lemma:  globalization preserves cell filtrations},
$A_n e_n \otimes_{A_{n-1}} \Res(\Delta^{(\mu, n)})$ has a cell filtration with subquotients
$\Delta^{(\la_j, n+1)} =   A_n e_n \otimes_{A_{n-1}} \Delta^{(\la_j, n-1)}$.

Point (2) follows from Lemma \ref{lemma:  sort of flatness} (with left and right modules interchanged),  taking
$A = A_n$,  $P = \Delta^{(\mu, n)}$,  $N_1 = A_n e_n A_n$,  and $N_2 = A_{n+1}$.  Note that
$A_{n+1}$ is a  free $R$--module by Proposition \ref{lemma:  cellularity induction step}, and $A_n e_n A_n \otimes_{A_n} \Delta^{(\mu, n)} $ is a free $R$--module by point (1).  The statement regarding the quotient follows from the right exactness of tensor products.

For $n=1$,  $A_1 = Q_1$, and $\Delta^{(\mu, n)}$ is an $Q_1$--cell module;  statement (3) follows from
the assumption of coherence of $(Q_k)_{k \ge 0}$.   If $n \ge 2$,  then by the induction assumption,
either $A_n e_{n-1} A_n\  \Delta^{(\mu, n)} = \Delta^{(\mu, n)}$,  or $A_n e_{n-1} A_n \ \Delta^{(\mu, n)} = (0)$.  In the former case,
$$
\begin{aligned}
Q_{n+1} \otimes_{A_n} \Delta^{(\mu, n)} &= Q_{n+1} \otimes_{A_n}A_n e_{n-1} A_n\   \Delta^{(\mu, n)} \\
&= Q_{n+1}A_n e_{n-1} A_n \otimes_{A_n}  \Delta^{(\mu, n)} = 0,\\
\end{aligned}
$$
because  $e_{n-1} \in A_{n+1} e_n A_{n+1}$, by the framework axiom (\ref{axiom: e(n-1) in An en An}).   In the latter case,  $A_n e_{n-1} A_n$ annihilates both $Q_{n+1}$ and $\Delta^{(\mu, n)}$,  so both are $A_n/(A_n e_{n-1} A_n) \cong Q_n$--modules.
Thus $Q_{n+1} \otimes_{A_n} \Delta^{(\mu, n)} = Q_{n+1} \otimes_{Q_n} \Delta^{(\mu, n)}$, which has
an $Q_{n+1}$--cell filtration by the assumption of coherence of $(Q_k)_{k \ge 0}$.  This proves point (3).

Finally, we have an exact sequence
$$
0 \to A_n e_n A_n \otimes_{A_n} \Delta^{(\mu, n)} \to \Ind_{A_n}^{A_{n+1}}(\Delta^{(\mu, n)}) \to
Q_{n+1}  \otimes_{A_n} \Delta^{(\mu, n)} \to 0,
$$
where both $A_n e_n A_n \otimes_{A_n} \Delta^{(\mu, n)}$ and $Q_{n+1}  \otimes_{A_n} \Delta^{(\mu, n)} $ have $A_{n+1}$--cell filtrations.  Hence $ \Ind_{A_n}^{A_{n+1}}(\Delta^{(\mu, n)})$ has an  $A_{n+1}$--cell filtration.
\end{proof}

\begin{corollary} \label{corollary:  Ak coherent tower}  
The finite tower $(A_k)_{0 \le k \le n+1}$ is a coherent tower of cellular algebras.
\end{corollary}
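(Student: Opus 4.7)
The plan is to assemble the pieces already established during the induction step into a verification of the three conditions in the definition of a coherent tower for the extended finite tower $(A_k)_{0 \le k \le n+1}$.

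First, by the induction hypothesis in force, the finite tower $(A_k)_{0 \le k \le n}$ is already a coherent tower of cellular algebras, so I only need to check the conditions at the new level, namely for the inclusion $A_n \subseteq A_{n+1}$. Cellularity of $A_{n+1}$ itself is part (3) of Proposition \ref{lemma:  cellularity induction step}, so each $A_k$ for $0\le k \le n+1$ is cellular.

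Next, I would verify condition (1), consistency of involutions: this is immediate from framework axiom (\ref{axiom: involution on An}), which says there is a single involution $i$ on $\bigcup_k A_k$ preserving each $A_k$, so its restriction from $A_{n+1}$ to $A_n$ agrees with the involution on $A_n$.

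Condition (2), the existence of a cell filtration of $\Ind_{A_n}^{A_{n+1}}(\Delta^{(\mu,n)})$ for every $(\mu,n)\in \Lambda_n$, is exactly the content of Proposition \ref{proposition: cell filtration of induced modules}(4). Condition (3), the existence of a cell filtration of $\Res_{A_n}^{A_{n+1}}(\Delta^{(\la,n+1)})$ for every $(\la,n+1)\in \Lambda_{n+1}$, is exactly Proposition \ref{lemma: cell filtration of restrictions}. Assembling these observations establishes the corollary.

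There is no real obstacle remaining; all the genuine work was done in the preceding propositions (the identification of $A_n e_n A_n$ as a basic construction via $\Phi_\Gamma$, the cell net structure, and the stability of cell filtrations under the globalization functor $A_n e_n \otimes_{A_{n-1}} -$ together with the splitting of induced modules into a basic construction piece and a $Q_{n+1}$ piece). The corollary is the packaging step.
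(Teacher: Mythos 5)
Your proposal is correct and follows the paper's own proof exactly: the paper likewise combines the induction hypothesis with Proposition \ref{lemma:  cellularity induction step}, Proposition \ref{lemma: cell filtration of restrictions}, and Proposition \ref{proposition: cell filtration of induced modules}, and your proposal simply spells out which condition in the definition of a coherent tower each ingredient verifies. No gaps.
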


\begin{proof} Combine the induction hypothesis,  Proposition \ref{lemma:  cellularity induction step}, Proposition \ref{lemma: cell filtration of restrictions}, and Proposition \ref{proposition: cell filtration of induced modules}.
\end{proof}

\begin{corollary}  \label{corollary: branching diagram obtained by reflections}
  The branching diagram for the finite tower 
  $(A_k^F)_{0 \le k \le n + 1}$ is that obtained by reflections from the branching diagram
of the finite tower $(Q_k^F)_{0 \le k \le n + 1}$.
\end{corollary}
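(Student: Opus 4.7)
The plan is to leverage the induction hypothesis together with Proposition \ref{lemma: cell filtration of restrictions} and Lemma \ref{lemma: multiplicities in cell filtrations} to read off the new edges in the branching diagram. By Corollary \ref{corollary: p.o. set for cellular structure} the vertex set at level $n+1$ already has the correct indexing $\La_{n+1} = \coprod_{i \le n+1,\ n+1-i\ \text{even}} \La_i\spp 0 \times \{n+1\}$, so only the edges connecting level $n$ to level $n+1$ need to be matched against the reflections prescription applied to $\B_0$. By Lemma \ref{lemma: multiplicities in cell filtrations}, the multiplicity of $(\la, n)$ below $(\mu, n+1)$ in the branching diagram for $(A_k^F)_{0 \le k \le n+1}$ equals the number of subquotients isomorphic to $\Delta^{(\la, n)}$ in any cell filtration of $\Res_{A_n}^{A_{n+1}}(\Delta^{(\mu, n+1)})$, and Proposition \ref{lemma: cell filtration of restrictions} produces such a filtration in two cases.

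In the case $\mu \in \La_{n+1}\spp 0$, Corollary \ref{corollary: cell modules and An en An} shows that $A_n e_n A_n$ annihilates $\Delta^{(\mu, n+1)}$, so by framework axiom (\ref{axiom: e(n-1) in An en An}) the restriction to $A_n$ is actually a $Q_n$--module. Coherence of $(Q_k)$ yields a cell filtration whose subquotient multiplicities are precisely the branching multiplicities from row $n$ to row $n+1$ in $\B_0$, supplying exactly the ``upward'' reflections edges between $V_n\spp 0$ and $V_{n+1}\spp 0$.

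In the case $\mu \in \La_i\spp 0$ with $i < n+1$, Proposition \ref{lemma: cell filtration of restrictions} identifies $\Res_{A_n}^{A_{n+1}}(\Delta^{(\mu, n+1)})$ with $\Ind_{A_{n-1}}^{A_n}(\Delta^{(\mu, n-1)})$. By the Frobenius reciprocity embedded in Lemma \ref{lemma: multiplicities in cell filtrations} and the induction hypothesis, the multiplicity of $\Delta^{(\la, n)}$ in this induced module equals the multiplicity of $\Delta^{(\mu, n-1)}$ in $\Res_{A_{n-1}}^{A_n}(\Delta^{(\la, n)})$, which is the number of edges from $(\mu, n-1)$ to $(\la, n)$ in the branching diagram for $(A_k^F)_{0 \le k \le n}$. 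By the induction hypothesis this last diagram is the reflection of $\B_0$, so this multiplicity is the number of edges in $\B_0$ between $\mu \in V_i\spp 0$ and $\la \in V_{i\pm 1}\spp 0$, matching the ``reflected'' edges prescribed at the new level.

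The main point to check will be the bookkeeping: Case~1 produces the boundary edges linking $V_{n+1}\spp 0$ at level $n+1$ down to $V_n\spp 0$ at level $n$, while Case~2 produces all remaining edges between $V_i\spp 0 \times \{n+1\}$ and $V_{i\pm 1}\spp 0 \times \{n\}$ for $i < n+1$. Since the parity constraints $n+1-i$ even and $n-j$ even built into the definitions of $\La_{n+1}$ and $\La_n$ exactly match those forced by the reflections construction, the two cases together enumerate all the edges prescribed by reflections, completing the induction step.
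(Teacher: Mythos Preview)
Your proof is correct and follows essentially the same approach as the paper's: both use Lemma~\ref{lemma: multiplicities in cell filtrations} together with the explicit cell filtrations from Proposition~\ref{lemma: cell filtration of restrictions} to identify the edge multiplicities between levels $n$ and $n+1$. Your two-case split is exactly the two-case split in the proof of Proposition~\ref{lemma: cell filtration of restrictions}, and your use of Frobenius reciprocity in Case~2 to pass from $\Ind_{A_{n-1}}^{A_n}(\Delta^{(\mu,n-1)})$ back to the edges between levels $n-1$ and $n$ is the same mechanism the paper invokes.

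If anything, your write-up is slightly more complete than the paper's: the paper's proof only explicitly discusses the case $i<n+1$ (your Case~2) and relies on the reader to extract Case~1 from the reference to ``the proof of Proposition~\ref{lemma: cell filtration of restrictions}''. You spell out both cases and the bookkeeping that they jointly cover all edges prescribed by the reflections construction.
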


\begin{proof}  From the induction hypothesis, we already know that the branching diagram
for   $(A_k^F)_{0 \le k \le n }$ is  obtained by reflections from the branching diagram
of the finite tower $(Q_k^F)_{0 \le k \le n}$.  So we have only to consider the branching diagram
for $A_{n-1}^F \subseteq A_n^F \subseteq A_{n+1}^F$;  specifically, we need to show that
if $\la \in \La_i \spp 0$ with $i < n+1$ and $n+1 -i $ even,  and $(\mu, n) \in \La_n$ is arbitrary, 
then $$(\mu, n) \nearrow (\la, n+1) \text{\quad if, and only if \quad } (\la, n-1) \nearrow (\mu, n),$$
in the branching diagram for $A_{n-1}^F \subseteq A_n^F \subseteq A_{n+1}^F$,  and the number of 
edges connecting  $(\mu, n)$  and $(\la, n+1)$  is the same as the number of edges connecting 
$(\la, n-1)$  and $ (\mu, n)$.
But this follows from Lemma \ref{lemma: multiplicities in cell filtrations} and the proof of either Proposition \ref{lemma: cell filtration of restrictions},  or Proposition \ref{proposition: cell filtration of induced modules}, point (1).
\end{proof}

\medskip
\noindent
{\em  Conclusion of the proof of Theorem \ref{main theorem}.} \ \   Under the assumption that statements (1)--(4) of the theorem are valid for the finite tower  $(A_k)_{0 \le k \le n}$, for some fixed $n$,  we had to show that they are also valid for the tower  $(A_k)_{0 \le k \le n + 1}$. This was  verified in Corollary
\ref{corollary:  Ak coherent tower},  Corollary \ref{corollary: p.o. set for cellular structure}, Corollary 
\ref{corollary: cell modules and An en An}, and Corollary \ref{corollary: branching diagram obtained by reflections}.

\section{Examples}

\subsection{Preliminaries on tangle diagrams}  \label{subsection:  preliminaries on tangle diagrams}
Several of our examples involve {\em tangle diagrams} in the rectangle $\mathcal R = [0, 1] \times [0, 1]$.
Fix points $a_i \in [0, 1]$,  $i \ge 1$,  with $0 < a_1 < a_2 < \cdots$. Write
$\p i = (a_i, 1)$ and $\overline{ \p i} = (a_i, 0)$.

Recall that a {\em knot diagram} means a collection of piecewise smooth closed curves in the plane
which may have intersections and self-intersections, but only simple
transverse intersections.  At each intersection or crossing, one of the
two strands (curves) which intersect is indicated as crossing
over the other.  

An {\em $(n,n)$--tangle diagram}  is a piece of a
knot diagram in $\mathcal R$  consisting of exactly $n$ topological intervals and possibly some number of closed curves, such that:  (1)   the endpoints of the intervals are the points $\p 1, \dots \p n, \pbar 1, \dots, \pbar n$, and these are the only points of intersection of the family of curves with the boundary of the rectangle, and (2)  each interval intersects the boundary of the rectangle transversally.

An  {\em $(n,n)$--Brauer diagram} is a ``tangle" diagram  containing no closed curves, 
in which information about over and under crossings is ignored.  Two Brauer diagrams are identified if the pairs of boundary points joined by curves is the same in the two diagrams.
By convention, there is a unique $(0, 0)$--Brauer diagram,  the empty diagram with no curves.
For $n \ge 1$,  the number of $(n,n)$--Brauer diagrams is $(2n-1)!! = (2n-1)(2n-3)\cdots (3)(1)$.

A {\em Temperley--Lieb} diagram is a Brauer diagram without crossings.  For $n \ge 0$,  the  number of $(n, n)$--Temperley--Lieb diagrams is the Catalan number $\frac{1}{n+1} {2n \choose n}$.

For any of these types of diagrams, we call $P = \{\p 1, \dots, \p n, \pbar 1,\dots,  \pbar n\}$ the set of {\em vertices} of the diagram,  $P^+ =    \{\p 1, \dots, \p n\}$ the set of {\em top vertices},  and
$P^- =  \{\pbar 1,\dots,  \pbar n\}$ the set of {\em bottom vertices}.  A curve or {\em strand} in the diagram is called a {\em vertical} or {\em through} strand if it connects a top vertex and a bottom vertex,  and a {\em horizontal} strand if it connects two top vertices or two bottom vertices.

\subsection{The Brauer algebras}
\subsubsection{Definition of the Brauer algebras} \label{subsection: Brauer algebras}

Let $S$ be a commutative ring with identity, with a distinguished element $\delta$.
The Brauer algebra $\br_n(S, \delta)$ is the free $S$--module with basis the set of $(n, n)$--Brauer diagrams, and with multiplication defined as follows.
The product of two Brauer diagrams is defined
to be a certain multiple of another Brauer diagram.  Namely, given two
Brauer diagrams $a, b$, first ``stack" $b$ over $a$; the result is a planar tangle that may contain some number of closed curves.    Let $r$ denote the number of closed curves, and let $c$ be the Brauer
diagram obtained by removing all the closed curves.  Then
$
a b = \delta^r c.
$

\begin{definition}\rm
For $n \ge 1$, the {\em Brauer algebra} $\br_n(S, \delta)$ over $S$ with parameter $\delta$ is the free $S$-module with basis the set of                     
$(n,n)$-Brauer diagrams, with the bilinear product determined by the
multiplication of Brauer diagrams.  In particular, $\br_0(S, \delta) = S$.
\end{definition}

Note that the Brauer diagrams with only vertical strands are in
bijection with permutations of $\{1, \dots, n\}$, and that the
multiplication of two such diagrams coincides with the multiplication of
permutations.  Thus the  Brauer algebra contains the group algebra $S\S_n$ of
the permutation group $\mathfrak S_n$.  The identity element of the Brauer algebra is the diagram corresponding to the trivial permutation.

\subsubsection{Brief history of the Brauer algebras}
The Brauer algebras were introduced by  Brauer~\cite{Brauer} as a device
for studying the invariant theory of orthogonal and symplectic groups.
Wenzl ~\cite{Wenzl-Brauer} observed that generically,  the sequence of Brauer algebras (over a field) is obtained
by repeated Jones basic constructions from the symmetric group algebras;  he used this to show that
$\br_n(k, \delta)$ is semisimple, when $k$ is a field of characteristic zero and $\delta$ is not an integer.
Graham and Lehrer ~\cite{Graham-Lehrer-cellular}  showed that the Brauer algebras are cellular, and classified the simple modules of $\br_n(k, \delta)$ when $k$ is a field and $\delta$ is arbitrary.  Another illuminating proof of cellularity of
the Brauer algebras was given by K\"onig and Xi  ~\cite{KX-Brauer}.    Enyang's two proofs of cellularity for 
Birman--Wenzl algebras ~\cite{Enyang1, Enyang2}  also apply to the Brauer algebras.

\subsubsection{Some properties of the Brauer algebras}  In this section, write $\br_n$ for $\br_n(S, \delta)$.  
For $n \ge 1$, let $\iota$ denote  the map  from  $(n,n)$--Brauer diagrams to
$(n+1, n+1)$--Brauer  diagrams that adds an additional strand  to a diagram,  connecting $\p {n+1}$ to
$\pbar {n+1}$.
$$
\iota: \quad \inlinegraphic{tangle_box2} \quad \mapsto \quad 
\inlinegraphic{iota}
$$
The linear extension of $\iota$ to $\br_n$ is an injective unital homomorphism into
$\br_{n+1}$.  Using $\iota$,  we identify $\br_n$  with its image in $\br_{n+1}$.

For $n \ge 1$ define a map     $\cl$  from  $(n,n)$--Brauer diagrams into $\br_{n-1}$ as follows.  First ``partially close" a given $(n,n)$--Brauer diagram by adding an additional smooth curve connecting $\p n$ to $\pbar n$,
$$
 \inlinegraphic{tangle_box2} \quad \mapsto \quad 
\inlinegraphic{partial_closure}.
$$
In case the resulting ``tangle" contains a closed curve (which happens precisely when the original diagram already had a strand connecting $\p n$ to $\pbar n$), remove this loop and replace it with a factor of $\delta$.  The linear extension of $\cl$ to $\br_n$ is a (non-unital)
$\br_{n-1}$--$\br_{n-1}$ bimodule map, and $\cl\circ\iota(x)  = \delta\  x$ for $x \in 
\br_n$.

If $\delta$ is invertible in $S$, we can define $\eps_n = (1/\delta) \cl$, which is a 
conditional expectation, that is, a unital $\br_{n-1}$--$\br_{n-1}$ bimodule map.  We have
${\eps_{n+1}}\circ\iota(x)  =  x$ for $x \in 
\br_n$.   The map $\eps = \eps_1\circ \cdots \circ \eps_n : \br_n \to \br_0 \cong S$ is a normalized trace; that is, $\eps(\1) = 1$ and $\eps(a b) = \eps(b a)$ for all $a, b$.    The value of $\eps$ on a Brauer diagram $d$ is obtained as follows:  first close all the strands of $d$ by introducing new curves joining $\p j$ to $\pbar j$ for all $j$;   let $c$  be the number of components (closed loops) in the resulting
$(0,0)$--tangle;  then $\eps(d) = \delta^{c - n}$  if $d \in \br_n$.  The trace and condition expectation play an essential role in Wenzl's treatment of the structure of the Brauer algebra over $\Q(\deltabold)$
~\cite{Wenzl-Brauer},  and thus implicitly in our verification of the framework axioms in Proposition
\ref{proposition: framework axioms for Brauer}.

The involution $i$ on $(n, n)$--Brauer diagrams which reflects a diagram in the axis $y = 1/2$
extends linearly to an algebra involution of $\br_n$.  We have $\iota \circ i = i \circ \iota$  and $\cl\circ i = i \circ \cl$.

The products $a b$ and $b a$  of two Brauer diagrams have  at most as many through strands as $a$.  Consequently, the span of diagrams with at most $r$ through strands    ($r \le n$ and $n-r$ even) is a two--sided ideal $J_r$  in $\br_n$.    $J_r$ is $i$--invariant.

Let $e_j$ and $s_j$ denote the $(n, n)$--Brauer diagrams:
$$
e_j =  \inlinegraphic[scale=.7]{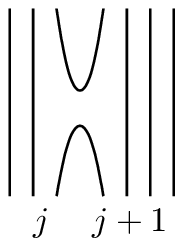}\qquad
s_j =  \inlinegraphic[scale= .7]{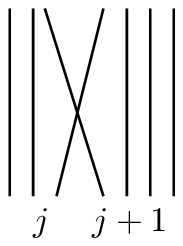} 
$$
Note that $e_j^2 = \delta e_j$,  so $e_j$ is an essential idempotent if $\delta \ne 0$, and nilpotent if $\delta = 0$.
We have $i(e_j) = e_j$ and $i(s_j) = s_j$.  
It is easy to see that $e_1, \dots, e_{n-1}$ and $s_1, \dots, s_{n-1}$ generate $\br_n$ as an algebra.   

Let $r \le n$ with $n - r$ even, and let $f_r = e_{r+1} e_{r+3} \cdots e_{n-1}$.
Any Brauer diagram with exactly $r$ through strands can be factored as $\pi_1  f_r \pi_2$,  where  $\pi_i$  are permutation diagrams.  Consequently, $J_r$ is generated by $f_r$.  In particular the
ideal  $J = J_{n-2}$ spanned by diagrams with fewer than $n$ through strands is generated by $e_{n-1}$.  We have   $\br_n/J \cong S\S_n$, as algebras with involutions.

\begin{lemma}  \label{lemma:  Brauer  axiom 6} Write $\br_n$  for $\br_n(S, \delta)$.
\begin{enumerate}
\item 
For $n \ge 2$, $e_{n} \br_{n} e_{n} = \br_{n-1} e_{n}$.
\item  $e_1 \br_1 e_1 =  \delta B_0  e_1$
\item  For $n \ge 2$,  
$e_{n}$  commutes with $ \br_{n-1} $.  
\end{enumerate}
\end{lemma}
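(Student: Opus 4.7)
All three parts flow from a single ``pull-through'' identity
\begin{equation*}
e_n\, x\, e_n \;=\; \cl(x)\, e_n \qquad (x \in \br_n),
\end{equation*}
where $\cl : \br_n \to \br_{n-1}$ is the partial-closure map defined earlier in this section.  I would verify this identity by a direct diagrammatic computation on each Brauer diagram $x \in \br_n$: embedded in $\br_{n+1}$, $x$ acquires the vertical strand $\p{n+1}$ to $\pbar{n+1}$; in the stack $e_n \cdot x \cdot e_n$ this vertical strand combines with the bottom cap of the upper $e_n$ and the top cap of the lower $e_n$ (both at positions $n, n+1$) to form exactly the closing arc from $\p n$ to $\pbar n$ that defines $\cl(x)$.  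Any closed loops that arise contribute precisely the $\delta$-powers already built into $\cl$; the only subtlety is when $x$ itself has a horizontal strand at position $n$, which must be handled case by case.

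Granting the identity, part (1) reduces to the surjectivity $\cl(\br_n) = \br_{n-1}$.  Since $\cl$ is an $\br_{n-1}$--$\br_{n-1}$--bimodule map and a short diagrammatic check gives $\cl(e_{n-1}) = \1_{n-1}$ (the added closing arc, together with the two horizontal caps of $e_{n-1}$, collapses into a single through-strand from $\p{n-1}$ to $\pbar{n-1}$), we have $\cl(y_1\, e_{n-1}\, y_2) = y_1 y_2$ for all $y_1, y_2 \in \br_{n-1}$, and surjectivity follows.  Hence $e_n \br_n e_n = \cl(\br_n)\, e_n = \br_{n-1}\, e_n$.  Part (2) is immediate: $\br_1 = S \cdot \1_1$, and $\cl(\1_1) = \delta \cdot \1_0$ because the unique through-strand of $\1_1$ closes into a single loop, so $e_1 \br_1 e_1 = \cl(\br_1)\, e_1 = \delta\, \br_0\, e_1$.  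The factor $\delta$ cannot be removed because $\br_1$ contains no analogue of $e_{n-1}$ that $\cl$ could send to an identity.

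For part (3), I would argue purely diagrammatically.  Any Brauer diagram $y \in \br_{n-1}$, embedded in $\br_{n+1}$, has its non-trivial strands supported on the vertices $\{\p i, \pbar i : 1 \le i \le n-1\}$, together with vertical through-strands at positions $n$ and $n+1$.  The diagram $e_n$ has vertical through-strands at positions $1, \dots, n-1$ and horizontal caps supported on positions $n, n+1$.  Thus in both $y\,e_n$ and $e_n\,y$ the non-trivial portions of $y$ and $e_n$ occupy disjoint columns and cannot interact, so the two stacked diagrams coincide.

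The main obstacle is the careful bookkeeping in the proof of the pull-through identity: one must correctly track the closed loops and the merging of horizontal strands in all the possible shapes of $x$, in particular when $x$ already contains a horizontal strand at position $n$ (for instance $x = e_{n-1}$).  Once this identity is nailed down, parts (1)--(3) follow quickly by the bimodule and diagrammatic arguments sketched above.
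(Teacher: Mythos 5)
Your argument is correct and follows essentially the paper's own route, just packaged more explicitly: the pull-through identity $e_n x e_n = \cl(x)\,e_n$ is a sharper form of the paper's diagrammatic observation that $e_n x e_n \in \br_{n-1} e_n$ for a Brauer diagram $x$, your surjectivity step $\cl(y_1 e_{n-1} y_2) = y_1 y_2$ is exactly the paper's identity $e_n x e_{n-1} e_n = x e_n$ (for $x \in \br_{n-1}$) giving the reverse inclusion, and (2) and (3) are the same elementary diagram checks the paper dismisses as obvious. One cosmetic slip worth fixing: the closed loop in the pull-through computation arises precisely when $x$ has a strand joining $\p n$ to $\pbar n$ (a through strand), not when $x$ has a horizontal strand at position $n$.
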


\begin{proof}     For $n \ge 2$,   if $x$ is an $(n, n)$--Brauer diagram, then $e_{n} x e_{n} \in \br_{n-1}\, e_{n}$.  Thus, $e_{n} \br_{n}  \,e_{n} \break \subseteq  \br_{n-1} \,e_{n}$.   On the other hand, for $x \in \br_{n-1}$, we 
have     $e_{n} x e_{n-1} e_{n} =  x e_{n}$.  Hence, 
 $e_{n} \br_{n}  e_{n}  \supseteq   \br_{n-1} \,e_{n}$.  This proves (1).  Points (2) and (3) are obvious.
\end{proof}

\begin{lemma} \label{B(n+1) e(n) = B(n) e(n)  for Brauer algebras}  Write $\br_n$  for $\br_n(S, \delta)$.
For $n \ge 1$, 
 $\br_{n+1}\, e_{n} = \br_{n} \, e_{n}$.  Moreover,  
 $x \mapsto  x e_{n}$  is injective from $ \br_{n}$ to $ \br_{n+1}$.
\end{lemma}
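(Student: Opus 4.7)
The plan is to verify both statements by direct combinatorial analysis of Brauer diagrams. The reverse inclusion $\br_n e_n \subseteq \br_{n+1} e_n$ is obvious from $\iota : \br_n \hookrightarrow \br_{n+1}$, so the content of the first part is $\br_{n+1} e_n \subseteq \br_n e_n$.

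First I would analyze the product $d \cdot e_n$ for an arbitrary $(n+1,n+1)$-Brauer diagram $d$. When $d$ is stacked on top of $e_n$, the vertical strands of $e_n$ at positions $1,\ldots,n-1$ pass the pairings at $\pbar j^d$ straight down to $\pbar j$ of the product; the top horizontal strand $\p n - \p{n+1}$ of $e_n$ joins the two half-strands arriving at $\pbar n^d$ and $\pbar{n+1}^d$ into a single strand, creating a closed loop and a factor $\delta$ exactly when $\pbar n^d$ and $\pbar{n+1}^d$ were already paired in $d$; and the bottom horizontal strand $\pbar n - \pbar{n+1}$ of $e_n$ is always inherited at the bottom of the product. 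Consequently $d e_n = \delta^r c$ with $r \in \{0,1\}$ and $c$ an $(n+1,n+1)$-Brauer diagram whose bottom strand is $\pbar n^c - \pbar{n+1}^c$.

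Next I would show that every such $c$ equals $\iota(\tilde c') e_n$ for some $\tilde c' \in \br_n$, which finishes the first part. Let $v$ denote the partner of $\p{n+1}^c$ in $c$; necessarily $v \in \{\p 1^c,\ldots,\p n^c\} \cup \{\pbar 1^c,\ldots,\pbar{n-1}^c\}$ because $\pbar n^c$ and $\pbar{n+1}^c$ are already paired with each other. Build $\tilde c' \in \br_n$ by pairing $\pbar n^{\tilde c'}$ with the vertex corresponding to $v$ under the natural identifications $\p j^c \leftrightarrow \p j^{\tilde c'}$ and $\pbar j^c \leftrightarrow \pbar j^{\tilde c'}$, and by inheriting all remaining pairings of $\tilde c'$ directly from those of $c$ among the same vertex set. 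Tracing strands through $\iota(\tilde c') \cdot e_n$, the vertical strand $\p{n+1} - \pbar{n+1}$ in $\iota(\tilde c')$ combined with the top horizontal strand of $e_n$ transports the partner of $\pbar n^{\tilde c'}$ onto $\p{n+1}$ of the product, while the remaining strands are matched straight through by the vertical portion of $e_n$; one then checks that the result is exactly $c$.

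For injectivity of $x \mapsto x e_n$, I would note that for any basis diagram $d \in \br_n$ the product $\iota(d) e_n$ is a single Brauer diagram with no $\delta$ factor, since $\iota(d)$ has $\pbar{n+1}$ paired vertically with $\p{n+1}$, so $\pbar n^{\iota(d)}$ and $\pbar{n+1}^{\iota(d)}$ are not paired in $\iota(d)$ and no loop arises. The inverse construction of the previous paragraph, applied to $c = \iota(d) e_n$, recovers $d$; hence $d \mapsto \iota(d) e_n$ sends distinct basis diagrams of $\br_n$ to distinct basis diagrams of $\br_{n+1}$, and linear independence of Brauer diagrams in $\br_{n+1}$ yields the injectivity of the $R$-linear map $x \mapsto x e_n$. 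The only real bookkeeping challenge is tracking how the pairings of $d$ that involved $\pbar n^d$ are redirected onto $\p{n+1}$ of the product upon multiplication by $e_n$; once the strand-tracing conventions are fixed this is entirely routine.
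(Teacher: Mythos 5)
Your proof is correct, but it takes a genuinely different route from the paper's. The paper does not analyze the diagram product directly: it quotes Wenzl's factorization result (\cite{Wenzl-Brauer}, Proposition 2.1) that any $(n+1,n+1)$--Brauer diagram is either in $\br_n$ or of the form $a\chi_n b$ with $a,b\in \br_n$, $\chi_n\in\{e_n,s_n\}$, applies it a second time to $b$, and then reduces $a\chi_n b\,e_n$ to $\br_n e_n$ using the identities $e_n^2=\delta e_n$, $s_ne_n=e_n$, $e_n\chi_{n-1}e_n=e_n$, $s_ne_{n-1}e_n=s_{n-1}e_n$, $s_ns_{n-1}e_n=e_{n-1}e_n$; injectivity is the one--liner $\cl(xe_n)=x$. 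You instead compute $d\,e_n$ diagrammatically, observe that it is $\delta^r c$ with $c$ carrying the distinguished horizontal strand joining the last two bottom vertices, show that every such $c$ is exactly $\iota(\tilde c')e_n$ by an explicit reconstruction of $\tilde c'$, and get injectivity by noting that $d\mapsto \iota(d)e_n$ sends basis diagrams to distinct basis diagrams. Your argument is self--contained (no appeal to Wenzl's factorization) and in fact gives slightly more, namely an explicit bijection showing $\br_n e_n$ is free with basis $\{d\,e_n : d \text{ an } (n,n)\text{--Brauer diagram}\}$; your inverse construction is essentially the partial closure map in disguise, so the injectivity arguments are close in spirit. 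What the paper's algebraic route buys is portability: the identical proof is reused verbatim for the BMW, cyclotomic BMW, and contour algebras, where strands carry crossing or labelling data and a purely set--theoretic pairing analysis is not available. One small caution: with the paper's convention that $ab$ means ``stack $b$ over $a$,'' stacking $d$ on top of $e_n$ computes $e_n d$ rather than $d e_n$, so as written your analysis proves $e_n\br_{n+1}=e_n\br_n$; this is harmless, since applying the involution $i$ (which fixes $e_n$ and preserves each $\br_k$), or simply mirroring the strand--tracing top--to--bottom, yields the stated right--handed version and the injectivity of $x\mapsto xe_n$.
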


\begin{proof}  
By ~\cite{Wenzl-Brauer}, Proposition 2.1,  any $(n+1, n+1)$--Brauer diagram is either already in 
$\br_{n}$, or can be written in the form $a \chi_{n} b$, with $a, b \in \br_{n}$ and
$\chi_{n} \in \{e_{n}, s_{n}\}$.  Applying this again to $b$,  either $b \in \br_{n-1}$, or $b$ can be factored as $b_1 \chi_{n-1} b_2$,  with $b_i \in \br_{n-1}$ and $\chi_{n-1} \in \{e_{n-1}, s_{n-1}\}$.  Since $e_{n}^2 = \delta e_{n}$  and $s_{n} e_{n} = e_{n}$,  it follows that if
$b \in \br_{n-1}$,  then $a \chi_{n} b e_{n} = a b  \chi_{n}  e_{n} \in \br_{n} e_{n}$.
If $b = b_1 \chi_{n-1} b_2$, then $a \chi_{n} b e_{n} =  a b_1  \chi_{n} \chi_{n-1} e_{n} b_2$.
Now we can apply the following identities:    $e_{n} \chi_{n-1} e_{n} = e_{n}$ for 
$\chi_{n-1} \in \{e_{n-1}, s_{n-1}\}$,  $s_{n} e_{n-1} e_{n} = s_{n-1} e_{n}$,  and
 $s_{n} s_{n-1} e_{n} = e_{n-1} e_{n}$ to conclude that  $a \chi_{n} b e_{n}  \in \br_{n} e_{n}$.  This shows that $\br_{n+1} e_{n} = \br_{n} e_{n}$. 

For $x \in \br_{n}$,  we have
$\cl( x e_{n}) = x$,  so the map $x \mapsto x e_{n}$ is injective from
$\br_{n}$  to $\br_{n}e_{n}$.  \end{proof}

\subsubsection{Verification of framework axioms for the Brauer algebras}

We take $R = \Z[\deltabold]$,  where $\deltabold$ is an indeterminant.   Then $R$ is the universal ground ring for the Brauer algebras;  for any commutative ring $S$ with distinguished element $\delta$, we have
$\br_n(S, \delta) \cong  \br_n(R, \deltabold)\otimes_R  S$.  Let $F = \Q(\deltabold)$ denote the field of fractions of $R$.  Write $\br_n = \br_n(R, \deltabold)$.

\begin{proposition} \label{proposition: framework axioms for Brauer} The two sequence of $R$--algebras  $(\br_n)_{n \ge 0}$ and $(R \S_n)_{n \ge 0}$  satisfy the framework axioms of  Section \ref{subsection: framework axioms}.
\end{proposition}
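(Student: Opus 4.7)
My plan is to verify each of the eight framework axioms for the pair of towers $(\br_n)_{n \ge 0}$ and $(R\S_n)_{n \ge 0}$ in turn, drawing on the diagrammatic lemmas already established in Section 5.2.3 and invoking standard results from the literature for two of the axioms.

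Axioms (\ref{axiom: involution on An}), (\ref{axiom: A0 and A1}) and (\ref{axiom:  idempotent and Hn as quotient of An}) are essentially bookkeeping. The reflection involution $i$ on Brauer diagrams preserves the subalgebras $\br_n \subseteq \br_{n+1}$ (since $\iota$ commutes with $i$), fixes each $e_j$, and restricts on permutation diagrams to the inversion anti-automorphism of $\S_n$; this gives axiom~(\ref{axiom: involution on An}) and makes the quotient map in axiom~(\ref{axiom:  idempotent and Hn as quotient of An}) a homomorphism of algebras with involution. The equalities $\br_0 = R$ and $\br_1 = R$ are immediate from the diagram count, and $R\S_0 = R\S_1 = R$, so axiom~(\ref{axiom: A0 and A1}) holds. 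For axiom~(\ref{axiom:  idempotent and Hn as quotient of An}), the ideal $J = J_{n-2}$ of diagrams with fewer than $n$ through-strands is generated by $e_{n-1}$ (as recalled in Section~5.2.3 via the factorization of Brauer diagrams through the element $f_r$), and the quotient $\br_n/J$ is spanned by the permutation diagrams, giving $\br_n/(\br_n e_{n-1}\br_n) \cong R\S_n$ as algebras with involution. Since $e_{n-1}^2 = \deltabold\, e_{n-1}$ with $\deltabold \in R^\times \cup\{$non-zero-divisor$\}$, the idempotent is essential and $i$--invariant.

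Axioms (\ref{axiom: en An en}) and (\ref{axiom:  An en}) are exactly the content of Lemmas \ref{lemma:  Brauer  axiom 6} and \ref{B(n+1) e(n) = B(n) e(n)  for Brauer algebras}, which have already been proved by diagrammatic computations. For axiom~(\ref{axiom: e(n-1) in An en An}), I will use the standard Brauer relation $e_{n-1} e_n e_{n-1} = e_{n-1}$, verifiable by stacking diagrams: the composite has horizontal strands connecting the pairs $\{\p{n-1},\p n\}$ and $\{\pbar{n-1},\pbar n\}$, no closed loops, and vertical strands elsewhere, recovering $e_{n-1}$. Hence $e_{n-1} \in \br_n e_n \br_n \subseteq \br_{n+1} e_n \br_{n+1}$.

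For axiom~(\ref{axiom Hn coherent}), the tower $(R\S_n)_{n \ge 0}$ is a coherent tower of cellular algebras: it is the specialization at $q = 1$ of Example~\ref{example: Hn coherent tower}, where Murphy's cellular bases and the Dipper--James/Jost cell filtrations of induced and restricted Specht modules were noted to be compatible with the tower structure. Axiom~(\ref{axiom: semisimplicity}), split semisimplicity of $\br_n^F = \br_n(F,\deltabold)$ over $F = \Q(\deltabold)$, is Wenzl's theorem \cite{Wenzl-Brauer}, whose proof was sketched in Section~2.6 of the excerpt. The main (and only non-elementary) obstacles are therefore the invocations of the Hecke-algebra coherence result for axiom~(\ref{axiom Hn coherent}) and Wenzl's generic semisimplicity for axiom~(\ref{axiom: semisimplicity}); every other axiom reduces to the diagram calculus or to lemmas already proved.
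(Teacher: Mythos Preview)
Your proof is correct and follows essentially the same approach as the paper: axioms (\ref{axiom Hn coherent}) and (\ref{axiom: semisimplicity}) are handled by invoking Example~\ref{example: Hn coherent tower} and Wenzl's theorem respectively, axioms (\ref{axiom: en An en}) and (\ref{axiom:  An en}) by Lemmas~\ref{lemma:  Brauer  axiom 6} and~\ref{B(n+1) e(n) = B(n) e(n)  for Brauer algebras}, axiom (\ref{axiom: e(n-1) in An en An}) by the relation $e_{n-1}e_n e_{n-1} = e_{n-1}$, and the remaining axioms by direct inspection.
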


\begin{proof} According to Example \ref{example: Hn coherent tower},  $(R\S_n)_{n \ge 0}$ is a coherent tower of cellular algebras, so axiom (\ref{axiom Hn coherent}) holds.
Framework axioms (\ref{axiom: involution on An}) and (\ref{axiom: A0 and A1}) are evident.
$\br_n^F$ is split semisimple by ~\cite{Wenzl-Brauer}, Theorem 3.2, so axiom (\ref{axiom: semisimplicity}) holds.

We take $e_{n-1} \in \br_n$ to be the element defined in the previous section.   Let us verify the axioms (\ref{axiom:  idempotent and Hn as quotient of An})--(\ref{axiom: e(n-1) in An en An}) involving $e_{n-1}$.  As observed above,  $e_{n-1}$ is $i$--invariant,  $J = \br_n e_{n-1} \br_n$ is the ideal spanned by diagrams with fewer than $n$ through strands,  and $\br_n/J \cong R\S_n$ as algebras with involution.  This verifies axiom (\ref{axiom:  idempotent and Hn as quotient of An}).
Axiom (\ref{axiom: en An en}) follows from Lemma \ref{lemma:  Brauer  axiom 6} and axiom (\ref{axiom:  An en}) from 
 Lemma \ref{B(n+1) e(n) = B(n) e(n)  for Brauer algebras}.   Axiom (\ref{axiom: e(n-1) in An en An}) holds because $e_{n-1} e_n e_{n-1} = e_{n-1}$.
\end{proof}

\begin{corollary} For any commutative ring $S$ and for any $\delta \in S$, 
the sequence of  Brauer algebras $(B_n(S, \delta))_{n \ge 0}$ is a coherent tower of cellular algebras.
$B_n(S, \delta)$  has cell modules indexed by all Young diagrams of size $n$, $n-2$, $n-4, \dots$.   The cell module labeled by
a Young diagram $\la$ has a basis labeled by up--down tableaux of length $n$ and shape $\la$.
\end{corollary}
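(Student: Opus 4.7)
The plan is to deduce the corollary by combining Theorem \ref{main theorem} with Proposition \ref{proposition: framework axioms for Brauer}, supplemented by a base-change argument. First, over the generic ring $R = \Z[\deltabold]$, Proposition \ref{proposition: framework axioms for Brauer} verifies the framework axioms for the pair of towers $(\br_n(R, \deltabold))_{n \ge 0}$ and $(R\S_n)_{n \ge 0}$. Applying Theorem \ref{main theorem} then yields that $(\br_n(R, \deltabold))_{n \ge 0}$ is a coherent tower of cellular algebras.

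Next, for an arbitrary commutative ring $S$ and arbitrary $\delta \in S$, one specializes via the unique ring homomorphism $\phi : R \to S$ sending $\deltabold \mapsto \delta$. Since $\br_n(S, \delta) \cong \br_n(R, \deltabold) \otimes_R S$ as $S$-algebras with involution, a cellular basis $\{c_{u,v}^\la\}$ of $\br_n(R, \deltabold)$ produces a basis $\{c_{u,v}^\la \otimes 1_S\}$ of $\br_n(S, \delta)$. The axioms (1)--(3) of Definition \ref{gl cell} are identities at the level of basis elements with coefficients in $R$, and these pass through the homomorphism $\phi$ without issue. For coherence, note that every cell module and every subquotient of a cell filtration over $R$ is free as an $R$-module; hence the filtrations of induced and restricted modules remain exact after tensoring with $S$, yielding cell filtrations over $S$.

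For the indexing, the partially ordered set $\Lambda_n^{(0)}$ in the cell datum for $Q_n = R\S_n$ consists of all Young diagrams of size $n$, the cell modules being the Specht modules (see Example \ref{example: Hn coherent tower} at $q = 1$). By point (2) of Theorem \ref{main theorem}, the partially ordered set $\Lambda_n$ for $\br_n$ decomposes as
$$
\Lambda_n = \coprod_{\substack{i \le n \\ n - i \text{ even}}} \Lambda_i^{(0)} \times \{n\},
$$
so the cell modules of $\br_n(S, \delta)$ are labelled by Young diagrams of sizes $n, n-2, n-4, \dots$.

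Finally, the branching diagram $\B_0$ for $(F\S_n)_{n \ge 0}$ (with $F$ the field of fractions of $R$) is Young's lattice, which has no multiple edges. By point (4) of Theorem \ref{main theorem}, the branching diagram $\B$ for $(\br_n^F)_{n \ge 0}$ is obtained from Young's lattice by reflections; paths on $\B$ from $\emptyset$ to $(\la, n)$ are precisely sequences $\emptyset = \la^{(0)}, \la^{(1)}, \ldots, \la^{(n)} = \la$ in which each $\la^{(j+1)}$ is obtained from $\la^{(j)}$ by adding or removing a single box, i.e., up-down tableaux of length $n$ and shape $\la$. By Remark \ref{remark:  point 5 comes from other statements}, the index set $\mathcal T((\la, n))$ can be chosen to be exactly this set of paths. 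The one point requiring genuine care is the preservation of coherence under base change; this is handled by the freeness of all modules involved, which makes $- \otimes_R S$ exact on the relevant short exact sequences.
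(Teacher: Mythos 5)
Your proposal is correct and follows essentially the route the paper intends: verify the framework axioms over the generic ring $\Z[\deltabold]$ (Proposition \ref{proposition: framework axioms for Brauer}), apply Theorem \ref{main theorem}, identify paths on the reflected Young's lattice with up--down tableaux via Remark \ref{remark:  point 5 comes from other statements}, and then specialize to $(S,\delta)$ using $\br_n(S,\delta)\cong \br_n(R,\deltabold)\otimes_R S$. Your explicit check that coherence survives base change (exactness of the cell filtrations under $-\otimes_R S$, which holds because the subquotients are free, so the filtration steps are $R$-split) is a detail the paper leaves implicit, but it is the right justification and does not change the argument.
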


\subsection{The Jones--Temperley--Lieb algebras}

\subsubsection{Definition of the Jones--Temperley--Lieb algebras} 
Let $S$ be a commutative ring with identity, with distinguished element $\delta$.  The Jones--Temperley--Lieb algebra $\tl_n(S, \delta)$ is the unital $S$--algebra with generators $e_1, \dots, e_{n-1}$ satisfying the relation:
\begin{enumerate}
\item $e_j^2 = \delta e_j$,
\item $e_j e_{j \pm 1} e_j = e_j$,
\item  $e_j e_k =  e_k e_j$, if $|j - k| \ge 2$,
\end{enumerate}
whenever all indices involved are in the range from $1$ to $n-1$.

\subsubsection{Diagramatic realization of the Jones--Temperley-Lieb algebras}

The $S$--span  \break  $\tilde \tl_n(S, \delta)$ of Temperley--Lieb diagrams is a subalgebra of the Brauer algebra.  We have an algebra map $\varphi$  from $\tl_n(S, \delta)$ to $\tilde \tl_n(S, \delta)$, determined by $e_j \mapsto e_j$ for $1 \le j \le n-1$.
Kauffman shows (\cite{Kauffman}, Theorem 4.3)  that the map is an isomorphism.  In fact, to show that $\varphi$ is surjective, it suffices to show that any \TL diagram can be written as a product of $e_j$'s.  Kauffman indicates by example how this is to be done, and it is not difficult to invent a measure of complexity of \TL diagrams and to show this formally, by induction on complexity.  For injectivity,  Jones shows (\cite{Jones-index}, p.\  14)  that  $\tl_n(S, \delta)$ is spanned by a family $\mathbb B$ of $\frac{1}{n+1}{2n \choose n}$ reduced words in the $e_j$'s.  
Since $\varphi$ is surjective and $\tilde \tl_n(S, \delta)$  is a free $S$--module of rank $\frac{1}{n+1}{2n \choose n}$, it follows easily that $\mathbb B$ is a basis and $\varphi$ is an isomorphism.  Because of this, we will no longer distinguish between $\tl_n(S, \delta)$ and  $\tilde \tl_n(S, \delta)$.

\subsubsection{Brief history of the Jones--Temperley--Lieb algebras}  The Jones-Temperley-Lieb algebras were introduced by Jones in his study of subfactors ~\cite{Jones-index}  and then employed by him to define the Jones link invariant ~\cite{jones-invariant}.    The name derives from the appearance of specific representations of the algebras in statistical mechanics that had been found some years earlier.  By now, there is a huge literature related to these algebras because of their multiple roles in subfactor theory,  invariants of links and 3-manifolds, statistical mechanics and quantum field theory.  The Jones--Temperley--Lieb algebras were shown to be cellular in ~\cite{Graham-Lehrer-cellular}.  Several other proofs of cellularity are known, for example ~\cite{wilcox-cellular, green-martin-tabular}.

\subsubsection{Some properties of the Jones--Temperley--Lieb algebra}  The Brauer algebra maps
$\iota$,  $\cl$,  $\vareps_n$  (when $\delta$ is invertible),  and $i$  restrict to maps
of the Jones--\TL algebras having similar properties.  For example, $i$ is an algebra involution on each
$\tl_n(S, \delta)$ and $i \circ \iota = \iota\circ i$.

The span of \TL diagrams having at least one horizontal strand is an ideal $J$ in $\tl_n(S, \delta)$, and
$\tl_n(S, \delta)/J \cong S$.
The proof of surjectivity of $\varphi$ sketched above shows that any \TL diagram with at least one horizontal edge
can be written as a non-trivial product of $e_j$'s; so $J$ is equal to the ideal generated by all of the $e_j$'s.
However, the identities $e_j e_{j+1} e_j = e_j$ imply that $J$ is the ideal generated by $e_{n-1}$.

\subsubsection{Verification of the framework axioms for the Jones--\TL algebras}
We take $R = \Z[\deltabold]$,  where $\deltabold$ is an indeterminant.   Then $R$ is the universal ground ring for the  Jones--\TL algebras;  for any integral domain $S$ with distinguished element $\delta$, we have
$\tl_n(S, \delta) \cong  \tl_n(R, \deltabold)\otimes_{R} S$.  Let $F = \Q(\deltabold)$ denote the field of fractions of $R$.  Write $\tl_n = \tl_n(R, \deltabold)$.

\begin{proposition} \label{proposition: framework axioms for TL} The two sequences of $R$--algebras  $(\tl_n)_{n \ge 0}$ and $(R)_{n \ge 0}$  satisfy the framework axioms of  Section \ref{subsection: framework axioms}.
\end{proposition}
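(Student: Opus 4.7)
The plan is to verify each of the eight framework axioms of Section \ref{subsection: framework axioms} in turn, in complete parallel with the verification for the Brauer algebras in Proposition \ref{proposition: framework axioms for Brauer}; most assertions are either immediate or can be inherited from (or are simpler than) the Brauer case, since $\tl_n$ sits inside $\br_n$ as an $i$--invariant subalgebra containing the idempotents $e_j$ and closed under the maps $\iota$, $\cl$, and $i$.

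Axiom (\ref{axiom Hn coherent}) is trivial because $Q_n = R$ for every $n$ is tautologically a coherent tower of cellular algebras. Axioms (\ref{axiom: involution on An}) and (\ref{axiom: A0 and A1}) are immediate: the Brauer involution $i$ restricts to an algebra involution on each $\tl_n$, and $\tl_0 = \tl_1 = R$ by direct inspection of the Catalan count. Axiom (\ref{axiom: semisimplicity}) is the classical generic semisimplicity of the Temperley--Lieb algebras over $F = \Q(\deltabold)$; one can invoke Jones's original analysis \cite{Jones-index} or derive it from Schur--Weyl duality for $U_q(\mathfrak{sl}_2)$ with $q$ generic.

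For axiom (\ref{axiom:  idempotent and Hn as quotient of An}), I take the essential idempotent to be $e_{n-1}$, which is $i$--invariant and satisfies $e_{n-1}^2 = \delta e_{n-1}$. Using the relation $e_j = e_j e_{j+1} e_j$ (and its mirror) iteratively, every $e_k$ lies in the two--sided ideal generated by $e_{n-1}$; since the ideal $J$ spanned by TL diagrams with at least one horizontal strand is generated by the $e_k$'s, we have $\tl_n e_{n-1} \tl_n = J$, and consequently $\tl_n/(\tl_n e_{n-1} \tl_n) \cong R = Q_n$ as algebras with involution.

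For axiom (\ref{axiom: en An en}), commutativity $e_n a = a e_n$ for $a \in \tl_{n-1}$ follows because $\tl_{n-1}$ is generated by $e_1, \dots, e_{n-2}$, each commuting with $e_n$. The containment $e_n \tl_n e_n \subseteq \tl_{n-1} e_n$ can be seen diagrammatically: for any TL diagram $d \in \tl_n \subseteq \tl_{n+1}$, the sandwich $e_n d e_n$ is a diagram in which the strands of $d$ meeting positions $n$ and $n+1$ are capped off at top and bottom by the horizontal strands of the two $e_n$'s, producing $\delta^r c e_n$ for some $c \in \tl_{n-1}$ and some $r \ge 0$; alternatively, it follows from the Brauer analog (Lemma \ref{lemma:  Brauer  axiom 6}) combined with $\tl_{n+1} \cap \br_{n-1} e_n = \tl_{n-1} e_n$.

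For axiom (\ref{axiom:  An en}), I would mimic the proof of Lemma \ref{B(n+1) e(n) = B(n) e(n)  for Brauer algebras}, using the fact that every TL diagram in $\tl_{n+1}$ not already in $\tl_n$ factors as $a e_n b$ with $a, b \in \tl_n$ (we do not need $s_n$, since TL admits no crossings); then iterating on $b$ and using $e_n e_{n-1} e_n = e_n$ together with the commutativity of $\tl_{n-1}$ with $e_n$ collapses $a e_n b e_n$ into $\tl_n e_n$. Injectivity of $x \mapsto x e_n$ on $\tl_n$ is immediate from $\cl(x e_n) = x$. Finally, axiom (\ref{axiom: e(n-1) in An en An}) is the relation $e_{n-1} = e_{n-1} e_n e_{n-1} \in \tl_{n+1} e_n \tl_{n+1}$. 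The main obstacle is axiom (\ref{axiom: semisimplicity}) if one wishes a self--contained argument; otherwise the verification is a routine exercise in TL diagrammatics, lighter than in the Brauer case.
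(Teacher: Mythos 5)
Your verification is correct and takes essentially the same route as the paper: axioms (1)--(3) and (8) are immediate, axiom (5) follows because the ideal spanned by diagrams with a horizontal strand is generated by $e_{n-1}$ with quotient $R$, axiom (6) is checked diagrammatically exactly as for the Brauer algebras, and axiom (7) uses the factorization of any $(n+1,n+1)$--Temperley--Lieb diagram not in $\tl_n$ as $a e_n b$ with $a,b \in \tl_n$ (the paper cites Jones for this) together with the single identity $e_n e_{n-1} e_n = e_n$ and $\cl(x e_n)=x$ for injectivity. The only cosmetic difference is the source cited for generic semisimplicity of $\tl_n^F$ (the paper invokes Goodman--de la Harpe--Jones, Theorem 2.8.5, rather than Jones's original analysis or quantum Schur--Weyl duality).
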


\begin{proof} Axioms (\ref{axiom Hn coherent}), (\ref{axiom: involution on An}), and  (\ref{axiom: A0 and A1}) are obvious.    
For semisimplicity of $\tl_n^F$,  see  ~\cite{GHJ},  
Theorem 2.8.5.    This gives axiom  (\ref{axiom: semisimplicity}).
We checked axiom (\ref{axiom:  idempotent and Hn as quotient of An})  in the previous section.  The proof for
axiom (\ref{axiom: en An en}) is the same as for the Brauer algebras.

According to ~\cite{Jones-index}, Lemma 4.1.2, any 
 $(n+1, n+1)$--\TL diagram is either already in 
$\tl_{n}$, or can be written in the form $a e_{n} b$, with $a, b \in \tl_{n}$.   Given this,  the verification of
axiom (\ref{axiom:  An en}) is the same as for the Brauer algebras;  we have to use only the identity
$e_{n} e_{n-1} e_{n} = e_{n}$  in place of several similar identities for the Brauer algebras.  

As for the Brauer algebras, axiom 
(\ref{axiom: e(n-1) in An en An})  follows from the identity $e_{n-1} e_n e_{n-1} = e_{n-1}$.
 \end{proof}
 
 \begin{corollary}  For any ring $S$ and $\delta \in S$, the sequence of Jones--Temperley--Lieb algebras $(\tl_n(S, \delta))_{n \ge 0}$ is a coherent tower of cellular algebras.   The cell modules of $\tl_n(S, \delta)$  can be labeled by Young diagrams with one or two rows and size $n$, and the basis of the cell module labeled by $\lambda$ by standard tableaux of shape $\la$.
 \end{corollary}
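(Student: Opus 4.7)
The plan is to deduce this Corollary as a direct application of the Main Theorem \ref{main theorem} to the pair of towers verified in Proposition \ref{proposition: framework axioms for TL}, followed by a base-change argument and a combinatorial identification of the labels. First I would work over the generic ring $R=\Z[\deltabold]$: since Proposition \ref{proposition: framework axioms for TL} establishes that $(\tl_n)_{n\ge 0}$ and $(Q_n)_{n\ge 0}:=(R)_{n\ge 0}$ satisfy the framework axioms, Theorem \ref{main theorem} immediately yields that $(\tl_n)_{n\ge 0}$ is a coherent tower of cellular algebras over $R$ with cell datum described by points (2) and (4) of the theorem.

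To descend to an arbitrary commutative ring $S$ with distinguished element $\delta$, I would invoke the isomorphism $\tl_n(S,\delta)\cong\tl_n(R,\deltabold)\otimes_R S$ induced by $\deltabold\mapsto\delta$. A cellular basis over $R$ remains a basis under tensoring with $S$, and any cell filtration of an induced or restricted module is preserved (with subquotients again cell modules) because the cell modules themselves base-change compatibly, since they are free $R$-modules. Thus cellularity and coherence pass to $S$.

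For the combinatorial description of the cell data, observe that because $Q_n=R$, the partially ordered set $\La_n\spp 0$ is a singleton for each $n$. Point (2) of Theorem \ref{main theorem} then gives
$$\La_n\;=\;\coprod_{\substack{i\le n\\ n-i\text{ even}}} \La_i\spp 0\times\{n\},$$
which I would identify with the set of two-row Young diagrams of size $n$ via $i\mapsto\bigl((n+i)/2,(n-i)/2\bigr)$, since $i$ runs over $\{n,n-2,n-4,\dots\}$. The branching diagram $\B_0$ for $(Q_n^F)$ is a single vertical chain, so the reflected diagram $\B$ has no multiple edges. By Remark \ref{remark:  point 5 comes from other statements}, the basis of $\Delta^{(*,n)}$ at height $i$ is indexed by paths in $\B$ from $\emptyset$ to $(i,n)$. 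Such a path is a lattice walk $0=x_0,x_1,\dots,x_n=i$ with $|x_{j+1}-x_j|=1$ and $x_j\ge 0$, and the standard ballot bijection identifies these walks with standard Young tableaux of shape $\bigl((n+i)/2,(n-i)/2\bigr)$, completing the identification.

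The only non-routine step is the combinatorial bijection between paths and standard tableaux, and even this is entirely classical; there is no genuine obstacle, since all of the analytic content lies in Proposition \ref{proposition: framework axioms for TL} and Theorem \ref{main theorem}, both already in hand.
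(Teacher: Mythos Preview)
Your proposal is correct and follows essentially the same approach as the paper's own proof: apply Theorem \ref{main theorem} via Proposition \ref{proposition: framework axioms for TL}, then identify the vertices of the reflected branching diagram with two-row Young diagrams and the paths with standard tableaux. The paper's proof is terser---it simply remarks on the labeling, citing \cite{GHJ} for the branching diagram---while you spell out the base-change step and the ballot-sequence bijection explicitly, but there is no substantive difference in strategy.
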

 
 \begin{proof}  We only have to remark that the vertices on the $n$-th row of the branching diagram for
 $(\tl_k^F)_{k\ge 0}$  (see ~\cite{GHJ},  Lemma 2.8.4) can be labeled by Young diagrams of size $n$ with no more than 2 rows, and the paths on the branching diagram by standard tableaux. 
 (Alternatively,  the vertices on the $n$-th row of the branching diagram can be labeled by Young diagrams with one row and size $n$, $n-2$, $n-4, \dots$,  and the paths on the branching diagram by up--down tableaux.)
 \end{proof}

\subsection{The Birman--Wenzl--Murakami (BMW) algebras}

\subsubsection{Definition of the BMW algebras}
The BMW algebras were first introduced by Birman and Wenzl ~\cite{Birman-Wenzl} and independently by Murakami ~\cite{Murakami-BMW} as abstract algebras defined by generators and relations.  The version of the presentation given here follows ~\cite{Morton-Wassermann}.

\def\hods{\unskip\kern.55em\ignorespaces}
\def\mods{\vskip-\lastskip\vskip4pt}
\def\ods{\vskip-\lastskip\vskip4pt plus2pt}
\def\bods{\vskip-\lastskip\vskip12pt plus2pt minus2pt}

\begin{definition}  \label{definition: BMW algebra}
Let $S$ be a commutative unital ring with invertible elements $\rho$ and $q$ and an element $\delta$ satisfying $\rho\inv - \rho = (q\inv -q)(\delta -1)$.  The {\em Birman--Wenzl--Murakami algebra}
$\bmw n(S; \rho, q, \delta)$ is the unital $S$--algebra 
 with generators $g_i^{\pm 1}$  and
$e_i$ ($1 \le i \le n-1$) and relations:
\begin{enumerate}
\item (Inverses) \hods $g_i g_i\inv = g_i\inv g_i = 1$.
\item (Essential idempotent relation)\hods $e_i^2 = \delta e_i$.
\item (Braid relations) \hods $g_i g_{i+1} g_i = g_{i+1} g_i g_{i+1}$ 
and $g_i g_j = g_j g_i$ if $|i-j|  \ge 2$.
\item (Commutation relations)  \hods $g_i e_j = e_j g_i$  and
$e_i e_j = e_j e_i$  if $|i-j|\ge 2$. 
\item (Tangle relations)\hods $e_i e_{i\pm 1} e_i = e_i$, $g_i
g_{i\pm 1} e_i = e_{i\pm 1} e_i$, and $ e_i  g_{i\pm 1} g_i=   e_ie_{i\pm 1}$.
\item (Kauffman skein relation)\hods  $g_i - g_i\inv = (q - q\inv) (1 - e_i)$.
\item (Untwisting relations)\hods $g_i e_i = e_i g_i = \rho\inv e_i$,
and $e_i g_{i \pm 1} e_i = \rho e_i$.
\end{enumerate}
\end{definition}

\subsubsection{Geometric realization of the BMW algebras}
A geometric realization of the BMW algebra is as the algebra of framed $(n, n)$--tangles in the disc cross the interval, modulo certain skein relations.  It is more convenient, at least for our purposes, to describe this geometric version in terms of tangle diagrams.

First, tangle diagrams can be multiplied by stacking, as for Brauer or \TL diagrams (but closed loops are allowed, and there is no reduction by removing closed loops after stacking).   Recall that our convention is that the product $ab$ of tangle diagrams is given by stacking $b$ over $a$.  This makes $(n, n)$--tangle diagrams into a monoid, the identity being the tangle diagram in which each top vertex $\p j$ is connected to the bottom vertex $\pbar j$ by a vertical line segment, when $n \ge 1$.
(The identity for the monoid of $(0, 0)$--tangle diagrams is the empty tangle.)

\medskip
\vbox{
\begin{eqnarray*}
\text{I}&\quad\ &\inlinegraphic{right_twist} \quad \longleftrightarrow
 \quad \inlinegraphic{vertical_line} \quad
  \longleftrightarrow  \quad \inlinegraphic{left_twist}\\
\text{II}&\quad\ &\inlinegraphic[scale =.5] {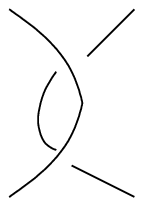} \quad 
\longleftrightarrow 
\quad \inlinegraphic[scale=1.75]{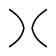} \\
  \text{III}&\quad\ &\inlinegraphic{ReidIIIleft} \quad \longleftrightarrow \quad 
\inlinegraphic{ReidIIIright} 
\end{eqnarray*} 

\centerline{Reidemeister moves}
}

\medskip
Two tangle diagrams are said to be {\em regularly isotopic} if they are related by a sequence of Reidemeister moves of types II and III, followed by an isotopy of $\mathcal R$ fixing the boundary.
(Reidemeister moves of type I are not allowed.)  See the figure above for the Reidemeister moves.

Stacking of tangle diagrams respects regular isotopy; thus one obtains a monoid structure on the regular isotopy classes of $(n, n)$--tangle diagrams. Let us denote this monoid by $\mathcal U_n$.
Let $S$ be a ring with elements $\rho$, $q$ and $\delta$ as  in the definition of the BMW algebras.
The {\em Kauffman tangle algebra} $\kt n(S; \rho, q, \delta)$ is the monoid algebra $S\ \mathcal U_n$ modulo the following skein relations:
\begin{enumerate}
\item Crossing relation:
$
\quad \inlinegraphic[scale=.6]{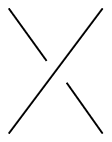} - \inlinegraphic[scale=.3]{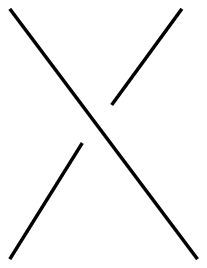} 
\quad = 
\quad
(q\inv - q)\,\left( \inlinegraphic[scale=1.2]{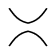} - 
\inlinegraphic[scale=1.2]{id_smoothing}\right).
$
\item Untwisting relation:
$\quad 
\inlinegraphic{right_twist} \quad = \quad \rho \quad
\inlinegraphic{vertical_line} \quad\ \text{and} \quad\ 
\inlinegraphic{left_twist} \quad = \quad \rho\inv \quad
\inlinegraphic{vertical_line}. 
$
\item  Free loop relation:  $T\, \cup \, \bigcirc = \delta \, T, $  where $T\, \cup \, \bigcirc$ means the union of a tangle diagram $T$ and a closed loop having no crossings with $T$.
\end{enumerate}

Let $E_j$ and $G_j$ denote the following $(n,n)$--tangle diagrams:
$$
E_j =  \inlinegraphic[scale=.7]{ordinary_E_j}\qquad
G_j =  \inlinegraphic[scale= .7]{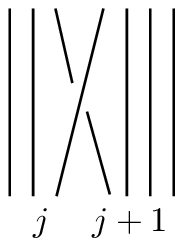} 
$$
Morton and Wassermann \cite{Morton-Wassermann} showed that the assignments $e_j \mapsto E_j$ and $g_j \mapsto G_j$   determine an isomorphism from $\bmw n(S; \rho, q, \delta)$ to
$\kt n(S; \rho, q, \delta)$.  Given this, we will no longer distinguish between the BMW algebras and the Kauffman tangle algebras.  (However, we remark that it is possible to use our techniques to recover
the theorem of Morton and Wasserman, using only results  in the original paper of Birman and Wenzl;  we prove the analogous isomorphism theorem  for the cyclotomic BMW algebras in Section \ref{The cyclotomic Birman--Wenzl--Murakami (BMW) algebras}, and the result for the ordinary BMW algebras is a special case.)

\subsubsection{Brief history of the BMW algebras} The origin of the BMW algebras was in knot theory.  Kauffman defined \cite{Kauffman} an invariant of regular isotopy for links in $S^3$, determined by skein relations.  Birman and Wenzl ~\cite{Birman-Wenzl} and Murakami ~\cite{Murakami-BMW}  then defined the BMW algebras in order to give an algebraic setting for the Kauffman invariant.  The BMW algebras were implicitly modeled on algebras of tangles.  The definition of the Kauffman tangle algebra was made explicit by Morton and Traczyk ~\cite{Morton-Traczyk}, who also showed that $\kt n(S; \rho, q, \delta)$ is free as an $S$--module of rank
$(2n-1)!!$.  Morton and Wassermann ~\cite{Morton-Wassermann} showed that the BMW algebras and Kauffman tangle algebras are isomorphic.

Xi showed ~\cite{Xi-BMW} that the tangle basis of Morton and Traczyk is a cellular basis.  Enyang has exhibited two cellular bases of BMW algebras;  the first ~\cite{Enyang1} is a tangle type basis, and the second ~\cite{Enyang2} is a basis indexed by up--down tableaux, which demonstrates the coherence of the cellular structures on $(\bmw n)_{n \ge 0}$.

\subsubsection{Some properties of the BMW algebras} 
In the following, we write $\bmw n$  for  \break $\bmw n(S; \rho, q, \delta)$.

 The BMW algebras have an algebra involution 
$i$ uniquely determined by $i(e_j) = e_j$ and $i(g_j) = g_j$ for all $j$.  The action of $i$ on tangle diagrams is by the rotation through the axis $y = 1/2$.  (It is by rotation rather than reflection, since the reflection would take $g_j \mapsto g_j\inv$.)

For $n \ge 0$, there is a unique homomorphism $\iota$ from $\bmw n$  to $\bmw {n+1}$  determined by $e_i \mapsto e_i$  and $g_i \mapsto g_i$  for $1 \le i \le n-1$.   On the level of tangle diagrams, the map is given by adding
a new vertical strand connecting $\p {n+1}$  and $\pbar {n+1}$,  as for the Brauer algebras.

For $n \ge 1$, a map $\cl$ from $(n, n)$--tangle diagrams to 
$(n-1, n-1)$--tangle diagrams can be defined  as for Brauer diagrams.  The linear extension of this map respects
regular isotopy and the Kauffman skein relations, so determines a linear map from 
$\bmw n$ to $\bmw {n-1}$.   We have $i  \circ \cl = 
 \cl \circ i$ and $ \cl \circ \iota = \delta \, x$.  Moreover,  for   $x \in \bmw{n}$, we have
 $x  = {\rm cl} ( \iota(x) e_n)$,   so it follows that $\iota: \bmw n \to \bmw {n+1}$  is injective.
 The involution  $i$ and inclusion $\iota$ satisfy $i\circ \iota = \iota\circ i$.
  Using $\iota$, we identify $\bmw n$ as a subalgebra of $\bmw {n+1}$.

 If $\delta$ is invertible in $S$, we can define $\eps_n = (1/\delta) \cl$, which is a conditional expectation, that is, an unital $\bmw {n-1}$--$\bmw {n-1}$  bimodule map.  We have  $\eps_{n+1} \circ \iota(x) = x$ for $x \in \bmw n$.
 
 The ideal $J$  in $\bmw n$  generated by $e_{n-1}$  contains $e_j$ for all $j$ because of  the relations
 $e_j e_{j+1} e_j = e_j$.    It follows from the BMW relations that $\bmw n/J $ is isomorphic to the Hecke algebra $H_n(S; q^2)$  with the quadratic relation $g_j - g_j\inv = q - q\inv$,  or 
 $(g_j - q) (g_j + q\inv) = 0$. 
 
 \begin{lemma} \label{lemma:  axiom 6 for BMW} \mbox{}
 \begin{enumerate}
\item 
For $n \ge 2$, $e_{n} \bmw {n} e_{n} = \bmw {n-1} e_{n}$.
\item  $e_1 \bmw 1 e_1 =  \delta\, \bmw 0  \,e_1$
\item  For $n \ge 1$,  
$e_{n}$  commutes with $ \bmw {n-1} $.  
\end{enumerate}
 \end{lemma}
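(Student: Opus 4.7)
Statements (2) and (3) are essentially immediate. Since $\bmw 1$ has no generators in Definition~\ref{definition: BMW algebra}, $\bmw 1 = R = \bmw 0$, so $e_1 \bmw 1 e_1 = R e_1^2 = \delta R e_1$, giving (2). For (3), the subalgebra $\bmw{n-1}$ is generated by $e_j$ and $g_j\pmone$ with $1 \le j \le n-2$, each of which satisfies $|j - n| \ge 2$; hence every generator commutes with $e_n$ by the commutation relations in Definition~\ref{definition: BMW algebra}, so $e_n$ commutes with $\bmw{n-1}$.

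For (1), the inclusion $\bmw{n-1} e_n \subseteq e_n \bmw n e_n$ is quick: for $x \in \bmw{n-1}$, the tangle relation $e_n e_{n-1} e_n = e_n$ combined with (3) gives $x e_n = x(e_n e_{n-1} e_n) = e_n(xe_{n-1})e_n \in e_n \bmw n e_n$.

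For the reverse inclusion, the plan is to first establish a Wenzl-type normal form: every element of $\bmw n$ lies in the $R$-linear span of $\bmw{n-1} \,\cup\, \bmw{n-1}\,\chi\,\bmw{n-1}$ for $\chi \in \{e_{n-1},\, g_{n-1},\, g_{n-1}\inv\}$. This is the BMW analog of Wenzl's \cite{Wenzl-Brauer} Proposition~2.1 for Brauer algebras, and can be proved via the tangle model supplied by Morton--Wassermann: trace the strand of a tangle diagram $T$ incident to $\pbar n$; if it terminates at $\pbar{n-1}$ the diagram factors across a horizontal cap-cup, yielding the form $ae_{n-1}b$; otherwise the strand runs to the top of the diagram, and Reidemeister II and III moves together with the Kauffman skein relation can be used to move all crossings off the rightmost strand except possibly one, giving $a\chi b$ with $\chi \in \{1, g_{n-1}, g_{n-1}\inv\}$. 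Granted this normal form, it suffices to verify $e_n y e_n \in \bmw{n-1}e_n$ for $y$ of each shape. If $y \in \bmw{n-1}$, then by (3), $e_n y e_n = y e_n^2 = \delta y e_n \in \bmw{n-1} e_n$. If $y = a\chi b$ with $a,b \in \bmw{n-1}$, then by (3),
\[
e_n a\chi b\, e_n \;=\; a\,(e_n \chi e_n)\, b \;=\; c_\chi \cdot ab \cdot e_n,
\]
where the scalar $c_\chi \in R$ is determined by the three identities $e_n e_{n-1} e_n = e_n$, $e_n g_{n-1} e_n = \rho e_n$, and $e_n g_{n-1}\inv e_n = \rho\inv e_n$. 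The third identity is obtained from the Kauffman skein relation $g_{n-1}\inv = g_{n-1} - (q-q\inv)(1 - e_{n-1})$ together with the first two identities and the parameter constraint $\rho\inv - \rho = (q\inv - q)(\delta - 1)$.

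The main obstacle is the Wenzl-type normal form; the rest amounts to the scalar bookkeeping carried out above. The tangle argument is routine but requires some care in handling over/under crossing data via the skein relation, which is the feature that distinguishes the BMW case from the Brauer case, where Wenzl's original proof applies directly.
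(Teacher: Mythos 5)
Your proof is correct, but for the containment $e_n \bmw{n} e_n \subseteq \bmw{n-1} e_n$ it takes a genuinely different route from the paper's. Parts (2), (3) and the reverse containment are exactly as in the paper, and your scalar bookkeeping checks out: $e_n e_{n-1} e_n = e_n$, $e_n g_{n-1} e_n = \rho\, e_n$, and $e_n g_{n-1}\inv e_n = \rho\inv e_n$ (the last following from the skein relation together with $\rho\inv - \rho = (q\inv - q)(\delta - 1)$). The paper, however, uses no normal form at all: it transports the Brauer-algebra argument to the Kauffman tangle picture (available via Morton--Wassermann \cite{Morton-Wassermann}), where sandwiching an $(n,n)$--tangle diagram between two copies of $E_n$ visibly caps off the $n$-th strand, so $e_n x e_n = \cl(x)\, e_n \in \bmw{n-1} e_n$ on the nose, with no case analysis and no skein computation. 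You instead reduce to the spanning statement $\bmw{n} = \bmw{n-1} + \bmw{n-1} e_{n-1} \bmw{n-1} + \bmw{n-1} g_{n-1} \bmw{n-1}$; this is precisely \cite{Birman-Wenzl}, Lemma 3.1 (your third element $g_{n-1}\inv$ is redundant by the skein relation), which the paper itself invokes, but only for the subsequent lemma that $\bmw{n+1} e_n = \bmw{n} e_n$. What your route buys: once the Birman--Wenzl spanning lemma is granted, the verification is purely relation-theoretic and does not lean on the tangle realization, which fits the spirit of the paper's later cyclotomic treatment where the tangle isomorphism is deliberately not assumed. What it costs: the spanning lemma is the heavy ingredient, and your sketched diagrammatic proof of it is the one weak point --- the strand incident to $\pbar n$ may terminate at any bottom vertex, not only at $\pbar {n-1}$, so the stated dichotomy needs repair --- so it is cleaner simply to cite \cite{Birman-Wenzl}, Lemma 3.1 (whose proof is algebraic, by induction on word length) than to re-derive it from tangles.
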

 
 \begin{proof}   The proof is the same as that of Lemma \ref{lemma:  Brauer  axiom 6} for the Brauer algebras, using the tangle realization of the BMW algebras.
 \end{proof}
 
 \begin{lemma} \label{Bn e(n-1) = B(n-1) e(n-1)  for BMW algebras}.  For $n \ge 1$, 
$\bmw {n+1}  \,e_{n} =  \bmw {n} \,e_{n}$.  Moreover,  
 $x \mapsto  x e_{n}$  is injective from $\bmw {n}$ to $\bmw {n} e_{n}$.
\end{lemma}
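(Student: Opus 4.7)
The plan is to follow closely the template of the Brauer algebra proof in Lemma~\ref{B(n+1) e(n) = B(n) e(n)  for Brauer algebras}, replacing each Brauer identity by its BMW counterpart and inserting the Kauffman skein relation whenever a positive and a negative crossing need to be interchanged.

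First, I would invoke the tangle realization $\bmw{n+1}\cong \kt{n+1}$ of Morton--Wassermann to decompose an arbitrary $(n+1,n+1)$-tangle diagram as either lying in $\iota(\bmw n)$ or as a product $a\chi_n b$ with $a,b\in\bmw n$ and $\chi_n\in\{g_n^{\pm 1},e_n\}$; this isolates the single elementary feature involving the $(n+1)$-st site. Applying the decomposition once more to $b$, either $b\in\bmw{n-1}$ or $b=b_1\chi_{n-1}b_2$ with $b_1,b_2\in\bmw{n-1}$ and $\chi_{n-1}\in\{g_{n-1}^{\pm 1},e_{n-1}\}$. Since $g_n^{\pm 1}$ commutes with all of $\bmw{n-1}$ by the BMW commutation relations, and $e_n$ commutes with $\bmw{n-1}$ by Lemma~\ref{lemma:  axiom 6 for BMW}(3), we may reshuffle $a\chi_n b\,e_n = a b_1 \chi_n\chi_{n-1} e_n b_2$, reducing the problem to verifying $\chi_n\chi_{n-1}e_n\in\bmw n e_n$ for each pair $(\chi_n,\chi_{n-1})\in\{g_n^{\pm 1},e_n\}\times\{1,g_{n-1}^{\pm 1},e_{n-1}\}$.

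Next, I would carry out this finite case analysis. The straightforward cases use only the untwisting and tangle relations: $g_n^{\pm 1}e_n=\rho^{\mp 1}e_n$; the identities $e_n e_{n-1}e_n = e_n$, $e_n g_{n-1}e_n=\rho e_n$, and $e_n^2 = \delta e_n$, each a scalar multiple of $e_n$; the tangle relation $g_n g_{n-1}e_n=e_{n-1}e_n$; and, multiplying it on the left by $g_n^{-1}$, the identity $g_n^{-1}e_{n-1}e_n=g_{n-1}e_n$. The remaining five mixed cases are each reduced to already-settled ones by a single application of the Kauffman skein relation $g_k-g_k^{-1}=(q-q^{-1})(1-e_k)$; as a representative example,
\begin{equation*}
g_n e_{n-1}e_n \;=\; g_n^{-1}e_{n-1}e_n + (q-q^{-1})(e_{n-1}e_n-e_n) \;=\; g_{n-1}e_n + (q-q^{-1})(e_{n-1}e_n-e_n)\;\in\;\bmw n e_n.
\end{equation*}

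For the injectivity of $x\mapsto xe_n$, I would use the partial-closure $\bmw n$-$\bmw n$-bimodule map $\cl:\bmw{n+1}\to\bmw n$ defined diagrammatically by adjoining an arc from $\p{n+1}$ to $\pbar{n+1}$ along the right side. For $x\in\bmw n\subseteq\bmw{n+1}$, the tangle $\iota(x)e_n$ consists of $x$ together with the extra vertical strand produced by $\iota$ and the two horizontal arcs of $e_n$ at the $(n,n+1)$ sites; the closure arc stitches these three pieces into a single arc that reconstitutes the $n$-th strand pattern of $x$ without producing a free loop, giving $\cl(\iota(x)e_n)=x$. Hence $xe_n=0$ forces $x=\cl(xe_n)=0$. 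The main obstacle is the case analysis in the preceding step: relative to the Brauer proof there are three possibilities for $\chi_n$ instead of two, and five of the twelve mixed pairings must be handled via the Kauffman skein rather than a direct BMW relation; each individual reduction is nevertheless a one-step calculation from the defining relations.
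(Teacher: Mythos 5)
Your proof is correct and follows essentially the same route as the paper: decompose $\bmw {n+1}$ using the Birman--Wenzl spanning result (their Lemma 3.1) into $\bmw n$ plus terms $a\chi_n b$, push the $\bmw {n-1}$ factors past $\chi_n$ and $e_n$ by the commutation relations, verify the finitely many products $\chi_n\chi_{n-1}e_n$ via the tangle, untwisting, and Kauffman skein relations, and deduce injectivity from $\cl(x e_n)=x$. The only (harmless) deviation is that the cited decomposition needs only $\chi_n\in\{e_n,g_n\}$ (and likewise $\chi_{n-1}\in\{e_{n-1},g_{n-1}\}$), since $g_n\inv$ is a linear combination of $g_n$, $1$, and $e_n$ by the skein relation, so several of your extra $g^{-1}$ cases are redundant rather than necessary.
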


\begin{proof}  According to ~\cite{Birman-Wenzl}, Lemma 3.1, any $(n+1, n+1)$--tangle is already in $\bmw {n}$, or it can be written as  a linear combination of elements
$a \chi_{n} b$, with $a, b \in \bmw {n}$ and $\chi_{n} \in \{e_{n}, g_{n}\}$. Given this, the proof of 
the lemma is the same as the proof of Lemma \ref{B(n+1) e(n) = B(n) e(n)  for Brauer algebras}  
 for the Brauer algebras, using the tangle relations and untwisting relations of Definition \ref{definition: BMW algebra} in place of similar identities for the Brauer algebras.
\end{proof}
 
 \subsubsection{Verification of the framework axioms for the BMW algebras}
 The generic or universal ground ring for the BMW algebras is
 $$
 R = \Z[\rhobold^{\pm1}, \qbold^{\pm1}, \deltabold]/\langle \rhobold\inv - \rhobold = (\qbold\inv - \qbold)(\deltabold -1) \rangle,
 $$
 where $\rhobold$, $\qbold$, and $\deltabold$ are indeterminants over $\Z$.  Suppose that $S$ is an appropriate ground ring for the BMW algebras; that is, 
 $S$ is a  commutative unital ring with invertible elements $\rho$ and $q$ and an element $\delta$ satisfying $\rho\inv - \rho = (q\inv -q)(\delta -1)$.  Then $\bmw n(S; \rho, q, \delta) \cong \bmw n(R; \rhobold, \qbold, \deltabold) \otimes_R S$.   

$R$ is an integral domain whose field of fractions is $F \cong \Q(\rhobold, \qbold)$  (with $\deltabold =  \break
 (\rhobold\inv - \rhobold)/(\qbold\inv - \qbold) + 1$ in $F$.)  We write $\bmw n$ for 
 $\bmw n(R; \rhobold, \qbold, \deltabold)$  and $H_n$ for $H_n(R; \qbold^2)$  in this section.

\begin{proposition}\label{proposition: framework axioms for BMW}
 The two sequences of algebras $(\bmw n)_{n \ge 0}$  and $(H_n)_{n \ge 0}$ satisfy the framework axioms of  Section \ref{subsection: framework axioms}.
\end{proposition}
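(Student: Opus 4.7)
The plan is to verify the eight framework axioms in the same order, and in essentially the same style, as the corresponding verification for the Brauer algebras (Proposition \ref{proposition: framework axioms for Brauer}). I would first dispose of the routine axioms. Axiom (\ref{axiom Hn coherent}) holds by Example \ref{example: Hn coherent tower}, which records that the Hecke algebras of type $A$ form a coherent tower of cellular algebras. Axioms (\ref{axiom: involution on An}) and (\ref{axiom: A0 and A1}) are immediate: the involution $i$ on $\bmw n$ is defined so as to fix each generator $e_j$ and $g_j$, and is therefore consistent with the inclusion $\bmw n \subseteq \bmw {n+1}$; and $\bmw 0 = \bmw 1 = R = H_0 = H_1$ by inspection of the presentations.

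Next I would turn to the axioms involving $e_{n-1}$. The generator $e_{n-1}$ of $\bmw n$ satisfies $e_{n-1}^2 = \deltabold e_{n-1}$ with $\deltabold \ne 0$ in $R$, so it is an essential idempotent; and $i(e_{n-1}) = e_{n-1}$ by definition. For axiom (\ref{axiom:  idempotent and Hn as quotient of An}), I would observe that the ideal $\bmw n e_{n-1} \bmw n$ contains every $e_j$ (using $e_j = e_j e_{j\pm 1} e_j$ iteratively), so modulo this ideal the Kauffman skein relation (6) of Definition \ref{definition: BMW algebra} collapses to $(g_j-\qbold)(g_j + \qbold^{-1}) = 0$, while the braid and commutation relations survive; the remaining relations are precisely those of $H_n(R, \qbold^2)$, and the isomorphism intertwines the involutions. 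Axiom (\ref{axiom: en An en}) is Lemma \ref{lemma:  axiom 6 for BMW}, axiom (\ref{axiom:  An en}) is Lemma \ref{Bn e(n-1) = B(n-1) e(n-1)  for BMW algebras}, and axiom (\ref{axiom: e(n-1) in An en An}) holds because of the tangle relation $e_{n-1}e_n e_{n-1} = e_{n-1}$.

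The only non-trivial step is axiom (\ref{axiom: semisimplicity}), namely the split semisimplicity of $\bmw n^F$ over $F = \Q(\rhobold, \qbold)$. For this I would cite Wenzl's theorem on the generic semisimplicity of the BMW algebras \cite{Birman-Wenzl, Wenzl-BCD}; the argument there is in fact the basic-construction argument sketched for the Brauer case earlier in this paper, realizing $\bmw n^F$ inductively as an iterated basic construction starting from the split semisimple Hecke algebras $H_n^F$ (split semisimplicity of $H_n^F$ itself is standard, since $\qbold$ is not a root of unity in $F$). This is the one ingredient that relies on non-trivial external input; the rest of the verification is bookkeeping with the BMW presentation and the lemmas already established in this section.
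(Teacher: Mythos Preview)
Your proposal is correct and matches the paper's own proof essentially step for step: both verify axiom (\ref{axiom Hn coherent}) via Example \ref{example: Hn coherent tower}, declare axioms (\ref{axiom: involution on An}) and (\ref{axiom: A0 and A1}) evident, cite \cite{Birman-Wenzl} and \cite{Wenzl-BCD} for axiom (\ref{axiom: semisimplicity}), use the earlier observation that $\bmw n/\bmw n e_{n-1}\bmw n \cong H_n$ for axiom (\ref{axiom:  idempotent and Hn as quotient of An}), invoke Lemmas \ref{lemma:  axiom 6 for BMW} and \ref{Bn e(n-1) = B(n-1) e(n-1)  for BMW algebras} for axioms (\ref{axiom: en An en}) and (\ref{axiom:  An en}), and finish axiom (\ref{axiom: e(n-1) in An en An}) with $e_{n-1}e_n e_{n-1}=e_{n-1}$. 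Your write-up is slightly more expansive in places (e.g., spelling out why the quotient is the Hecke algebra), but there is no substantive difference.
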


\begin{proof}  According to example \ref{example: Hn coherent tower} ,  $(H_n)_{n \ge 0}$ is a coherent tower of cellular algebras, so axiom (\ref{axiom Hn coherent}) holds.  Axioms (\ref{axiom: involution on An}) and (\ref{axiom: A0 and A1})  are evident.
$\bmw n^F$ is semisimple by ~\cite{Birman-Wenzl}, Theorem 3.7,  or ~\cite{Wenzl-BCD}, Theorem 3.5.  Thus axiom (\ref{axiom: semisimplicity})  holds.

We observed above that $\bmw n/\bmw n e_{n-1} \bmw n \cong H_n$;  it is easy to check that the isomorphism respects the involutions.  Thus axiom (\ref{axiom:  idempotent and Hn as quotient of An}) holds.  Axiom 
(\ref{axiom: en An en})  follows from  Lemma \ref{lemma:  axiom 6 for BMW} and axiom (\ref{axiom:  An en}) from Lemma \ref{Bn e(n-1) = B(n-1) e(n-1)  for BMW algebras}.
Finally, axiom (\ref{axiom: e(n-1) in An en An}) holds again because of the relation $e_{n-1} e_n e_{n-1} = e_{n-1}$.
\end{proof}

\begin{corollary}  Let $S$ be any ground ring for the BMW algebras, with parameters $\rho$, $q$, and 
$\delta$.  
The sequence of  BMW algebras $(\bmw n(S; \rho, q, \delta))_{n \ge 0}$ is a coherent tower of cellular algebras.
$\bmw n(S; \rho, q, \delta)$  has cell modules indexed by all Young diagrams of size $n$, $n-2$, $n-4, \dots$.   The cell module labeled by
a Young diagram $\la$ has a basis labeled by up--down tableaux of length $n$ and shape $\la$.
\end{corollary}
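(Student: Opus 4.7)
The plan is to invoke Theorem \ref{main theorem} applied to the generic tower established in Proposition \ref{proposition: framework axioms for BMW}, and then transport the conclusions to an arbitrary ground ring $S$ by base change. So first I would work over the generic ring $R = \Z[\rhobold^{\pm 1}, \qbold^{\pm 1}, \deltabold]/\langle \rhobold\inv - \rhobold - (\qbold\inv-\qbold)(\deltabold-1)\rangle$, where Proposition \ref{proposition: framework axioms for BMW} gives that the pair $(\bmw n(R;\rhobold,\qbold,\deltabold))_{n\ge 0}$, $(H_n(R;\qbold^2))_{n\ge 0}$ satisfies the framework axioms. Theorem \ref{main theorem} then immediately yields that $(\bmw n)_{n\ge 0}$ is a coherent tower of cellular algebras over $R$, with partially ordered set
$$\Lambda_n \;=\; \coprod_{\substack{i\le n\\ n-i\ \text{even}}} \Lambda_i^{(0)} \times \{n\},$$
where $\Lambda_i^{(0)}$ is the partially ordered set in the cell datum for $H_i$. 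By Example \ref{example: Hn coherent tower} and the Murphy cellular structure on the Hecke algebra, $\Lambda_i^{(0)}$ is identified with the set of Young diagrams of size $i$, so $\Lambda_n$ is identified with the set of Young diagrams of sizes $n, n-2, n-4, \dots$ as required.

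Next I would read off the basis indexed by up--down tableaux from Remark \ref{remark:  point 5 comes from other statements} and Lemma \ref{lemma: cell basis indexed by paths}. The branching diagram $\B_0$ for $(H_n^F)_{n\ge 0}$ is Young's lattice, which has no multiple edges; by point (4) of Theorem \ref{main theorem}, the branching diagram $\B$ for $(\bmw n^F)_{n\ge 0}$ is obtained from $\B_0$ by reflections. The example immediately following Definition \ref{def of branching} identifies the paths on $\B$ from $\emptyset$ to a Young diagram $\la$ sitting in row $n$ with the up--down tableaux of length $n$ and shape $\la$. Lemma \ref{lemma: cell basis indexed by paths} (applied via Remark \ref{remark:  point 5 comes from other statements}) then provides a cellular basis of $\bmw n(R;\rhobold,\qbold,\deltabold)$ with $\mathcal T((\la,n))$ equal to that set of up--down tableaux, so in particular each cell module $\Delta^{(\la,n)}$ has an $R$--basis of this form.

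Finally I would descend to an arbitrary ground ring $S$. The ring homomorphism $R \to S$ defined by the parameter assignment gives $\bmw n(S;\rho,q,\delta) \cong \bmw n(R;\rhobold,\qbold,\deltabold)\otimes_R S$. The defining properties of a cellular basis (Definition \ref{gl cell}) are preserved under base change of the ground ring, so the image of the Murphy--type basis of $\bmw n(R;\rhobold,\qbold,\deltabold)$ under $-\otimes_R S$ is a cellular basis of $\bmw n(S;\rho,q,\delta)$ with the same partially ordered set $\Lambda_n$ and the same index sets $\mathcal T((\la,n))$ of up--down tableaux. Coherence also descends: the cell modules over $R$ are free $R$--modules, and the cell filtrations of $\Res$ and $\Ind$ of cell modules produced by Propositions \ref{lemma: cell filtration of restrictions} and \ref{proposition: cell filtration of induced modules} have free $R$--module terms, so tensoring with $S$ preserves exactness of the filtration sequences and identifies the subquotients with the corresponding cell modules over $S$. (For induction one additionally uses the isomorphism of Lemma \ref{first tensor iso} to commute the base change with $\bmw{n+1}\otimes_{\bmw n}-$.)

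The only real content lies in the framework verification that was already carried out in Proposition \ref{proposition: framework axioms for BMW}; the more delicate framework axioms (\ref{axiom:  An en}) and (\ref{axiom: e(n-1) in An en An})\ --- namely $\bmw{n+1}e_n = \bmw n e_n$ with $x\mapsto xe_n$ injective, and $e_{n-1}\in \bmw{n+1}e_n\bmw{n+1}$ --- were the main obstacle, handled via Birman--Wenzl's reduction lemma and the relation $e_{n-1}e_n e_{n-1}=e_{n-1}$. With these inputs in place, the main theorem does all the work and the remaining descent to $S$ is a routine tensor product argument.
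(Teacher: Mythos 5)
Your proposal is correct and follows essentially the same route as the paper: the paper's (implicit) proof of this corollary is exactly to verify the framework axioms over the generic ring $R$ (Proposition \ref{proposition: framework axioms for BMW}), invoke Theorem \ref{main theorem} together with Remark \ref{remark:  point 5 comes from other statements} and Lemma \ref{lemma: cell basis indexed by paths} to identify the poset with Young diagrams and the index sets with up--down tableaux, and then pass to an arbitrary ground ring $S$ via $\bmw n(S;\rho,q,\delta)\cong \bmw n(R;\rhobold,\qbold,\deltabold)\otimes_R S$, since cellularity and the cell filtrations (whose terms are free over $R$) are preserved under specialization. Your extra remarks on why coherence descends are a correct elaboration of what the paper leaves implicit.
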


\subsection{The cyclotomic Birman--Wenzl--Murakami (BMW) algebras}
\label{The cyclotomic Birman--Wenzl--Murakami (BMW) algebras}

\subsubsection{Definition of the cyclotomic BMW algebras}

In general, our notation will follow  ~\cite{GH3}.  In order to simplify statements, we establish the following convention.

\begin{definition} Fix an integer $r \ge 1$.   A {\em ground ring} 
$S$ is a commutative unital ring with parameters $\rho$, $q$, $\delta_j$   ($j \ge 0$),  and
$u_1, \dots, u_r$, with    $\rho$, $q$,   and $u_1, \dots, u_r$  invertible,  and with $\rho\inv - \rho=   (q\inv -q) (\delta_0 - 1)$.
\end{definition}

\begin{definition} \label{definition:  cyclotomic BMW}
Let $S$ be a ground ring with
parameters $\rho$, $q$, $\delta_j$   ($j \ge 0$),  and
$u_1, \dots, u_r$.
The {\em cyclotomic BMW algebra}  $\bmw{n, S, r}(u_1, \dots, u_r)$  is the unital $S$--algebra
with generators $y_1^{\pm 1}$, $g_i^{\pm 1}$  and
$e_i$ ($1 \le i \le n-1$) and relations:
\begin{enumerate}
\item (Inverses)\hods $g_i g_i\inv = g_i\inv g_i = 1$ and 
$y_1 y_1\inv = y_1\inv y_1= 1$.
\item (Idempotent relation)\hods $e_i^2 = \delta_0 e_i$.
\item (Affine braid relations) 
\begin{enumerate}
\item[\rm(a)] $g_i g_{i+1} g_i = g_{i+1} g_ig_{i+1}$ and 
$g_i g_j = g_j g_i$ if $|i-j|  \ge 2$.
\item[\rm(b)] $y_1 g_1 y_1 g_1 = g_1 y_1 g_1 y_1$ and $y_1 g_j =
g_j y_1 $ if $j \ge 2$.
\end{enumerate}
\item[\rm(4)] (Commutation relations) 
\begin{enumerate}
\item[\rm(a)] $g_i e_j = e_j g_i$  and
$e_i e_j = e_j e_i$  if $|i-
j|
\ge 2$. 
\item[\rm(b)] $y_1 e_j = e_j y_1$ if $j \ge 2$.
\end{enumerate}
\item[\rm(5)] (Affine tangle relations)\vadjust{\vskip-2pt\vskip0pt}
\begin{enumerate}
\item[\rm(a)] $e_i e_{i\pm 1} e_i = e_i$,
\item[\rm(b)] $g_i g_{i\pm 1} e_i = e_{i\pm 1} e_i$ and
$ e_i  g_{i\pm 1} g_i=   e_ie_{i\pm 1}$.
\item[\rm(c)\hskip1.2pt] For $j \ge 1$, $e_1 y_1^{ j} e_1 = \delta_j e_1$. 
\vadjust{\vskip-
2pt\vskip0pt}
\end{enumerate}
\item[\rm(6)] (Kauffman skein relation)\hods  $g_i - g_i\inv = (q - q\inv)(1- e_i)$.
\item[\rm(7)] (Untwisting relations)\hods $g_i e_i = e_i g_i = \rho \inv e_i$
 and $e_i g_{i \pm 1} e_i = \rho  e_i$.
\item[\rm(8)] (Unwrapping relation)\hods $e_1 y_1 g_1 y_1 = \rho e_1 = y_1 
g_1 y_1 e_1$.
\item[\rm(9)](Cyclotomic relation) \hods $(y_1 - u_1)(y_1 - u_2) \cdots (y_1 - u_r) = 0$.
\end{enumerate}
\end{definition}

Thus,  a cyclotomic BMW algebra is the quotient of the affine BMW algebra  ~\cite{GH1}, by the cyclotomic relation $(y_1 - u_1)(y_1 - u_2) \cdots (y_1 - u_r) = 0$.

 \subsubsection{Geometric realization}  \label{subsubsection:  cyclotomic BMW geometric realization}
 We recall from ~\cite{GH1} that the affine BMW algebra is isomorphic to the affine Kauffman tangle algebra, which is an algebra of ``affine tangle diagrams," modulo Kauffman skein relations.    An affine $(n,n)$--tangle diagram is just an ordinary $(n+1, n+1)$--tangle diagram with a fixed
 vertical strand connecting $\p 1$  and $\pbar  1$, as in the following figure.
 $$
\inlinegraphic[scale=1.5]{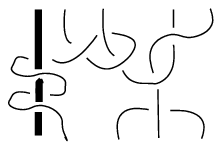}
$$
 The affine Kauffman tangle algebra is generated by the following affine tangle diagrams:
$$
X_1 = \inlinegraphic[scale= .7]{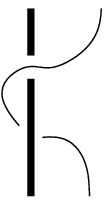}
\qquad
G_i =  \inlinegraphic[scale=.6]{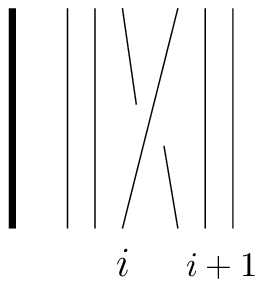}\qquad
E_i =  \inlinegraphic[scale= .7]{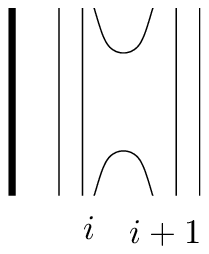} .
$$

 One can also define a  cyclotomic Kauffman tangle algebra 
 $\kt{n, S, r}(u_1, \dots, u_r)$ as the quotient of the affine Kauffman tangle algebra   by a cyclotomic skein relation,  which is a ``local" version of the cyclotomic relation of 
 Definition \ref{definition:  cyclotomic BMW} (9).   See
 ~\cite{GH2}  for the precise definition.     We denote the images of $X_1$, $E_i$ and $G_i$ in the cyclotomic Kauffman tangle algebra by the same letters.  
The assignments
 $e_i \mapsto E_i$,   $g_i \mapsto G_i$ and $y_1 \mapsto \rho X_1$  defines a surjective homorphism
 from $\varphi : \bmw {n, S, r}(u_1, \dots, u_r) \to \kt{n, S, r}(u_1, \dots, u_r)$,   see  ~\cite{GH2}, page 1114. 
 
 It is shown in ~\cite{GH2, GH3}  and in ~\cite{Wilcox-Yu3}  that
 the the map $\varphi$ is an isomorphism,  assuming admissibility conditions on the ground ring (see Section  \ref{subsubsection: admissibility}).  However, we are {\em not} going to assume this result here, but will give a new proof of the isomorphism.

\subsubsection{Brief history of cyclotomic BMW algebras}  Affine and cyclotomic BMW algebras were introduced by H\"aring--Oldenberg ~\cite{H-O2}   and have recently been studied by three groups of mathematicians:
Goodman and  Hauschild Mosley ~\cite{GH1, GH2, GH3,  goodman-2008},  Rui, Xu, and Si  ~\cite{rui-2008, rui-2008b},   and Wilcox and Yu  ~\cite{Wilcox-Yu, Wilcox-Yu2, Wilcox-Yu3, Yu-thesis}.  Under (slightly different) admissibility assumptions  on the ground ring (see Section \ref{subsubsection: admissibility})  all three groups have shown that the algebra
$\bmw{n, S, r}$  is free over $S$ of rank $r^n (2n-1) ! !$ and in fact is cellular.   (Wilcox and Yu produced cellular basis satisfying the strict equality $i(c_{s, t}^\la) = c_{t, s}^\la$, while the other groups only established cellularity in the weaker sense of Definition \ref{gl cell}.)   The cellular bases produced by all three groups are essentially tangle bases, i.e.,  cyclotomic analogues of the basis of Morton, Traczyk, and Wassermann for the ordinary BMW algebras.  Goodman \& Hauschild Mosley and Wilcox \& Yu have shown that the algebras can be realized as algebras of tangles, when the ground ring is admissible.   Rui et.\  al.\   have achieved additional representation theoretic results.   Further background on cyclotomic BMW algebras, motivation for the study of these algebras,  relations to other mathematical topics (quantum groups, knot theory), and further literature citations  can be found in ~\cite{GH2} and in the other papers cited above.

\subsubsection{Advantages of our approach to cellularity}
One of our motivations in undertaking the current work was to produce a Murphy type cellular basis for the cyclotomic BMW algebras, indexed by up--down tableaux.  As mentioned in the introduction, this has not been done previously, and it would be involved to extend Enyang's method for ordinary BMW algebras ~\cite{Enyang2} to the cellular case.   

 It turns out that our proof of cellularity is actually more direct than the previous proofs cited above, in that it bypasses the lengthy proof  (in ~\cite{GH2}, Proposition 3.7,  or ~\cite{Wilcox-Yu3}, Theorem 3.2)   that these algebras have a finite spanning set of the appropriate cardinality.  Our method does not  depend the isomorphism of the cyclotomic BMW algebras and cyclotomic Kauffman tangle algebras ~\cite{GH2, GH3} or ~\cite{Wilcox-Yu3};  in fact, we can give a new proof of this isomorphism.   
 
 One might say that the difficulty in our proof has been displaced, because instead of the finite spanning set result cited above, we require Mathas' recent theorem on coherence of cellular structures for cyclotomic Hecke algebras ~\cite{mathas-2009}.

\subsubsection{Admissibility conditions on the ground ring.}   \label{subsubsection: admissibility}

The cyclotomic BMW algebras can be defined over arbitrary ground rings.   However, it is necessary to impose conditions on the parameters in order to get a satisfactory theory.

One can see by a simple computation why one has to expect  conditions on the parameters.  First, one can show that there are elements $\delta_{-j}$  in the ground ring  $S$  for $j \ge 1$ such that
$e_1 y_1^{-j} e_1 = \delta_{-j} \,e_1$;  moreover,   $\delta_{-j}$  is  a  polynomial  in
$\rho^{-1}$,   $q - q\inv$,  and $\delta_0, \delta_1,  \dots, \delta_j$; see  \cite{GH3}, Lemma 2.5.
If one now multiplies the cyclotomic relation, Definition \ref{definition:  cyclotomic BMW} (9),  by $y_1^a$ and
pre--  and post--multiplies by $e_1$,  one gets  
$
(\sum_{k = 0}^r a_k  \delta_{k + a} )  e_1 = 0,
$
for $a \in \Z$,   where the $a_k$  are signed elementary symmetric polynomials in $u_1, \dots, u_r$.  Therefore,  either $e_1$ is a torsion element over $S$, or the following {\em weak admissibility conditions} hold:
$$
\sum_{k = 0}^r a_k  \delta_{k + a}  = 0, \quad \text{for $a \in \Z$}.
$$
If $S$ is a field and the weak admissibility conditions do not hold, then $e_1 = 0$;   it follows that all the $e_i$ are zero, and the algebra reduces to the cyclotomic Hecke algebra over $S$  with parameters $q^2$ and
$u_1,  \dots, u_r$.

The weak admissibility conditions  are  complicated and  not strong   enough to give satisfactory results on the representation theory of the algebras.  Therefore,  one wishes  to find conditions that are both simpler and stronger.  
Two apparently different conditions have been proposed, one by Wilcox and Yu ~\cite{Wilcox-Yu},  and another by Rui and Xu ~\cite{rui-2008}.  It has been shown in ~\cite{goodman-admissibility}  that the two conditions are equivalent in the case of greatest interest,  when $S$ is an integral domain with $q - q\inv \ne 0$. 
We consider only this case from now on.

\begin{definition}    Let $S$ be an integral  ground ring with
parameters $\rho$, $q$, $\delta_j$ ($j \ge 0$)  and $u_1, \dots, u_r$,  with $q - q\inv \ne 0$.
One says that $S$ is {\em admissible} (or that the parameters are {\em admissible})   if  $\{e_1, y_1 e_1, \dots, y_1^{r-1} e_1\} \subseteq  \bmw{2, S, r}$ is linearly independent over $S$.
\end{definition}

It is shown in ~\cite{Wilcox-Yu} that admissibility is equivalent to finitely many (explicit) polynomial relations on the parameters.  Moreover,  these relations give $\rho$  and $(q - q\inv) \delta_j$  as  Laurent polynomials
in the remaining parameters $q, u_1, \dots, u_r$;    see ~\cite{Wilcox-Yu}  and ~\cite{GH3}  for details.

\subsubsection{Morphisms of ground rings and a universal admissible ground ring} 
\label{subsubsection:  morphisms and generic ring}
We consider what are the appropriate morphisms between ground rings for cyclotomic BMW algebras.   The obvious notion would be that of a ring homomorphism taking parameters to parameters;  that is,  if $S$ is a ground ring with parameters $\rho$, $q$, 
etc.,  and $S'$ another ground  ring with parameters $\rho'$, $q'$, etc.,  then a morphism $\varphi : S \to S'$  would be required to map $\rho \mapsto \rho'$,  
$q \mapsto q'$,  etc.  

However,  it is better to require less, for the following reason:  The parameter $q$ enters into the cyclotomic BMW
 relations only in the expression $q\inv -q$, and the transformation $q \mapsto -q\inv$ leaves this expression invariant.  Moreover,  the transformation $g_i  \mapsto -g_i$,  $\rho \mapsto -\rho$,  $q \mapsto -q$ (with all other generators and parameters unchanged)   leaves the cyclotomic BMW relations unchanged.

Taking this into account, we arrive at the following notion:

\begin{definition}  \label{definition: parameter preserving}
Let $S$ be a ground ring with
parameters $\rho$, $q$, $\delta_j$   ($j \ge 0$),  and
$u_1, \dots, u_r$.
Let  $S'$ be another ground   ring with parameters $\rho'$, $q'$, etc.  

A unital ring homomorphism $\varphi : S \rightarrow S'$ is a {\em morphism of ground rings}  if it maps
$$
\begin{cases}
&\rho \mapsto \rho',  \text{ and}\\
& q \mapsto q' \text{ or } q \mapsto    -{q'}\inv,
\end{cases}
$$
or
$$
\begin{cases}
&\rho \mapsto  -\rho',  \text{ and}\\
& q \mapsto -q' \text{ or } q \mapsto    {q'}\inv,
\end{cases}
$$
and strictly preserves all other parameters.
\end{definition}

 Suppose there is a morphism of ground rings $\psi : S \rightarrow S'$.
  Then $\psi$ extends to a 
homomorphism  from 
$\bmw{n, S, r}$  to $\bmw{n, S', r}$.     Moreover,  $\bmw{n, S, r} \otimes_S S' \cong \bmw{n,S', r}$  as $S'$--algebras.   These statements are discussed in ~\cite{GH3},  Section 2.4.

Let $S$ be a ground ring with admissible   parameters  $\rho$, $q$, $\delta_j$   ($j \ge 0$),  and
$u_1, \dots, u_r$.  Then  
$$
\rho, -q\inv, \delta_j \  (j \ge 0),  \text{ and }  u_1, \dots, u_r
$$
and
$$
-\rho, -q, \delta_j \  (j \ge 0),  \text{ and }  u_1, \dots, u_r
$$
are also sets of admissible parameters.  
 Suppose that  $S$ is an integral ground ring with admissible  parameters, with $q - q\inv \ne 0$,   and that $S'$ is another integral ground ring; 
  if  $\varphi : S \rightarrow S'$ is 
a morphism of ground rings  such that
 $\varphi(q - q\inv) \ne 0$,  then $S'$ is also admissible.
 
 It is easy to show (see \cite{GH3}, Theorem 3.19)    that there is a universal integral admissible ground ring $R$, with parameters $\rhobold$,  $\qbold$,  $\deltabold_j$ ($j \ge 0$), and $\ubold_1, \dots, \ubold_r$,  with the following properties:
 \begin{enumerate}
 \item The parameters    $\qbold$, $\ubold_1$, \dots, $\ubold_r$ of  
$R$  are algebraically independent over $\Z$.
\item    $R$ is generated as a  ring by 
 $\qbold^{\pm 1}$, $\rhobold \powerpm$, $\deltabold_0$, $\deltabold_1$, \dots $\deltabold_{r-1} $, and $\ubold_1^{\pm 1}, \dots, \ubold_r^{\pm 1}$.
\item Whenever $S$ is an integral  ground ring with admissible
parameters,   with $q - q\inv \ne 0$,  there exists a  morphism of ground rings from $R$ to  $S$;  thus
$\bmw{n, S, r} \cong \bmw{n, R, r} \otimes_{R} S$. 
\item  The field of fractions of $R$  is $\Q(\qbold, \ubold_1, \dots, \ubold_r)$.
\item  Let $\bm p = \prod_{j = 1}^r \ubold_j$.  Then one has $\rhobold = \bm p$ if $r$ is even and
$\rhobold = \qbold\inv  \bm p$ if $r$ is odd.   Since $\rhobold\inv - \rhobold=   (\qbold\inv -\qbold) (\deltabold_0 - 1)$, and  $\qbold$, $\ubold_1$, \dots, $\ubold_r$  are algebraically independent,
one has $\deltabold_0 \ne 0$.  
 \end{enumerate}
 
 \subsubsection{Some properties of cyclotomic BMW and Kauffman tangle algebras.}  We restrict attention to the case of an integral admissible ground ring $S$ with $q - q\inv \ne 0$.     We write $\bmw n$  for $\bmw {n, S, r}(u_1, \dots, u_r)$ and $\kt n$ for  $\kt {n, S, r}(u_1, \dots, u_r)$.
 
 The cyclotomic BMW algebras have an algebra involution 
$i$ uniquely determined by $i(e_j) = e_j$ and $i(g_j) = g_j$ for all $j$, and $i(y_1) = y_1$.  Likewise, the
 cyclotomic Kauffman tangle algebras have an algebra involution $i$, whose  action on affine  tangle diagrams is by the rotation through the axis $y = 1/2$.   The surjective homomorphism $\varphi : \bmw n \to \kt n$  respects the involutions.  
 
 For $n \ge 0$, there is a  homomorphism (of involutive algebras)  $\iota$ from $\bmw n$  to $\bmw {n+1}$  determined by $e_i \mapsto e_i$  and  $g_i \mapsto g_i$  for $1 \le i \le n-1$, and $y_1 \mapsto y_1$;  it is not clear
 {\em a priori} that $\iota$ is injective.     
  
 Likewise, there is a homomorphism (of involutive algebras)  $\iota$  from $\kt n$ to $\kt {n+1}$.  
 On the level of affine tangle diagrams, the map is given by adding
a new vertical strand connecting $\p {n+1}$  and $\pbar {n+1}$,  as for the Brauer algebras.   This map
is injective, as we will now explain.  

 For $n \ge 1$, a map $\cl$ from affine  $(n, n)$--tangle diagrams to  affine  
 $(n-1, n-1)$--tangle diagrams can be defined  as for Brauer diagrams and ordinary tangle diagrams.  The linear extension of this map respects
regular isotopy and all the  skein relations defining the cyclotomic Kauffman tangle algebras, so determines a linear map from 
$\kt n$ to $\kt {n-1}$.   (See ~\cite{GH1}, Section 2.7, and ~\cite{GH2}, Section 3.3 for details.)  
 The map $\cl$ respects the involutions,  $i  \circ \cl = 
 \cl \circ i$.  Moreover,  for   $x \in \kt{n}$, we have
 $x  = {\rm cl} ( \iota(x) e_n)$,   so it follows that $\iota: \kt n \to \kt {n+1}$  is injective.
  Using $\iota$, we identify $\kt n$ as a subalgebra of $\kt {n+1}$.  

 If $\delta_0$ is invertible in $S$, we can define $\eps_n = (1/\delta_0) \cl$, which is a conditional expectation, that is, an unital $\kt {n-1}$--$\kt {n-1}$  bimodule map.  We have  $\eps_{n+1} \circ \iota(x) = x$ for $x \in \bmw n$.

\subsubsection{The cyclotomic Hecke algebra} 
We recall the definition of the affine and cyclotomic Hecke algebras,   see ~\cite{ariki-book}.

\begin{definition}
Let $S$ be a commutative unital ring with an invertible element $q$.  The {\em affine Hecke algebra} 
$\ahec{n,S}(q^2)$ 
over $S$
is the $S$--algebra with generators $t_1,  g_1,  \dots, g_{n-1}$, with relations:
\begin{enumerate}
\item  The generators $g_i$ are invertible,  satisfy the braid relations,  and   $g_i - g_i\inv =  (q - q\inv)$.
\item  The generator $t_1$ is invertible,  $t_1 g_1 t_1 g_1 = g_1 t_1 g_1  t_1$  and $t_1$ commutes with $g_j$  for $j \ge 2$.
\end{enumerate}
Let $u_1,  \dots, u_r$  be additional  elements in $S$.   The {\em cyclotomic Hecke algebra}
\break  $\hec{n, S, r}(q^2; u_1,  \dots, u_r)$  is the quotient of the affine Hecke algebra $\ahec{n, S}(q^2)$ by the polynomial relation    $(t_1 - u_1) \cdots (t_1 - u_r) = 0$.
\end{definition}

We remark that since the generator $t_1$ can be rescaled by an arbitrary invertible element of $S$, only the ratios of the parameters $u_i$ have invariant significance in the definition of the cyclotomic Hecke algebra.  The affine and cyclotomic Hecke algebras have unique algebra involutions determined by
$g_i \to g_i$  and $t_1 \to t_1$.  

Now let $S$  be a ground  ring with parameters $\rho$, $q$,  $\delta_j$,  and $u_1, \dots, u_r$.    For each $n$,  let $I_n$  be the two sided ideal in $\bmw{n, S, r}$   generated by $e_{n-1}$.   
Because of the relations  $e_j e_{j\pm1} e_j = e_j$,   the ideal $I_n$  is generated by any $e_i$  ($1 \le i \le n-1$)  or by all of them.    It is easy to check that the quotient of $\bmw{n, S, r}$  by $I_n$  is isomorphic (as involutive algebras) to the cyclotomic Hecke algebra  $\hec{n, S, r}(q^2; u_1, \dots, u_r)$.

Let $\labold = (\la^{(1)}, \dots, \la^{(r)})$ be an $r$--tuple of Young diagrams.  The total size of $\labold$ is
$|\labold| = \sum_i |\la^{(i)}|$.    If $\mubold$ and $\labold$ are  $r$--tuples of Young diagrams of total size $f-1$  and $f$ respectively,  we write $\mubold \subset \labold$ if $\mubold$ is obtained from $\labold$ by removing one box from one component of $\labold$.

\begin{theorem}[\cite{ariki-book}]   \label{theorem:  cyclotomic hecke split semisimple}
Let $F$ be a field.   The cyclotomic Hecke algebra  $\hec{n, F, r}(q^2; u_1, \dots, u_r)$  is split semisimple for all $n$  as long  as $q$ is not a proper root of unity and, for all
$i \ne j$,
$u_i/u_j$  is not an integer power of $q$  .  In this case,   the simple components of \break $\hec{n, F, r}(q; u_1, \dots, u_r)$   are labeled by $r$--tuples of Young diagrams of total size $n$,   and a simple $\hec{n, F, r}$  module $V_\labold$ decomposes as a
$\hec{n-1, F, r}$  module as the direct sum of all $V_\mubold$  with $\mubold \subset \labold$.
\end{theorem}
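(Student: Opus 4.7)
The plan is to invoke the classical construction of seminormal representations for the Ariki--Koike algebra, following Ariki's book \cite{ariki-book}.  First I would exhibit a standard spanning set of $\hec{n, F, r}(q^2; u_1, \dots, u_r)$ of cardinality $r^n n!$, obtained from words of the form $L_1^{a_1} L_2^{a_2} \cdots L_n^{a_n} T_w$ with $0 \le a_k < r$ and $w \in \S_n$, where $L_1 = t_1$ and $L_{k+1} = g_k L_k g_k$ are the Jucys--Murphy elements; this forces $\dim_F \hec{n, F, r} \le r^n n!$.

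Next, for each $r$--tuple $\labold$ of Young diagrams with $|\labold| = n$, I would construct a module $V_\labold$ with basis indexed by standard $\labold$--tableaux, following Hoefsmit's seminormal construction adapted to the cyclotomic case.  The pairwise commuting elements $L_k$ act diagonally on this basis:  if the box labeled $k$ in the tableau $\mathfrak{t}$ lies in component $j$ of $\labold$ with content $c$, the eigenvalue of $L_k$ on the basis vector $v_{\mathfrak{t}}$ is $u_j q^{2c}$.  The generators $g_k$ act by explicit matrices of size at most $2 \times 2$ on the subspaces spanned by $v_{\mathfrak{t}}$ and $v_{s_k \mathfrak{t}}$ (when the latter is standard), and by a scalar otherwise.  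The hypotheses that $q$ is not a proper root of unity and that no $u_i/u_j$ is an integer power of $q$ guarantee that the required rational matrix coefficients are well-defined and that distinct standard $\labold$--tableaux yield distinct sequences of $L$--eigenvalues.  One must then verify that the defining relations of $\hec{n, F, r}$ are satisfied on $V_\labold$.

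A dimension count $\sum_{|\labold| = n} (\dim V_\labold)^2 = r^n n!$ (where $\dim V_\labold$ equals the number of standard $\labold$--tableaux) combined with the upper bound above forces equality and identifies $\{V_\labold\}$ as a complete family of pairwise non-isomorphic simple modules; pairwise non-isomorphism follows because the joint spectrum of $L_1, \dots, L_n$ on $V_\labold$ determines $\labold$.  Split semisimplicity is then clear, since all simple modules are already realized over $F$.  The branching rule is immediate from the construction: for each $\mubold \subset \labold$, the span of those $v_{\mathfrak{t}}$ whose box labeled $n$ sits at the removable corner producing $\mubold$ is a $\hec{n-1, F, r}$--submodule isomorphic to $V_\mubold$, and these submodules together exhaust a basis of $V_\labold$.

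The main obstacle is the bookkeeping of the seminormal construction, in particular the verification of the braid relation $g_k g_{k+1} g_k = g_{k+1} g_k g_{k+1}$ on the subspaces spanned by tableaux differing only in the positions of the boxes labeled $k$, $k+1$, and $k+2$; this is precisely where the non-degeneracy hypotheses on $q$ and the $u_i$ enter crucially, to rule out zero denominators in the matrix coefficients.  Since the result is classical, in practice we would simply cite \cite{ariki-book}.
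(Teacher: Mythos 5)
Your proposal is correct and matches the source: the paper offers no proof of its own for this statement, simply citing \cite{ariki-book}, and the seminormal (Hoefsmit/Ariki--Koike) construction you sketch --- the $r^n n!$ spanning set, the tableau bases with $L_k$-eigenvalues $u_j q^{2c}$, the dimension count, and the branching rule read off from removable boxes --- is exactly the classical argument of the cited reference. Nothing further is needed.
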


Let us call the branching diagram for the cyclotomic Hecke algebras, as described in the theorem,  the
{\em $r$--Young lattice}.   Note that, as for the usual Young's lattice,  the $r$--Young lattice has no multiple edges.

 \begin{theorem}[Ariki, Koike, Dipper, James, Mathas]  \label{propositon:  cyclotomic Hecke coherent tower}
 The sequence of cyclotomic Hecke algebras  $(\hec{n, S, r}(q^2; u_1, \dots, u_r))_{n \ge 0}$ is a coherent tower  of cellular algebras.
 \end{theorem}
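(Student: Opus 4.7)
The plan is to verify the three conditions in the definition of a coherent tower of cellular algebras for $H_n := \hec{n, S, r}(q^2; u_1,\dots, u_r)$. Cellularity of each individual $H_n$ was established by Dipper, James, and Mathas (generalizing constructions of Ariki--Koike and Murphy): the cell datum has $\Lambda_n$ equal to the set of $r$-tuples $\labold = (\la\spp 1, \dots, \la\spp r)$ of Young diagrams with $|\labold| = n$, partially ordered by a suitable multipartition dominance order, and the cell modules are the Specht modules $\Delta^\labold$.

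First I would verify consistency of involutions. The standard algebra involution on $H_n$ fixes each generator $t_1, g_1, \dots, g_{n-1}$; since the embedding $H_n \hookrightarrow H_{n+1}$ sends each generator to the generator of the same name in $H_{n+1}$, the involution on $H_{n+1}$ restricts to that on $H_n$, so condition (1) holds.

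For the induction coherence condition, I would invoke Mathas's recent theorem \cite{mathas-2009}: for any $\labold \in \Lambda_n$, the induced module $\Ind_{H_n}^{H_{n+1}}(\Delta^\labold)$ admits a filtration whose successive quotients are the cell modules $\Delta^\mubold$, one for each $\mubold \supset \labold$ obtained by adding a box to some component of $\labold$. For the restriction coherence condition, the module $\Res_{H_n}^{H_{n+1}}(\Delta^\mubold)$ must carry a cell filtration by the $\Delta^\labold$ with $\labold \subset \mubold$; this is the cyclotomic analogue of Jost's theorem for ordinary Hecke algebras and either appears alongside the induction result in \cite{mathas-2009} or can be extracted from the induction filtration by combining it with the contravariant duality afforded by the cellular involution on $H_n$.

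The principal obstacle is precisely the restriction filtration over the generic integral ground ring rather than merely in the split semisimple regime (where Theorem \ref{theorem:  cyclotomic hecke split semisimple} and Frobenius reciprocity would suffice). The induction half is the exact content of the recent result of Mathas on which the present paper explicitly depends; once that is available, the restriction half is comparatively standard given the Murphy-type cellular structure and the canonical involution, and the three conditions for a coherent tower of cellular algebras follow.
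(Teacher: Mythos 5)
Your overall route coincides with the paper's: cellularity of each $H_n$ via the Dipper--James--Mathas Murphy-type basis, consistency of the involutions on generators, and the induction half of coherence from Mathas's recent theorem ~\cite{mathas-2009}. (One small omission: to have a tower at all one needs the natural map $H_n \to H_{n+1}$ to be injective, which the paper obtains from the Ariki--Koike freeness theorem ~\cite{ariki-koike}.)

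The genuine gap is the restriction half. The paper cites a specific earlier result, Ariki--Mathas ~\cite{ariki-mathas}, Proposition 1.9, which proves directly, over an arbitrary ground ring, that the restriction of a cell module from $H_{n+1}$ to $H_n$ has a cell filtration; this is the cyclotomic analogue of Jost's theorem and is a theorem in its own right, established by explicit analysis of the Murphy-type basis. Neither of your proposed substitutes fills this in: the 2009 paper of Mathas concerns the induced module, not the restriction; and the suggestion that the restriction filtration ``can be extracted from the induction filtration by contravariant duality'' does not work as stated. The cellular involution gives a duality that sends cell modules to duals of cell modules (not to cell modules), and there is no formal adjunction-plus-duality argument converting a cell filtration of $\Ind_{H_n}^{H_{n+1}}(\Delta^{\labold})$ into a cell filtration of $\Res_{H_n}^{H_{n+1}}(\Delta^{\mubold})$ over the integral ground ring $S$; arguments of that type require costandard modules and highest-weight or Ext-vanishing machinery, which is exactly the semisimple-regime shortcut you correctly identify as insufficient here. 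So as written, one of the three conditions for coherence rests on an unverified claim; the fix is simply to cite (or reprove) Ariki--Mathas, Proposition 1.9.
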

 
 \begin{proof}  Write $\hec n$ for $ \hec{n, S, r}(q^2; u_1, \dots, u_r)$.   Ariki and Koike showed that the cyclotomic Hecke algebras are free as $S$ modules  ~\cite{ariki-koike},  which implies that $\hec n$ imbeds naturally in 
 $\hec {n+1}$.   Moreover,  the algebras $\hec n$ have involutions that are consistent with the inclusions. Dipper, James and Mathas ~\cite{dipper-james-mathas} constructed a cellular basis of the cyclotomic Hecke algebras,  generalizing the Murphy basis of ordinary Hecke algebras.  Ariki and Mathas
 showed ~\cite{ariki-mathas}, Proposition 1.9,  that restrictions of cell modules from $\hec {n+1}$ to $\hec n$ have cell filtrations.  Finally,  Mathas has shown ~\cite{mathas-2009}  that the module obtained from inducing a cell module from
 $\hec n$  to $\hec {n+1}$  has a cell filtration.
 \end{proof}

 \subsubsection{Verification of the framework axioms for the cyclotomic BMW algebras}
Let $R$ be the generic admissible integral ground ring, with parameters 
$\rhobold$,  $\qbold$,  $\deltabold_j$ ($j \ge 0$), and $\ubold_1, \dots, \ubold_r$, as  introduced at the end of Section \ref{subsubsection:  morphisms and generic ring}.  In this section, 
we write  $\bmw n$  for $\bmw{n, R, r}(\ubold_1, \dots,  \ubold_r)$,
 $\kt n$  for $\kt{n, R, r}(\ubold_1, \dots,  \ubold_r)$,
and $\hec n$  for
$\hec{n, R, r}(\qbold^2; \ubold_1, \dots, \ubold_r)$.  Recall that the field of fractions of $R$ is
$F = \Q(\qbold, \ubold_1, \dots, \ubold_r)$.      Let $\bmw n^F = \bmw n \otimes_R F$, and similarly for the other algebras.  

If we would assume the isomorphism of $\bmw n$  and $\kt n$,  then we could verify the framework axioms for the pair of sequences $(\bmw n)_{n \ge 0}$ and $(\hec n)_{n \ge 0}$  without difficulty,
using elementary observations and some deeper results from the literature, and consequently apply Theorem \ref{main theorem} to the cyclotomic BMW algebras.  However, we wish to give
an independent proof of the isomorphism.   Consequently, we have to verify the framework axioms and prove the isomorphism $\bmw n \cong \kt n$ inductively,  in tandem with the inductive step in the proof of Theorem \ref{main theorem}.   

\begin{lemma} \label{lemma: Axiom on A0 for cyclotomic BMW}
 $\bmw 0 \cong \kt 0 \cong R$.
\end{lemma}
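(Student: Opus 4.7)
The plan is to unpack the definitions for $n=0$; no substantive argument is required once the conventions are in place.

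First, I would address $\bmw 0 \cong R$. The presentation in Definition~\ref{definition:  cyclotomic BMW} specifies generators $g_i^{\pm 1}$, $e_i$ for $1 \le i \le n-1$, a range that is empty when $n=0$. Following the convention of the source papers ~\cite{GH1, GH2}, the generator $y_1^{\pm 1}$ is taken to be present only for $n \ge 1$, since geometrically $y_1$ is the monodromy around the flagpole acting on the first strand and there are no strands when $n=0$. Under this convention, $\bmw 0$ is the free unital $R$-algebra on no generators, hence $\bmw 0 \cong R$.

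Next, for $\kt 0 \cong R$, the surjective $R$-algebra homomorphism $\varphi : \bmw 0 \to \kt 0$ recalled in Section~\ref{subsubsection:  cyclotomic BMW geometric realization} presents $\kt 0$ as a cyclic quotient of $R \cong \bmw 0$. To see that this quotient is nonzero (so that $\varphi$ is an isomorphism), I would exhibit an $R$-linear map $\kt 0 \to R$ sending the identity tangle (the flagpole alone) to $1$; the closure map $\cl$ recalled in the preceding subsection on the cyclotomic Kauffman tangle algebra does the job. Geometrically, this amounts to the observation that every affine $(0,0)$-tangle diagram reduces, via the Kauffman skein relations, the free-loop relation, and the cyclotomic skein relation, to a scalar multiple of the identity diagram.

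The only real obstacle is to pin down the conventions for the degenerate case $n = 0$, since Definition~\ref{definition:  cyclotomic BMW} is formally written for $n \ge 1$ and does not explicitly spell out the $n = 0$ case. Once one adopts, as is standard in the literature and consistent with the ordinary case $\br_0(S, \delta) = S$, the convention that $y_1$ plays no role when there are no strands to act on, both halves of the lemma are immediate.
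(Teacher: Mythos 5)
Your handling of $\bmw 0 \cong R$ is fine and matches the paper's (implicit) convention: for $n=0$ there are no generators, so the presentation gives $R$ itself. The problem is the second half. Your reduction of $\kt 0$ to a cyclic $R$-module spanned by the empty diagram is acceptable (surjectivity of $\varphi$ can be cited), but the remaining step -- that the empty diagram is not annihilated by any nonzero element of $R$, i.e.\ that $\kt 0$ is \emph{free} of rank one -- is exactly the nontrivial content, and your proposed tool does not exist: the closure map $\cl$ is defined only from $\kt n$ to $\kt {n-1}$ for $n \ge 1$; there is no such map out of $\kt 0$, and producing an $R$-linear functional on $\kt 0$ sending the empty diagram to $1$ requires verifying that an evaluation of affine $(0,0)$-diagrams is compatible with \emph{all} of the defining relations, including the cyclotomic skein relation and the reduction of loops winding around the flagpole (which the free-loop relation does not handle, since those loops are not null-homotopic). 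This consistency is precisely where admissibility of the parameters enters -- over a non-admissible ground ring such collapse genuinely occurs (compare the discussion in Section 5.5.5, where $e_1$ becomes a torsion element) -- so the claim cannot be dismissed as ``no substantive argument required.''

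The paper deals with this by a direct citation: Wilcox and Yu \cite{Wilcox-Yu3}, Proposition 6.2, prove that $\kt 0$ is a free $R$-module with basis the empty diagram, which is the identity element; the lemma is then immediate. To repair your argument you would either cite that result (as the paper does), or actually construct and verify the well-definedness of the evaluation functional on the skein module over the universal admissible ring $R$ -- a real piece of work, not a formal unwinding of conventions.
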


\begin{proof}  Wilcox and Yu ~\cite{Wilcox-Yu3}, Proposition 6.2,  show that $\kt 0$ is a free $R$ module with basis $\{\emptyset\}$,  where
$\emptyset$ denotes the empty affine tangle diagram, 
which is also  the identity element of $\kt 0$.
\end{proof}

\begin{lemma} \label{lemma:  isomorphism varphi implies injectivity of iota}
 If for some $n$  and for some admissible ground ring $S$, we have
$\varphi : \bmw n^S \to \kt n^S$ is an isomorphism,  then $\iota: \bmw n^S \to \bmw {n+1}^S$ is injective.
\end{lemma}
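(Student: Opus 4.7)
The plan is to deduce injectivity of $\iota : \bmw{n}^S \to \bmw{n+1}^S$ by transferring it through the surjective homomorphism $\varphi$ to the cyclotomic Kauffman tangle side, where an inclusion $\iota_{\kt{}} : \kt{n}^S \to \kt{n+1}^S$ has already been shown to be injective by means of the partial closure map.

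More precisely, I would first record the commutative square
\begin{equation*}
\begin{CD}
\bmw n^S  @>\iota>>   \bmw{n+1}^S \\
@VV\varphi V               @VV\varphi V \\
\kt n^S   @>\iota_{\kt{}}>>  \kt{n+1}^S
\end{CD}
\end{equation*}
The commutativity is immediate from how $\iota$ and $\iota_{\kt{}}$ are defined on generators (both send $e_i \mapsto e_i$, $g_i \mapsto g_i$, and the affine generator to itself, while $\varphi$ is defined on generators). Next I would invoke the fact, already established in Section \ref{subsubsection:  cyclotomic BMW geometric realization} and the subsequent discussion, that the closure map $\cl : \kt{n+1}^S \to \kt{n}^S$ satisfies $x = \cl(\iota_{\kt{}}(x) e_n)$ for $x \in \kt{n}^S$, which forces $\iota_{\kt{}}$ to be injective.

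Now the argument is a one-line diagram chase: suppose $x \in \bmw n^S$ satisfies $\iota(x) = 0$. Applying $\varphi$ and using commutativity gives $\iota_{\kt{}}(\varphi(x)) = 0$; by injectivity of $\iota_{\kt{}}$ we get $\varphi(x) = 0$; and by the hypothesis that $\varphi : \bmw n^S \to \kt n^S$ is an isomorphism we conclude $x = 0$.

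There is no real obstacle here --- the content has already been absorbed into the two ingredients (the inclusion of $\kt{n}^S$ into $\kt{n+1}^S$ being injective, and the assumed isomorphism at level $n$). The only thing to be a little careful about is to spell out why the diagram commutes, which reduces to checking on the generators $e_i$, $g_i$, and $y_1$ (equivalently $X_1$, up to the scalar $\rho$).
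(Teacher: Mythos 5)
Your argument is correct and is essentially the paper's own proof: the paper likewise notes that $\varphi\circ\iota = \iota\circ\varphi$ and that this composite $\bmw n^S \to \kt{n+1}^S$ is injective because $\varphi$ at level $n$ and $\iota:\kt n^S \to \kt{n+1}^S$ (injective via the closure identity $x=\cl(\iota(x)e_n)$) both are, whence $\iota$ on the BMW side is injective. Your diagram-chase phrasing is just a restatement of that one-line argument.
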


\begin{proof}  $\varphi \circ \iota = \iota \circ \varphi : \bmw n^S \to
\kt {n+1}^S$ is injective, because $\varphi: \bmw n^S \to \kt n^S$  and $\iota: \kt n^S \to \kt {n+1}^S$ are injective. Thus
 $\iota : \bmw n^S \to \bmw {n+1}^S$ is injective.
\end{proof}

\begin{lemma} \label{lemma: generic semisimplicty of cycotomic BMW} For all $n \ge 0$,   $\bmw n^F  \cong \kt n^F$,  $\bmw n^F$ is split semisimple of dimension \break   $r^n (2n - 1)!!$, and $\iota : \bmw n^F \to \bmw {n+1}^F$ is injective.  
\end{lemma}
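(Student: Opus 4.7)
The proof proceeds by induction on $n$. The base cases $n = 0$ and $n = 1$ are handled directly: Lemma \ref{lemma: Axiom on A0 for cyclotomic BMW} gives $\bmw 0^F = F$, split semisimple of dimension $1$; and $\bmw 1^F$, being the quotient of $F[y_1^{\pm 1}]$ by the cyclotomic polynomial $\prod_i(y_1 - \ubold_i)$ with distinct roots in $F$, is split semisimple of dimension $r$. The map $\varphi$ is easily seen to be an isomorphism at these two levels, and injectivity of $\iota: \bmw 0^F \to \bmw 1^F$ is obvious.

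For the inductive step, assume the lemma holds for all $k \le n$. I will first verify the framework axioms of Section \ref{subsection: framework axioms} for the finite towers $(\bmw k)_{0 \le k \le n}$ and $(\hec k)_{0 \le k \le n}$: coherence of the cyclotomic Hecke tower is Theorem \ref{propositon:  cyclotomic Hecke coherent tower}, the axioms involving $e_j$ follow from elementary manipulations of the defining relations as in Propositions \ref{proposition: framework axioms for Brauer} and \ref{proposition: framework axioms for BMW}, and split semisimplicity of $\bmw k^F$ for $k \le n$ is the induction hypothesis. The crucial step is extending split semisimplicity to $\bmw{n+1}^F$. Since $\deltabold_0 \ne 0$ in $F$, the element $f_n = \deltabold_0^{-1} e_n$ is a genuine idempotent in $\bmw{n+1}^F$; the framework axioms translate to $f_n x f_n = \eps_n(x) f_n$ for $x \in \bmw n^F$ (with $\eps_n$ the conditional expectation) and the injectivity of $x \mapsto x f_n$. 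Wenzl's basic construction observation (\cite{Wenzl-Brauer}, Theorem 1.3) applied to the inclusion $\bmw{n-1}^F \subseteq \bmw n^F$ of split semisimple algebras then yields $\bmw n^F e_n \bmw n^F \cong \End((\bmw n^F)_{\bmw{n-1}^F})$, which is split semisimple. Combined with split semisimplicity of the quotient $\bmw{n+1}^F / \bmw{n+1}^F e_n \bmw{n+1}^F \cong \hec{n+1}^F$ (Theorem \ref{theorem:  cyclotomic hecke split semisimple}), and noting that the identity $z$ of the semisimple ideal is automatically central in $\bmw{n+1}^F$, we obtain $\bmw{n+1}^F = z\bmw{n+1}^F \oplus (1-z)\bmw{n+1}^F$ as a direct sum of split semisimple algebras.

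With the framework axioms now holding for $(\bmw k)_{0 \le k \le n+1}$, Theorem \ref{main theorem} provides the full cellular structure for $\bmw{n+1}$, and the branching diagram of $(\bmw k^F)_{0 \le k \le n+1}$ is the reflected $r$-Young lattice. In particular, $\dim_F \bmw{n+1}^F$ equals the sum of squares of path counts from $\emptyset$ to level-$(n+1)$ vertices, which a combinatorial identity for up-down tableaux on $r$-tuples of Young diagrams evaluates to $r^{n+1}(2n+1)!!$. Surjectivity of $\varphi$ gives $\dim_F \kt{n+1}^F \le r^{n+1}(2n+1)!!$, and a direct normal-form argument on affine tangle diagrams produces a matching spanning set of $\kt{n+1}^F$, forcing $\varphi$ to be an isomorphism; injectivity of $\iota$ then follows from Lemma \ref{lemma:  isomorphism varphi implies injectivity of iota}. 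I expect the main technical obstacle to be verifying the combinatorial identity matching squared path counts to $r^{n+1}(2n+1)!!$ and keeping the tangle-diagram spanning argument elementary enough to avoid reproducing the full spanning-set arguments of \cite{GH2} or \cite{Wilcox-Yu3}.
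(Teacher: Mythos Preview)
The paper's own proof of this lemma is a one-line citation to \cite{GH3}, Theorem~4.8, together with the observation that injectivity of $\iota$ at each stage follows from the inductive hypothesis $\bmw n^F \cong \kt n^F$ via Lemma~\ref{lemma:  isomorphism varphi implies injectivity of iota}.  You are instead attempting to reconstruct that external result inside the paper, by interleaving a Wenzl-type semisimplicity argument with the inductive machinery of Theorem~\ref{main theorem}.  That overall strategy is not unreasonable---it is close in spirit to what the paper later does over $R$ in Theorem~\ref{proposition: framework axioms for CBMW}, where the present lemma is already available as the input for axiom~(\ref{axiom: semisimplicity})---but your execution has a real gap.

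The problem is in your argument that $\varphi: \bmw{n+1}^F \to \kt{n+1}^F$ is an isomorphism.  You have computed $\dim_F \bmw{n+1}^F = r^{n+1}(2n+1)!!$ and observed that surjectivity of $\varphi$ gives $\dim_F \kt{n+1}^F \le r^{n+1}(2n+1)!!$.  You then say that a ``direct normal-form argument on affine tangle diagrams produces a matching spanning set of $\kt{n+1}^F$, forcing $\varphi$ to be an isomorphism.''  But a spanning set of the right cardinality only re-establishes the \emph{upper} bound you already have; it does nothing to force equality of dimensions.  What is actually needed is a \emph{lower} bound $\dim_F \kt{n+1}^F \ge r^{n+1}(2n+1)!!$, i.e.\ $r^{n+1}(2n+1)!!$ linearly independent elements in the tangle algebra.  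That is precisely the substantive content of the cyclotomic Morton--Traczyk result, established in \cite{GH2, GH3} or \cite{Wilcox-Yu3} via non-degeneracy of a suitable Markov trace, and it is exactly the step the paper stresses (in the proof of the lemma and in the paragraph on ``Advantages of our approach'') cannot be replaced by anything elementary.  A secondary issue is that your invocation of \cite{Wenzl-Brauer}, Theorem~1.3, for semisimplicity of $\bmw n^F e_n \bmw n^F$ presupposes that the conditional expectation $\eps_n$ arises from a \emph{faithful} trace on $\bmw n^F$; you have not verified this, and doing so is again part of what \cite{GH3}, Theorem~4.8 actually proves.
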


\begin{proof}  This is proved in ~\cite{GH3},  Theorem 4.8.  We stress that the result is independent of the finite spanning set theorem, ~\cite{GH2}, Proposition 3.7.   One thing that is not made clear in the  proof of 
~\cite{GH3},  Theorem 4.8 is why $\iota : \bmw n^F \to \bmw {n+1}^F$ is injective.   But if one assumes
inductively that the conclusions of the theorem hold for $\bmw f^F, f \le n$, for some fixed $n$, and in particular that
$\varphi : \bmw n^F \to \kt n^F$ is an isomorphism,  then   $\iota : \bmw n^F \to \bmw  {n+1}^F$ is injective by Lemma \ref{lemma:  isomorphism varphi implies injectivity of iota}.
One can then continue with the proof of the inductive step of ~\cite{GH3},  Theorem 4.8. 
\end{proof}  

\begin{lemma}  \label{lemma: cardinality of basis of bmw n}
If for some $n$,  $\bmw n$ is a free $R$--module, then its rank is $r^n (2n-1)!!$.
\end{lemma}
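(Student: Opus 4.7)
The plan is very short: this is a rank comparison between $\bmw n$ over $R$ and $\bmw n^F$ over $F$, once we know the generic dimension.

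First I would invoke Lemma \ref{lemma: generic semisimplicty of cycotomic BMW}, which gives $\dim_F \bmw n^F = r^n(2n-1)!!$, independently of any finite spanning set theorem. This is the only nontrivial input needed; everything else is linear algebra over the field of fractions.

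Next, assume $\bmw n$ is a free $R$-module, and let $k$ denote its rank. By Lemma \ref{lemma injectivity of x to x tensor 1} (or directly from the cited propositions of Jacobson in its proof), the map $x \mapsto x \otimes 1_F$ sends any $R$-basis of $\bmw n$ to an $F$-basis of $\bmw n \otimes_R F = \bmw n^F$. Hence $\dim_F \bmw n^F = k$.

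Combining the two, $k = r^n(2n-1)!!$, which is the desired conclusion. There is no real obstacle here; the lemma is essentially a bookkeeping statement whose whole content lies in the previously established generic dimension count of Lemma \ref{lemma: generic semisimplicty of cycotomic BMW}. It is recorded separately because freeness of $\bmw n$ over $R$ has not yet been established at this point in the argument, and will be proved later by an inductive application of Theorem \ref{main theorem}; the present lemma merely says that when freeness is eventually verified, the rank is forced.
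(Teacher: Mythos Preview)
Your proof is correct and follows essentially the same approach as the paper: the paper's proof is the single sentence that $x \mapsto x \otimes 1$ takes an $R$--basis of $\bmw n$ to an $F$--basis of $\bmw n^F$, implicitly invoking Lemma \ref{lemma: generic semisimplicty of cycotomic BMW} for the dimension count and the fact (from the proof of Lemma \ref{lemma injectivity of x to x tensor 1}) that an $R$--basis of a free module maps to an $F$--basis under extension of scalars.
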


\begin{proof}   $x \mapsto x \otimes 1$  takes an $R$--basis of $\bmw n$ to an $F$--basis of
$\bmw n \otimes_R F = \bmw n^F$.  
\end{proof}

\begin{lemma} \label{lemma:  finite spanning set implies isomorphism and freeness}
 If for some $n$,  $\bmw n$  has a spanning set  $A$ of cardinality $r^n (2n -1)!!$, 
then $\varphi : \bmw n \to \kt n$ is an isomorphism,  and $A$ is an $R$--basis of $\bmw n$.   
\end{lemma}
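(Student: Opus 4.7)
The plan is to exploit the generic semisimplicity result (the preceding lemma) together with the fact that extension of scalars from $R$ to $F$ preserves spanning and reflects linear independence for torsion-free modules.

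First I would prove that $A$ is an $R$-basis. Since $A$ spans $\bmw n$ over $R$, the set $\{a \otimes 1_F : a \in A\}$ spans $\bmw n^F = \bmw n \otimes_R F$ over $F$. By the preceding lemma, $\dim_F \bmw n^F = r^n(2n-1)!!= |A|$, so the spanning set $\{a \otimes 1_F\}$ must in fact be an $F$-basis of $\bmw n^F$. Now if $\sum_{a \in A} r_a a = 0$ in $\bmw n$ with $r_a \in R$, then $\sum_a r_a (a \otimes 1_F) = 0$ in $\bmw n^F$, so by $F$-linear independence each $r_a = 0$ in $F$, and hence in $R$ (since $R \hookrightarrow F$). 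Thus $A$ is an $R$-basis and $\bmw n$ is free of rank $r^n(2n-1)!!$.

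Next I would deduce the isomorphism $\varphi : \bmw n \to \kt n$. The map $\varphi$ is surjective by construction (it is the quotient map onto the Kauffman tangle algebra). To prove injectivity, consider the commutative diagram
\begin{diagram}
\bmw n & \rTo^{\varphi} & \kt n \\
\dTo^{\iota_1} & & \dTo_{\iota_2} \\
\bmw n^F & \rTo^{\varphi^F} & \kt n^F
\end{diagram}
where the vertical maps send $x \mapsto x \otimes 1_F$. By the preceding lemma, $\varphi^F$ is an isomorphism. Since $\bmw n$ is free over $R$ (hence torsion-free), Lemma \ref{lemma injectivity of x to x tensor 1} shows that $\iota_1$ is injective. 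If $\varphi(x) = 0$ then $\varphi^F(\iota_1(x)) = \iota_2(\varphi(x)) = 0$, and the injectivity of $\varphi^F$ forces $\iota_1(x) = 0$, whence $x = 0$ by injectivity of $\iota_1$. Therefore $\varphi$ is injective, and hence an isomorphism.

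There is no substantive obstacle here once the preceding lemma is in hand: the argument is a clean dimension count followed by a diagram chase, using only that $R$ is an integral domain with field of fractions $F$ and that the basic construction $(\bmw n)^F \cong (\kt n)^F$ is split semisimple of the expected dimension. The slightly delicate point, worth isolating, is that the freeness of $\bmw n$ (and not merely a cardinality match) is what permits reflecting injectivity from $\bmw n^F$ back down to $\bmw n$; this is why the two conclusions of the lemma must be proved in the order given, establishing freeness first and deducing the isomorphism second.
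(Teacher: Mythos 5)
Your argument is correct, and it leans on the same key input as the paper (the generic semisimplicity lemma, Lemma \ref{lemma: generic semisimplicty of cycotomic BMW}, giving $\dim_F \bmw n^F = \dim_F \kt n^F = r^n(2n-1)!!$), but it runs the dimension count on the $\bmw n$ side rather than the $\kt n$ side, and this forces the extra structure you noticed. The paper's proof is more economical: it pushes the spanning set through $\varphi$ and observes that $\{\varphi(a)\otimes 1 : a \in A\}$ is a spanning set of $\kt n^F$ of cardinality equal to $\dim_F \kt n^F$, hence linearly independent; therefore $\varphi(A)$ is linearly independent in $\kt n$ (a relation $\sum r_a \varphi(a)=0$ with $r_a \in R$ maps to a relation over $F$, and $R\hookrightarrow F$). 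From that single fact both conclusions drop out at once: relations among the $a$'s push forward to relations among the $\varphi(a)$'s, so $A$ is independent and hence a basis, and $\varphi$ carries the basis $A$ to a linearly independent spanning set of $\kt n$, so it is bijective. In particular the paper never needs injectivity of $x\mapsto x\otimes 1_F$, Lemma \ref{lemma injectivity of x to x tensor 1}, or the freeness-first ordering that your diagram chase requires. Your route is sound nonetheless; the only point to make explicit is that the preceding lemma literally asserts $\bmw n^F \cong \kt n^F$ rather than that $\varphi^F$ is an isomorphism, but since $\varphi$ (hence $\varphi^F$) is surjective and both algebras have the same finite $F$-dimension, $\varphi^F$ is indeed bijective, so this is a one-line remark rather than a gap. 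What the paper's version buys is brevity and independence from torsion considerations; what yours buys is an explicit identification of where freeness enters, which you correctly isolate as the reason your two conclusions must be proved in that order.
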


\begin{proof}  Say $\bmw n$ has a spanning set $A$ of cardinality  $r^n (2n -1)!!$.  To prove both conclusions, it suffices to show that $\varphi(A)$ is linearly independent in $\kt n$.    But  $$ \{\varphi(a) \otimes 1: a \in A\} \subseteq   \kt  n \otimes_R F = \kt n^F$$ is a spanning set of cardinality $r^n (2n -1)!!$,  which is  the dimension of  $\kt n^F$, according to Lemma \ref{lemma: generic semisimplicty of cycotomic BMW}.   Therefore $\{\varphi(a) \otimes 1: a \in A\} $ is linearly independent in $\kt n^F$,  and hence $\varphi(A)$ is linearly independent in $\kt n$.  
\end{proof}

\begin{lemma}  \label{lemma:  W1 isomorphic to KT1 and to H1}
$\bmw 1 \cong \kt 1 \cong \hec 1$,  $\bmw 1$ is a free $R$--module of rank $r$, and both
$\iota : \bmw 0 \to \bmw 1$  and  $\iota : \bmw 1 \to \bmw 2$ are injective.  
\end{lemma}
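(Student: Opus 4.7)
The plan is to exploit the very simple structure of $\bmw 1$: when $n=1$, the index $i$ satisfies $1\le i\le n-1=0$, so every generator $g_i$ and $e_i$ disappears from the presentation of Definition \ref{definition:  cyclotomic BMW}. What remains is precisely $y_1^{\pm 1}$ subject to $y_1 y_1\inv = 1$ and the cyclotomic relation $(y_1-u_1)\cdots(y_1-u_r)=0$. The presentation of $\hec{1, R, r}$ collapses in the same way to $t_1^{\pm 1}$ with the analogous cyclotomic relation. Thus the assignment $y_1\mapsto t_1$ is a well defined isomorphism of involutive $R$-algebras $\bmw 1\cong \hec 1$, and both are naturally identified with $R[y^{\pm 1}]/\langle (y-u_1)\cdots(y-u_r)\rangle$.

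Next I would check that this quotient is free of rank $r$ over $R$ with spanning set $\{1, y_1,\dots, y_1^{r-1}\}$. The cyclotomic polynomial is monic of degree $r$, so higher powers $y_1^k$ ($k\ge r$) reduce to $R$-linear combinations of $1,\dots,y_1^{r-1}$. Its constant term is $(-1)^r\prod_j u_j$, which is invertible in the generic admissible ring $R$ (each $\ubold_j$ is by assumption invertible), so the relation allows $y_1\inv$ to be written as an $R$-polynomial in $y_1$ of degree at most $r-1$; hence every Laurent monomial lies in the span of $\{1,y_1,\dots,y_1^{r-1}\}$. In particular $\bmw 1$ is spanned over $R$ by at most $r=r^1(2\cdot 1-1)!!$ elements.

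Now I apply Lemma \ref{lemma:  finite spanning set implies isomorphism and freeness} with $n=1$: because $\bmw 1$ admits a spanning set of cardinality $r^1(2\cdot 1-1)!!=r$, the homomorphism $\varphi:\bmw 1\to \kt 1$ is an isomorphism and the spanning set $\{1,y_1,\dots,y_1^{r-1}\}$ is an $R$-basis of $\bmw 1$. (This step silently invokes Lemma \ref{lemma: generic semisimplicty of cycotomic BMW}, which guarantees $\dim_F\kt 1^F = r$.) This simultaneously yields $\bmw 1\cong \kt 1$ and freeness of rank $r$.

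Finally, injectivity of the two inclusions is essentially a corollary. For $\iota:\bmw 0\to \bmw 1$, we have $\bmw 0\cong R$ by Lemma \ref{lemma: Axiom on A0 for cyclotomic BMW}, and $\iota$ sends $1\mapsto 1$; since $1$ is a basis vector in $\bmw 1$, the map is injective. For $\iota:\bmw 1\to \bmw 2$, the isomorphism $\varphi:\bmw 1\to\kt 1$ obtained above lets me invoke Lemma \ref{lemma:  isomorphism varphi implies injectivity of iota} directly. The only mildly delicate point in the whole argument is the verification that $y_1\inv$ reduces to a polynomial of degree $<r$ in $y_1$; this is the place where invertibility of the parameters $\ubold_j$ in the generic ring $R$ is essential, and is what makes passage between the polynomial and Laurent polynomial quotients possible.
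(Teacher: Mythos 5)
Your proposal is correct and takes essentially the same route as the paper: identify $\bmw 1\cong\hec 1\cong R[X]/((X-u_1)\cdots(X-u_r))$, free of rank $r$ over $R$, then apply Lemma~\ref{lemma:  finite spanning set implies isomorphism and freeness} to conclude $\varphi:\bmw 1\to\kt 1$ is an isomorphism, and Lemma~\ref{lemma:  isomorphism varphi implies injectivity of iota} for the injectivity of $\iota$. Your added details --- the reduction of $y_1\inv$ using invertibility of the $\ubold_j$ to pass from the Laurent to the polynomial quotient, and the direct basis argument for $\iota:\bmw 0\to\bmw 1$ --- are fine elaborations of steps the paper dispatches with ``by definition.''
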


\begin{proof}  By definition, $\bmw 1 \cong \hec 1 \cong R[X]/((X-u_1)\cdots (X-u_r))$,  and these algebras are free $R$--modules of rank $r$. Hence $\varphi : \bmw 1 \to \kt 1$ is an isomorphism by
Lemma \ref{lemma:  finite spanning set implies isomorphism and freeness}.    The injectivity statements follow from Lemma \ref{lemma:  isomorphism varphi implies injectivity of iota}.

 \end{proof}

\begin{lemma} \label{lemma:  axioms 2 6 7 for cyclotomic BMW} 
Suppose that for some $n \ge 1$ one has $\bmw k \cong \kt k$ for $0 \le k \le n$.  Then
the maps $\iota: \bmw {k} \to \bmw {k+1}$  are injective  for $0 \le k \le n$.  Using the maps $\iota$, regard $\bmw k$ as a subalgebra of $\bmw {k+1}$  for  $0 \le k \le n$.     One has:
 \begin{enumerate}
 \item  $  \deltabold_0 R \,e_1 \subseteq
 e_1 \bmw 1 e_1  \subseteq   R \,e_1$.

 \item 
For $2 \le k \le n$, $e_{k} \bmw {k} e_{k} = \bmw {k-1} e_{k}$.

\item  For $1 \le k \le n$,  
$e_{k}$  commutes with $ \bmw {k-1} $.  
\item  For $1 \le k \le n$, 
$\bmw {k+1}  \,e_{k} =  \bmw {k} \,e_{k}$.   Moreover,  
 $x \mapsto  x e_{k}$  is injective from $\bmw {k}$ to $\bmw {k} e_{k}$.
\end{enumerate}
 \end{lemma}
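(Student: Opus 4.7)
The plan is to mimic the proofs of Lemmas~\ref{lemma:  axiom 6 for BMW} and~\ref{Bn e(n-1) = B(n-1) e(n-1)  for BMW algebras} for the ordinary BMW algebras, but now working with affine tangle diagrams via the hypothesized isomorphisms $\varphi : \bmw k \cong \kt k$ for $0 \le k \le n$. Injectivity of each $\iota : \bmw k \to \bmw{k+1}$ with $0 \le k \le n$ is immediate from Lemma~\ref{lemma:  isomorphism varphi implies injectivity of iota}, so we may freely regard $\bmw{k-1}$ as a subalgebra of $\bmw{k}$ throughout.

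Statements (1) and (3) are essentially formal. For (1), using that $\bmw 1$ is spanned over $R$ by $\{1,y_1,\dots,y_1^{r-1}\}$, relations (2) and (5)(c) of Definition~\ref{definition:  cyclotomic BMW} give $e_1 y_1^j e_1 = \deltabold_j e_1$ for all $j \ge 0$, whence $e_1 \bmw 1 e_1 \subseteq R e_1$; the first inclusion is just $\deltabold_0 e_1 = e_1^2 \in e_1 \bmw 1 e_1$. For (3), the subalgebra $\bmw{k-1}$ is generated by $y_1, g_1,\dots,g_{k-2}, e_1,\dots,e_{k-2}$, each of which commutes with $e_k$ by relations (3)(b), (4)(a), (4)(b); the case $k=1$ is trivial since $\bmw 0 = R$.

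For (2), the inclusion $\bmw{k-1} e_k \subseteq e_k \bmw k e_k$ is purely algebraic: for $x \in \bmw{k-1}$ we have $x e_k = x \, e_k e_{k-1} e_k = e_k (x e_{k-1}) e_k \in e_k \bmw k e_k$, using (3) together with the tangle relation $e_k e_{k-1} e_k = e_k$. The reverse inclusion is where we pass to the tangle side: for any affine $(k,k)$-tangle diagram $T$, one sees geometrically that $E_k T E_k$ is regularly isotopic, modulo the Kauffman skein relations, to a tangle of the form $T' E_k$ with $T'$ an affine $(k-1, k-1)$-tangle diagram, by rerouting the strands of $T$ that interact with position $k$ through the outer cap and cup. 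Applying $\varphi\inv$ yields $e_k \bmw k e_k \subseteq \bmw{k-1} e_k$.

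The main obstacle is (4). For the equality $\bmw{k+1} e_k = \bmw k e_k$, I would invoke the affine analog of Birman--Wenzl's Lemma~3.1 established in~\cite{GH1}: every affine $(k+1,k+1)$-tangle diagram lies in $\kt k$ or is an $R$-linear combination of elements of the form $a\, \chi_k\, b$ with $a,b \in \kt k$ and $\chi_k \in \{E_k, G_k\}$. Transporting this back through $\varphi$ and applying the tangle and untwisting relations of Definition~\ref{definition:  cyclotomic BMW} (in particular $e_k^2 = \deltabold_0 e_k$, $g_k e_k = \rhobold\inv e_k$, $e_k e_{k-1} e_k = e_k$, and $g_k g_{k-1} e_k = e_{k-1} e_k$) to absorb the factor $\chi_k b e_k$, the same telescoping argument as in the proof of Lemma~\ref{Bn e(n-1) = B(n-1) e(n-1)  for BMW algebras} yields the equality. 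Finally, the injectivity of $x \mapsto x e_k$ from $\bmw k$ to $\bmw k e_k$ follows from the identity $\cl(\iota(x) e_k) = x$ on $\kt k$ noted in the preliminaries on cyclotomic Kauffman tangle algebras, which, after passage through $\varphi$, provides $\cl$ as a left inverse to the map in question.
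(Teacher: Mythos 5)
Your handling of the injectivity of $\iota$, of point (1), of point (3), and of the injectivity half of point (4) is correct and essentially identical to the paper's argument. The gap is in point (2) and in the equality $\bmw{k+1}\,e_k = \bmw{k}\,e_k$ of point (4): in both places you compute inside $\bmw{k+1}$ by passing to affine tangle diagrams and then ``applying $\varphi\inv$'' (or ``transporting back through $\varphi$''). For $k < n$ this is legitimate, since $\varphi : \bmw{k+1} \to \kt{k+1}$ is an isomorphism by hypothesis. But for $k = n$ --- the only new case, and precisely the one needed to drive the induction in Theorem \ref{proposition: framework axioms for CBMW} --- the products $e_n \bmw{n} e_n$ and $\bmw{n+1} e_n$ live in $\bmw{n+1}$, and at level $n+1$ the hypothesis gives you only that $\varphi : \bmw{n+1} \to \kt{n+1}$ is surjective. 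An identity or spanning statement verified in $\kt{n+1}$ (your geometric rerouting of $E_k T E_k$, or the affine analogue of Birman--Wenzl's Lemma 3.1, which is a statement about affine tangle diagrams, i.e.\ about the Kauffman tangle algebra) cannot be pulled back to $\bmw{n+1}$: it only determines elements of $\bmw{n+1}$ modulo $\ker\varphi$. Whether $\varphi$ is injective at level $n+1$ is exactly what the surrounding induction is trying to prove, so at the crucial step your argument is circular.

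The paper sidesteps this by deducing point (2) and the first half of point (4) from the corresponding statements for the \emph{affine} BMW algebras (\cite{GH1}, Propositions 3.17 and 3.20). Those are facts about the affine algebras themselves (where the identification with the affine Kauffman tangle algebra is known unconditionally), and they descend to the cyclotomic quotients $\bmw{k}$ through the quotient map, with no appeal to any level-$(k+1)$ isomorphism $\varphi$. To repair your proof, either quote those affine results, or redo the relevant rewriting arguments purely algebraically from the relations of Definition \ref{definition:  cyclotomic BMW} (in the style of Birman--Wenzl), instead of arguing with tangle diagrams and $\varphi\inv$ inside $\bmw{k+1}$.
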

 
 \begin{proof}    The statement about injectivity of the maps $\iota$ follows from  Lemma \ref{lemma:  isomorphism varphi implies injectivity of iota}.  
 
 Point (1) follows from the relations $e_1 y_1^j e_1 = \deltabold_j e_1$  for $j \ge 0$. 
 Point (2) and the first part of point (4)  follows from the corresponding facts for the affine BMW algebras,  ~\cite{GH1}, Proposition 3.17, and Proposition 3.20.    
Point (3) follows from the defining relations for the cyclotomic BMW algebras.  For the injectivity statement
in point (4),  note that for $x \in \bmw k$, 
$$
\cl(\varphi(x e_k)) = \cl(\varphi(x) E_k) = \varphi(x).
$$  
Since $\varphi : \bmw k \to \kt k$ is injective, so is $x \mapsto x e_k$.  

\end{proof}

\vbox{
\begin{theorem}\label{proposition: framework axioms for CBMW}  \mbox{}
\begin{enumerate}
\item
 The two sequences of algebras $(\bmw k)_{k \ge 0}$  and $(H_k)_{k \ge 0}$ satisfy the framework axioms of Section \ref{subsection: framework axioms}.
 \item  For all $k \ge 0$,  $\varphi : \bmw k \to \kt k$ is an isomorphism, and $\iota: \bmw k \to \bmw {k+1}$ is injective.
 \item The conclusions of Theorem \ref{main theorem} are valid for the sequence $(\bmw k)_{k \ge 0}$. 
 \end{enumerate}
\end{theorem}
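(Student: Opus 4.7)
The plan is to prove (1), (2), and (3) simultaneously by strong induction on $n \ge 0$. At stage $n$, the inductive hypothesis asserts that for all $0 \le k \le n$: (A) $\varphi : \bmw k \to \kt k$ is an isomorphism; (B) $\iota : \bmw k \to \bmw{k+1}$ is injective, so that I may regard $\bmw k$ as a subalgebra of $\bmw{k+1}$; (C) the framework axioms of Section \ref{subsection: framework axioms} hold for the finite tower $(\bmw \ell, \hec \ell)_{0 \le \ell \le k}$; and (D) the conclusions of Theorem \ref{main theorem} hold for $(\bmw \ell)_{0 \le \ell \le k}$. The base cases $n = 0, 1$ are immediate from Lemma \ref{lemma: Axiom on A0 for cyclotomic BMW} and Lemma \ref{lemma:  W1 isomorphic to KT1 and to H1}.

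For the inductive step I assume (A)--(D) at stage $n \ge 1$ and deduce them at stage $n+1$. The first task is to verify the framework axioms for the finite tower $(\bmw \ell, \hec \ell)_{0 \le \ell \le n+1}$. Coherence of $(\hec \ell)$ is Theorem \ref{propositon:  cyclotomic Hecke coherent tower}; consistency of the involutions and the identifications $\bmw 0 \cong \hec 0 \cong R$, $\bmw 1 \cong \hec 1$ are evident; split semisimplicity of $\bmw k^F$ is Lemma \ref{lemma: generic semisimplicty of cycotomic BMW}. The isomorphism $\bmw{n+1}/\bmw{n+1} e_n \bmw{n+1} \cong \hec{n+1}$ (as involutive algebras) holds because the kernel of the evident surjection $\bmw{n+1} \to \hec{n+1}$ (killing every $e_i$) is generated by $e_n$ alone, thanks to the tangle relations $e_i e_{i\pm 1} e_i = e_i$. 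The remaining Jones-type axioms (\ref{axiom: en An en}), (\ref{axiom:  An en}), and (\ref{axiom: e(n-1) in An en An}) for $e_k$ with $k \le n$ are supplied by Lemma \ref{lemma:  axioms 2 6 7 for cyclotomic BMW}, whose sole hypothesis is precisely (A) at stage $n$; axiom (\ref{axiom: e(n-1) in An en An}) also uses the defining relation $e_{n-1} e_n e_{n-1} = e_{n-1}$.

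With the framework axioms available through level $n+1$, I invoke the inductive step of Theorem \ref{main theorem} carried out in Section \ref{section:  basic construction preserves cellularity}, which promotes (D) from stage $n$ to stage $n+1$. In particular $\bmw{n+1}$ becomes a coherent cellular $R$-algebra and hence a free $R$-module of finite rank; this rank equals $\dim_F \bmw{n+1}^F = r^{n+1}(2n+1)!!$ by Lemma \ref{lemma: generic semisimplicty of cycotomic BMW}. Any $R$-basis of $\bmw{n+1}$ is a spanning set of the cardinality required by Lemma \ref{lemma:  finite spanning set implies isomorphism and freeness}, which yields (A) at stage $n+1$; Lemma \ref{lemma:  isomorphism varphi implies injectivity of iota} then gives (B). The main obstacle is the interlocking character of the induction: the framework axioms at level $n+1$ implicitly demand the injectivity of $x \mapsto xe_n$ on $\bmw n$, which rests on $\varphi : \bmw n \to \kt n$ being an isomorphism, while $\varphi$ at level $n+1$ can only be extracted from the rank computation afforded by the cellularity of $\bmw{n+1}$ supplied by Theorem \ref{main theorem}, which itself requires the framework axioms at level $n+1$. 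The induction closes precisely because every dependency at level $n+1$ is satisfied by the inductive hypothesis at level $n$, one step below.
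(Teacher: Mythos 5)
Your proposal is correct and follows essentially the same route as the paper: the same tandem induction (base cases from Lemmas \ref{lemma: Axiom on A0 for cyclotomic BMW} and \ref{lemma:  W1 isomorphic to KT1 and to H1}, truncated framework axioms via Lemma \ref{lemma:  axioms 2 6 7 for cyclotomic BMW}, the inductive step of Theorem \ref{main theorem}, then the rank count feeding Lemma \ref{lemma:  finite spanning set implies isomorphism and freeness} and Lemma \ref{lemma:  isomorphism varphi implies injectivity of iota}). Your closing observation that every dependency at level $n+1$ is discharged one level below is precisely the point the paper makes by formulating the finite axioms (2$'$), (6$'$), (7$'$) and noting the main theorem's inductive step goes through with them unchanged.
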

}

\begin{proof}   According to Proposition  \ref{propositon:  cyclotomic Hecke coherent tower},  $(H_k)_{k \ge 0}$ is a coherent tower of cellular algebras, so axiom (\ref{axiom Hn coherent}) of the framework axioms holds.   Axiom (\ref{axiom: A0 and A1}) holds by  Lemmas \ref{lemma: Axiom on A0 for cyclotomic BMW} and \ref{lemma:  W1 isomorphic to KT1 and to H1}.   
Axiom (\ref{axiom: semisimplicity})  holds by Lemma \ref{lemma: generic semisimplicty of cycotomic BMW}.   We observed above that $\bmw k/\bmw k e_{k-1} \bmw k \cong H_k$ as involutive algebras;   thus axiom (\ref{axiom:  idempotent and Hn as quotient of An}) holds.  
Axiom (\ref{axiom: e(n-1) in An en An}) holds  because of the relation $e_{k-1} e_k e_{k-1} = e_{k-1}$.

Suppose that for some $n \ge 0$, it is known that the maps $\varphi : \bmw k \to \kt k$ are isomorphisms for $0 \le k \le n$.  Then, from Lemma \ref{lemma:  axioms 2 6 7 for cyclotomic BMW} ,  we have the following versions of framework axioms (\ref{axiom: involution on An}), (\ref{axiom: en An en}) and (\ref{axiom:  An en}):
\par\noindent (2$'$) \ \  $\bmw k$ is an $i$--invariant subalgebra of $\bmw {k+1}$  for $0 \le k \le n$.  
\par\noindent (6$'$) \ \ For $1 \le k \le n$,   $e_{k}$ commutes with $\bmw {k-1}$ and $e_{k} \bmw {k} e_{k}  \subseteq \bmw {k-1} e_{k}$.
\par\noindent (7$'$) \ \ For $ 1 \le k \le n$,  $\bmw {k+1} 	e_{k} = \bmw {k} e_{k}$,  and the map $x \mapsto x e_{k}$ is injective from
$\bmw {k}$ to $\bmw {k} e_{k}$.

Now we consider the following:
\par\smallskip
\noindent {\bf Claim:}\quad
  For all $n \ge 0$,  \par
\noindent (a) \ \
for $0\le k \le n$,  the maps $\varphi : \bmw k \to \kt k$ are isomorphisms,
and therefore $\bmw k$ may be regarded as an $i$--invariant subalgebra of $\bmw {k+1}$, and
\par\noindent (b) \ \ the statements  (1) --(4)  of Theorem \ref{main theorem}  hold for the finite tower $(\bmw k)_{0 \le k \le n}$.
\smallskip

For $n = 0$ and $n = 1$,  the claim follows from Lemmas \ref{lemma: Axiom on A0 for cyclotomic BMW} and \ref{lemma:  W1 isomorphic to KT1 and to H1}.    We assume the claim holds for some $n \ge 1$ and show that it also holds for $n + 1$.  
Then, by the discussion above,  the framework axioms hold for the finite tower
$(\bmw k)_{0\le k \le n}$  with axioms (\ref{axiom: involution on An}), (\ref{axiom: en An en}) and (\ref{axiom:  An en}) replaced by the finite versions (2$'$), (6$'$), and (7$'$).
Now  the  inductive step in the proof of Theorem \ref{main theorem} goes through without change and
yields part (b) of the claim for the tower $(\bmw k)_{0 \le k \le n+1}$.    In particular, 
$\bmw {n+1}$ is a cellular algebra;  the cardinality of its cellular basis is   $r^{n+1} (2n + 1)!!$,
by Lemma \ref{lemma: cardinality of basis of bmw n}.      But then
Lemma \ref{lemma:  finite spanning set implies isomorphism and freeness} gives that
$\varphi: \bmw {n+1} \to \kt {n+1}$ is an isomorphism, so part (a) of the claim also holds for $n + 1$.  
\end{proof}

\begin{corollary}  Let $S$ be any admissible integral ground ring with $q - q\inv \ne 0$.  
\begin{enumerate}
\item
The sequence of  cyclotomic BMW algebras $(\bmw {n, S, r})_{n \ge 0}$ is a coherent tower of cellular algebras.
$\bmw {n, S, r}$  has cell modules indexed by all  $r$--tuples of Young diagrams of total size $n$, $n-2$, $n-4, \dots$.   The cell module labeled by
an $r$--tuple of  Young diagrams  $\labold$ has a basis labeled by up--down tableaux of length $n$ and shape $\labold$.
\item  $\bmw {n, S, r} \cong  \kt {n, S, r}$  for all $n \ge 0$.  
\end{enumerate}
\end{corollary}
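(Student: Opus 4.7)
The plan is to obtain the corollary by base change from the generic integral admissible ground ring $R$ constructed at the end of Section \ref{subsubsection:  morphisms and generic ring}. Because $S$ is an integral admissible ground ring with $q - q\inv \neq 0$, property (3) of that construction supplies a morphism of ground rings $R \to S$ and consequently an $S$-algebra isomorphism $\bmw{n, S, r} \cong \bmw{n, R, r} \otimes_R S$. A completely analogous argument --- reading off generators (affine tangle diagrams) and relations (the Kauffman skein and cyclotomic relations, whose parameter dependence is compatible with the allowed transformations of Definition \ref{definition: parameter preserving}) --- gives $\kt{n, S, r} \cong \kt{n, R, r} \otimes_R S$. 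With these two identifications in hand, part (2) of the corollary is obtained at once by tensoring the isomorphism $\varphi : \bmw{n, R, r} \to \kt{n, R, r}$ of Theorem \ref{proposition: framework axioms for CBMW}(2) with $S$ over $R$.

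For part (1), Theorem \ref{proposition: framework axioms for CBMW}(3) states that the four conclusions of Theorem \ref{main theorem} hold for $(\bmw{n, R, r})_{n \ge 0}$. I would now transport this structure to $S$ along $-\otimes_R S$: applying $-\otimes_R S$ to the cellular basis of $\bmw{n, R, r}$ produces a cellular basis of $\bmw{n, S, r}$ (with the same partially ordered set and the same combinatorial labels for the cell modules); and applying $-\otimes_R S$ to a cell filtration of $\Ind_{\bmw{n, R, r}}^{\bmw{n+1, R, r}}(\Delta^{\labold})$ or of $\Res_{\bmw{n, R, r}}^{\bmw{n+1, R, r}}(\Delta^{\labold})$ produces a cell filtration of the corresponding induced or restricted module over $S$. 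Here one uses that induction commutes with base change and that the cell modules and algebras in question are free over $R$, so that the injectivity of the filtration steps survives. Thus $(\bmw{n, S, r})_{n \ge 0}$ is a coherent tower of cellular algebras.

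The combinatorial description of cell indices follows by applying Theorem \ref{main theorem}(2) and (4) over $R$. The branching diagram for $(\bmw{n, R, r}^F)_{n \ge 0}$ is, by Theorem \ref{main theorem}(4), the reflection of the branching diagram for the semisimple cyclotomic Hecke tower $(\hec{n, R, r}^F)_{n \ge 0}$, which by Theorem \ref{theorem:  cyclotomic hecke split semisimple} is the $r$-Young lattice. Hence the cell indices of $\bmw{n, R, r}$ --- and so also of $\bmw{n, S, r}$ --- are the $r$-tuples of Young diagrams of total size $n, n-2, n-4, \dots$. Because the $r$-Young lattice, and hence its reflection, has no multiple edges, Remark \ref{remark:  point 5 comes from other statements} identifies the basis of the cell module $\Delta^{\labold}$ with the set of paths on this reflected diagram from $\emptyset$ to $\labold$, i.e.\ with up--down tableaux of length $n$ and shape $\labold$.

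The main (and quite mild) obstacle I foresee is the identification $\kt{n, S, r} \cong \kt{n, R, r} \otimes_R S$, which is not literally stated earlier. Unlike the BMW presentation, the Kauffman tangle algebra is defined by planar diagrams modulo skein relations whose coefficients involve $q, q\inv, \rho, \delta_0$, so one must check that the ground-ring morphism $R \to S$ --- including the possible sign/reciprocal transformations on $q$ and $\rho$ allowed by Definition \ref{definition: parameter preserving} --- carries the generating set and defining relations of $\kt{n, R, r}$ to those of $\kt{n, S, r}$. Once this verification is in place, the entire corollary is immediate.
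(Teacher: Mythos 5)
Your proposal is correct and is essentially the argument the paper intends: the corollary follows from Theorem \ref{proposition: framework axioms for CBMW} by specializing from the universal admissible ground ring $R$ via $\bmw{n,S,r}\cong\bmw{n,R,r}\otimes_R S$, with cellular bases and cell filtrations (being built from $R$--free modules) transported along $-\otimes_R S$, and the combinatorial labels read off from Theorem \ref{main theorem}(2),(4) and Remark \ref{remark:  point 5 comes from other statements}. The one point you flag, $\kt{n,S,r}\cong\kt{n,R,r}\otimes_R S$ compatibly with the sign/reciprocal twists allowed in Definition \ref{definition: parameter preserving}, is indeed needed for part (2) but is exactly the base-change compatibility discussed in the cited references (\cite{GH3}, Section 2.4, together with \cite{GH2}), so it is a verification, not a gap.
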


\begin{remark}  It is possible to combine our results with the  results of  Wilcox and Yu
~\cite{Wilcox-Yu2}  to obtain
Murphy type bases of the cyclotomic BMW algebras that are strictly cellular, i.e.  $i(c_{s, t}^\la) = c_{t, s}^\la $ for all $\la, s, t$.      To do this, all we need, according to Remark \ref{remark: conditions for cell net to give strict cellular basis}, is an $i$--invariant $R$--module complement to  the ideal
$\breve {\bmw n}^{(\labold, n)}$  in $ {\bmw n}^{(\labold, n)}$.  However, one can check that the
ideals  $\breve {\bmw n}^{(\labold, n)}$  and $ {\bmw n}^{(\labold, n)}$ for our cellular structure are the same as for the cellular structure of  Wilcox and Yu, and therefore, since their cellular basis satisfies the strict equality $i(c_{s, t}^\la) = c_{t, s}^\la $ for all $\la, s, t$, the desired
$i$--invariant $R$--module complement exists. 
\end{remark}

 \begin{remark}  Our framework also applies to 
  the degenerate
cyclotomic BMW algebras (cyclotomic Nazarov Wenzl algebras)  studied in   ~\cite{ariki-mathas-rui}.   For the details, see ~\cite{GG2}.
 \end{remark}

\subsection{The walled Brauer algebras} \label{subsection: walled Brauer algebras}
\subsubsection{Definition of the walled Brauer algebras} 
Let $S$ be a commutative ring with identity, with a distinguished element $\delta$.
The walled (or rational)  Brauer algebra $\br_{r, s}(S, \delta)$
 is a unital subalgebra of the Brauer algebra $\br_{r+s}(S, \delta)$ spanned by certain Brauer diagrams.
 Divide the $r+s$ top vertices  into a left cluster consisting of the leftmost $r$ vertices and a right cluster consisting of the remaining $s$ vertices,  and similarly for the bottom vertices.      The walled Brauer diagrams are those in which no vertical strand connects a left vertex and a right vertex,  and every horizontal strand connects a left vertex and a right vertex.   (If we draw a vertical line--the wall--separating left and right vertices,  then vertical strands are forbidden to cross the wall, and horizontal strands are required to cross the wall.)  One can easily check that the span of walled Brauer diagrams is a unital subalgebra of $\br_{r+s}(S, \delta)$.
 
 \subsubsection{Brief history of the walled Brauer algebras}   The walled  Brauer algebras were introduced by Turaev ~\cite{turaev-operator-invariants} and  by Koike ~\cite{koike-tensor-products},  and studied by Benkart et.\ al.\  ~\cite{benkart-walled-brauer}  and by Nikitin ~\cite{nikitin-walled-brauer}.   
 The walled Brauer algebras arise in connection with the invariant theory of the general linear group acting on mixed tensors.  
 Cellularity of walled Brauer algebras was proved by Green and Martin ~\cite{green-martin-tabular} and by Cox et.\ al.\  ~\cite{cox-walled-brauer};  the latter authors show that
 walled Brauer algebras can be arranged into coherent cellular  towers.
 
 \subsubsection{Some properties of the walled Brauer algebras}  The walled Brauer algebra $\br_{r, s}$ is invariant under the involution $i$ of the Brauer algebra $\br_{r+s}$.  Moreover,  the inclusion map
 $\iota : \br_{r+s} \to \br_{r+s+1}$  maps $\br_{r, s}$ to $\br_{r, s+1}$, and the closure map
 $\cl : \br_{r+s} \to \br_{r + s -1}$  maps $\br_{r, s}$ to $\br_{r, s-1}$,  when $s \ge 1$.  If $\delta$ is invertible, $\eps_{r, s} = (1/\delta)\, \cl : \br_{r, s} \to \br_{r, s-1}$ is a conditional expectation, and, of course, the trace $\eps$ on $\br_{r+s}$  restricts to a trace on $\br_{r, s}$. 
 
 The Brauer algebras have an involutive inner automorphism $\rho$  which maps each Brauer diagram to its reflection in the vertical line  $x = 1/2$. (We might as well take the vertical line to coincide with our wall.)
 It is clear that $\rho$ restricts to an isomorphism from $\br_{r, s}$ to $\br_{s, r}$.  Given this, we can define ``left versions" of $\iota$, $\cl$ and $\eps_{r, s}$ by
 $\iota' = \rho \circ \iota \circ  \rho : \br_{r, s} \to \br_{r+1, s}$,   $\cl' =  \rho \circ \cl \circ \rho : \br_{r, s} \to \br_{r-1, s}$, and $\eps' =  \rho \circ \eps \circ \rho : \br_{r, s} \to \br_{r-1, s}$.  Note that
 $\iota'$  adds a vertical strand on the left, and $\cl'$  partially closes diagrams on the left.
 
 Let $e_{a, b}$  be the Brauer diagram with horizontal strands connecting $\p a$ to $\p {b}$  and
 $\pbar a$ to $\pbar {b}$  and vertical strands connecting $\p j$ to $\pbar j$  for all $j \ne a, b$.
 One can easily check the following properties:   
 \begin{lemma}  \mbox{} \label{lemma: properties of e(r, s) in walled brauer algebra}
 \begin{enumerate}
 \item  $e_{a, b}^2 = \delta e_{a, b}$.
 \item  $e_{a, b} \,e_{a, b\pm 1}\, e_{a, b} = e_{a, b}$
 and $e_{a, b} \,e_{a\pm 1, b}\, e_{a, b} = e_{a, b}$. 
  \item For $e_{a, b} \in \br_{r, s}$,  $\iota(e_{a, b}) = e_{a, b}$  and $\iota'(e_{a, b}) = e_{a+1, b+1}$.
  \item For $x \in \br_{r, s+1}$, we have $e_{1, r+ s+2} \,\iota'(x)\, e_{1, r+ s+2}  =  \iota'\circ \iota\circ\cl(x)\,  e_{1, r+ s+2}$.
 \item   For $x \in \br_{r+1, s}$, we have $e_{1, r+ s+2} \,\iota(x)\,e_{1, r+ s+2}=  \iota'\circ \iota\circ \cl'(x) \,e_{1, r+ s+2} $.
 \item  $e_{1, r+s + 2}$ commutes with  $\iota'\circ \iota(x)$ for all $x  \in \br_{r, s}$.
 \end{enumerate}
 \end{lemma}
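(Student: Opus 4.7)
All six assertions are consequences of a direct reading of the Brauer-diagram multiplication rule, and the plan is to verify each of them by drawing pictures and tracking strand connections. The key observation that organizes all six parts is that $e_{a,b}$ consists of horizontal strands at the pair of positions $\{a,b\}$ together with vertical strands everywhere else, while the inclusions $\iota$ and $\iota'$ simply insert a vertical strand at the rightmost (respectively leftmost) position. Consequently, products involving $e_{a,b}$ and images of $\iota, \iota'$ interact only ``locally'' at a small set of positions, and the diagram structure away from those positions is preserved.

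Parts (1)--(3) are essentially immediate. For (1), stacking $e_{a,b}$ on top of itself creates a closed loop from the two horizontal strands at positions $a, b$, contributing a factor of $\delta$, while all other strands match up into vertical strands; the result is $\delta\, e_{a,b}$. Part (2) follows from the standard Brauer tangle identity, which one verifies directly by stacking the three diagrams: no closed loops appear and the resulting diagram is $e_{a,b}$. For (3), $\iota(e_{a,b}) = e_{a,b}$ because $\iota$ adds a vertical strand at position $r+s+1$, which lies beyond the positions involved in $e_{a,b}$; while $\iota'(e_{a,b}) = e_{a+1, b+1}$ follows by unwinding $\iota' = \rho \circ \iota \circ \rho$, since the double application of $\rho$ has the net effect of shifting the positions $a,b$ to $a+1, b+1$ after the insertion of a leftmost strand.

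Parts (4) and (5) are the substantive identities and amount to the observation that conjugation by $e_{1,\,r+s+2}$ realizes the partial trace $\cl$ (respectively $\cl'$). Focusing on (4): the diagram $\iota'(x)$ carries a new vertical strand at position $1$ and a shifted copy of $x$ at positions $2, \ldots, r+s+2$. In the product $e_{1,\,r+s+2}\, \iota'(x)\, e_{1,\,r+s+2}$, the horizontal strand at the bottom of the upper $e_{1,\,r+s+2}$ connects positions $1$ and $r+s+2$ just above $\iota'(x)$; together with the new vertical strand at position $1$ of $\iota'(x)$ and the corresponding horizontal strand at the top of the lower $e_{1,\,r+s+2}$, this creates a path that ``short-circuits'' the rightmost strand of $x$ to itself at top and bottom, which is exactly the topological operation performed by $\cl$. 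Meanwhile positions $2, \ldots, r+s+1$ pass through the vertical strands of $e_{1,\,r+s+2}$ unchanged, and horizontal strands at positions $1$ and $r+s+2$ remain at the very top and very bottom of the product. Matching this against $\iota' \circ \iota \circ \cl(x) \cdot e_{1,\,r+s+2}$, which by inspection has horizontal strands at positions $1$ and $r+s+2$ at top and bottom with $\cl(x)$ (shifted by one) in the middle, yields the identity (with any closed loops arising from $\cl(x)$ already carried by that factor). Part (5) is entirely analogous; alternatively it can be deduced from (4) by conjugation with the reflection automorphism $\rho$.

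Finally, part (6) is the simplest: $\iota' \circ \iota(x)$ carries vertical strands at positions $1$ and $r+s+2$, coming from the two inclusions, and has a copy of $x$ on the remaining positions; $e_{1,\,r+s+2}$ is the ``complementary'' diagram, with horizontal strands at exactly those two positions and vertical strands elsewhere. Stacking in either order produces the same diagram, since the interior $x$ passes through the vertical strands of $e_{1,\,r+s+2}$ unperturbed and the local configuration at positions $1, r+s+2$ is the same either way. The main conceptual step in the entire lemma is the short-circuiting analysis in (4); the remaining bookkeeping is routine, with the only real pitfall being the careful tracking of index shifts introduced by $\iota$ and $\iota'$.
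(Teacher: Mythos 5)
Your proof is correct, and it is exactly the argument the paper intends: the paper gives no proof at all (it introduces the lemma with ``one can easily check''), and the intended verification is precisely this direct diagrammatic bookkeeping, with the conjugation-by-$e_{1,r+s+2}$ short-circuit in parts (4)--(5) realizing the partial closures $\cl$ and $\cl'$. Your index tracking and loop counting are accurate, so nothing is missing.
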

 
 The following statement is also easy to check:
 
 \begin{lemma}  \label{lemma:  quotient of A_n by J for walled brauer}
The ideal $J$  in $\br_{r, s}(S, \delta)$  generated by $e_{1, r+s}$ is the ideal spanned by diagrams with fewer than $r + s$ through strands,  and $\br_{r, s}(S, \delta)/J \cong S (\S_r \times \S_s)$.
  \end{lemma}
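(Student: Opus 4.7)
The plan is to introduce $J'$, the $S$-span of those walled Brauer diagrams containing at least one horizontal strand (equivalently, fewer than $r+s$ through strands), and show both that $\br_{r,s}(S,\delta)/J' \cong S(\S_r\times\S_s)$ and that $J'=J$.

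First I would verify that $J'$ is a two-sided ideal of $\br_{r,s}(S,\delta)$. The key observation is that stacking two walled Brauer diagrams can only decrease or preserve the number of through strands, so if one factor has fewer than $r+s$ through strands, so does the product. The complement of $J'$ is spanned by diagrams with all $r+s$ strands vertical; because the wall forbids any strand from crossing sides, such a diagram is determined by a pair $(\sigma,\tau)\in\S_r\times\S_s$, where $\sigma$ permutes the left through strands and $\tau$ permutes the right through strands. Multiplication of two such diagrams produces another of the same type with componentwise product, giving $\br_{r,s}(S,\delta)/J'\cong S(\S_r\times\S_s)$ as algebras.

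Next I would show $J=J'$. The inclusion $J\subseteq J'$ is immediate since $e_{1,r+s}\in J'$. For the reverse, I first observe that every elementary generator $e_{a,b}$ with $1\le a\le r<b\le r+s$ lies in $J$: take $\pi\in S(\S_r\times\S_s)\subseteq\br_{r,s}(S,\delta)$ to be a permutation diagram carrying $1\mapsto a$ on the left and $r+s\mapsto b$ on the right, and note $\pi e_{1,r+s}\pi\inv = e_{a,b}$. Then, given any walled Brauer diagram $d$ with exactly $k\ge 1$ horizontal strand pairs (forcing $k\le\min(r,s)$), set $f_k = e_{1,r+s}\,e_{2,r+s-1}\cdots e_{k,r+s-k+1}$; the factors commute because their index pairs are disjoint, and $f_k\in J$ since it is a product containing the generator of $J$. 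I claim $d=\pi_1 f_k\pi_2$ for suitable $\pi_1,\pi_2\in S(\S_r\times\S_s)\subseteq\br_{r,s}(S,\delta)$, which then places $d$ in $J$ and completes the proof.

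The main, though still routine, work is this last factorization. The point is that $\S_r\times\S_s$, acting by left and right multiplication of permutation diagrams, can independently adjust the set of top horizontal endpoints, the set of bottom horizontal endpoints, and the two through-strand bijections (one on the left of the wall, one on the right). Given the horizontal strand pairs of $d$ together with its two through-strand bijections, one chooses $\pi_2$ to realize the top structure of $d$ and $\pi_1$ to realize the bottom structure. This is a wall-respecting adaptation of the standard factorization of Brauer diagrams used in the discussion of $f_r$ preceding Lemma \ref{lemma:  Brauer  axiom 6}, and presents no genuine obstacle.
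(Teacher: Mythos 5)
Your proof is correct and is essentially the argument the paper has in mind: the paper leaves this lemma as ``easy to check,'' and your factorization $d=\pi_1 f_k \pi_2$ with wall-respecting permutation diagrams (together with the observation that the span of such permutation diagrams is a complementary subalgebra isomorphic to $S(\S_r\times\S_s)$) is exactly the walled adaptation of the factorization $\pi_1 f_r \pi_2$ that the paper uses for the ordinary Brauer algebra. No gaps; the routine details you defer (commuting factors of $f_k$, enough freedom in $\pi_1,\pi_2$ to match the horizontal pairings and both through-strand bijections) all check out.
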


 \begin{lemma} \mbox{} \label{lemma:  An e(n-1) equals A(n-1) e(n-1)  for walled Brauer}
 \begin{enumerate}
 \item  $\br_{r, s+1}\, e_{1, r+s+1} = \iota(\br_{r, s})  \,e_{1, r+s+1}$.
 \item  $\br_{r+1, s} \,e_{1, r+s+1} = \iota'(\br_{r, s})\, e_{1, r+s+1}$.
 \end{enumerate}
 \end{lemma}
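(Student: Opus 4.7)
The plan is to prove part (1) by a direct combinatorial argument on the basis of walled Brauer diagrams, and then to deduce part (2) from part (1) via the reflection automorphism $\rho$. Specifically, applying $\rho$ to the instance of (1) with $(r,s)$ swapped to $(s,r)$, and using that $\rho$ carries $\br_{s, r+1}$ onto $\br_{r+1, s}$, conjugates $\iota$ into $\iota'$, and fixes $e_{1, r+s+1}$ (whose indices are symmetric across the wall), yields (2) immediately.

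For (1), since $\br_{r, s+1}$ is spanned by walled Brauer diagrams, it suffices to show that $d \cdot e_{1, r+s+1} \in \iota(\br_{r, s}) \cdot e_{1, r+s+1}$ for every such diagram $d$. Stacking $e_{1, r+s+1}$ atop $d$ produces a composite whose top is that of $e_{1, r+s+1}$ (so $\p 1 \sim \p{r+s+1}$ horizontally, identity elsewhere), whose bottom is that of $d$, and in whose interior the middle horizontal strand of $e_{1, r+s+1}$ forces the $d$-strands incident to the vertices $\p 1$ and $\p{r+s+1}$ of $d$ to be fused. Writing $w_1, w_2, u$ for the $d$-partners of $\p 1, \p{r+s+1}, \pbar{r+s+1}$ respectively, the fusion either produces a single new strand $w_1 \sim w_2$, or --- when $w_1 = \p{r+s+1}$ --- a closed loop contributing a factor of $\delta$.

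I would then construct $d' \in \br_{r, s}$ by the following procedure: perform the fusion described above on $d$, producing a matching on $\{\p 2, \ldots, \p{r+s}\} \cup \{\pbar 1, \ldots, \pbar{r+s+1}\}$, and then relabel the vertex $\pbar{r+s+1}$ as $\p 1$ to obtain a matching on the vertex set of $\br_{r, s}$. A direct strand trace in both composites then confirms $d \cdot e_{1, r+s+1} = \delta^k \cdot \iota(d') \cdot e_{1, r+s+1}$ with $k \in \{0, 1\}$, and this inclusion together with the obvious reverse inclusion $\iota(\br_{r,s}) \cdot e_{1,r+s+1} \subseteq \br_{r,s+1} \cdot e_{1,r+s+1}$ gives the equality in (1).

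The main obstacle is verifying that $d'$ so constructed is a valid walled Brauer diagram in $\br_{r, s}$: the new edge in $d'$ incident to the relabeled $\p 1$ and the fused edge $w_1 \sim w_2$ (when present) must respect the wall constraints --- vertical strands must not cross the wall, horizontal strands must cross it. This comes down to a small case analysis on the sides of $w_1, w_2, u$. Because these three vertices are $d$-partners of specific boundary vertices of known sides --- namely $\p 1$ (left top), $\p{r+s+1}$ (right top), and $\pbar{r+s+1}$ (right bottom) --- the walled constraints on $d$ restrict their possible sides, and a routine check in each subcase shows the resulting pairings in $d'$ to be admissible. This case analysis is the essential content of the proof and is what forces the specific choice of $e_{1, r+s+1}$ as the outermost wall-crossing horizontal element.
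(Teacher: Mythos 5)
Your proposal is correct and takes essentially the same route as the paper: for each walled diagram $d$ you build the same replacement diagram (the paper writes it directly as an element of $\iota(\br_{r,s})$ with the extra strand $\p{r+s+1}\sim\pbar{r+s+1}$, while you obtain its preimage in $\br_{r,s}$ by fusing the strands at $\p 1$ and $\p{r+s+1}$ and relabeling $\pbar{r+s+1}$ as $\p 1$), check $d\, e_{1,r+s+1}=\delta^{k}\,\iota(d')\,e_{1,r+s+1}$ with $k\in\{0,1\}$ by tracing strands, and deduce (2) from (1) by conjugating with $\rho$, exactly as in the paper. The only difference is cosmetic: you make explicit the wall-admissibility check for $d'$, which the paper's proof leaves implicit.
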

 
 \begin{proof}  To prove  part (1), we
  have to show that if $d$ is a diagram in  $\br_{r, s+1}$,  then there is a diagram
 $d' \in \iota(\br_{r, s})$  such that $d  \,e_{1, r+s+1} = d'  \,e_{1, r+s+1}$.  We can suppose that $d$ is not already in $ \iota(\br_{r, s})$;  therefore,  the vertex $ \pbar {r+s + 1}$ in $d$  is connected to some vertex $v$ other than $\p 1$ and $\p {r+s + 1}$.  There are two cases to consider.

 The first is that the vertices $\p 1$ and $\p {r+s + 1}$ are not connected to each other in $d$;    let $a$ and $b$ be the vertices connected to
 $\p 1$ and $\p {r+s + 1}$.  Now let $d'$ be the diagram in which $a$ and $b$ are connected to each other;  $\p {r + s + 1}$ is connected to $ \pbar {r+s + 1}$; $\p 1$ is connected to $v$; and all other strands are as in $d$.  Then we have $d  \,e_{1, r+s+1} = d'  \,e_{1, r+s+1}$.
 The case that the vertices $\p 1$ and $\p {r+s + 1}$ are  connected to each other is similar and will be omitted.
 
 Part (2) is proved  by applying the map $\rho$ to both sides of the equality in part (1) and then
 interchanging the roles of $r$ and $s$.
 \end{proof}

 A unital trace $\eps$ on an $S$--algebra $A$ is {\em non-degenerate} if for every non--zero $x \in A$  there exists a $y \in A$ such that $\eps(xy) \ne 0$.
 
 \begin{lemma}  \mbox{}  \label{lemma: non-degeneracy of trace on Brauer and walled Brauer}
 \begin{enumerate}
 \item
 The trace $\eps$ on $\br_n(\Q(\deltabold), \deltabold)$ is non-degenerate, for any $n$.
 \item  The trace $\eps$ on      $\br_{r, s}(\Q(\deltabold), \deltabold)$ is non-degenerate, for any $r, s$.
 \end{enumerate}
 \end{lemma}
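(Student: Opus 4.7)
The plan is to reduce non-degeneracy of $\eps$ over the generic ground field to its non-degeneracy at a single integer specialization of $\deltabold$, and then to obtain the latter from classical Schur--Weyl duality. Let $G_n(\deltabold)$ denote the Gram matrix of the bilinear form $(x,y)\mapsto \eps(xy)$ with respect to the diagram basis of $\br_n(\Z[\deltabold^{\pm 1}],\deltabold)$. For any two Brauer diagrams $d_1, d_2$, the product $d_1d_2$ equals $\deltabold^r c$ for some Brauer diagram $c$ and some non-negative integer $r$, so $\eps(d_1 d_2) = \deltabold^{c(d_1d_2)-n}$ where $c(d_1d_2)$ counts the total number of components obtained by stacking and closing. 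Hence $\det G_n(\deltabold)\in \Z[\deltabold^{\pm 1}]$, and non-degeneracy of $\eps$ over $\Q(\deltabold)$ amounts to showing this Laurent polynomial is not identically zero; for that it suffices to find one positive integer $N$ with $\det G_n(N)\ne 0$.

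Fix $N\ge n$ and let $V=\Q^N$ with its standard symmetric bilinear form. By the classical Brauer--Weyl theorem (see ~\cite{Brauer, Wenzl-Brauer}), the specialized algebra $\br_n(\Q,N)$ acts faithfully on $V^{\otimes n}$ as the centralizer of the diagonal $O(N)$-action, via a homomorphism $\rho$ whose image is a semisimple subalgebra of $\End(V^{\otimes n})$. A direct computation on diagrams shows
\begin{equation*}
\tr_{V^{\otimes n}}(\rho(d)) \,=\, N^{c(d)}
\end{equation*}
for every Brauer diagram $d$, where $c(d)$ is the number of components of the topological closure of $d$; equivalently, $\eps = N^{-n}\tr_{V^{\otimes n}}\circ\rho$ on $\br_n(\Q,N)$. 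For any non-zero idempotent $p$ in a matrix algebra over $\Q$, the matrix trace $\tr(p)$ equals the rank of $p$, a positive integer; applying this to the minimal central idempotents of $\rho(\br_n(\Q,N))$ shows that $\eps$ is strictly positive on every such idempotent. Since a trace on a finite-dimensional semisimple algebra that is non-zero on every minimal central idempotent is non-degenerate, we conclude $\det G_n(N)\ne 0$, proving (1).

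For (2), the identical strategy applies, replacing Brauer's theorem with the mixed-tensor Schur--Weyl duality of Koike~\cite{koike-tensor-products} and Benkart et al.~\cite{benkart-walled-brauer}: for $N\ge r+s$, the walled Brauer algebra $\br_{r,s}(\Q,N)$ acts faithfully on $V^{\otimes r}\otimes (V^*)^{\otimes s}$ as the centralizer of $GL(N)$, and its image is semisimple. The same closure-to-partial-trace analysis yields $\eps = N^{-(r+s)}\tr\circ\rho$, after which the semisimplicity argument carries over verbatim.

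The only step requiring genuine care is the identity $\eps = N^{-n}\tr_{V^{\otimes n}}\circ\rho$ (and its walled analog). This is not deep but needs a careful check: each pairing of horizontal strands in a diagram $d$ contributes a contraction with the bilinear form on $V$, while each through-strand contributes an identity factor, and summing over orthonormal bases of $V$ at each free endpoint of the closure produces exactly the factor $N^{c(d)}$. One can either verify this combinatorially on generators using the conditional expectations $\eps_n$ from Section~\ref{subsection: Brauer algebras} (which correspond under $\rho$ to the normalized partial trace on the last tensor factor), or read it off directly from the diagrammatic calculus. Once this identity is in hand, everything else is formal.
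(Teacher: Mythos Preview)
Your argument is correct, but the paper's proof is considerably more elementary and self-contained. The paper observes directly that the Gram matrix $[\eps(d d')]_{d,d'}$ (indexed by Brauer diagrams) has the following structure: each entry is $\deltabold^{c(dd')-n}$, and one checks combinatorially that $c(d\,i(d)) = n$ while $c(d\,d') < n$ whenever $d' \ne i(d)$. Thus every row and column of the Gram matrix has exactly one entry equal to $1$ and all other entries equal to negative powers of $\deltabold$. Expanding the determinant, the unique term of top degree is $\pm 1$, so the determinant is a nonzero Laurent polynomial. The argument for the walled case is identical.

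Your route---specialize $\deltabold$ to a large integer $N$, invoke Schur--Weyl duality to identify $\br_n(\Q,N)$ (resp.\ $\br_{r,s}(\Q,N)$) with a centralizer algebra acting faithfully on tensor space, and then observe that the Markov trace coincides with the normalized matrix trace and is therefore non-degenerate on a semisimple algebra---is perfectly valid. It has the virtue of tying the non-degeneracy back to the representation-theoretic origins of these algebras. But it imports substantial external results (faithfulness of the Brauer action for large $N$, the mixed-tensor Schur--Weyl duality of Koike and Benkart et al.), whereas the paper's proof needs nothing beyond the definition of $\eps$ and a one-line combinatorial observation about closures. In the context of this paper, where the lemma is only a stepping stone toward verifying the framework axioms, the direct combinatorial argument is preferable; your approach would be more natural in a paper that already has the Schur--Weyl machinery on the table. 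A minor caution: the precise bound you quote for faithfulness ($N \ge n$ for the orthogonal case) may need to be $N \ge 2n$ depending on the source, though of course this does not affect the argument since any sufficiently large $N$ will do.
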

 
 \noindent {\em Sketch of proof.}  The argument for part (1)  is from ~\cite{Morton-Traczyk}.  It suffices to show that
 the determinant of the Gram matrix $\eps(d d')_{d, d'}$, where $d, d'$ run over the list of all Brauer diagrams (in some order),  is non-zero.   Recall that $\eps(d d')$  is  $\qbold^{c(d d')- n}$,  where
 $c(d d')$ is the number of components in the tangle obtained by closing all the strands of $d d'$. 
 One can check that $c(d \,i(d) ) = n$ and $c(d d') < n$ for all diagrams other than $i(d)$.  Therefore, each
 row and column of the Gram matrix has exactly one entry equal to $1$ and all other entries have the form 
 $\qbold^{-k}$ for some $k >0$.  
 
 The argument for part (2) is identical.
 \qed
 
 \medskip

 \subsubsection{Verification of the framework axioms for the walled Brauer algebras}
 
To fit the walled Brauer algebras to our framework, we have to reduce the double sequence of algebras to a single sequence.  We adopt the following scheme, as in ~\cite{nikitin-walled-brauer}, or
~\cite{cox-walled-brauer}:  
  Fix some integer  $t \ge 0$. For any $S$ and $\delta \in S$,  we consider the sequence of walled Brauer algebras
 $A_n = A_n(S, \delta)$,  where
 $A_{2 k}(S, \delta)  = \br_{k,  k+ t}(S, \delta)$,  and $A_{2 k + 1}(S, \delta) =  \br_{k, k+t + 1}(S, \delta)$,  with the inclusions
 $$
 A_{2 k} \stackrel{\iota}{\longrightarrow} A_{2k + 1} \stackrel{\iota'}{\longrightarrow}  A_{2k + 2}.
 $$
We put $f_{2k-1} = e_{1, 2k+t} \in A_{2k}$  and $f_{2k} = e_{1, 2k+ t + 1} \in A_{2k+1}$.  
  We identify $A_n$ as a subalgebra of $A_{n+1}$ via these embeddings.
 With these conventions,  Lemma \ref{lemma: properties of e(r, s) in walled brauer algebra}, points (2) and (3)  give $f_n f_{n\pm 1} f_n = f_n$.
 Moreover, if we write $\cl_n = \cl$  when $n$ is even and $\cl_n = \cl'$ when $n$ is odd, then
 we have  $f_{n-1} x f_{n-1} = \cl_{n-1}(x) f_{n-1} $ for $x \in A_{n-1}$, by Lemma \ref{lemma: properties of e(r, s) in walled brauer algebra}, points (4) and (5).    Point (6) of the Lemma says that $f_{n-1}$ commutes with
 $A_{n-2}$.

 If $J$ is the ideal in $A_n$ generated by
 $f_{n-1}$, then we have $A_{2k}/J \cong S(\S_k \times \S_{k + t})$,  and $A_{2k + 1}/J \cong S(\S_k \times \S_{k + t + 1})$.  So we set $Q_{2k}(S) =   S(\S_k \times \S_{k + t})$ and $Q_{2k + 1}(S) =  S(\S_k \times \S_{k + t + 1})$,  with the natural embeddings.
 
 Since $A_0 = \br_{0, t} \cong S \S_t$,  and $A_1 = \br_{0, t+1} \cong S \S_{t + 1}$, we cannot hope to satisfy our framework axiom (\ref{axiom: A0 and A1}).  However, we can replace axiom (\ref{axiom: A0 and A1}) with the weaker
 
 \medskip
 \noindent(3$'$) \quad   $A_0 \cong Q_0$, and $A_1 \cong Q_1$.
  \medskip
  
\noindent We  also  have to drop our usual convention (see Definition \ref{def of branching}) regarding branching diagrams that the 
$0$--th row of the branching diagram has a single vertex.     Our conclusions will have to be modified, but not severely.
  
  \medskip
  We now take $R = \Z[\deltabold]$ and $\delta = \deltabold$.    $R$ is the generic ground ring for
  walled Brauer algebras;  if $S$ is any commutative unital ring with parameter $\delta$,  then
  $\br_{r, s}(S, q) = \br_{r, s}(R, \qbold) \otimes_R S$.  Let $F = \Q(\deltabold)$.
 In the remainder of this section, we write
 $A_n = A_n(R, \deltabold)$  and $Q_n = Q_n(R)$.  (Recall that $Q_n(R) = R(\S_k \times \S_{k + t})$ if $n = 2k$,  and
$ Q_n(R)  =    R(\S_k \times \S_{k + t + 1})$ if $n = 2 k + 1$.)

 \begin{lemma} \label{lemma: generic semisimplicity for walled Brauer}
 The walled Brauer algebra $\br_{r, s}(\Q(\deltabold), \deltabold)$  is split semisimple.
 \end{lemma}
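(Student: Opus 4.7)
The argument follows the inductive Wenzl basic construction proof sketched for the ordinary Brauer algebras at the end of the discussion of the Jones basic construction, adapted to the walled setting using the sequence $(A_n)$ introduced above. Fix $t \ge 0$ and write $A_n = A_n(F,\deltabold)$ with $F = \Q(\deltabold)$. We induct on $n$ to show that $A_n^F$ is split semisimple; since $t$ is arbitrary and the reflection $\rho$ gives an algebra isomorphism $\br_{r,s}\cong \br_{s,r}$, this will cover every $\br_{r,s}(F,\deltabold)$. The base cases $A_0 \cong F\S_t$ and $A_1 \cong F\S_{t+1}$ are split semisimple because symmetric group algebras are split semisimple in characteristic zero.

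For the inductive step, assume $A_k$ is split semisimple for $k\le n$. Since $\deltabold$ is invertible in $F$, the element $p_n = \deltabold^{-1} f_n \in A_{n+1}$ is an idempotent, and the relations in Lemma \ref{lemma: properties of e(r, s) in walled brauer algebra}(4),(5) combined with the definition of $\cl_n$ give $p_n x p_n = \eps_n(x)\,p_n$ for all $x \in A_n$, where $\eps_n = \deltabold^{-1}\cl_n$ is a conditional expectation onto $A_{n-1}$. Lemma \ref{lemma: non-degeneracy of trace on Brauer and walled Brauer} supplies a faithful trace on $A_n$ restricting faithfully to $A_{n-1}$, and the map $x\mapsto x p_n$ is injective because $\cl_n$ applied to $x f_n$ recovers $x$ up to a nonzero scalar (exactly as in the proof of Lemma \ref{B(n+1) e(n) = B(n) e(n)  for Brauer algebras} for the ordinary Brauer algebras). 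Wenzl's basic construction theorem, as recalled in the discussion preceding Proposition \ref{proposition: framework axioms for Brauer}, therefore identifies
\[
A_n p_n A_n \;\cong\; A_n\,\eps_n\,A_n \;\cong\; \End\bigl((A_n)_{A_{n-1}}\bigr),
\]
which is split semisimple by the inductive hypothesis on $A_{n-1}\subseteq A_n$.

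Lemma \ref{lemma:  An e(n-1) equals A(n-1) e(n-1)  for walled Brauer} (applied together with its $i$-image) gives $A_{n+1} f_n A_{n+1} = A_n f_n A_n$, so the ideal $J = A_{n+1} f_n A_{n+1}$ in $A_{n+1}$ is split semisimple. Moreover, $J$ is a unital algebra (being isomorphic to an endomorphism algebra), and a standard argument shows that the identity $e$ of a unital two-sided ideal is a central idempotent in the ambient algebra: for any $a \in A_{n+1}$ one has $ae, ea \in J$, so $ae = eae = ea$. Hence $A_{n+1} = eA_{n+1} \oplus (1-e)A_{n+1}$ as algebras, with $eA_{n+1} = J$ split semisimple and $(1-e)A_{n+1} \cong A_{n+1}/J \cong Q_{n+1}^F$, a group algebra of a product of symmetric groups over $F$, again split semisimple. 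Therefore $A_{n+1}$ is split semisimple, completing the induction. The only non-routine point in this program is the diagrammatic verification that $x \mapsto x f_n$ is injective on $A_n$; everything else is a direct transcription of Wenzl's argument for the Brauer algebras to the walled setting.
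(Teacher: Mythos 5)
Your proposal is correct and takes essentially the same route as the paper: the paper's proof simply invokes Wenzl's inductive basic-construction method (Theorem 3.2 of \cite{Wenzl-Brauer}, as used by Nikitin) together with the non-degeneracy of the trace from Lemma \ref{lemma: non-degeneracy of trace on Brauer and walled Brauer}, and your argument is a faithful writing-out of exactly that induction for the tower $(A_n)$, with the trace-faithfulness supplied at every level by that lemma rather than by Wenzl's Lie-theoretic step.
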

 
 \noindent{\em Sketch of proof.}   It suffices to show that (for any $t$) the algebras in the sequence $A_n$ are split semisimple.  This was proved by Nikitin in ~\cite{nikitin-walled-brauer}, following Wenzl's method for the Brauer algebra in ~\cite{Wenzl-Brauer}. 
 Nikitin's proof involves obtaining the weights of the trace  $\eps$, but little detail is given.  For our purposes, we can bypass this issue, and use Lemma \ref{lemma: non-degeneracy of trace on Brauer and walled Brauer} instead.  Then the method of proof of Theorem 3.2 from ~\cite{Wenzl-Brauer} applies.
 \qed
 \medskip 
 
 \begin{proposition} \label{proposition: framework axioms for walledBrauer} The two sequence of $R$--algebras  $(A_n)_{n \ge 0}$ and $(Q_n)_{n \ge 0}$  satisfy the framework axioms of  Section \ref{subsection: framework axioms},  with axiom (3) replaced by (3\,$'$), specified above,  and with the elements
 $f_n$ taking the role of the elements $e_n$ in the list of framework axioms.
\end{proposition}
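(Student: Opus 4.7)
The plan is to verify the (modified) framework axioms one by one, with nearly every axiom assembled from the lemmas already in this subsection, together with one slightly subtler point about axiom~(1).

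First I would address axiom~(1), the coherence of $(Q_n)_{n\ge 0}$.  Each $Q_n$ is a group algebra of a direct product $\S_k\times\S_{k+t}$ (or $\S_k\times\S_{k+t+1}$), and the inclusion $Q_n \subseteq Q_{n+1}$ adds a generator to exactly one of the two factors.  Specializing Example~\ref{example: Hn coherent tower} to $q=1$ gives coherence of $(R\S_n)_{n\ge 0}$; since Murphy cell modules of $R(\S_k\times\S_\ell)$ are outer tensor products of Murphy cell modules of $R\S_k$ and $R\S_\ell$, and restriction/induction along $R(\S_k\times\S_\ell)\subseteq R(\S_k\times\S_{\ell+1})$ is simply $V\otimes W \mapsto V\otimes \mathrm{Res}(W)$ (respectively $V\otimes\mathrm{Ind}(W)$), the cell filtrations on the symmetric group side produce cell filtrations for $(Q_n)$.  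Axiom~(2) is immediate because the involution $i$ on $B_{r+s}$ (reflection in $y = 1/2$) sends each walled Brauer diagram to a walled Brauer diagram, and axiom~(3$'$) is built into the definitions.  Axiom~(4) is exactly Lemma~\ref{lemma: generic semisimplicity for walled Brauer}.

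Next I would handle the axioms involving $f_{n-1}$.  For axiom~(5), $f_{n-1}$ is essential because $f_{n-1}^2 = \delta f_{n-1}$ (Lemma~\ref{lemma: properties of e(r, s) in walled brauer algebra}(1)), it is $i$-invariant because reflection in $y=1/2$ fixes the diagram $e_{a,b}$, and the isomorphism $A_n/(A_n f_{n-1} A_n) \cong Q_n$ is exactly Lemma~\ref{lemma:  quotient of A_n by J for walled brauer} applied in the two parity cases.  Axiom~(6) follows from Lemma~\ref{lemma: properties of e(r, s) in walled brauer algebra}(6) (commutation of $f_n$ with $A_{n-1}$) together with parts~(4)--(5) of that lemma, which give $f_n x f_n = \cl_n(x) f_n$ for $x \in A_n$, where $\cl_n$ denotes either the right or left partial closure according to the parity of $n$; this is exactly $f_n A_n f_n \subseteq A_{n-1} f_n$.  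The first part of axiom~(7), $A_{n+1} f_n = A_n f_n$, is Lemma~\ref{lemma:  An e(n-1) equals A(n-1) e(n-1)  for walled Brauer}, again split into two parity cases.  For the injectivity part of~(7), I would use that $\cl_n(x f_n) = x$ for $x \in A_n$ (by the same computation, since $f_n$ has a single horizontal cap at one end and all other strands vertical), so $x\mapsto xf_n$ is left-inverted by $\cl_n$.  Finally axiom~(8) is immediate from the identity $f_{n-1} f_n f_{n-1} = f_{n-1}$ of Lemma~\ref{lemma: properties of e(r, s) in walled brauer algebra}(2), rewritten using our indexing convention.

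The main place where care is needed is axiom~(1):  the tower $(Q_n)$ grows by adding a generator on alternating sides, and one must be sure that the outer tensor product decomposition of Murphy cell modules is compatible with the one-sided restriction and induction that actually occur.  This is a routine consequence of the Mackey-type identity $V\otimes \mathrm{Ind}_{R\S_\ell}^{R\S_{\ell+1}}(W) \cong \mathrm{Ind}_{R(\S_k\times\S_\ell)}^{R(\S_k\times\S_{\ell+1})}(V\otimes W)$, but it is the only step that is not a direct appeal to a lemma already proved in the subsection.  Everything else is bookkeeping.
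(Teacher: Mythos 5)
Your verification is correct and follows the paper's proof essentially lemma-for-lemma: coherence of $(Q_n)$ for axiom (1), the generic semisimplicity lemma for (4), the quotient lemma for (5), the properties of the elements $e_{a,b}$ for (6) and (8), and the lemma $A_{n+1}f_n = A_n f_n$ for (7). The only difference is that you spell out two points the paper treats as evident — coherence of the tower of tensor products of symmetric group algebras via outer tensor products of Murphy cell modules, and injectivity of $x \mapsto x f_n$ via the appropriate one-sided partial closure — and both of these fill-ins are accurate.
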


\begin{proof}   The sequence $(Q_n)_{n \ge 0}$ is clearly a coherent tower of cellular algebras, so axiom (\ref{axiom Hn coherent}) holds.  Axiom (\ref{axiom: involution on An}) is evident, and we have remarked about substituting axiom (\ref{axiom: A0 and A1}$'$) for axiom (\ref{axiom: A0 and A1}).   
$A_n^F$ is split semisimple by Lemma 
\ref{lemma: generic semisimplicity for walled Brauer}.   Thus axiom (\ref{axiom: semisimplicity}) holds.

We have $f_{n-1}$ is an essential idempotent with $i(f_{n-1}) = f_{n-1}$.  We have   \break $A_n/ (A_n f_{n-1} A_n) \cong Q_n$ by Lemma  \ref{lemma:  quotient of A_n by J for walled brauer}, which gives axiom 
(\ref{axiom:  idempotent and Hn as quotient of An}).

We have seen that $f_{n-1}$ commutes with $A_{n-2}$  and $f_{n-1} A_{n-1} f_{n-1} \subseteq A_{n-2} f_{n-1}$.  Moreover, if $x \in A_{n-2}$,  then  $f_{n-1} x f_{n-1} = \deltabold x  f_{n-1}$, so
$f_{n-1} A_{n-1} f_{n-1} \supseteq  \deltabold A_{n-2} f_{n-1}$.  Therefore, 
$f_{n-1} A_{n-1}^F f_{n-1} = A_{n-2}^F f_{n-1}$, so axiom (\ref{axiom: en An en}) holds.

Axiom (\ref{axiom:  An en})  results from Lemma \ref{lemma:  An e(n-1) equals A(n-1) e(n-1)  for walled Brauer},  and
axiom (\ref{axiom: e(n-1) in An en An})  from $f_{n-1} f_n f_{n-1} = f_{n-1}$.
\end{proof}

\begin{remark} The branching diagram  for the sequence
$(Q_n^F)$ is the following:  Each row has vertices labeled by pairs of Young diagrams; on an even row $2k$,  the  the first  Young diagram in a pair has  $k$ boxes and the second  $k+t$ boxes;  on an odd row $2k+1$,   the first Young diagram has $k$ boxes and the second $k+t+1$ boxes; finally,  there is an edge between pairs of Young diagrams in successive rows that differ by exactly one box.
\end{remark}

\begin{corollary}  \mbox{}    Let $S$ be any commutative unital ring with parameter $\delta$.
\begin{enumerate}
\item The walled Brauer algebras $\br_{r, s}(S, \delta)$ are cellular algebras. 
\item  The family is coherent in the sense that the restriction of a cell module from $\br_{r, s}(S, \delta)$ to $\br_{r-1, s}(S, \delta)$ or to $\br_{r, s-1}(S, \delta)$  and induction of a cell module from $\br_{r, s}(S, \delta)$ to
$\br_{r+1, s}(S, \delta)$ or to $\br_{r, s+1}(S, \delta)$ have filtrations by cell modules.  
\item The cell modules
of  $\br_{r, s}(S, \delta)$  are labeled by pairs of Young diagrams $(\la^{(1)}, \la^{(2)})$, where
$|\la^{(2)}| - |\la^{(1)}| = s - r$ and $|\la^{(2)}| + |\la^{(1)}| \le s + r$.
\end{enumerate}
\end{corollary}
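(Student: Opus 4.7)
The plan is to apply Theorem~\ref{main theorem} to each pair of sequences $(A_n)_{n \ge 0}$ and $(Q_n)_{n \ge 0}$ constructed in Proposition~\ref{proposition: framework axioms for walledBrauer}, one for each choice of $t \ge 0$. The framework axioms have been verified there, except that axiom~(\ref{axiom: A0 and A1}) is replaced by the weaker (3\,$'$). This is essentially harmless: the inductive proof of Theorem~\ref{main theorem} in Section~\ref{section:  basic construction preserves cellularity} starts the induction from $n = 0, 1$ using only that $A_0, A_1$ are cellular (guaranteed by (3\,$'$) because $Q_0, Q_1$ are symmetric group algebras), and thereafter uses only the higher-indexed axioms. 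The sole effect on the conclusions is that the $0$th row of the branching diagram $\B_0$ for $(Q_n^F)_{n \ge 0}$, and hence of the reflected diagram $\B$ for $(A_n^F)_{n \ge 0}$, has several vertices rather than a single one, exactly as flagged in the discussion before Proposition~\ref{proposition: framework axioms for walledBrauer}.

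With this proviso, part~(1) of Theorem~\ref{main theorem} asserts that $(A_n)_{n \ge 0}$ is a coherent tower of cellular algebras for each $t$. Every walled Brauer algebra $\br_{r,s}(R, \deltabold)$ with $s \ge r$ appears as some $A_n$ (take $t = s-r$), so it is cellular; the case $s < r$ follows from the inner-automorphism isomorphism $\br_{r,s} \cong \br_{s,r}$. Cellularity over an arbitrary ground ring $S$ then follows by specialization, since $\br_{r,s}(S, \delta) \cong \br_{r,s}(R, \deltabold) \otimes_R S$. For coherence (part~(2)), the embeddings $A_{2k} \hookrightarrow A_{2k+1}$ and $A_{2k+1} \hookrightarrow A_{2k+2}$ realize $\br_{k,k+t} \hookrightarrow \br_{k,k+t+1}$ and $\br_{k,k+t+1} \hookrightarrow \br_{k+1,k+t+1}$ respectively; as $(r,s,t)$ vary this exhausts all inclusions $\br_{r,s} \hookrightarrow \br_{r,s+1}$ and (after symmetry under $\rho$) all inclusions $\br_{r,s} \hookrightarrow \br_{r+1,s}$. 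The cell filtrations predicted by coherence of $(A_n)$ therefore give the required cell filtrations of restrictions and inductions.

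For part~(3), apply point~(2) of Theorem~\ref{main theorem}. The set $\La_{2k}^{(0)}$ consists of pairs $(\mu^{(1)}, \mu^{(2)})$ with $|\mu^{(1)}| = k$ and $|\mu^{(2)}| = k+t$, and $\La_{2k+1}^{(0)}$ consists of such pairs with sizes $k$ and $k+t+1$. For $\br_{r,s}$ with $s \ge r$, take $k = r$ and $t = s-r$; the partially ordered set labelling cell modules of $A_{2k}$ is then the disjoint union $\coprod_{0 \le j \le k} \La_{2j}^{(0)} \times \{2k\}$, i.e.\ pairs $(\la^{(1)}, \la^{(2)})$ with $|\la^{(1)}| = j$ and $|\la^{(2)}| = j + (s-r)$ for some $0 \le j \le r$. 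This translates immediately into the stated conditions $|\la^{(2)}| - |\la^{(1)}| = s-r$ and $|\la^{(1)}| + |\la^{(2)}| = 2j + (s-r) \le r+s$; the case $s<r$ reduces by $\rho$.

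The one genuine obstacle is confirming that the proof of Theorem~\ref{main theorem} tolerates the weakening of axiom~(\ref{axiom: A0 and A1}) to (3\,$'$). This amounts to inspecting the inductive scaffolding of Section~\ref{section:  basic construction preserves cellularity} to check that it invokes only the cellularity of $A_0$ and $A_1$ (provided by (3\,$'$)) together with the unchanged higher-indexed axioms, and that the reflection construction of Definition~\ref{def of branching} is purely combinatorial and adapts directly to a multi-vertex base case. Both checks are routine.
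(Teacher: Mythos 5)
Your proposal is correct and follows essentially the same route as the paper, which states this corollary without a separate proof precisely because it is the combination of Proposition~\ref{proposition: framework axioms for walledBrauer}, Theorem~\ref{main theorem} (with the noted modification for axiom (3$'$) and the multi-vertex bottom row of the branching diagram), specialization from $R=\Z[\deltabold]$ to $S$, and the symmetry $\rho$ to reduce to $t = s-r \ge 0$. The only quibble is that the base case $n=0,1$ of the induction needs the full two-term coherent-tower data supplied by (3$'$) (consistent involutions, inclusions, and cell filtrations between $A_0$ and $A_1$), not merely cellularity of $A_0$ and $A_1$, but this is exactly what the identifications $A_0\cong Q_0$, $A_1\cong Q_1$ provide, so the substance of your argument matches the paper's.
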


A basis for any cell module for $\br_{r, s}$ can be labeled by paths on a certain branching diagram.
Suppose without loss of generality that $t = s - r \ge 0$.   Let $(A_n)_{n \ge 0}$ and $(Q_n)_{n \ge 0}$ be the two sequences of algebras defined above, depending on $t$,  so in particular,  $\br_{r, s} = A_{2 r}$.
Let $\B_0$ be the branching diagram for $(Q_n^F)_{n \ge 0}$,  which was described above, and let 
$\B$ be that obtained by reflections from $\B_0$.  On the $0$--th row,  $\B$ has vertices labeled
by all pairs $(\emptyset, \la)$, where $\la$ is a Young diagram of size $t$.  Finally, augment $\B$ with a copy of Young's lattice up to the $(t-1)$--st level,  with vertices labeled by pairs $(0, \mu)$ with $0 \le |\mu| \le t-1$.   The pairs of Young diagrams labeling the cell modules of $\br_{r, s}$
are located on the $r + s$--th row of the augmented branching diagram,  and a basis of any cell module can be labeled by paths on the augmented branching diagram from $(\emptyset, \emptyset)$ to the pair in question.

We note that several of the results of Section 3 of ~\cite{cox-walled-brauer} follow from the application of our method to the walled Brauer algebras.

\subsection{Partition algebras}  

\subsubsection{Definition of the partition algebras}  Let $n$ be an integer,  $n \ge 1$.     Let 
$[\p n] = \break\{\p 1, \dots, \p n\}$  and $[\pbar n] = \{ \pbar 1, \dots, \pbar n\}$  be disjoint sets of size $n$,  and 
let $X_n$  be the family of all set partitions of   $[\p n] \cup [\pbar n]$.    

We can represent an element  $x$ of $X_n$  by any graph with vertex set equal to $[\p n] \cup [\pbar n]$ whose connected components are the blocks or classes of the partition $x$.  We picture such a graph as a diagram
in the rectangle $\mathcal R$,  with the vertices in $[\p n]$ arranged on the top edge and those in
$[\pbar n]$  arranged on the bottom edge of $\mathcal R$, as in the tangle diagrams discussed in Section \ref{subsection:  preliminaries on tangle diagrams}.

Let $S$  be any commutative ring with identity, with a distinguished element $\delta$.   
We define a product on $X_n$ as follows:  
Let $x$ and $y$  be elements of $X_n$.  Realize $y$ as a set partition of $[\p n] \cup [\p n']$   (with $[\p n']$ the set of bottom vertices).  Realize $x$ as a set partition of $[\p n'] \cup [\pbar n]$   (with $[\p n']$ the set of top vertices).
Let $E_x$  and $E_y$  be the corresponding  equivalence relations,  regarded as equivalence relations on
$[\p n] \cup [\p n'] \cup [\pbar n]$.    Let $E$  be the smallest equivalence relation on $[\p n] \cup [\p n'] \cup [\pbar n]$ containing $E_x \cup E_y$.   Let $r$  be the number of equivalence classes of $E$ contained in $[\p n']$.
Let $E_{x y}$  be the equivalence relation obtained by restricting $E$ to $[\p n] \cup [\pbar n]$, and let
$z$ be the corresponding set partition of $[\p n] \cup [\pbar n]$.   Then $x y$ is defined to be $\delta^r  z$.

 Here is an example of two set partitions represented by graphs and their product.

$$
y = \inlinegraphic[scale= .3]{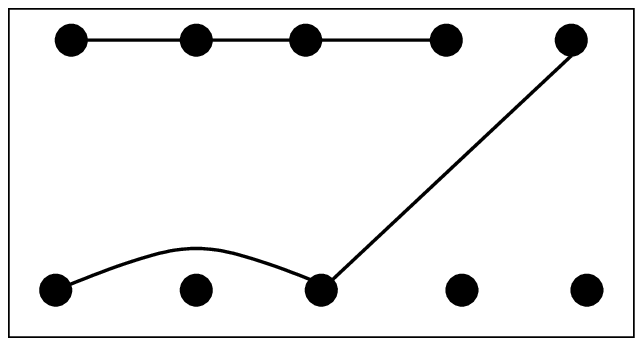}
\qquad
x =  \inlinegraphic[scale=.3]{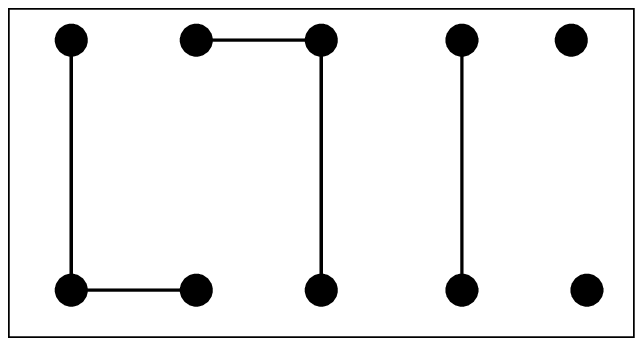}\qquad
xy  = \delta \  \inlinegraphic[scale= .3]{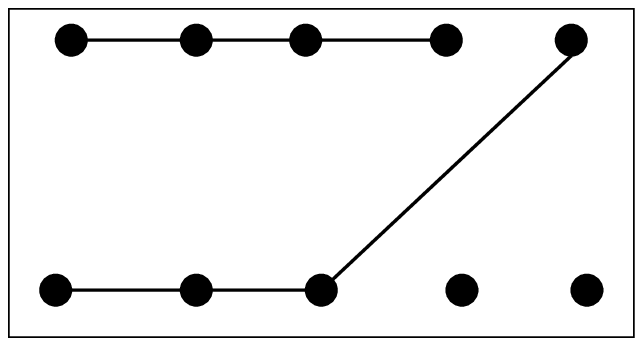} 
$$ 

We let $A_{2n}(S, \delta)$  be the free $S$ module with basis $X_n$.  We give $A_{2n}(S, \delta)$ the bilinear product extending the product defined on $X_n$.  One can check the multiplication is associative.  Note that $A_0(S, \delta)  \cong S$.  
For $n \ge 1$, the  multiplicative identity of $A_{2n}(S, \delta)$ is the partition with blocks $\{ \p i, \pbar i\}$  for $1 \le i \le n$.

For $n \ge 1$,  Let $X'_n \subset X_n$  be the family of set partitions with $\p n$  and $\pbar n$  in the same block. The
$S$--span of $X'_n$  is a unital subalgebra of $A_{2n}(S, \delta)$,   which we denote by  \break$A_{2n -1}(S, \delta)$.  

The algebras $A_k(S, \delta)$  for $k \ge 0$   are called the {\em partition algebras}.  

Note that  the set partitions $x \in X_n$ each of whose blocks has size $2$ can be identified with Brauer diagrams on $2n$ vertices,  and  the product of such diagrams in the Brauer algebra $B_n(S, \delta)$  agrees with the product in $A_{2n}$.   Thus $B_n(S, \delta)$ can be identified with a unital subalgebra of $A_{2n}(S, \delta)$.

\subsubsection{Brief history of the partition algebras}  The partition algebras  $A_{2n}$  were introduced independently by Martin ~\cite{martin-partition-JKTR, martin-structure-j.algebra}  and Jones   ~\cite{jones-partition}.   Partition algebras arise as centralizer algebras for the symmetric group $\mathfrak S_k$  acting
as a subgroup of ${\rm GL}(k, \C)$ on tensor powers of $\C^k$ ~\cite{jones-partition,  martin-partition-potts}.
The algebras $A_{2n+1}$  have been used as an auxiliary device for studying the partition algebras, by Martin and others.   Halverson and Ram ~\cite{halverson-ram-partition}   emphasized putting the even and odd algebras on an equal footing, which reveals the role played by the basic construction.
Cellularity of the partition algebras was proved in ~\cite{Xi-Partition, doran-partition, wilcox-cellular}.  For further literature citations, see the review article ~\cite{halverson-ram-partition}.

\subsubsection{Some properties of the partition algebras}  Fix a ground ring $S$  and $\delta \in S$.
In this section write $A_k$  for $A_k(S, \delta)$.

For $n \ge 1$, $A_{2n-1}$  is defined as a subalgebra of 
$A_{2n}$.  The map  $\iota : X_{n} \to X'_{n+1}$ which adds the additional block
$\{\p {n+1}, \pbar  {n+1}\}$ to $x \in X_n$ is an imbedding;  the linear extension of $\iota$ to $A_{2n}$
is a unital   algebra monomorphism into $A_{2n + 1}$.  Using $\iota$,  we identify $A_{2n}$ with its image in $A_{2n + 1}$.

For $n \ge 1$,  let  $p_{2n -1} \in A_{2n}$  
be the set partition of $[\p n] \cup [\pbar n]$  with blocks $\{\p n\}$,   $\{\pbar n\}$, and $\{\p i, \pbar i\}$  for  $1 \le i \le n-1$,  .   The element
$p_{2n -1}$  satisfies $p_{2n-1}^2 =  \delta\,  p_{2n-1}$.    Let $p_{2n} \in A_{2n + 1}$   be the set partition of $[\p {n+1}] \cup [\pbar {n+1}]$  with blocks  $\{ \p n, \p {n+1},  \pbar n, \pbar {n+1}\}$ and $\{\p i, \pbar i\}$  for  $1 \le i \le n-1$.   Then $p_{2n} $  is an idempotent.   

Here are graphs representing the $p_k$  for $k$ even and odd:
$$
p_8 = \inlinegraphic[scale= .3]{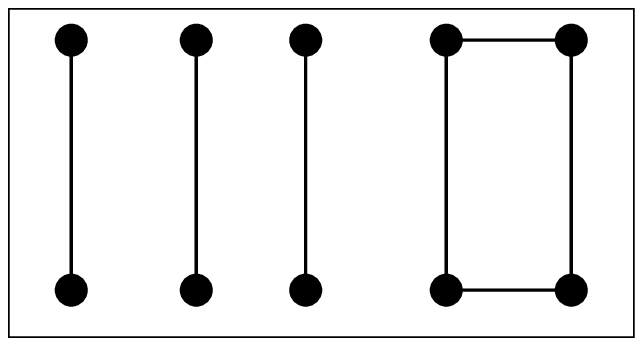}
\qquad
p_9 =  \inlinegraphic[scale=.3]{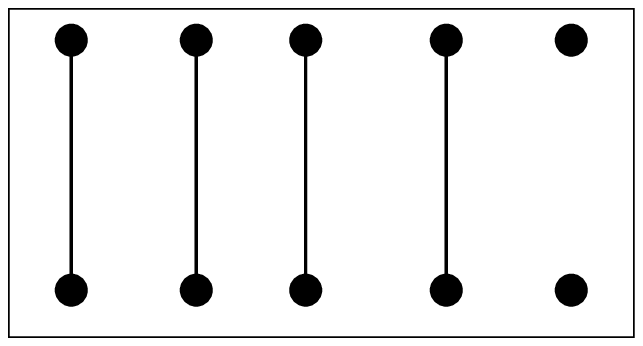}\
$$ 
One can check that 
\begin{equation}
p_k p_{k \pm 1} p_k = p_k  \quad \text{for all $k$}.
\end{equation}

Define an involution $i$  on $X_n$  by interchanging $\p j$  with $\pbar j$  for each $j$.   The map $i$   reflects a graph $d(x)$  representing $x \in X_n$ in the line $y = 1/2$.   The linear extension of $i$  to 
$A_n$  is an algebra involution.   Note that $X'_n$  and  $A_{2n-1}$  are invariant under $i$.  
The embeddings of $A_k$ in $A_{k+1}$  commute with the involutions.    The elements $p_k$ are 
invariant under $i$.  

Define a map ${\rm cl} : X_n \to X'_n$ by merging the blocks containing $\p n$  and $\pbar n$, and define
${\rm cl} : A_{2n} \to A_{2n-1}$  as the linear extension of the map ${\rm cl} : X_n \to X'_n$.

Define a map 
${\rm cl} : X'_n \to A_{2n-2}$ as follows:  For $x \in X'_n$,  if  $\{\p n, \pbar n\}$ is a block of $x$,  then
${\rm cl}(x) =  \delta \,x'$, where $x' \in X_{n-1}$  is obtained by removing the block  $\{\p n, \pbar n\}$.
Otherwise,  ${\rm cl}(x) \in X_{n-1}$  is obtained by intersecting each block of $x$  with $[\p {n-1}] \cup [\pbar {n-1}]$.    Define 
${\rm cl} : A_{2n-1} \to A_{2n-2}$  as the linear extension of the map ${\rm cl} : X'_n \to A_{2n-2}$.

One can check that for all $k$,  ${\rm cl} : A_k \to A_{k-1}$  is a non--unital  $A_{k-1}$--$A_{k-1}$ bimodule map.   Moreover,  ${\rm tr} = {\rm cl} \circ {\rm cl} \circ \cdots \circ {\rm cl} : A_k \to A_0 \cong S$  is a non--unital trace.    The trace ${\rm tr}$  can be computed as follows:   given $x \in X_n$,  let $d(x)$  be any graph representing $x$  and let $d'(x)$  be the graph augmented by drawing edges between each pair
of vertices $\{\p j, \pbar j\}$;  then  ${\rm tr}(x) =  \delta^r$,  where $r$ is the number of components of $d'(x)$.

The maps ${\rm cl}$  commute with the algebra involutions $i$,  and ${\rm tr} (a)  =  {\rm tr}(i(a))$.  
Moreover,
\begin{equation}
p_k x p_k  = {\rm cl}(x) p_k   \quad \text{for all $x \in A_k$,  $k \ge 1$}.
\end{equation}

If $\delta$ is invertible,  define $\eps_{2n} : A_{2n} \to A_{2n -1}$  by $\eps_{2n} = {\rm cl}$,  and
$\eps_{2n-1} : A_{2n-1} \to A_{2n -2}$  by $\eps_{2n-1} = \delta\inv \, {\rm cl}$.   Then the maps 
$\eps_k$  are unital conditional expectations, and the map $\eps = \eps_1 \circ \cdots \eps_k : A_k \to A_0 \cong S$  is a unital trace.

\def\pn{{\rm pn}}
Let $x \in X_n$.  Call a block of $x$  a {\em through block}  if the block has non--empty intersection with both
$[\p n ]$  and $[\pbar n]$.    The number of through blocks of $x$  is called the propagating number
of $x$, denoted ${\rm pn}(x)$.   Clearly, $\pn(x) \le n$ for all $x \in X_n$.   The only $x \in X_n$  with propagating number equal to $n$  are Brauer diagrams with only vertical strands,  i.e.\  permutation diagrams.

 If $x, y \in X_n$  and $x y = \delta^r z$,  then
${\rm pn}(z)  \le \min\{ {\rm pn}(x), {\rm pn}(y)\}$.   Hence the span of the set of $x \in X_n$  with
$\pn(x) < n$ is an ideal $J_{2n} \subset A_{2n}$.   Moreover,  $J_{2n -1}  := J_{2n}  \cap A_{2n -1}$  is the span
of $x \in X'_n$  with $\pn(x) < n$.  

\begin{lemma}  \label{lemma:  axiom 5 for partition algebras}

 For $n \ge 1$, $A_{2n}/J_{2n}   \cong S \mathfrak S_n$,  and $A_{2n-1}/J_{2n-1}   \cong S \mathfrak S_{n-1}$, as algebras with involution.
\end{lemma}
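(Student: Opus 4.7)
My plan is to exhibit an explicit algebra isomorphism in each case by choosing the obvious candidate for a basis of the quotient and checking that multiplication and involution work out. For the even case, I first observe that the quotient $A_{2n}/J_{2n}$ has as an $S$--basis the images of $\{x \in X_n : \pn(x) = n\}$. By the counting argument noted in the excerpt (if $x$ has $n$ through blocks, each of the $n$ blocks must contain at least one top and one bottom vertex, and the total vertex count $2n$ forces each block to have exactly one of each), these are precisely the permutation diagrams, so the basis is naturally indexed by $\mathfrak S_n$. For the odd case, the quotient $A_{2n-1}/J_{2n-1}$ has as an $S$--basis the images of $\{x \in X_n' : \pn(x) = n\}$; the same counting argument applies, with the extra constraint that $\p n$ and $\pbar n$ lie in the same block, so these permutation diagrams correspond to permutations fixing $n$, i.e.\ to $\mathfrak S_{n-1}$.

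Next I would define maps $\varphi_{2n} : S\mathfrak S_n \to A_{2n}/J_{2n}$ and $\varphi_{2n-1} : S\mathfrak S_{n-1} \to A_{2n-1}/J_{2n-1}$ by sending each permutation to the class of its corresponding permutation diagram, and check that these are algebra homomorphisms. The key computation is that the product of two permutation diagrams in $A_{2n}$ (or $A_{2n-1}$) is again a permutation diagram, with no free loops arising in the multiplication and hence no factor of $\delta$; this matches the composition of permutations in $\mathfrak S_n$ (resp.\ $\mathfrak S_{n-1}$), so each $\varphi_k$ is a unital algebra homomorphism. Each $\varphi_k$ carries a basis bijectively to a basis, so it is an $S$--linear isomorphism.

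Finally I would verify compatibility with the involutions. The involution $i$ reflects a diagram through the horizontal axis $y = 1/2$, which sends the permutation diagram of $\sigma$ to the permutation diagram of $\sigma^{-1}$; on the group algebra side the induced involution is also $\sigma \mapsto \sigma^{-1}$, so $\varphi_k \circ i = i \circ \varphi_k$. This yields the isomorphism in the category of algebras with involution.

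I do not anticipate a significant obstacle: the only point requiring minor care is confirming, in the odd case, that restricting the domain from $X_n$ to $X_n'$ does not disturb the argument, which is immediate because the constraint ``$\p n$ and $\pbar n$ in the same block'' is preserved under multiplication of elements of $A_{2n-1}$ and exactly cuts out the image of the $\mathfrak S_{n-1} \subset \mathfrak S_n$ stabilizer of $n$ at the level of permutation diagrams.
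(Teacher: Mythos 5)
Your proof is correct and follows essentially the same route as the paper, which simply notes that the span of permutation diagrams (resp.\ permutation diagrams fixing $n$) is an $i$--invariant subalgebra of $A_{2n}$ (resp.\ $A_{2n-1}$) isomorphic to $S\mathfrak S_n$ (resp.\ $S\mathfrak S_{n-1}$) and is a linear complement to $J_{2n}$ (resp.\ $J_{2n-1}$). Your version just spells out the same facts in more detail: permutation diagrams are exactly the basis partitions with full propagating number, their products involve no closed middle components (hence no $\delta$), and the involution corresponds to $\sigma\mapsto\sigma^{-1}$.
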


\begin{proof}  The span of permutation diagrams is a linear complement to  $J_{2n}$,  and is an $i$--invariant subalgebra of $A_{2n}$  isomorphic to $S \mathfrak S_n$;   hence, $A_{2n}/J_{2n}   \cong S \mathfrak S_n$.  The span of permutation diagrams $\pi$  with $\pi(n) = n$  is a linear complement to $J_{2n-1}$  in $A_{2n-1}$;  hence
$A_{2n-1}/J_{2n-1}   \cong S \mathfrak S_{n-1}$.
\end{proof}

\begin{lemma}  For $k \ge 2$,     $J_{k} =  A_{k-1}  p_{k-1}  A_{k-1}$.  
\end{lemma}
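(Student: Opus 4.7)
The plan is to prove the two inclusions separately. The easy direction, $A_{k-1} p_{k-1} A_{k-1} \subseteq J_k$, follows once I observe that $p_{k-1} \in J_k$. When $k = 2n$, the element $p_{2n-1}$ has $n-1$ through blocks $\{\p i, \pbar i\}$ for $i \le n-1$ together with singletons $\{\p n\}$ and $\{\pbar n\}$, so $\pn(p_{2n-1}) = n-1 < n$; when $k = 2n-1$, the element $p_{2n-2}$ has $n-2$ through pairs $\{\p i, \pbar i\}$ for $i \le n-2$ together with the fat through block $\{\p{n-1}, \p n, \pbar{n-1}, \pbar n\}$, giving $\pn(p_{2n-2}) = n-1 < n$, and moreover $p_{2n-2}$ lies in $X'_n$. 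Since $J_k$ is a two-sided ideal of $A_k$ and $A_{k-1} \subseteq A_k$, one concludes $A_{k-1} p_{k-1} A_{k-1} \subseteq J_k$.

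The reverse inclusion is the substantive part, and my strategy is to construct, for each basis set partition $x$ of $J_k$, an explicit factorization $x = a\, p_{k-1}\, b$ with $a, b \in A_{k-1}$. The key structural feature of $p_{k-1}$ is that, when inserted into the middle of a stacked product, it ``pinches off'' the $n$-th central column: for $k = 2n$, the singletons $\{\p n\}$ and $\{\pbar n\}$ of $p_{2n-1}$ forbid any block from propagating through column $n$, so the block data of $a\, p_{2n-1}\, b$ is transmitted exclusively via the $n-1$ identity strands of $p_{2n-1}$ together with the nontrivial blocks of $a$ and $b$; for $k = 2n-1$ the fat block of $p_{2n-2}$ plays an analogous role of merging columns $n-1$ and $n$ across the cut. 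This pinching immediately reconfirms the easy direction, and more importantly it gives sufficient flexibility in choosing $a$ and $b$ to realize any target $x$ with $\pn(x) < n$.

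Concretely, given such an $x$, choose an injection $\sigma$ from the set of through blocks of $x$ into $\{1, \dots, n-1\}$, which exists because $x$ has fewer than $n$ through blocks. Build $b$ so that its top-row blocks record the top-parts of the through blocks and the top-only blocks of $x$, with each through-block $T$ relayed downwards through position $\sigma(T)$; build $a$ dually, using the bottom-parts of the through blocks and the bottom-only blocks of $x$. The requirement $a, b \in A_{k-1}$ is satisfied by placing the vertices $\p n$ and $\pbar n$ of $a$ (and of $b$) into a designated cap-block that will be severed by the singleton (respectively, absorbed by the fat block) of $p_{k-1}$ during multiplication. A direct bookkeeping calculation then shows $a\, p_{k-1}\, b = x$. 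The main obstacle is carrying out the bookkeeping uniformly, especially in the case $k = 2n-1$, where the merging effect of the fat block of $p_{2n-2}$ is more intricate and $a, b$ must lie in $A_{2n-2}$; a clean way to dispose of this case is first to reduce $x$ to a normal form by left and right multiplication with permutations in $\S_{n-1} \subset A_{k-1}$ fixing position $n$, after which the required $a$ and $b$ can be read off transparently from the through-block structure of the normalized $x$.
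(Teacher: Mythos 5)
Your argument is correct and is essentially the paper's: the published proof simply asserts that every diagram with propagating number less than $n$ factors as $x'\,p_{k-1}\,x''$ with $x',x''$ in the smaller algebra, which is exactly the routing-through-the-identity-strands factorization you construct (the easy containment $A_{k-1}p_{k-1}A_{k-1}\subseteq J_k$ being left implicit there). When you carry out the bookkeeping, just take care to attach the unused middle strands (positions not in the image of $\sigma$) so that no closed components arise in the middle rows; otherwise the product comes out as $\delta^{r}x$ with $r>0$, which does not suffice when $\delta$ is not invertible in $S$.
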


\begin{proof}  It is straightforward to check that if $x \in X_n$  has propagating number strictly less than $n$,  then 
$x$   can be factored as $x = x' p_{2n-1}  x''$,  with $x', x'' \in X'_n$.     Likewise,  if $n \ge 2$  and $x \in X'_n$
has propagating number strictly less than $n$,  then $x$  can be factored as
$x  = x' p_{2n-2} x''$  with $x', x'' \in X'_{n-1}$.    
\end{proof}

\begin{lemma}\label{lemma:  axiom 6 for partition algebras}
 \mbox{}
 \begin{enumerate}

\item 
For $k \ge 3$, $p_{k-1} A_{k-1} p_{k-1} = A_{k-2} p_{k-1}$.
 \item  $p_1 A_1 p_1 =  \delta\, A_0  \,p_1$.
\item  For $k \ge 2$,  
$p_{k-1}$  commutes with $ A_{k-2} $.  
\end{enumerate}
 \end{lemma}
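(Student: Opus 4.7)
The proof follows the template of Lemma \ref{lemma:  Brauer  axiom 6} for the Brauer algebras, relying on two key identities from the preceding paragraphs: first, $p_k x p_k = \cl(x) p_k$ for all $x \in A_k$, and second, the tangle--type relation $p_k p_{k \pm 1} p_k = p_k$. The plan is to handle (3) first by direct diagrammatic inspection, then deduce (1) as a two--line algebraic manipulation, and finally verify (2) as a one--line computation.

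For part (3), split into cases according to the parity of $k$. When $k = 2m$, the element $p_{k-1} = p_{2m-1} \in A_{2m}$ is the set partition with pairs $\{\p i, \pbar i\}$ for $i < m$ together with singletons $\{\p m\}$ and $\{\pbar m\}$; an element $x \in A_{k-2}$, embedded into $A_k$, carries $\{\p m, \pbar m\}$ as an additional block and otherwise partitions only vertices of index $< m$. A direct comparison of the merged equivalence relations defining $p_{k-1} x$ and $x p_{k-1}$ shows that both products reproduce the partition $x$ on the left columns and leave $\{\p m\}, \{\pbar m\}$ as singletons on the right. The case $k = 2m+1$ is the analogous inspection, with $p_{2m}$ now contributing a single block merging positions $m$ and $m+1$ while leaving positions $i < m$ as identity strands, all outside the support of $A_{k-2}$.

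For part (1), the forward containment $p_{k-1} A_{k-1} p_{k-1} \subseteq A_{k-2} p_{k-1}$ is immediate from $p_{k-1} y p_{k-1} = \cl(y) p_{k-1}$ combined with $\cl(A_{k-1}) \subseteq A_{k-2}$. For the reverse, given $x \in A_{k-2}$, use $p_{k-1} = p_{k-1} p_{k-2} p_{k-1}$ and part~(3) to write
\begin{equation*}
x p_{k-1} \;=\; x\,(p_{k-1} p_{k-2} p_{k-1}) \;=\; (p_{k-1} x)\, p_{k-2}\, p_{k-1} \;=\; p_{k-1}\,(x p_{k-2})\,p_{k-1}.
\end{equation*}
Since both $x$ and $p_{k-2}$ lie in $A_{k-1}$, so does their product, and hence $x p_{k-1} \in p_{k-1} A_{k-1} p_{k-1}$.

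Part (2) is essentially a one--line computation: under the embedding $A_0 \hookrightarrow A_1$, both algebras are identified with $S \cdot 1_{A_1}$, so $p_1 A_1 p_1 = S \cdot p_1^2 = \delta S p_1 = \delta A_0 p_1$. The only mildly involved step in the whole proof is the diagrammatic verification of (3); once it is in hand, (1) and (2) follow immediately from the two tangle--type identities.
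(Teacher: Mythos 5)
Your proof is correct, and for the parts the paper actually argues it is essentially the paper's argument: your reverse inclusion $x p_{k-1} = x p_{k-1}p_{k-2}p_{k-1} = p_{k-1}(x p_{k-2})p_{k-1}$ is word-for-word the paper's, and parts (2) and (3) are exactly the ``easy to check'' diagrammatic facts the paper leaves to the reader. The one place you diverge is the forward inclusion of (1): the paper proves it by a direct diagrammatic span argument (showing that $p_{2n-1} x p_{2n-1}$ lies in the span of partitions having $\{\p n\}$ and $\{\pbar n\}$ as singleton blocks, and that any such partition factors as $z\,p_{2n-1}$ with $z \in A_{2n-2}$, with the analogous statement in the odd case), which in fact yields the slightly stronger containment $p_{k-1}A_k p_{k-1} \subseteq A_{k-2}p_{k-1}$; you instead invoke the identity $p_{k-1} x p_{k-1} = \cl(x)\,p_{k-1}$ together with $\cl(A_{k-1}) \subseteq A_{k-2}$. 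Since that identity is stated earlier in the paper (without proof, as part of the background properties of $\cl$) and is nowhere derived from this lemma, your citation is legitimate and not circular; it buys a shorter proof of exactly the containment needed for framework axiom (6), at the cost of leaning on an unproved background fact that the paper's proof in effect re-verifies diagrammatically.

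One small caution on your sketch of (3) in the odd case $k = 2m+1$: the phrase ``all outside the support of $A_{k-2}$'' is not literally right, since the merged block of $p_{2m}$ involves position $m$, and elements of $A_{2m-1}$ do occupy position $m$. What makes the commutation work is that every diagram in $A_{2m-1}$, viewed in $A_{2m+1}$, has $\p m$ and $\pbar m$ in the same block and carries $\{\p{m+1}, \pbar{m+1}\}$ as a block, so merging positions $m$ and $m+1$ on either side produces the same set partition (and no closed middle classes, hence no factors of $\delta$). This is a presentational imprecision rather than a gap, but the verification should be phrased in those terms.
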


\begin{proof}  Let $x \in A_{2n}$  with $n \ge 1$.   Then $p_{2n-1} x p_{2n-1}$  is contained in the span of $y \in X_n$  such
that  $\{\p n\}$  and $\{ \pbar n\}$  are blocks of $y$, and any such $y$  can be written as $y = z p_{2n-1}$,  where $z \in A_{2n-2}$.      

Now consider $x \in A_{2n + 1}$  with $n \ge 1$.   Then
$p_{2n} x p_{2n}$  is contained in the span of $y \in X'_{n+1}$  such that $\{\p n, \p {n+1}, \pbar n, \pbar {n+1}\}$  is contained in one block of $y$.    Any such $y$  can be written as $y =  z p_{2n}$  where
$z \in  A_{2n-1}$.  

This shows that  $p_{k-1} A_k p_{k-1}  \subseteq  A_{k-2}  p_{k-1}$  for all $k \ge 3$.
On the other hand, if $x \in A_{k-2}$   then $x p_{k-1} =  x  p_{k-1} p_{k-2} p_{k-1} =
p_{k-1} x p_{k-2}  p_{k-1}  \in  p_{k-1} A_{k}  p_{k-1}$,   so $p_{k-1} A_k p_{k-1}  \supseteq  A_{k-2}  p_{k-1}$.
This proves (1).

Points (2) and (3) are easy to check.
\end{proof}

\begin{lemma}  \label{lemma: axiom 7 for partition algebras}
For $k \ge 2$,    $A_k p_{k-1}  = A_{k-1} p_{k-1}$.  Moreover,  
 $x \mapsto  x e_{k-1}$  is injective from $ A_{k-1}$ to $A_{k}$.
\end{lemma}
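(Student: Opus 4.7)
My plan is to treat the injectivity and the equality statements separately, using a closure identity for the injectivity and a two-step diagrammatic reduction for the equality, in the spirit of the analogous Brauer-algebra argument.

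\emph{Injectivity.} First I would compute $\cl(p_{k-1}) = 1_{A_{k-1}}$. For $k = 2n$, the partition $p_{2n-1}$ has blocks $\{\p n\}$, $\{\pbar n\}$, and $\{\p i, \pbar i\}$ for $i<n$; the map $\cl : A_{2n} \to A_{2n-1}$ merges the blocks containing $\p n$ and $\pbar n$ and yields the identity of $A_{2n-1}$. For $k = 2n+1$, $p_{2n}$ has $\{\p n, \p{n+1}, \pbar n, \pbar{n+1}\}$ as a block, so $\{\p{n+1}, \pbar{n+1}\}$ is not itself a block, and the ``otherwise'' branch of $\cl : A_{2n+1} \to A_{2n}$ applies: intersecting every block with $[\p n] \cup [\pbar n]$ again yields the identity. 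Since $\cl$ is an $A_{k-1}$--$A_{k-1}$--bimodule map, $\cl(x p_{k-1}) = x \cl(p_{k-1}) = x$ for all $x \in A_{k-1}$, so $x p_{k-1} = 0$ forces $x = 0$.

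\emph{The equality, step one.} The inclusion $A_{k-1} p_{k-1} \subseteq A_k p_{k-1}$ is immediate, so it suffices to show that for each basis element $x$ of $A_k$, the product $x p_{k-1}$ lies in $A_{k-1} p_{k-1}$. I would first establish that $x p_{k-1} = \delta^{r(x)} z$ for some ``reduced'' basis element $z$ of $A_k$ and some exponent $r(x) \in \{0,1\}$, where reduced means: $\{\p n\}$ is a block of $z$ in the case $k = 2n$, or else $\p n, \p{n+1}, \pbar{n+1}$ all lie in one block of $z$ in the case $k = 2n+1$. In the even case, stacking $p_{2n-1}$ above $x$ detaches $\p n$ from the block $B$ of $x$ containing it; inspecting the equivalence relation $E$ on $[\p n] \cup [\p n'] \cup [\pbar n]$ generated by the blocks of $p_{2n-1}$ and $x$, a middle-only class $\{\p n'\}$ arises, contributing one factor of $\delta$, precisely when $B = \{\p n\}$, i.e., $\p n$ is already a singleton in $x$. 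In the odd case, a parallel analysis for $x \in X'_{n+1}$ and $p_{2n}$ shows that no middle-only classes ever arise and that $x p_{2n}$ is directly a basis element with $\p n, \p{n+1}, \pbar{n+1}$ in a common block.

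\emph{The equality, step two, and obstacle.} Next I would show that every reduced basis element $z$ can be written as $y p_{k-1}$ for a basis element $y \in A_{k-1}$. For $k = 2n$ and $z$ with $\{\p n\}$ as a block, define $y \in X'_n$ by removing the block $\{\p n\}$ of $z$ and adjoining $\p n$ to the block of $\pbar n$; a direct computation gives $y p_{2n-1} = z$. For $k = 2n+1$ and $z$ with $\p n, \p{n+1}, \pbar{n+1}$ in a common block $C$, define $y \in \iota(X_n) \subset A_{2n}$ by splitting off $\{\p{n+1}, \pbar{n+1}\}$ as its own block and leaving $C \setminus \{\p{n+1}, \pbar{n+1}\}$ as the block containing $\p n$; then $y p_{2n} = z$. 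Combining the two steps yields $x p_{k-1} = \delta^{r(x)} z = \delta^{r(x)} y p_{k-1} = (\delta^{r(x)} y) p_{k-1} \in A_{k-1} p_{k-1}$, as required. The main obstacle lies in the bookkeeping for the equivalence relation $E$: one must verify that apart from the single middle-only class arising in the even case when $\p n$ is a singleton in $x$, the restriction of $E$ to the top and bottom rows reproduces the claimed block structures and that no further middle-only classes appear. This is elementary but requires separate attention to the even and odd cases and to the singleton sub-case within the even case.
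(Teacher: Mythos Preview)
Your proof is correct. The injectivity argument via $\cl(p_{k-1}) = 1_{A_{k-1}}$ and the bimodule property is exactly what the paper does (phrased there as ``one can check that $x = \cl(x p_{k-1})$''). Your diagrammatic reductions in steps one and two are sound; the bookkeeping you flag as an obstacle is indeed elementary and goes through as you describe.

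For the equality $A_k p_{k-1} = A_{k-1} p_{k-1}$, however, the paper takes a much shorter algebraic route that avoids all case analysis. It treats $k=2$ directly ($A_2 p_1 = S p_1 = A_1 p_1$) and for $k \ge 3$ argues in one line:
\[
A_k p_{k-1} = A_k\, p_{k-1} p_{k-2} p_{k-1} \subseteq J_k\, p_{k-1} = A_{k-1} p_{k-1} A_{k-1} p_{k-1} \subseteq A_{k-1} A_{k-2} p_{k-1} = A_{k-1} p_{k-1},
\]
using in turn the relation $p_{k-1} p_{k-2} p_{k-1} = p_{k-1}$, the factorization $J_k = A_{k-1} p_{k-1} A_{k-1}$ from the preceding lemma, and the earlier result $p_{k-1} A_{k-1} p_{k-1} = A_{k-2} p_{k-1}$. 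Your approach has the advantage of being self-contained and giving an explicit preimage $y$ for each reduced diagram $z$, but it duplicates work already encoded in those structural lemmas and requires separate even/odd verifications. The paper's approach shows how the equality follows formally from the relations $p_{k} p_{k\pm 1} p_{k} = p_{k}$ together with the two previously established identities, with no further diagram chasing.
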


\begin{proof}  For $k =2$,  we have $A_2 p_1 = S p_1 = A_1 p_1$.    For $k \ge 3$,     we have
$$
\begin{aligned}
A_k p_{k-1} &= A_k p_{k-1} p_{k-2} p_{k-1}  \\& \subseteq J_k p_{k-1}  = A_{k-1} p_{k-1} A_{k-1} p_{k-1} 
\\& \subseteq
 A_{k-1} A_{k-2} p_{k-1}  = A_{k-1} p_{k-1}.
 \end{aligned}
 $$
 Checking $k$ odd and even separately,  one can check that $x =  {\rm cl}(x p_{k-1})$  for $k \ge 2$  and
 $x \in A_{k-1}$.
\end{proof}

\begin{lemma} \label{lemma:  non-degeneracy of trace partition algebra}
  The trace $\eps$ on $A_k(\Q(\deltabold), \deltabold)$  is non--degenerate.
\end{lemma}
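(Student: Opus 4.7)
The plan mimics the Morton--Traczyk argument sketched in Lemma \ref{lemma: non-degeneracy of trace on Brauer and walled Brauer}. Work in $A_k = A_k(\Q(\deltabold), \deltabold)$, and let $G$ be the Gram matrix of $\eps$ with respect to the basis $X_n$ (or $X'_n$ for $A_{2n-1}$), that is, $G_{x,y} = \eps(x \cdot i(y))$; non-degeneracy of $\eps$ is equivalent to $\det G \ne 0$.

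The first step is geometric: combining the definition of the product of set-partition diagrams with the formula $\mathrm{tr}(z) = \deltabold^{r(z)}$ for $z \in X_n$ (where $r(z)$ counts connected components of a representing graph of $z$ with closure edges $\p j$--$\pbar j$ adjoined for every $j$) yields
\[
\eps(x\cdot i(y)) = \deltabold^{c(x,y)-n},
\]
where $c(x,y)$ is the total number of connected components in the planar graph obtained by stacking a diagram of $i(y)$ above one of $x$ (gluing along the shared middle row) and then adding the outer closure edges joining $\p j$ to $\pbar j$ for every $j$.

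The second step is the combinatorial claim that, for each fixed $x \in X_n$, the function $y \mapsto c(x,y)$ is \emph{strictly} maximized at $y = x$. Intuitively, $y = x$ glues every block $B$ of $x$ to its mirror image $i(B)$ across the middle row, producing the maximum number of small closed curves in the stacked-and-closed diagram; any other $y$ forces a mismatch that merges two such curves and strictly lowers the component count. Given this claim, set $M(x) = c(x,x)$; row $x$ of $G$ then has a unique entry of maximal $\deltabold$-degree, namely $G_{x,x} = \deltabold^{M(x)-n}$, and every other entry in that row has strictly smaller $\deltabold$-degree. Dividing row $x$ by $\deltabold^{M(x)-n}$ produces a matrix $\widetilde G$ congruent to the identity modulo $\deltabold^{-1}\Z[\deltabold^{-1}]$, so $\det \widetilde G$, and hence $\det G$, is non-zero in $\Q(\deltabold)$. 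The same argument handles $A_{2n-1}$, indexed by $X'_n$ and incorporating the extra $\deltabold^{-1}$ factor in $\eps_{2n-1}$.

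The main obstacle is the uniqueness-of-maximum claim in the second step. In the Brauer case every block has size $2$, so $x \cdot i(y)$ is a disjoint union of cycles that can be controlled by a simple pairing argument. For set partitions, blocks have arbitrary size and mismatches between $x$ and $y$ can take many local forms (a singleton in one but not the other, a large block of $x$ split across two blocks of $y$, two blocks of $x$ merged in $y$, and so on), so the uniform combinatorial inequality must be proved by a careful comparison of the equivalence relations on the glued vertex set generated by $\{x, i(x)\}$ versus $\{x, i(y)\}$, tracking how any single discrepancy forces at least one extra merger among the surviving loops after the outer closure. The remainder of the proof is routine bookkeeping.
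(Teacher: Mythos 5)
Your overall route---writing each Gram entry as a power of $\deltabold$ determined by the number of connected components of the glued-and-closed diagram, and then extracting a non-vanishing term of top degree---is the same as the paper's. But the combinatorial claim you hang everything on is false: $y \mapsto c(x,y)$ is \emph{not} strictly maximized at $y = x$. Indeed $c(x,y)$ is just the number of classes of the join $E_x \vee E_y$ of the two set partitions on the $2n$ glued vertices, so whenever $y$ is a proper refinement of $x$ one has $E_x \vee E_y = E_x$ and hence $c(x,y) = c(x,x)$. Concretely, for $n=1$ take $x = \1$ (single block $\{\p 1, \pbar 1\}$) and $y = p_1$ (two singletons): then $\eps(x\, i(y)) = \eps(p_1) = 1 = \eps(x\, i(x))$, so the row of $x$ has two entries of maximal degree and your rescaled matrix is not congruent to the identity modulo $\deltabold^{-1}$. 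This is not a bookkeeping issue to be settled by ``tracking mergers'' in your second step: splitting a block of $x$ into smaller pieces in $y$ merges nothing, so the inequality you propose to prove simply does not hold. (The strictness you are importing from the Brauer case relies on every block having size exactly $2$, which forces $r(x)=n$ for all diagrams; for partitions the number of blocks varies.)

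The correct statement, and the one the paper uses, is weaker: $c(x,y) \le \min\{r(x), r(y)\}$, where $r(x)$ is the number of blocks, and if $r(x) = r(y)$ then equality forces $y = x$ (if the join has $r(x)=r(y)$ classes, each class is simultaneously a single block of $x$ and a single block of $y$). Since an off-diagonal entry can tie the diagonal only when the partner has strictly more blocks, one cannot argue row by row; instead expand the determinant over permutations $\sigma$ of the basis and compare total degrees: $\sum_x c(x,\sigma(x)) \le \sum_x \min\{r(x), r(\sigma(x))\} \le \sum_x r(x)$, and equality throughout forces $r(x) = r(\sigma(x))$ for all $x$ and then, by the claim above, $\sigma(x)=x$ for all $x$. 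Hence the diagonal permutation contributes the unique term of highest degree, $\pm\prod_x \deltabold^{\, r(x)-n}$, so the Gram determinant is a non-zero Laurent polynomial in $\deltabold$; the same argument applies verbatim to $A_{2n-1}$ with the basis $X'_n$. With this repair your argument coincides with the paper's proof.
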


\begin{proof}   For any set partition $x \in X_n$,  let $r(x)$  be the number of blocks of $x$.   Let $E_x$
be the equivalence relation on
$[\p n] \cup [\pbar n]$  whose equivalence classes are the blocks of $x$.  

 For any $x, y \in X_n$,  define an integer $r(x, y)$  as follows:   Let $E(x, y)$ be the smallest equivalence relation on $[\p n] \cup [\pbar n]$   containing  $E_x \cup E_{i(y)}$  and let $r(x, y)$  be the number of equivalence classes of $E(x, y)$.    Clearly, $r(x, y) \le \min\{r(x), r(y)\}$.   Moreover,  if $r(x) = r(y)$,  then
 $r(x, y) < r(x)$  unless $y =  i(x)$,  and $r(x, i(x)) = r(x)$.    
 
 It is not hard to see that  ${\rm tr}(x y) =  \deltabold^{r(x, y)}$, so $\eps(x, y) =   \deltabold^{r(x, y)-n}$.
 It follows that the Gram determinant  $\det(\eps(x y))_{x, y}$  is a Laurent polynomial that has a unique
 term of highest degree namely $\pm  \prod_x  \eps(x\, i(x))$.   In particular the Gram determinant is non--zero.
 This shows that the trace on $A_{2n}(\Q(\deltabold), \deltabold)$  is non--degenerate,  and the same method shows that the restriction of the trace to $A_{2n-1}(\Q(\deltabold), \deltabold)$ is non--degenerate.
 \end{proof}

\begin{lemma} \label{lemma:  generic semisimplicity for partition algebras}   $A_k(\Q(\deltabold), \deltabold)$  is  split semisimple.    The branching diagram for \break
$(A_k(\Q(\deltabold), \deltabold))_{k \ge 0}$  has vertices on levels $2n$ and $2n+1$  labeled by all Young diagrams of size $j$,  $0 \le j \le n$.  There is an edge connecting $\la$ on level $2n$  and $\mu$  on level
$2n \pm 1$ if, and only if, $\la = \mu$  or $\mu$ is obtained by removing one box from $\lambda$.  \end{lemma}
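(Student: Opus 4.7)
My plan is to mimic Wenzl's inductive proof of generic semisimplicity of the Brauer algebras, recalled in Section~2.6, with the idempotents $p_k$ (suitably rescaled) playing the role of the Jones idempotents. Write $F = \Q(\deltabold)$ and $A_k = A_k(F,\deltabold)$; let $f_k = p_k$ when $k$ is even (so that $p_k$ is already idempotent) and $f_k = \deltabold\inv p_k$ when $k$ is odd, so that $f_k \in A_{k+1}$ is a genuine idempotent in every case. I will prove by induction on $k$ that $A_k$ is split semisimple with the stated labelings and branching rule. The base cases $k\in\{0,1\}$ reduce to $A_k\cong F$.

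For the inductive step, assume the statements hold for $A_0,\dots,A_k$. The relation $p_k x p_k = \cl(x) p_k$ for $x \in A_k$, together with the fact that the normalized closure map $\eps_k = \delta_k\inv \cl$ is a trace-preserving conditional expectation $A_k \to A_{k-1}$ (the global trace $\tr$ on $A_k$ being non-degenerate by Lemma~\ref{lemma:  non-degeneracy of trace partition algebra}), verifies the hypotheses of Wenzl's basic construction theorem (\cite{Wenzl-Brauer}, Theorem~1.3) quoted in Section~2.6. Applied to the chain $A_{k-1}\subseteq A_k\subseteq A_{k+1}$ with idempotent $f_k$, this yields
\[
A_k f_k A_k \;\cong\; A_k \otimes_{A_{k-1}} A_k \;\cong\; \End\bigl((A_k)_{A_{k-1}}\bigr),
\]
which is split semisimple, with inclusion matrix from $A_k$ equal to the transpose of that for $A_{k-1}\subseteq A_k$. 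By Lemma~\ref{lemma: axiom 7 for partition algebras} applied with $k+1$ in place of $k$, together with $J_{k+1} = A_k p_k A_k$, one obtains $A_k f_k A_k = A_{k+1} f_k A_{k+1} = J_{k+1}$; hence $J_{k+1}$ is a split semisimple (unital) ideal of $A_{k+1}$, and $A_{k+1}/J_{k+1}\cong F\mathfrak S_n$ (Lemma~\ref{lemma:  axiom 5 for partition algebras}) is also split semisimple.

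To promote this to split semisimplicity of $A_{k+1}$ itself, note that $\mathrm{Jac}(A_{k+1})$ is contained in $J_{k+1}$ (since the quotient is semisimple) and is a nilpotent two-sided ideal of the semisimple ring $J_{k+1}\cong \End((A_k)_{A_{k-1}})$, and therefore must vanish. Split semisimplicity of $A_{k+1}$ is then automatic from that of the composition factors. The simples of $A_{k+1}$ partition into two families: those inside $J_{k+1}$, which correspond under the basic construction to simples of $A_{k-1}$ and hence, by induction, are labeled by Young diagrams of size at most $\lfloor(k-1)/2\rfloor$; and those in the quotient $F\mathfrak S_n$, labeled by Young diagrams of size $n$. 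Together these exhaust the Young diagrams of size at most $\lfloor(k+1)/2\rfloor$ listed in the statement.

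The branching diagram for $A_k\subseteq A_{k+1}$ assembles from two pieces. The inclusion $A_k \hookrightarrow J_{k+1}$, under the basic construction identification, is the transpose of $A_{k-1}\subseteq A_k$, and by induction this supplies exactly the ``remove one box'' edges. The composite of $A_k\hookrightarrow A_{k+1}$ with the quotient $A_{k+1}\twoheadrightarrow A_{k+1}/J_{k+1}\cong F\mathfrak S_n$ sends $A_k$ onto $F\mathfrak S_{n-1}$ or $F\mathfrak S_n$ depending on the parity of $k$, and the standard symmetric group branching then produces the ``$\la = \mu$'' edges. The main obstacle is the parity case split between even and odd $k$; but once the basic construction identification is in place, the combinatorics matches the claim without difficulty.
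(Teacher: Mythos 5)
Your argument is exactly the route the paper itself takes: the paper's proof of this lemma is only a citation to Martin together with the remark that Wenzl's method from the Brauer-algebra paper applies once one has the non--degeneracy of the trace on $A_k(\Q(\deltabold),\deltabold)$, and you have simply carried that remark out. Your verification of Wenzl's hypotheses is sound: the rescalings $f_k=p_k$ ($k$ even), $f_k=\deltabold\inv p_k$ ($k$ odd) are precisely what is needed to match the normalizations $\eps_{2n}=\cl$ and $\eps_{2n-1}=\deltabold\inv\cl$ in the identity $f_kxf_k=\eps_k(x)f_k$; the facts $A_{k+1}p_k=A_kp_k$, injectivity of $x\mapsto xp_k$, and $J_{k+1}=A_kp_kA_k$ are supplied by the preceding lemmas of this subsection; and the radical argument (a nilpotent ideal of $A_{k+1}$ sitting inside the unital semisimple ideal $J_{k+1}$ must vanish) together with the centrality of the unit of $J_{k+1}$ gives split semisimplicity, just as in the Brauer case recalled in Section 2.6.

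One bookkeeping slip in your final paragraph: it is not the case that the basic-construction piece supplies ``exactly the remove-one-box edges'' while the quotient supplies the ``$\lambda=\mu$'' edges. The transposed inclusion matrix for $A_{k-1}\subseteq A_k$ accounts for \emph{all} edges incident to the old simple modules (those labeled by diagrams of size at most $\lfloor (k-1)/2\rfloor$), and by induction these edges are of both kinds. The quotient accounts for the edges incident to the new simples: at an even-to-odd step the composite $A_{2n}\hookrightarrow A_{2n+1}\twoheadrightarrow F\mathfrak{S}_n$ is surjective with kernel $J_{2n}$, so each size-$n$ label at level $2n+1$ acquires only the ``$\lambda=\mu$'' edge; at an odd-to-even step the image of $A_{2n+1}$ is the proper subalgebra $F\mathfrak{S}_n\subset F\mathfrak{S}_{n+1}$, and ordinary symmetric-group branching gives the ``remove one box'' edges for the size-$(n+1)$ labels. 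The union of the two contributions is the stated rule, so your proof closes correctly once this attribution is repaired.
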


\begin{proof}  This is proved by Martin  ~\cite {martin-partition-JKTR}.   It can also be proved using the method of Wenzl from ~\cite{Wenzl-Brauer}, using
Lemma \ref{lemma:  non-degeneracy of trace partition algebra}.
\end{proof}

\subsubsection{Verification of framework axioms for the partition algebras}

We take $R = \Z[\deltabold]$,  where $\deltabold$ is an indeterminant.   Then $R$ is the universal ground ring for the partition algebras;  for any commutative ring $S$ with distinguished element $\delta$, we have
$A_k(S, \delta) \cong  A_k(R, \deltabold)\otimes_R  S$.  Let $F = \Q(\deltabold)$ denote the field of fractions of $R$.  Write $A_k = A_k(R, \deltabold)$.  Define $Q_{2n}  = Q_{2n + 1} =  R \mathfrak S_n$.

\begin{proposition} \label{proposition: framework axioms for partition algebras} The two sequence of $R$--algebras  $(A_k)_{k \ge 0}$ and $(Q_k)_{k \ge 0}$  satisfy the framework axioms of  Section \ref{subsection: framework axioms}.
\end{proposition}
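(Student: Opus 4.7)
The plan is to verify each of the eight framework axioms directly, using the preceding lemmas and setting $e_{n-1} := p_{n-1}$ for $n \ge 2$.

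For axiom (\ref{axiom Hn coherent}), I would observe that $(Q_k)_{k \ge 0}$ is the doubled sequence $R\S_0, R\S_0, R\S_1, R\S_1, R\S_2, R\S_2, \ldots$, in which the inclusion $Q_{2n} \hookrightarrow Q_{2n+1}$ is the identity and $Q_{2n+1} \hookrightarrow Q_{2n+2}$ is the standard embedding $R\S_n \hookrightarrow R\S_{n+1}$. For identity inclusions, induction and restriction of cell modules are the identity functors, so length-one cell filtrations exist trivially; for the standard inclusions, coherence is Example~\ref{example: Hn coherent tower} specialized at $q = 1$. Axioms (\ref{axiom: involution on An}) and (\ref{axiom: A0 and A1}) are immediate from the constructions above: $i$ preserves each $A_k$ and is compatible with the inclusions, $A_0 \cong R = Q_0$, and $A_1 \cong R = Q_1$ because $X'_1 = \{\{\{\p 1, \pbar 1\}\}\}$ is a singleton. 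Axiom (\ref{axiom: semisimplicity}) is Lemma~\ref{lemma:  generic semisimplicity for partition algebras}.

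For axiom (\ref{axiom:  idempotent and Hn as quotient of An}), note that $p_{n-1}^2$ equals either $p_{n-1}$ or $\deltabold\, p_{n-1}$ (depending on the parity of $n-1$), so $p_{n-1}$ is an essential idempotent, and it is fixed by $i$. The ideal $A_n p_{n-1} A_n$ contains $p_{n-1}$ and is contained in $J_n$, while the lemma identifying $J_k = A_{k-1} p_{k-1} A_{k-1}$ gives the reverse inclusion; hence $A_n p_{n-1} A_n = J_n$, and Lemma~\ref{lemma:  axiom 5 for partition algebras} identifies $A_n / J_n$ with $Q_n$ as algebras with involution. Axiom (\ref{axiom: en An en}) is precisely Lemma~\ref{lemma:  axiom 6 for partition algebras} after reindexing $k = n+1$, with the border case $n = 1$ handled directly by $p_1 A_1 p_1 = \deltabold R p_1 \subseteq R p_1 = A_0 p_1$. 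Axiom (\ref{axiom:  An en}) is Lemma~\ref{lemma: axiom 7 for partition algebras} under the same reindexing. Finally, axiom (\ref{axiom: e(n-1) in An en An}) follows from the relation $p_{n-1} p_n p_{n-1} = p_{n-1}$ recorded in the property list above, which yields $p_{n-1} \in A_n p_n A_n \subseteq A_{n+1} p_n A_{n+1}$.

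There is no real obstacle here; all the substantive work has been carried out in the structural lemmas for the partition algebras. The deepest ingredient is the generic split semisimplicity of $A_k^F$, which itself rests on non-degeneracy of the trace (Lemma~\ref{lemma:  non-degeneracy of trace partition algebra}) via Wenzl's method. Once the axioms are verified, Theorem~\ref{main theorem} immediately yields that $(A_k(S, \delta))_{k \ge 0}$ is a coherent tower of cellular algebras for any specialization, with branching diagram obtained by reflections from that of the symmetric groups.
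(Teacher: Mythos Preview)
Your proof is correct and follows essentially the same approach as the paper's: verify each framework axiom by invoking the corresponding structural lemma for the partition algebras, with the essential idempotent $e_{n-1} := p_{n-1}$. You supply slightly more detail than the paper does---explicitly treating the doubled symmetric-group tower, the identification $A_n p_{n-1} A_n = J_n$, and the border case $n=1$ for axiom~(\ref{axiom: en An en})---but the structure and substance are identical.
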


\begin{proof}
According to Example \ref{example: Hn coherent tower},  $(Q_k)_{k \ge 0}$ is a coherent tower of cellular algebras, so axiom (\ref{axiom Hn coherent}) holds.
Framework axioms (\ref{axiom: involution on An}) and (\ref{axiom: A0 and A1}) are evident.
$A_k^F$ is split semisimple by  Lemma \ref{lemma:  generic semisimplicity for partition algebras}.  This verifies axiom (\ref{axiom: semisimplicity}).

We take $p_{k-1} \in A_k$ to be the element defined in the previous section.  Then   
$p_{k-1}$  is an  $i$--invariant essential idempotent.
With   $J_k = A_k p_{k-1} A_k$, we have $A_k/J_k \cong  Q_k$ as algebras with involution by Lemma \ref{lemma:  axiom 5 for partition algebras}.  
  This verifies axiom (\ref{axiom:  idempotent and Hn as quotient of An}).
  
Axiom (\ref{axiom: en An en}) follows from Lemma  \ref{lemma:  axiom 6 for partition algebras}, and axiom (\ref{axiom:  An en}) from 
 Lemma \ref{lemma: axiom 7 for partition algebras}.   Axiom (\ref{axiom: e(n-1) in An en An}) holds because $p_{n-1} p_n p_{n-1} = p_{n-1}$.
 \end{proof}

\begin{corollary} For any commutative ring $S$ and for any $\delta \in S$, 
the sequence of  partition  algebras $(A_n(S, \delta))_{n \ge 0}$ is a coherent tower of cellular algebras.
$A_n(S, \delta)$  has cell modules indexed by all Young diagrams of size  $j$,  $0 \le j \le n$.
   The cell module labeled by
a Young diagram $\la$ has a basis labeled by paths on the branching diagram for $(A_k(\Q(\deltabold), \deltabold))_{k \ge 0}$,  described in  Lemma \ref{lemma:  generic semisimplicity for partition algebras}.
\end{corollary}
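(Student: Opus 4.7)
The plan is to deduce the corollary directly from Theorem \ref{main theorem} combined with Proposition \ref{proposition: framework axioms for partition algebras}, plus a short argument that cellularity and coherence descend to any specialization of the ground ring. First, take $R = \Z[\deltabold]$ and $F = \Q(\deltabold)$. By Proposition \ref{proposition: framework axioms for partition algebras}, the pair of towers $(A_k(R, \deltabold))_{k \ge 0}$ and $(Q_k)_{k \ge 0}$ satisfies the framework axioms of Section \ref{subsection: framework axioms}, so Theorem \ref{main theorem} applies and yields that $(A_k(R, \deltabold))_{k \ge 0}$ is a coherent tower of cellular algebras. The partially ordered set in the cell datum of $A_k(R, \deltabold)$ is
$$\La_k = \coprod_{\substack{i\le k\\ k-i \text{ even}}} \La_i^{(0)} \times \{k\},$$
and since $Q_{2n} = Q_{2n+1} = R\S_n$, the set $\La_i^{(0)}$ is identified with the set of Young diagrams of size $\lfloor i/2 \rfloor$; hence $\La_k$ is indexed by Young diagrams whose size $j$ ranges over $0 \le j \le \lfloor k/2 \rfloor$.

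Next, specialization: for any commutative ring $S$ with distinguished element $\delta$, the ring homomorphism $R \to S$ sending $\deltabold \mapsto \delta$ gives $A_k(S, \delta) \cong A_k(R, \deltabold) \otimes_R S$. If $\{c_{s, t}^\la\}$ is a cellular basis of $A_k(R, \deltabold)$, then $\{c_{s, t}^\la \otimes 1\}$ is an $S$-basis of $A_k(S, \delta)$ — freeness and cardinality are preserved under extension of scalars — and the multiplication and involution identities of Definition \ref{gl cell} pass verbatim through $-\otimes_R S$. Similarly, each cell module $\Delta^\la$ of $A_k(R, \deltabold)$ is a free $R$-module, so the cell filtrations of $\Res_{A_{k}}^{A_{k+1}}(\Delta^\mu)$ and $\Ind_{A_{k}}^{A_{k+1}}(\Delta^\la)$ consist of free $R$-submodules with free subquotients; tensoring with $S$ preserves the filtration and its subquotients (by right exactness of $-\otimes_R S$ together with the freeness of the terms, which ensures injectivity of the inclusions after base change, as in Lemma \ref{injectivity of iota tensor id(F) with free R modules}). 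This gives a coherent cellular tower structure on $(A_k(S, \delta))_{k \ge 0}$, with the same indexing of cell modules by Young diagrams.

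Finally, for the basis of cell modules indexed by paths: by Theorem \ref{main theorem}(4), the branching diagram $\B$ for $(A_k^F)_{k \ge 0}$ is that obtained by reflections from the branching diagram $\B_0$ for $(Q_k^F)_{k \ge 0}$. The diagram for the tower of symmetric group algebras $(R\S_n)_{n \ge 0}$ (Young's lattice, repeated in pairs as $Q_{2n} = Q_{2n+1} = R\S_n$) has no multiple edges, and neither does $\B$, which matches the explicit description in Lemma \ref{lemma:  generic semisimplicity for partition algebras}. By Remark \ref{remark:  point 5 comes from other statements} (invoking Lemma \ref{lemma: cell basis indexed by paths}), the cell module of $A_k(R, \deltabold)$ labeled by $\la$ has a cellular basis indexed by paths on $\B$ from $\emptyset$ to $(\la, k)$. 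Extension of scalars to $S$ preserves this path-indexed basis, finishing the proof. The only step requiring any care is the verification that cell filtrations survive extension of scalars, and this is routine given that all modules appearing in the filtrations are free over $R$; there is no substantive obstacle beyond what was already overcome in verifying the framework axioms.
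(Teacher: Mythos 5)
Your proposal is correct and takes essentially the same route as the paper: the corollary there is an immediate consequence of Proposition \ref{proposition: framework axioms for partition algebras}, Theorem \ref{main theorem}, Remark \ref{remark:  point 5 comes from other statements} (via Lemma \ref{lemma: cell basis indexed by paths}), together with the specialization $A_k(S,\delta)\cong A_k(R,\deltabold)\otimes_R S$, a base-change step the paper leaves implicit and you spell out. One small precision: for an arbitrary ring $S$ the injectivity of the filtration inclusions after $-\otimes_R S$ follows from freeness (hence flatness) of the subquotients $N_j/N_{j-1}$, which are cell modules, rather than from freeness of the terms themselves (the argument of Lemma \ref{injectivity of iota tensor id(F) with free R modules} is special to the fraction field) --- a fact you already have at hand since you note the subquotients are free.
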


\subsection{Contour algebras}
We define generalizations of the {\em contour algebras} of Cox et.\ al.\   ~\cite{cox-towers}, which in turn include several sorts of diagram algebras.   The algebras are obtained as a sort of wreath product of the Jones--Temperley--Lieb algebras with  some other algebra  $A$ with involution;  varying $A$ gives a wide variety of examples.

\subsubsection{Definition of contour algebras}     Let $S$ be a  commutative ring with distinguished element $\delta$.   Let $A$  be an $S$--algebra with involution $i$ and with a  unital $S$--valued trace $\eps$.  We first define the $A$--Temperley--Lieb algebras $\tl_n(A)$ and then  the contour algebras $\cont  n d A$ as subalgebras of $\tl_n(A)$.      In case we need to emphasize the ground ring $S$ and parameter $\delta$, we
write $\cont n d {A, S, \delta}$.  

An $A$--Temperley--Lieb diagram is a \TL (TL)   diagram with strands labeled by elements of $A$.    For convenience,  we adopt the convention that an unlabeled strand is the same as a strand labeled with the identity of $A$.

We will define the product of two $A$--\TL diagrams.   
First we note that  ordinary TL diagrams have an inherent orientation.  Label the top vertices of a TL diagram by
$\p 1, \dots, \p n$  and the bottom vertices by $\pbar 1, \dots, \pbar n$.   Place a small arrow pointing down at each
odd numbered vertex (top or bottom)  and a small arrow pointing up at each even numbered vertex.  Then  because of the planarity of  TL diagrams,  each strand of a TL diagram  must connect  one arrow pointing into the rectangle $\mathcal R$ of the diagram with one arrow pointing out of $\mathcal R$;  the strand can be thought of as oriented from the inward pointing arrow to the outward pointing arrow. When two TL diagrams are multiplied by stacking,  the orientation of composed strands agrees.  

Now consider two $A$--\TL diagrams $X$ and $Y$.   To form the product $X Y$,  stack $Y$ over $X$ as for tangles, forming a composite diagram $X\circ Y$.   Label each non--closed composite strand with the product of the labels  of its component strands from $X$ and $Y$, taken in the order of their occurrence as the strand is traversed according to its orientation.      For each closed strand $s$ in $X\circ Y$,  let $\eps(s)$  be the trace of the product of
the labels of its component strands;  the product is unique up to cyclic permutation of the factors,  so the trace 
is uniquely determined.    Let $r$  be the number of closed strands and let $Z$ be the labeled diagram obtained by removing all the closed strands.  Then $XY =  \delta^r ( \prod_s   \eps(s))\,   Z$.  

As an $S$--module,  $\tl_n(A)$ is $A^{\otimes n}  \otimes \tl_n(S, \delta) = \bigoplus_x   (A^{\otimes n}  \otimes x)$,  where the sum is over ordinary \TL diagrams $x$.    We identify a simple tensor  $a_1 \otimes \cdots \otimes a_n \otimes x$
with a labeling of $x$  with the labels $a_1, \dots, a_n$.  We have to specify how to place the labels.  We fix an ordering of the vertices, for example $\p 1 < \cdots < \p n <  \pbar 1 <  \cdots \pbar n$,  and then order the strands of $x$  according to the order of the initial vertex of each (oriented) strand.   The simple tensor
$a_1 \otimes \cdots \otimes a_n \otimes x$
is identified with the  diagram with underlying TL diagram $x$, with the $j$--th strand of $x$ labeled by $a_j$   for each $j$.

Fix TL  diagrams $x$ and $y$.    The  product of $A$--\TL diagrams with underlying TL diagrams $x$ and $y$, defined above,  determines a multilinear map $A^{2n} \to   A^{\otimes n} \otimes xy$,  and hence a bilinear
map $( A^{\otimes n} \otimes x) \times ( A^{\otimes n} \otimes y) \to  A^{\otimes n} \otimes xy$.  This product extends to a bilinear product on $\tl_n(A)$,  which one can check to be associative.

Next we define an involution on $\tl_n(A)$.     Define $i$ on an $A$--labeled TL diagram by flipping the diagram over the line $y = 1/2$ and applying the involution in $A$  to the label of each strand.  For a fixed TL diagram $x$, this gives a multilinear map from $A^n$  to $A^{\otimes n} \otimes i(x)$, and hence a linear map from
$A^{\otimes n} \otimes x$ to $A^{\otimes n} \otimes i(x)$.   Now $i$  extends to a linear map on 
$\tl_n(A)$.    One can check that $i$ is an algebra involution.

This completes the definition of the $A$--Temperley--Lieb algebra, as an algebra with involution.

Next we  define the $A$--contour algebras.    We assign a depth to each strand in an ordinary TL diagram $x$, as follows:    Draw a curve from a point on a given strand $s$  to the western boundary of $\mathcal R$, having only transverse intersections with
any  strands of $x$.    The depth of $s$ is the minimum,  over all such curves  $\gamma$,  of the number of points of intersection of $\gamma$  with the strands of $x$  (including $s$).      The depth of an $A$--labeled TL diagram is the maximum depth of the strands with non--identity labels.

Fix $d \le n$.  As an $S$--module $\cont n d A$ is the span of  those $A$--labeled TL diagrams of depth no greater than $d$.    It is easy to check as in
~\cite{cox-towers}  Lemma 2.1  that $\cont n d A$ is an $i$--invariant subalgebra of $\tl_n(A)$.  

For $a \in A$  and $1 \le j \le n$  let $a^{(j)}$  be the identity TL diagram in $\tl_n(A)$  with the $j$--th strand labeled with $a$  (and the other strands unlabeled).    We have $a\power j$ and $b \power k$  commute if $j \ne k$. 
Also $a \power j$  commutes with $e_k$ unless $j \in \{k, k+1\}$   and $e_k a \power k = e_k a \power {k+1}$, and, likewise,   $ a \power k  e_k= a \power {k+1} e_k$. 
Note that $a \mapsto  a \power k$  is an algebra homomorphism if $k$  is odd,  but an algebra anti-homomorphism if $k$  is even.

\begin{lemma}  \label{lemma; generating contour algebras}
$\cont n d  A$ is generated as an algebra by $e_1, \dots, e_{n-1}$  and by 
 $\{ a \power k :  1 \le k \le d\}$. 
\end{lemma}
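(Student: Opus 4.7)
\noindent\textit{Proof plan.} Let $B$ denote the subalgebra of $\tl_n(A)$ generated by $e_1,\dots,e_{n-1}$ together with all $a^{(k)}$ for $a \in A$ and $1 \le k \le d$. The inclusion $B \subseteq \cont n d A$ is immediate: each $e_i$ is an unlabeled TL diagram (depth $0$), and each $a^{(k)}$ with $k \le d$ is a labeled identity whose single non-identity label sits on a strand of depth $k \le d$. It remains to show every $A$-labeled TL diagram $X$ of depth $\le d$ lies in $B$, which I prove by induction on the number $m$ of strands of $X$ carrying a non-identity label.

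The base case $m = 0$ is immediate: $X$ is (the image of) an unlabeled TL diagram, hence a product of the $e_i$'s by the standard presentation of the Jones--Temperley--Lieb algebra $\tl_n(S, \delta)$.

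For the inductive step $m \ge 1$, select a strand $s$ of $X$ bearing a non-identity label $a$, say of depth $k' \le d$. The heart of the argument is a routing lemma: there exist unlabeled TL diagrams $Y_1, Y_2$ (hence products of $e_i$'s) and an integer $k \le d$ such that
\[
X \;=\; Y_1 \cdot a^{(k)} \cdot Y_2 \cdot X_0,
\]
where $X_0$ is obtained from $X$ by replacing the label $a$ on $s$ with the identity $1$. The diagrams $Y_1, Y_2$ serve to route a label placed at the shallow position $k$, via stacking, onto the strand of the composite corresponding to $s$, without disturbing the other labels of $X_0$. Granting the lemma, $Y_1, a^{(k)}, Y_2 \in B$ trivially, and $X_0 \in B$ by the inductive hypothesis (it has $m-1$ non-identity labels and the same underlying TL diagram, hence depth $\le d$), so $X \in B$. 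A model computation that illustrates the routing is the identity
\[
(e_1\, a^{(1)}) \cdot (e_2\, e_1) \;=\; e_1 \text{ with label } a \text{ on the vertical strand } \{\p 3,\pbar 3\} \text{ in } \tl_4(A),
\]
which routes a label introduced at position $k=1$ onto a strand whose endpoints lie at positions~$3$, with no closed loops appearing in the composition and hence no factors of $\delta$ produced.

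The principal obstacle is establishing the routing lemma in full generality: for each TL diagram $x$ underlying $X$ and each strand $s$ of depth $k' \le d$, producing explicit TL words $Y_1, Y_2$ whose stacking reroutes a shallow label onto $s$. This is a combinatorial planarity argument exploiting the non-crossing structure of TL matchings: one deforms $x$ by products of $e_i$'s so that a segment of the target strand $s$ is pulled adjacent to the western boundary at some position $k \le d$, places the label $a$ there using $a^{(k)}$, and then reverses the deformation. The delicate point is to carry out this routing without spurious closed loops appearing during composition, which is possible precisely because strands of depth $\le d$ admit a curve to the western boundary crossing at most $d$ strands, translating by duality into a product of $e_i$'s with the required property.
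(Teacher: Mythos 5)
Your overall strategy (peel off one label at a time and route it to a shallow position using unlabeled TL diagrams) is the same in spirit as the paper's, but the step that carries all the content is exactly the one you leave unproven: the ``routing lemma.'' You state it, illustrate it with one computation, and then acknowledge that establishing it in general is ``the principal obstacle,'' offering only the heuristic of deforming the diagram and reversing the deformation. Since the statement being proved is essentially equivalent to that routing claim, this is a genuine gap rather than a deferral of routine detail. Worse, the routing lemma is false as you have stated it: with the paper's convention that $ab$ stacks $b$ over $a$, in the product $Y_1\,a^{(k)}\,Y_2\,X_0$ the factor $X_0$ is the top layer, so the label introduced by $a^{(k)}$ can only reach composite strands that cross the interface below $X_0$, i.e.\ strands of the underlying diagram meeting the bottom edge. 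If the selected labeled strand $s$ is a top-to-top strand (a cup) -- for instance $e_1$ with a label $a$ on its cup, which has depth $1$ -- then in any product of your form that strand lies entirely in the top layer and carries only the identity label, so no choice of $Y_1,Y_2,k$ gives $X$ (closed loops cannot rescue this either, since $\delta$ is not assumed invertible). With the opposite stacking convention the same failure occurs for caps. One needs a two-sided decomposition, or an appeal to the involution $i$, to handle both kinds of horizontal strands; this is an error in the formulation, not just an omission.

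Even after that repair, the actual routing still has to be exhibited, and the ``reverse the deformation'' picture is misleading because products of $e_i$'s are not invertible, so $Y_1$ cannot simply undo $Y_2$; one must also rule out spurious closed loops and the merging or contamination of the other labels of $X_0$. The paper supplies the missing mechanism: after peeling to a single label it reduces to an element of the form $x_2\,a^{(k')}$ with $x_2$ a product of commuting $e_i$'s, and then walks the label to a position $r\le d$ purely algebraically, using $e_ie_{i\pm1}e_i=e_i$ together with the relations $e_i a^{(i)} = e_i a^{(i+1)}$, $a^{(i)}e_i = a^{(i+1)}e_i$, and $a^{(j)}e_i = e_i a^{(j)}$ for $j\notin\{i,i+1\}$, as in the model computation $e_1e_3\,a^{(6)} = (e_1e_3)(e_2e_4)(e_3e_5)\,a^{(2)}\,(e_2e_4)(e_1e_3)$. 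Identifying this normal form and these relations is what makes the argument work and keeps the bookkeeping (no $\delta$'s, no disturbed labels) transparent; your proposal replaces it with an unproved planarity assertion, so as written it does not establish the lemma.
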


\noindent{\em Sketch:}   It is enough to show that if $x$ is a \TL diagram and  $X = x a \power k$   has depth
$r$,  then   $X$  can be rewritten as a product of  $a \power r$ and TL diagrams.    First one can check that
$X$  can be written as  $x_1 \, x_2 a \power {k'} \, x_3$   where the $x_i$  are TL diagrams,    $x_2$ is a product of commuting  $e_i$'s,  and the depth of $ x_2 \,a \power {k'}  $  is $r$.    Finally, it suffices to show that
$ x_2 \,a \power {k'}  $   can be written as a product of TL diagrams with $a \power r$.    We give an example that
captures  the idea:   $e_1 e_3  a \power 6$  has depth $2$.    We have
$$
\begin{aligned}
e_1 e_3  a \power 6 &=  (e_1 e_3)  (e_2 e_4) (e_1 e_3) a \power 6 \\
&=  (e_1 e_3)  (e_2 e_4) (e_3 e_5) (e_2 e_4) (e_1 e_3) a \power 6 \\
&= (e_1 e_3)  (e_2 e_4) (e_3 e_5)a \power 2 (e_2 e_4) (e_1 e_3),
\end{aligned}
$$
by repeated use of the relations listed before the statement of the lemma.

\subsubsection{Brief history of contour algebras}     The contour algebras introduced by Cox et.\ al.\   ~\cite{cox-towers}   are the special case with $A$  the group algebra of the cyclic group $\Z_m$.    On the other hand, the
$A$--\TL  algebras  $\tl_n(A)$  have been considered in  ~\cite{jones-planar},  Example 2.2.   The contour subalgebras of $\tl_n(A)$  were discussed in ~\cite{green-martin}.

\subsubsection{Some properties of  $A$--\TL and contour algebras}   We deal with the contour algebras and the
$A$--\TL algebras together;   regard $\tl_n(A)$  as $\cont n {\infty}  A$.  

    We define maps $\iota: \cont n d A \to \cont {n+1} d A$  as for other classes of 
diagram or tangle algebras, and likewise   maps $\cl :  \cont n d A \to \cont {n-1} d A$;  if closing the rightmost strand of an $A$--\TL diagram produces a closed loop,  remove the loop and multiply the resulting diagram
by $\delta$  times the trace of the product of labels along the loop.   The map $\iota$ is injective, since
$x = \cl( \iota(x) e_n)$  for  $x \in \cont n d A$.    The maps $\iota$  and $\cl$  commute with the involutions.

If $\delta$ is invertible in $S$, we can define $\eps_n = (1/\delta) \cl : \cont n d A \to \cont {n-1} d A$, which is a  unital
conditional expectation.  We have
${\eps_{n+1}}\circ\iota(x)  =  x$ for $x \in 
\cont n d A$.   The map $\eps = \eps_1\circ \cdots \circ \eps_n : \cont n d A \to \cont 0 d A  \cong S$ is a normalized trace.  The value of $\eps$ on an $A$-\TL diagram $X$ with $n$  strands  is obtained as follows:  first close all the strands of $X$ by introducing new curves joining $\p j$ to $\pbar j$ for all $j$;   let $r$  be the number of closed loops in the resulting
diagram;  then $\eps(X) = \delta^{r - n} \prod_s  \eps(s)$,  where the product is over the collection of closed loops $s$,  and $\eps(s)$  denotes the trace in $A$  of the product of labels along the loop $s$.

The span $J$   of $A$--\TL  diagrams of depth $\le d$ and  with at least one horizontal strand is an ideal in $\cont n d A$.    
By Lemma \ref{lemma; generating contour algebras},  any  $A$--\TL  diagram with depth   $ \le d$ can be written as a word in the $e_i$'s and in elements $a \power k$  with $k \le d$;   the diagram is in $J$ if, and only if,
some $e_i$ appears in the word.  Thus $J$ is the ideal generated by the $e_i$'s.   Because of the relations
$e_i e_{i \pm 1} e_i = e_i$,   $J$ is generated by $e_{n-1}$.     The quotient $\cont n d A/ J$ is isomorphic
(as algebras with involution)   to the subalgebra generated by the $a \power k$  with $k \le d$,  and thus to $A^{\otimes d}$  if $n \ge d$  and
$A^{\otimes n}$  if $n < d$.  

 \begin{lemma} \label{lemma:  axiom 6 for contour} \mbox{}
 \begin{enumerate}
\item 
For $n \ge 3$, $e_{n-1} \, \cont {n-1} d A  \,  e_{n-1} = \cont {n-2} d A  \, e_{n-1}$.
\item  $e_1 \, \cont 1 d A \, e_1 =  \delta\, S  \,e_1$
\item  For $n \ge 2$,  
$e_{n-1}$  commutes with $ \cont {n-2} d A$.  
\end{enumerate}
 \end{lemma}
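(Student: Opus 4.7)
My plan is to prove each part using the diagrammatic description of $\cont n d A$, the standard Temperley--Lieb tracing operation, and the multiplication rule for $\tl_n(A)$.  Parts (2) and (3) fall out by direct inspection.  For (3), after the double embedding $\iota\circ\iota$ of $\cont{n-2}{d}{A}$ into $\cont n d A$, its elements are $A$-\TL diagrams whose strands at positions $n-1$ and $n$ are trivial; the diagram $e_{n-1}$, by contrast, has a horizontal cap/cup at positions $n-1,n$ and trivial strands at positions $1,\ldots,n-2$.  Acting on disjoint columns, they commute.  For (2), every element of $\cont 1 d A$ has the form $a^{(1)}$ with $a\in S$ (if $d=0$) or $a\in A$ (if $d\ge 1$), and a direct stacking calculation shows that the two vertical strands of $a^{(1)}$ combine with the arcs of the two copies of $e_1$ into a single closed loop on which the labels multiply to $a$.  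The multiplication rule then contributes a factor $\delta\,\eps(a)$, the residue being $e_1$; thus $e_1 a^{(1)} e_1=\delta\,\eps(a)\,e_1$, and since $\eps\colon A\to S$ is a unital trace, $\eps(\cont 1 d A)=S$, yielding $e_1\,\cont 1 d A\,e_1=\delta S\,e_1$.

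For (1) I prove the two inclusions in turn.  For $\subseteq$, let $X\in\cont{n-1}{d}{A}$ be an $A$-\TL diagram, embedded in $\cont n d A$ with a trivial $n$th strand.  In the product $e_{n-1} X e_{n-1}$, the cup of the top $e_{n-1}$, the cap of the bottom $e_{n-1}$, and the trivial $n$th strand of $X$ combine to either close off new loops (each contributing $\delta$ times the trace of the loop's labels) or reconnect the free ends at the $(n-1)$st position of $X$.  The outcome has the form $c\cdot Y\cdot e_{n-1}$, with $c\in S$ and $Y$ the $A$-\TL diagram on $n-2$ strands obtained by ``tracing out'' position $n-1$ of $X$.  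That $Y\in\cont{n-2}{d}{A}$ follows because $cY\,e_{n-1}$ lies a priori in $\cont n d A$ (closed under multiplication), and the east-side horizontals of $e_{n-1}$ do not affect curves from a surviving label to the western boundary, so the depth of any label in $Y$ equals its depth in $cY\,e_{n-1}$.  For $\supseteq$, given $Y\in\cont{n-2}{d}{A}$ I take $X=Y\cdot e_{n-2}$, which lies in $\cont{n-1}{d}{A}$ since the latter is a subalgebra containing both factors.  By Lemma~\ref{lemma; generating contour algebras}, after embedding in $\cont n d A$ the element $Y$ is a word in $e_k$ ($k\le n-3$) and $a^{(j)}$ ($j\le n-2$), none of which touch positions $n-1,n$; hence $Y$ commutes with $e_{n-1}$, and the \TL relation $e_{n-1}e_{n-2}e_{n-1}=e_{n-1}$ gives
\begin{equation*}
e_{n-1} X e_{n-1}=e_{n-1} Y e_{n-2} e_{n-1}=Y(e_{n-1} e_{n-2} e_{n-1})=Y\,e_{n-1}.
\end{equation*}

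The one nontrivial ingredient is the geometric tracing claim underlying the $\subseteq$ direction of (1)---that $e_{n-1} X e_{n-1}$ genuinely collapses to $c\cdot Y\cdot e_{n-1}$ with $Y$ a single $A$-\TL diagram on $n-2$ strands.  This is the labeled analogue of the familiar identity $e_{n-1} x\, e_{n-1}\in\tl_{n-2}\cdot e_{n-1}$ in the unlabeled Temperley--Lieb algebra; it can be verified either by direct diagrammatic inspection or by reducing $X$ to a word in the generators of Lemma~\ref{lemma; generating contour algebras} and computing the sandwich case by case, the only novelty beyond the unlabeled case being the careful bookkeeping (via the trace $\eps$) of label products along the new closed loops.
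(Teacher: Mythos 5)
Your argument is correct and is essentially the paper's own proof, which simply transfers the Brauer-algebra argument: the inclusion $e_{n-1}\,\cont {n-1} d A \, e_{n-1}\subseteq \cont {n-2} d A \, e_{n-1}$ is the partial-closure identity $e_{n-1}Xe_{n-1}=\cl(X)\,e_{n-1}$ (and since the paper's map $\cl$ is already known to land in $\cont {n-2} d A$, your depth bookkeeping comes for free), the reverse inclusion is exactly your computation $e_{n-1}Ye_{n-2}e_{n-1}=Ye_{n-1}$, and points (2) and (3) are the same direct checks. One tiny remark: in your $\subseteq$ step the inference from $c\,Y\,e_{n-1}\in\cont n d A$ to $Y\in\cont {n-2} d A$ is vacuous if the loop factor $c$ vanishes, but in that case $e_{n-1}Xe_{n-1}=0$ and the containment holds trivially, so nothing is lost.
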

 
 \begin{proof}   The proof is the same as that of Lemma \ref{lemma:  Brauer  axiom 6} for the Brauer algebras.
 \end{proof}
 
 \begin{lemma} \label{Bn e(n-1) = B(n-1) e(n-1)  for contour algebras}  For $n \ge 2$, 
$\cont n d A  \,e_{n-1} =  \cont {n-1} d A \,e_{n-1}$.  Moreover,  
 $x \mapsto  x e_{n-1}$  is injective from $\cont {n-1} d A$ to $\cont {n-1} d A \,e_{n-1}$.
\end{lemma}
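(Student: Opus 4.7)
The plan is to follow the strategy used for the Brauer algebras in Lemma~\ref{B(n+1) e(n) = B(n) e(n)  for Brauer algebras} and for the Jones--Temperley--Lieb algebras in Proposition~\ref{proposition: framework axioms for TL}, adapted to accommodate the strand labels.

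For the equality $\cont n d A\, e_{n-1}=\cont{n-1} d A\, e_{n-1}$, the inclusion $\supseteq$ is immediate. For $\subseteq$, I would argue by induction on the length of a word in the generators furnished by Lemma~\ref{lemma; generating contour algebras}; the base case (empty word) is trivial. For $X=gY$ with $g$ a generator, assume inductively that $Ye_{n-1}=We_{n-1}$ for some $W\in\cont{n-1} d A$; then $Xe_{n-1}=gWe_{n-1}$, and it suffices to show that $gWe_{n-1}\in\cont{n-1} d A\, e_{n-1}$. If $g=e_i$ with $i\le n-2$, or $g=a^{(k)}$ with $k\le n-1$, then $g$ lies in the image of $\iota$ and hence in $\cont{n-1} d A$, so $gW\in\cont{n-1} d A$ and we are done. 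If $g=e_{n-1}$, then $gWe_{n-1}=e_{n-1}We_{n-1}\in\cont{n-2} d A\, e_{n-1}\subseteq\cont{n-1} d A\, e_{n-1}$ by Lemma~\ref{lemma:  axiom 6 for contour}(1). The only remaining case is $g=a^{(n)}$, which can occur only when $d=n$; here a direct diagrammatic check shows that $a^{(n)}$ commutes with every $W\in\iota(\cont{n-1} d A)$, because such a $W$ has trivial unlabelled $n$--th strand, so the label on strand $n$ of $a^{(n)}$ can be placed either above or below $W$ with no effect. Using the identity $a^{(n)}e_{n-1}=a^{(n-1)}e_{n-1}$ recorded immediately before Lemma~\ref{lemma; generating contour algebras}, we then obtain $gWe_{n-1}=Wa^{(n)}e_{n-1}=Wa^{(n-1)}e_{n-1}$, with $Wa^{(n-1)}\in\cont{n-1} d A$.

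For the injectivity of $x\mapsto xe_{n-1}$ on $\cont{n-1} d A$, I would use the partial closure map $\cl:\cont n d A\to\cont{n-1} d A$ defined earlier in the section. A direct diagrammatic computation shows that $\cl(\iota(x)\, e_{n-1})=x$ for every $x\in\cont{n-1} d A$: the closure arc (joining $\p n$ to $\pbar n$ around the right side), the horizontal strands of $e_{n-1}$ at positions $(n-1,n)$, and the trivial unlabelled $n$--th strand of $\iota(x)$ combine into a single arc which creates no new closed loop and restores the diagram $x$ with all labels intact. Hence $\cl$ provides a left inverse to the assignment $x\mapsto xe_{n-1}$, so this map is injective.

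The main obstacle I anticipate is the case $g=a^{(n)}$ above: one must verify carefully both the commutation of $a^{(n)}$ with elements of $\iota(\cont{n-1} d A)$ and the depth bookkeeping (since $a^{(n)}$ has depth $n$ and lies in $\cont n d A$ only when $d=n$, in which case $a^{(n-1)}$ automatically satisfies the depth constraint for $\cont{n-1} d A$). Once this combinatorial step is dispatched, the rest of the argument is a routine adaptation of the Brauer and Temperley--Lieb cases.
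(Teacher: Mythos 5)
Your proof is correct, but it reaches the key reduction by a different mechanism than the paper. The paper's proof asserts a diagrammatic factorization (in the spirit of Wenzl's Proposition 2.1 for Brauer diagrams and Jones's Lemma 4.1.2 for TL diagrams): every $A$--labelled TL diagram in $\cont n d A$ is either already in $\cont {n-1} d A$ or can be written as $\alpha\chi\beta$ with $\alpha,\beta\in\cont {n-1} d A$ and $\chi\in\{e_{n-1},a^{(n)}\}$ (only $\chi=e_{n-1}$ when $n>d$), and then finishes exactly as in the Brauer case using the identities $a^{(n)}xe_{n-1}=xa^{(n-1)}e_{n-1}$ and $e_{n-1}xe_{n-1}=\cl(x)e_{n-1}$ for $x\in\cont {n-1} d A$. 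You instead run an induction on the length of words in the generating set of Lemma \ref{lemma; generating contour algebras}, and the cases you treat ($g$ in the image of $\iota$; $g=e_{n-1}$ handled via Lemma \ref{lemma:  axiom 6 for contour}, i.e.\ $e_{n-1}xe_{n-1}=\cl(x)e_{n-1}$; $g=a^{(n)}$ handled via commutation with $\iota(\cont {n-1} d A)$ together with $a^{(n)}e_{n-1}=a^{(n-1)}e_{n-1}$) amount to the same two identities. What your route buys is that it rests on the generating lemma, which the paper actually sketches, rather than on the unproved factorization of individual decorated diagrams; what the paper's route buys is uniformity with the Brauer, TL and BMW arguments and a statement at the level of single basis diagrams. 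Two trivial touch-ups: the case $g=a^{(n)}$ occurs whenever $n\le d$ (not only $d=n$, since the $A$--Temperley--Lieb algebra is treated as $\cont n {\infty} A$), and for $n=2$ the case $g=e_{n-1}$ needs part (2) of Lemma \ref{lemma:  axiom 6 for contour} (or directly $e_1We_1=\cl(W)e_1$) rather than part (1); neither affects the argument. Your injectivity argument, $\cl(\iota(x)e_{n-1})=x$ so that $\cl$ is a left inverse of $x\mapsto xe_{n-1}$, is exactly the paper's.
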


\begin{proof}    Any $A$--TL diagram  in $\cont n d A$ is either already in $\cont {n-1} d A$,  or it can be written
as $ \alpha  \chi  \beta$,   with   $\alpha, \beta \in \cont {n-1} d  A$,  and 
$\chi \in \{ e_{n-1} , a \power n \}$  if $n \le  d$,  or $\chi = e_{n-1}$   if $n > d$.   

The remainder of the proof is the same as the proof of Lemma \ref{B(n+1) e(n) = B(n) e(n)  for Brauer algebras}  for the Brauer algebras, using  the identities:   $a \power {n}  x  e_{n-1} =     x  a \power {n-1}  e_{n-1}$,    and $e_{n-1}  x e_{n-1} =  \cl(x) e_{n-1}$  for $x \in \cont {n-1} d A$.  
\end{proof}

\subsubsection{Hypotheses on the algebra $A$}    \label{subsubsection:  hypotheses on A for generic contour algebras}

We will suppose that the algebra $A$ has a generic version defined over an integral domain $R_0$.     
Let $F_0$ be the field of fractions of $R_0$.  We suppose that $A = A(R_0)$  satisfies the following hypotheses:
\begin{enumerate}
\item  $A = A(R_0)$ is cellular.
\item  $A(F_0) = A(R_0) \otimes_{R_0} F_0$  is split semisimple.
\item  The trace $\eps$ on $A({R_0})$  is non--degenerate.
\end{enumerate}

We  take $R = R_0[\deltabold]$, where $\deltabold$ is an indeterminant, and 
let $F = F_0(\deltabold)$  denote the field of fractions of $R$.   
We will show that  $(\cont n d {A, R, \deltabold})_{n \ge 0} $  is a coherent tower of cellular algebras.

\subsubsection{Special instances}    The cellular algebra $A$ in Section  \ref{subsubsection:  hypotheses on A for generic contour algebras} can be taken to be the generic version of any of the diagram or tangle algebras treated in this paper.      $A$  could be taken to be a generic Hecke algebra or cyclotomic Hecke algebra,  or the group ring  of  a symmetric group over $R_0 = \Z$.    

The contour algebras of Cox et.\  al.\  ~\cite{cox-towers}  are recovered by taking
$R_0 =    \Z[  \deltabold_1,  \dots,  \deltabold_{m-1}]$  and  $A$  the group algebra of $\Z_m$   over $R_0$. The trace on $A$ is determined by $\eps([k]) =  \deltabold_k$  for $[k] \ne [0]$  and $\eps([0] )= 1$.   The parameter $\delta_0$ in  ~\cite{cox-towers}  becomes identified with our $\delta$.

\subsubsection{Verification of the framework axioms for contour algebras}  Adopt the hypotheses and notation of Section  \ref{subsubsection:  hypotheses on A for generic contour algebras}.   

\begin{lemma} \label{lemma:  non degenerate trace on contour algebras}
 The trace $\eps$  on $\cont n d {A, F,  \deltabold}$  is non--degenerate.
\end{lemma}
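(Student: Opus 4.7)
The plan is to imitate the Gram-matrix argument used for the Brauer algebra in Lemma \ref{lemma: non-degeneracy of trace on Brauer and walled Brauer}, now keeping careful track of the $A$-labels on strands. Fix an $R_0$-basis $\mathcal{B}_A$ of $A$ containing $1_A$; this gives an $F$-basis of $\cont n d {A, F, \deltabold}$ consisting of $A$-labeled TL diagrams $X = (x, (a_s)_s)$, where $x$ is an ordinary TL diagram on $n$ strands, each strand $s$ of depth $\le d$ carries a label $a_s \in \mathcal{B}_A$, and each strand of depth $> d$ carries $1_A$. It then suffices to show that the Gram determinant $\det(\eps(XY))_{X, Y}$ is non-zero in $F$, and I plan to do so by showing that its leading coefficient as a Laurent polynomial in $\deltabold$ is non-zero.

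First I would compute a single matrix entry: directly from the definition of $\eps$ on $\cont n d A$ one has $\eps(XY) = \deltabold^{r(x, y) - n} \prod_s \eps_A(\ell(s))$, where $r(x, y)$ is the number of closed loops in the diagram obtained after closing the stacked product $XY$, and $\ell(s)$ is the (cyclically ordered) product of labels along loop $s$. The standard Temperley--Lieb observation is that $r(x, y) \le n$, with equality if and only if $y = i(x)$; in that case closing $XY$ produces exactly $n$ loops, in bijection with the strands of $x$, and the loop attached to strand $s$ carries exactly the label of $s$ in $X$ together with the label of the mirror strand $s'$ in $Y$. Thus every entry of the Gram matrix lies in $\deltabold^{-n} R_0[\deltabold]$, and the coefficient of $\deltabold^0$ in $\eps(XY)$ vanishes unless $y = i(x)$, in which case it equals $\prod_s \eps_A(a_s b_{s'})$.

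The coefficient of $\deltabold^0$ in $\det(\eps(XY))_{X, Y}$ is therefore the determinant of the matrix $G_0$ whose entries are these leading coefficients. Organizing basis elements by the orbits of $x \mapsto i(x)$, $\det(G_0)$ factors up to sign as a product, over such orbits, of determinants of bilinear pairings $B_x : V_x \times V_{i(x)} \to R_0$ given by $B_x(X, Y) = \prod_s \eps_A(a_s b_{s'})$, where $V_x$ is the free $R_0$-module on labelings of $x$. Each $B_x$ factorizes as a tensor product over strands of $x$: for a strand $s$ of depth $\le d$ the corresponding tensor factor is the Gram matrix of the form $(a, b) \mapsto \eps_A(ab)$ on $A$ with respect to $\mathcal{B}_A$, which has non-zero determinant by hypothesis (3) of Section \ref{subsubsection:  hypotheses on A for generic contour algebras}; for a strand of depth $> d$ the factor is the $1 \times 1$ block $[\eps_A(1_A)] = [1]$. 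Every tensor factor is invertible, so every $\det(B_x) \ne 0$, hence $\det(G_0) \ne 0$, and the full Gram determinant is non-zero. The main obstacle will be the diagrammatic verification of the second paragraph, namely confirming cleanly how labels distribute along the $n$ loops produced when closing $X \cdot Y$ with $Y$ supported on $i(x)$; once this combinatorics is pinned down, the algebraic conclusion is immediate.
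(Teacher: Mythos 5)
Your argument is essentially the paper's own proof: both take the basis of TL diagrams decorated up to depth $d$ by a basis of $A$ containing the identity, isolate the top-degree (in $\deltabold$) part of the Gram determinant, observe that it is supported on pairs whose underlying TL diagrams are mirror images, and reduce each block to (a tensor power of) the Gram matrix of $\eps_A$ on $A$, nonzero by the non-degeneracy hypothesis. The only cosmetic difference is that the paper pairs $X$ with $i(Y)$ so the relevant blocks sit on the diagonal, whereas you pair $X$ with $Y$ and get the same blocks along a permutation pattern; the closing combinatorics you flag at the end is exactly the Morton--Traczyk fact the paper invokes via its Brauer-algebra lemma.
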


\begin{proof}    We take any basis $\mathbb A$ of $A$ over $F_0$  with $\bm 1 \in \mathbb A$.   As a basis $\mathbb B$   of  $\cont n d A$  over $F$  we take all $n$--strand TL diagrams decorated up to depth $d$  with elements of $\mathbb A$.    We consider the modified Gram determinant   $\det [\eps( X i(Y))]_{X, Y \in \mathbb B}$.    If $X$ and $Y$  have different underlying TL diagrams, then  $\eps( X i(Y)) \in \delta\inv F_0$.    

Next consider matrix entries $\eps( X i(Y))$  where $X$ and $Y$  have  the same underlying TL diagram, say $x$.  Suppose $x$  has $\ell$ strands at depth $d$ or less and these strands are decorated by basis elements
$a_{1},  \dots, a_{\ell}$ in $X$,   respectively  $b_{1}, \dots, b_{\ell}$ in $Y$.  Then $\eps(X i(Y)) = \prod_{j=1}^{\ell} \eps(a_{j} i(b_{j}))$.     The determinant of the square submatrix of  $ [\eps( X i(Y))]$  consisting of those entries for which $X$ and $Y$ both have underlying TL diagram $x$  is  therefore $D^{\ell}$,  where
$D$ is the determinant of    $[\eps(a i(b))]_{a, b \in \mathbb A}$.   
It follows that $\det [\eps( X i(Y))]_{X, Y \in \mathbb B}$ is equal to a power of $D$  modulo
$\delta\inv R_{0}$,  and is therefore non--zero. 
\end{proof}

Consider $$Q_{n} =  \cont n d A/ J  \cong   \begin{cases}  A^{\otimes n} &\text{if $n < d$}  \\
A^{\otimes d} &\text{if $n \ge d$.} \\ \end{cases}
$$
By the assumptions in Section  \ref{subsubsection:  hypotheses on A for generic contour algebras},
$Q_{n}(R)$ is cellular and $Q_{n}(F)$  is split semisimple.    Moreover, it is easy to see that
$(Q_{n})_{n \ge 0}$ is a coherent tower of cellular algebras.

\begin{lemma} \label{lemma:  generic semisimplicity for contour algebras} 
$\cont n d {A, F, \deltabold}$ is split semisimple for all $n$.  
\end{lemma}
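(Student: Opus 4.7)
The plan is to induct on $n$, following the Jones basic construction argument Wenzl employed for the Brauer algebras (\cite{Wenzl-Brauer}, Theorem 3.2), which is sketched in the discussion of ``The Jones basic construction'' in Section 2.8 of this paper and reused in Lemma \ref{lemma:  generic semisimplicity for partition algebras} here. The key ingredients are already in hand: the non-degeneracy of the trace $\eps$ on $\cont n d {A, F, \deltabold}$ from Lemma \ref{lemma:  non degenerate trace on contour algebras}; the conditional expectation $\eps_n = \delta^{-1}\cl$ together with the idempotent $f_n = \delta^{-1} e_n$ satisfying $f_n x f_n = \eps_n(x) f_n$ for $x \in \cont n d A$ (from Lemma \ref{lemma:  axiom 6 for contour}) with $x \mapsto x f_n$ injective (from Lemma \ref{Bn e(n-1) = B(n-1) e(n-1)  for contour algebras}); and the quotient identification $\cont{n+1}d A^F / (\cont{n+1}d A^F \cdot e_n \cdot \cont{n+1}d A^F) \cong Q_{n+1}^F$, which is split semisimple because it is a tensor power of $A(F_0) \otimes_{F_0} F$, semisimple by hypothesis.

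For the base cases, $\cont 0 d {A, F, \deltabold} = F$ is trivially split semisimple, and $\cont 1 d {A, F, \deltabold}$ is either $F$ (if $d = 0$) or $A(F_0) \otimes_{F_0} F$ (if $d \ge 1$), hence split semisimple by assumption.  For the inductive step, suppose $\cont k d {A, F, \deltabold}$ is split semisimple for all $k \le n$.  Set $J_{n+1} = \cont{n+1}d A^F \cdot e_n \cdot \cont{n+1}d A^F$, which by Lemma \ref{Bn e(n-1) = B(n-1) e(n-1)  for contour algebras} equals $\cont n d A^F \cdot e_n \cdot \cont n d A^F$.   By Wenzl's basic construction theorem (\cite{Wenzl-Brauer}, Theorem 1.3)  applied to the inclusion $\cont{n-1}d A^F \subseteq \cont n d A^F$, together with the idempotent $f_n$ and the conditional expectation $\eps_n$, the ideal $J_{n+1}$ is isomorphic to $\End(\cont n d A^F)_{\cont{n-1}d A^F}$, which is unital and split semisimple.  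Its identity $p$ is therefore a central idempotent of $\cont{n+1}d A^F$, yielding the direct product decomposition
\[
\cont{n+1}d A^F \;\cong\; J_{n+1} \;\oplus\; \cont{n+1}d A^F/J_{n+1} \;\cong\; J_{n+1} \;\oplus\; Q_{n+1}^F,
\]
with both summands split semisimple.  This completes the induction.

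The main obstacle is verifying the hypotheses of Wenzl's basic construction theorem for $\cont{n-1}d A^F \subseteq \cont n d A^F$, specifically the faithfulness of the conditional expectation $\eps_n$ on $\cont n d A^F$.  This must be extracted from the non-degeneracy of the global trace $\eps = \eps_1 \circ \cdots \circ \eps_n$ on $\cont n d A^F$ supplied by Lemma \ref{lemma:  non degenerate trace on contour algebras}:  if $x \in \cont n d A^F$ satisfies $\eps_n(xy) = 0$ for every $y \in \cont n d A^F$, then applying $\eps_1 \circ \cdots \circ \eps_{n-1}$ and using that the lower conditional expectations are $\cont{n-1}d A^F$-bimodule maps shows $\eps(x z) = 0$ for all $z \in \cont n d A^F$, forcing $x = 0$.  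Everything else in the induction is a direct assembly of pieces we have already proved.
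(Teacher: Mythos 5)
Your proof is correct and is essentially the paper's argument: the paper's proof is a one-line appeal to "the method of Wenzl from \cite{Wenzl-Brauer}, using the non-degeneracy of the trace and the split semisimplicity of $Q_n(F)$," and you have simply written out that inductive basic-construction argument (ideal $\cont{n}{d}{A,F,\deltabold}\,e_n\,\cont{n}{d}{A,F,\deltabold}$ semisimple via Wenzl's Theorem 1.3, quotient $Q_{n+1}^F$ semisimple, faithfulness supplied by Lemma \ref{lemma:  non degenerate trace on contour algebras}) in detail. No discrepancy in approach or substance.
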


\begin{proof}  The method of Wenzl  from ~\cite{Wenzl-Brauer} applies,  using the non--degeneracy of the trace and the split semisimplicity of $Q_{n}(F)$  for all $n$.   
\end{proof}

\begin{proposition}   The pair of sequences $(\cont n d {A, R, \deltabold})_{n \ge 0}$  and
$(Q_{n}(R))_{n \ge 0}$  satisfy the framework axioms  of  Section \ref{subsection: framework axioms}.
Hence,  $(\cont n d {A, R, \deltabold})_{n \ge 0}$ is a coherent tower of cellular algebras.  
\end{proposition}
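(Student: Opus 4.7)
The plan is to verify the eight framework axioms of Section \ref{subsection: framework axioms} one at a time, quoting the preceding lemmas of this subsection, and then invoke Theorem \ref{main theorem} to deduce coherence of the resulting cellular tower together with the description of cell modules and branching diagram. There is essentially no new work to do beyond packaging what is already in place.

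Axioms (1)--(4) follow with almost no further effort. Axiom (1) is the observation, stated immediately before Lemma \ref{lemma: generic semisimplicity for contour algebras}, that $(Q_n(R))_{n\ge 0}$ is a coherent tower of cellular algebras; this uses the assumed cellularity of $A$ and the fact that successive inclusions $Q_{n} \hookrightarrow Q_{n+1}$ are either identities (for $n \ge d$) or tensor inclusions $A^{\otimes n} \hookrightarrow A^{\otimes n+1}$ via $x \mapsto x \otimes 1$, for which cell filtrations of induced and restricted modules are immediate. Axiom (2) is the involution defined on $A$--\TL diagrams in the paragraph following Lemma \ref{lemma; generating contour algebras}, together with its manifest compatibility with the inclusions $\iota$. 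Axiom (3) is immediate from the construction: $\cont 0 d A \cong R = Q_0$ and $\cont 1 d A \cong A = Q_1$ as involutive algebras, since in these degrees the depth condition is vacuous. Axiom (4) is exactly Lemma \ref{lemma: generic semisimplicity for contour algebras}.

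For Axioms (5)--(8), take $e_{n-1} \in \cont n d A$ to be the standard Temperley--Lieb generator with all strands carrying the identity label. Then $e_{n-1}^2 = \delta\, e_{n-1}$, so $e_{n-1}$ is an essential idempotent, and clearly $i(e_{n-1}) = e_{n-1}$. Axiom (5), the identification $\cont n d A/\cont n d A\, e_{n-1}\, \cont n d A \cong Q_n$ as involutive algebras, was recorded in the discussion preceding Lemma \ref{lemma: axiom 6 for contour}, using Lemma \ref{lemma; generating contour algebras} to see that the quotient is generated by the $a^{(k)}$ with $k \le \min(n,d)$. Axiom (6) is Lemma \ref{lemma: axiom 6 for contour} (with a routine index shift) together with the obvious fact that $e_n$ commutes with $\cont {n-1} d A$, since the generators $e_j$ and $a^{(k)}$ for $j, k \le n-1$ act on strands disjoint from those involved in $e_n$. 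Axiom (7) is Lemma \ref{Bn e(n-1) = B(n-1) e(n-1) for contour algebras}. Axiom (8) follows from the \TL identity $e_{n-1} e_n e_{n-1} = e_{n-1}$, which exhibits $e_{n-1}$ as an element of $\cont {n+1} d A \,e_n\, \cont {n+1} d A$.

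The real substance of the subsection has been absorbed into Lemmas \ref{lemma: non degenerate trace on contour algebras} and \ref{lemma: generic semisimplicity for contour algebras}, which together provide generic split semisimplicity via a Wenzl-style argument built on the non-degeneracy of the Markov trace. The present proposition is by contrast a bookkeeping exercise: once the axioms are checked, Theorem \ref{main theorem} immediately yields that $(\cont n d {A, R, \deltabold})_{n\ge 0}$ is a coherent tower of cellular algebras, with cell modules indexed by the reflection branching diagram built from the branching diagram of $(Q_n^F)_{n\ge 0}$. The only point requiring any care is Axiom (3) in the edge case $d = 0$, where the contour algebras collapse to Jones--Temperley--Lieb algebras and $Q_n = R$ for all $n$; this case is already subsumed in Proposition \ref{proposition: framework axioms for TL}.
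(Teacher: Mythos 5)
Your proposal is correct and follows essentially the same route as the paper's proof: cite the coherence of $(Q_n(R))_{n\ge 0}$ for axiom (1), note axioms (2) and (3) are evident, use Lemma \ref{lemma:  generic semisimplicity for contour algebras} for axiom (4), the identification $\cont n d A/\cont n d A\, e_{n-1}\, \cont n d A \cong Q_n$ for axiom (5), Lemmas \ref{lemma:  axiom 6 for contour} and \ref{Bn e(n-1) = B(n-1) e(n-1)  for contour algebras} for axioms (6) and (7), and the relation $e_{n-1}e_n e_{n-1}=e_{n-1}$ for axiom (8), then invoke Theorem \ref{main theorem}. The extra remarks you add (the explicit coherence argument for the tensor tower $Q_n$ and the degenerate case $d=0$, where everything collapses to the Temperley--Lieb situation) are harmless elaborations, not deviations.
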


\begin{proof}
We observed above that $(Q_k)_{k \ge 0}$ is a coherent tower of cellular algebras, so axiom (\ref{axiom Hn coherent}) holds.
Framework axioms (\ref{axiom: involution on An}) and (\ref{axiom: A0 and A1}) are evident.
Framework axiom  (\ref{axiom: semisimplicity}) follows from   Lemma \ref{lemma:  generic semisimplicity for contour algebras}.  

The elements $e_{k}$ are  $i$--invariant essential idempotents. 
With   $J =   \cont k d A e_{k-1} \cont k d A$, we have $\cont k d A/J \cong  Q_k$ as algebras with involution.  
  This verifies axiom (\ref{axiom:  idempotent and Hn as quotient of An}).
  Axiom (\ref{axiom: en An en}) follows from Lemma  \ref{lemma:  axiom 6 for contour}, and axiom (\ref{axiom:  An en}) from 
 Lemma \ref{Bn e(n-1) = B(n-1) e(n-1)  for contour algebras}.   Axiom (\ref{axiom: e(n-1) in An en An}) holds because $e_{n-1} e_n e_{n-1} = e_{n-1}$.
 \end{proof}

\bibliographystyle{amsplain}
\bibliography{PathModel}

\def\cprime{$'$} \def\cprime{$'$} \def\cprime{$'$}
\providecommand{\bysame}{\leavevmode\hbox to3em{\hrulefill}\thinspace}
\providecommand{\MR}{\relax\ifhmode\unskip\space\fi MR }
\providecommand{\MRhref}[2]{%
  \href{http://www.ams.org/mathscinet-getitem?mr=#1}{#2}
}
\providecommand{\href}[2]{#2}
\begin{thebibliography}{10}

\bibitem{ariki-book}
Susumu Ariki, \emph{Representations of quantum algebras and combinatorics of
  {Y}oung tableaux}, University Lecture Series, vol.~26, American Mathematical
  Society, Providence, RI, 2002, Translated from the 2000 Japanese edition and
  revised by the author. \MR{MR1911030 (2004b:17022)}

\bibitem{ariki-koike}
Susumu Ariki and Kazuhiko Koike, \emph{A {H}ecke algebra of {$({\bm Z}/r{\bm
  Z})\wr{\frak S}\sb n$} and construction of its irreducible representations},
  Adv. Math. \textbf{106} (1994), no.~2, 216--243. \MR{MR1279219 (95h:20006)}

\bibitem{ariki-mathas}
Susumu Ariki and Andrew Mathas, \emph{The number of simple modules of the
  {H}ecke algebras of type {$G(r,1,n)$}}, Math. Z. \textbf{233} (2000), no.~3,
  601--623. \MR{MR1750939 (2001e:20007)}

\bibitem{ariki-mathas-rui}
Susumu Ariki, Andrew Mathas, and Hebing Rui, \emph{Cyclotomic {N}azarov-{W}enzl
  algebras}, Nagoya Math. J. \textbf{182} (2006), 47--134. \MR{MR2235339}

\bibitem{benkart-walled-brauer}
Georgia Benkart, Manish Chakrabarti, Thomas Halverson, Robert Leduc, Chanyoung
  Lee, and Jeffrey Stroomer, \emph{Tensor product representations of general
  linear groups and their connections with {B}rauer algebras}, J. Algebra
  \textbf{166} (1994), no.~3, 529--567. \MR{MR1280591 (95d:20071)}

\bibitem{Birman-Wenzl}
Joan~S. Birman and Hans Wenzl, \emph{Braids, link polynomials and a new
  algebra}, Trans. Amer. Math. Soc. \textbf{313} (1989), no.~1, 249--273.
  \MR{90g:57004}

\bibitem{Brauer}
Richard Brauer, \emph{On algebras which are connected with the semisimple
  continuous groups}, Ann. of Math. (2) \textbf{38} (1937), no.~4, 857--872.
  \MR{MR1503378}

\bibitem{cox-walled-brauer}
Anton Cox, Maud {De Visscher}, Stephen Doty, and Paul Martin, \emph{On the
  blocks of the walled {B}rauer algebra},  (2007).

\bibitem{cox-towers}
Anton Cox, Paul Martin, Alison Parker, and Changchang Xi, \emph{Representation
  theory of towers of recollement: theory, notes, and examples}, J. Algebra
  \textbf{302} (2006), no.~1, 340--360. \MR{MR2236606 (2007b:16028)}

\bibitem{dipper-james1}
Richard Dipper and Gordon James, \emph{Representations of {H}ecke algebras of
  general linear groups}, Proc. London Math. Soc. (3) \textbf{52} (1986),
  no.~1, 20--52. \MR{MR812444 (88b:20065)}

\bibitem{dipper-james2}
\bysame, \emph{Blocks and idempotents of {H}ecke algebras of general linear
  groups}, Proc. London Math. Soc. (3) \textbf{54} (1987), no.~1, 57--82.
  \MR{MR872250 (88m:20084)}

\bibitem{dipper-james-mathas}
Richard Dipper, Gordon James, and Andrew Mathas, \emph{Cyclotomic {$q$}-{S}chur
  algebras}, Math. Z. \textbf{229} (1998), no.~3, 385--416. \MR{MR1658581
  (2000a:20033)}

\bibitem{doran-partition}
William~F. Doran, IV, David~B. Wales, and Philip~J. Hanlon, \emph{On the
  semisimplicity of the {B}rauer centralizer algebras}, J. Algebra \textbf{211}
  (1999), no.~2, 647--685. \MR{MR1666664 (99k:16034)}

\bibitem{Enyang1}
John Enyang, \emph{Cellular bases for the {B}rauer and
  {B}irman-{M}urakami-{W}enzl algebras}, J. Algebra \textbf{281} (2004), no.~2,
  413--449. \MR{MR2098377 (2005f:20008)}

\bibitem{Enyang2}
\bysame, \emph{Specht modules and semisimplicity criteria for {B}rauer and
  {B}irman-{M}urakami-{W}enzl algebras}, J. Algebraic Combin. \textbf{26}
  (2007), no.~3, 291--341. \MR{MR2348099}

\bibitem{goodman-2008}
Frederick~M. Goodman, \emph{Cellularity of cyclotomic
  {B}irman--{W}enzl--{M}urakami algebras}, Journal of Algebra \textbf{321}
  (2009), no.~11, 3299 -- 3320, Special Issue in Honor of Gus Lehrer.

\bibitem{goodman-admissibility}
\bysame, \emph{Comparison of admissibility conditions for cyclotomic
  {B}irman--{W}enzl--{M}urakami algebras}, preprint (2009).

\bibitem{GHJ}
Frederick~M. Goodman, Pierre de~la Harpe, and Vaughan F.~R. Jones,
  \emph{Coxeter graphs and towers of algebras}, Mathematical Sciences Research
  Institute Publications, vol.~14, Springer-Verlag, New York, 1989.
  \MR{MR999799 (91c:46082)}

\bibitem{GG2}
Frederick~M. Goodman and John Graber, \emph{On cellular algebras with
  {J}ucys--{M}urphy elements}, preprint (2009).

\bibitem{GH1}
Frederick~M. Goodman and Holly Hauschild, \emph{Affine
  {B}irman--{W}enzl--{M}urakami algebras and tangles in the solid torus}, Fund.
  Math. \textbf{190} (2006), 77--137. \MR{MR2232856}

\bibitem{GH2}
Frederick~M. Goodman and Holly~Hauschild Mosley, \emph{Cyclotomic
  {B}irman-{W}enzl-{M}urakami algebras {I}: Freeness and realization as tangle
  algebras}, J. Knot Theory Ramifications, to appear.

\bibitem{GH3}
\bysame, \emph{Cyclotomic {B}irman-{W}enzl-{M}urakami algebras {II}:
  Admissibility relations and freeness}, Algebras and Representation Theory (to
  appear), arXiv:math/0612065.

\bibitem{Graham-Lehrer-cellular}
J.~J. Graham and G.~I. Lehrer, \emph{Cellular algebras}, Invent. Math.
  \textbf{123} (1996), no.~1, 1--34. \MR{MR1376244 (97h:20016)}

\bibitem{green-martin-tabular}
Richard~M. Green and Paul Martin, \emph{Constructing cell data for diagram
  algebras}, J. Pure Appl. Algebra \textbf{209} (2007), no.~2, 551--569.
  \MR{MR2293327 (2007k:16029)}

\bibitem{green-martin}
\bysame, \emph{Constructing cell data for diagram algebras}, J. Pure Appl.
  Algebra \textbf{209} (2007), no.~2, 551--569. \MR{MR2293327 (2007k:16029)}

\bibitem{halverson-ram-partition}
Tom Halverson and Arun Ram, \emph{Partition algebras}, European J. Combin.
  \textbf{26} (2005), no.~6, 869--921. \MR{MR2143201 (2006g:05228)}

\bibitem{H-O2}
Reinhard H{\"a}ring-Oldenburg, \emph{Cyclotomic {B}irman-{M}urakami-{W}enzl
  algebras}, J. Pure Appl. Algebra \textbf{161} (2001), no.~1-2, 113--144.
  \MR{MR1834081 (2002c:20055)}

\bibitem{Hartmann-Paget}
Robert Hartmann and Rowena Paget, \emph{Young modules and filtration
  multiplicities for {B}rauer algebras}, Math. Z. \textbf{254} (2006), no.~2,
  333--357. \MR{MR2262706 (2008j:20009)}

\bibitem{Hemmer-Nakano}
David~J. Hemmer and Daniel~K. Nakano, \emph{Specht filtrations for {H}ecke
  algebras of type {A}}, J. London Math. Soc. (2) \textbf{69} (2004), no.~3,
  623--638. \MR{MR2050037 (2005f:20025)}

\bibitem{Jacobson}
Nathan Jacobson, \emph{Basic algebra. {II}}, second ed., W. H. Freeman and
  Company, New York, 1989. \MR{MR1009787 (90m:00007)}

\bibitem{jones-planar}
Vaughan F.~R. Jones, \emph{Planar algebras, {I}}, unpublished manuscript,
  arXiv:math/9909027.

\bibitem{Jones-index}
\bysame, \emph{Index for subfactors}, Invent. Math. \textbf{72} (1983), no.~1,
  1--25. \MR{MR696688 (84d:46097)}

\bibitem{jones-invariant}
\bysame, \emph{A polynomial invariant for knots via von {N}eumann algebras},
  Bull. Amer. Math. Soc. (N.S.) \textbf{12} (1985), no.~1, 103--111.
  \MR{MR766964 (86e:57006)}

\bibitem{jones-partition}
\bysame, \emph{The {P}otts model and the symmetric group}, Subfactors
  ({K}yuzeso, 1993), World Sci. Publ., River Edge, NJ, 1994, pp.~259--267.
  \MR{MR1317365 (97b:82023)}

\bibitem{jost}
Thomas Jost, \emph{Morita equivalence for blocks of {H}ecke algebras of
  symmetric groups}, J. Algebra \textbf{194} (1997), no.~1, 201--223.
  \MR{MR1461487 (98h:20014)}

\bibitem{Kauffman}
Louis~H. Kauffman, \emph{An invariant of regular isotopy}, Trans. Amer. Math.
  Soc. \textbf{318} (1990), no.~2, 417--471. \MR{MR958895 (90g:57007)}

\bibitem{koike-tensor-products}
Kazuhiko Koike, \emph{On the decomposition of tensor products of the
  representations of the classical groups: by means of the universal
  characters}, Adv. Math. \textbf{74} (1989), no.~1, 57--86. \MR{MR991410
  (90j:22014)}

\bibitem{KX-structure}
Steffen K{\"o}nig and Changchang Xi, \emph{On the structure of cellular
  algebras}, Algebras and modules, II (Geiranger, 1996), CMS Conf. Proc.,
  vol.~24, Amer. Math. Soc., Providence, RI, 1998, pp.~365--386. \MR{MR1648638
  (2000a:16011)}

\bibitem{KX-Morita}
\bysame, \emph{Cellular algebras: inflations and {M}orita equivalences}, J.
  London Math. Soc. (2) \textbf{60} (1999), no.~3, 700--722. \MR{MR1753809
  (2001a:16020)}

\bibitem{KX-Brauer}
\bysame, \emph{A characteristic free approach to {B}rauer algebras}, Trans.
  Amer. Math. Soc. \textbf{353} (2001), no.~4, 1489--1505 (electronic).
  \MR{MR1806731 (2002f:16039)}

\bibitem{martin-partition-JKTR}
Paul Martin, \emph{Temperley-{L}ieb algebras for nonplanar statistical
  mechanics---the partition algebra construction}, J. Knot Theory Ramifications
  \textbf{3} (1994), no.~1, 51--82. \MR{MR1265453 (95a:82022)}

\bibitem{martin-structure-j.algebra}
\bysame, \emph{The structure of the partition algebras}, J. Algebra
  \textbf{183} (1996), no.~2, 319--358. \MR{MR1399030 (98g:05152)}

\bibitem{martin-partition-potts}
\bysame, \emph{The partition algebra and the {P}otts model transfer matrix
  spectrum in high dimensions}, J. Phys. A \textbf{33} (2000), no.~19,
  3669--3695. \MR{MR1768036 (2001j:82025)}

\bibitem{Mathas-book}
Andrew Mathas, \emph{Iwahori-{H}ecke algebras and {S}chur algebras of the
  symmetric group}, University Lecture Series, vol.~15, American Mathematical
  Society, Providence, RI, 1999. \MR{MR1711316 (2001g:20006)}

\bibitem{mathas-seminormal}
\bysame, \emph{Seminormal forms and {G}ram determinants for cellular algebras},
  J. Reine Angew. Math. \textbf{619} (2008), 141--173, With an appendix by
  Marcos Soriano. \MR{MR2414949 (2009e:16059)}

\bibitem{mathas-2009}
Andrew Mathas, \emph{A {S}pecht filtration of an induced {S}pecht module},
  Journal of Algebra \textbf{322} (2009), no.~3, 893 -- 902, Special Issue in
  Honor of John Cannon and Derek Holt.

\bibitem{Morton-Traczyk}
Hugh Morton and Pawe\l\ Traczyk, \emph{Knots and algebras}, Contribuciones
  Matematicas en homenaje al profesor D. Antonio Plans Sanz de Bremond
  (E.~Martin-Peindador and A.~Rodez Usan, eds.), University of Zaragoza,
  Zaragoza, 1990, pp.~201--220.

\bibitem{Morton-Wassermann}
Hugh Morton and Antony Wassermann, \emph{A basis for the {B}irman-{W}enzl
  algebra}, Unpublished manuscript (1989, revised 2000), 1--29.

\bibitem{Murakami-BMW}
Jun Murakami, \emph{The {K}auffman polynomial of links and representation
  theory}, Osaka J. Math. \textbf{24} (1987), no.~4, 745--758. \MR{MR927059
  (89c:57007)}

\bibitem{murphy-hecke95}
G.~E. Murphy, \emph{The representations of {H}ecke algebras of type {$A\sb
  n$}}, J. Algebra \textbf{173} (1995), no.~1, 97--121. \MR{MR1327362
  (96b:20013)}

\bibitem{nikitin-walled-brauer}
P.~P. Nikitin, \emph{A description of the commutant of the action of the group
  {${\rm GL}\sb n(\Bbb C)$} in mixed tensors}, Zap. Nauchn. Sem. S.-Peterburg.
  Otdel. Mat. Inst. Steklov. (POMI) \textbf{331} (2006), no.~Teor. Predst. Din.
  Sist. Komb. i Algoritm. Metody. 14, 170--198, 224, Translation in J. Math.
  Sci. (N. Y.) 141 (2007), no. 4, 1479--1493. \MR{MR2251346 (2007f:20078)}

\bibitem{rui-2008b}
Hebing Rui and Mei Si, \emph{The representation theory of cyclotomic {BMW}
  algebras {II}}, preprint (2008), arXiv:0807.4149.

\bibitem{rui-2008}
Hebing Rui and Jie Xu, \emph{The representations of cyclotomic {BMW} algebras},
  J. Pure Appl. Algebra (to appear), arXiv:0801.0465.

\bibitem{turaev-operator-invariants}
V.~G. Turaev, \emph{Operator invariants of tangles, and {$R$}-matrices}, Izv.
  Akad. Nauk SSSR Ser. Mat. \textbf{53} (1989), no.~5, 1073--1107, 1135,
  translation in Math. USSR-Izv. 35 (1990), no. 2, 411--444. \MR{MR1024455
  (91e:17011)}

\bibitem{Wenzl-Brauer}
Hans Wenzl, \emph{On the structure of {B}rauer's centralizer algebras}, Ann. of
  Math. (2) \textbf{128} (1988), no.~1, 173--193. \MR{MR951511 (89h:20059)}

\bibitem{Wenzl-BCD}
\bysame, \emph{Quantum groups and subfactors of type {$B$}, {$C$}, and {$D$}},
  Comm. Math. Phys. \textbf{133} (1990), no.~2, 383--432. \MR{MR1090432
  (92k:17032)}

\bibitem{wilcox-cellular}
Stewart Wilcox, \emph{Cellularity of diagram algebras as twisted semigroup
  algebras}, J. Algebra \textbf{309} (2007), no.~1, 10--31. \MR{MR2301230
  (2008a:16041)}

\bibitem{Wilcox-Yu}
Stewart Wilcox and Shona Yu, \emph{The cyclotomic {BMW} algebra associated with
  the two string type {B} braid group}, preprint (2006, revised 2009),
  arXiv:math/0611518.

\bibitem{Wilcox-Yu2}
\bysame, \emph{On the cellularity of the cyclotomic {B}irman-{M}urakami-{W}enzl
  algebras}, preprint (2009).

\bibitem{Wilcox-Yu3}
\bysame, \emph{On the freeness of the cyclotomic {BMW} algebras: admissibility
  and an isomorphism with the cyclotomic {K}auffman tangle algebras}, preprint
  (2009).

\bibitem{Xi-Partition}
Changchang Xi, \emph{Partition algebras are cellular}, Compositio Math.
  \textbf{119} (1999), no.~1, 99--109. \MR{MR1711582 (2000i:16022)}

\bibitem{Xi-BMW}
\bysame, \emph{On the quasi-heredity of {B}irman-{W}enzl algebras}, Adv. Math.
  \textbf{154} (2000), no.~2, 280--298. \MR{MR1784677 (2001g:20008)}

\bibitem{Yu-thesis}
Shona Yu, \emph{The cyclotomic {B}irman--{M}urakami--{W}enzl algebras}, Ph.D.
  Thesis, University of Sydney (2007).

\end{thebibliography}

\end{document}